\setlist{nosep} % or \setlist{noitemsep} to leave space around whole list
\newcommand{\vol}{\text{vol}}
\newcommand\pmttimes[1]{\mathord{\mathop{#1}\limits^{\scriptscriptstyle(\sim)}}}
\theoremstyle{definition}
\newtheorem{theorem}{Theorem}[section]
\theoremstyle{definition}
\newtheorem{lemma}[theorem]{Lemma}
\theoremstyle{definition}
\newtheorem{definition}[theorem]{Definition}
\theoremstyle{definition}
\newtheorem{proposition}[theorem]{Proposition}
\theoremstyle{definition}
\newtheorem{example}[theorem]{Example}
\theoremstyle{definition}
\newtheorem{corollary}[theorem]{Corollary}
\theoremstyle{definition}
\theoremstyle{definition}
\newcommand{\innerthmname}{}% initialize
\theoremstyle{definition}
\newcommand*{\colorboxed}{}
\def\colorboxed#1#{%
  \colorboxedAux{#1}%
}
\newcommand*{\colorboxedAux}[3]{%
  % #1: optional argument for color model
  % #2: color specification
  % #3: formula
  \begingroup
    \colorlet{cb@saved}{.}%
    \color#1{#2}%
    \boxed{%
      \color{cb@saved}%
      #3%
    }%
  \endgroup
}
\title{\vspace{-3cm} An algorithm for finding minimal volume\\ hyperbolic links and the Dehn parental test}
\author{Misha Schmalian}
\thanks{Mathematical Institute, University of Oxford, OX2 6GG, UK, https://sites.google.com/view/mishaschmalian}
\begin{document}
\vspace*{3cm}
\maketitle
\date{}
\vspace{-1cm}
\begin{abstract}
We describe an algorithm that, given a 3-manifold $M$, outputs a finite set containing all minimal volume $k$-component hyperbolic link complements in $M$. A key step, that might be of independent interest, is an algorithm that, given two 3-manifolds $N$ and $M$, decides whether they are related by Dehn filling. In fact, we show that the set of boundary slopes giving a Dehn filling of $N$ to $M$ is determined by a special class of well-studied quadratic Diophantine (in)equalities, for which solvability is known to be decidable.
\end{abstract}

\section{Introduction}
A {\it hyperbolic} link $L$ in a 3-manifold $M$ is an embedding $\bigsqcup_{i=1}^k S^1\to M$ with $M-L$ admitting a complete finite-volume Riemannian metric with constant curvature $-1$. The {\it volume} of a hyperbolic link is $\text{vol}(L):=\text{vol}_g(M-L)$, for $g$ the hyperbolic metric. Substantial prior work has focused on finding the least volume hyperbolic $k$-component links in $S^3$. This has been resolved for $k=1, 2$, and $4$ \cite{Min_volume_cusped, Agol_2cusp_vol, 4_cusp_vol}. There have also been partial results for $k=3$ \cite{3_cusp_attempt}. See also \cite[Problem 3.60]{Kirby_List}. More generally, we can try to find minimal volume links in an arbitrary 3-manifold $M$. The minimal volume of a hyperbolic link, with any number of components, in $M$ is denoted $\text{volt}(M)$ and has been extensively studied in \cite{voltM}. We describe an algorithm to find minimal volume hyperbolic links and to compute $\text{volt}(M)$. 

\begin{theorem}\label{thm:compute_volt}
There is an algorithm that, given $k\in \mathbb Z_{\geq 0}$, $\varepsilon>0$, and a compact orientable 3-manifold $M$ with, possible empty, toroidal boundary, outputs, with an error of at most $\varepsilon$, $\text{volt}_k(M):=\min\{\text{vol}(L)\mid L\text{ is a hyperbolic }k-\text{component link in }M\}$ and $\text{volt}(M):=\min_{n\geq 0}\text{volt}_n(M)$ . 
\end{theorem}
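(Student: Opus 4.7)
A $k$-component hyperbolic link $L\subset M$ corresponds to a pair $(N,\sigma)$, where $N:=M\setminus L$ is a cusped orientable hyperbolic 3-manifold with $k+b$ cusps (with $b$ the number of toroidal components of $\partial M$) and $\sigma$ is a $k$-tuple of slopes---the meridians of $L$---along which Dehn filling $N$ recovers $M$. Under this correspondence, $\text{volt}_k(M)$ equals the minimum of $\text{vol}(N)$ over all such pairs. My strategy is to enumerate candidate parents for $N$ and then, for each, to minimize volume subject to a Diophantine slope constraint coming from the Dehn parental test.

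First I would obtain an a priori upper bound $V_0\geq\text{volt}_k(M)$ by exhibiting a specific $k$-component hyperbolic link in $M$; such links exist by Myers-type constructions in any compact orientable 3-manifold with toroidal boundary (returning $+\infty$ if the algorithm detects that no such $L$ exists). By Jorgensen--Thurston, the cusped hyperbolic 3-manifolds of volume at most $V_0$ are organized by a finite, algorithmically computable set of parent manifolds $\{P_1,\ldots,P_r\}$ under Dehn filling: the parents can be enumerated by a triangulation census combined with hyperbolic-structure and volume computations, and every cusped hyperbolic 3-manifold of volume $\leq V_0$ arises as a Dehn filling of some $P_i$.

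For each $P_i$ with at least $k+b$ cusps, and each designation of its cusps into $b$ that will match $\partial M$, $k$ that will be $L$-meridians, and the remaining $c-k-b$ that will be pre-filled to produce the intermediate candidate $N:=P_i(\sigma_2)$, I would apply the Dehn parental test to decide whether some Dehn filling of $P_i$ on the non-$\partial M$ cusps yields $M$. The test characterizes the admissible slope pairs $(\sigma_1,\sigma_2)$---with $\sigma_1$ the $L$-meridians and $\sigma_2$ the pre-filling---as the solution set of an explicit Diophantine system. Projecting onto the pre-filling coordinates $\sigma_2$ and invoking Neumann--Zagier asymptotics (which imply $\text{vol}(P_i(\sigma_2))\to \text{vol}(P_i)$ from below as $\|\sigma_2\|$ grows), the minimum of $\text{vol}(N)$ over admissible $\sigma_2$ is attained at $\sigma_2$ of explicitly bounded length, reducing to a finite search. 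Taking the overall minimum over $P_i$ and configurations, and approximating volumes to precision $\varepsilon$ via standard hyperbolic-structure computation, yields $\text{volt}_k(M)$. For $\text{volt}(M)=\min_n\text{volt}_n(M)$, Adams's per-cusp volume lower bound forces the relevant $n$ to be bounded above (any $n$ exceeding $V_0$ divided by the per-cusp minimum contributes nothing), so one minimizes over a finite range of $n$.

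The principal obstacle is coordinating the Diophantine admissibility condition on $(\sigma_1,\sigma_2)$ with the minimization of $\text{vol}(N)=\text{vol}(P_i(\sigma_2))$ over $\sigma_2$ alone: the test decides the joint condition, but one needs to extract an effective bound on the $\sigma_2$'s that could beat the current best candidate. Combining the decidability provided by the parental test with the Neumann--Zagier asymptotic control on how $\text{vol}(P_i(\sigma_2))$ depends on $\|\sigma_2\|$ reduces the infinite slope search to a finite effective computation, at the cost of careful precision bookkeeping to propagate the $\varepsilon$ error through the volume approximations.
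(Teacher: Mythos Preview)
Your outline is essentially the paper's own argument: Myers for an initial upper bound $V_0$, J{\o}rgensen--Thurston for a finite parent list, the Dehn parental test to encode admissibility as a solvable Diophantine system, a volume-change estimate to truncate the slope search, and Adams's per-cusp bound to reduce $\text{volt}(M)$ to finitely many $\text{volt}_k(M)$. Two points deserve more care. First, the intermediate $N=P_i(\sigma_2)$ must itself be hyperbolic, not merely satisfy $P_i(\sigma_1,\sigma_2)\cong M$; the paper builds this into the Diophantine system via the ``strongly non-exceptional'' clause of Corollary~\ref{thm:hyperbolic_Dehn_Ancestry}. Second, your phrase ``$\sigma_2$ of explicitly bounded length, reducing to a finite search'' hides an induction: the volume-change estimate (FKP or Neumann--Zagier) only forces \emph{at least one} component of $\sigma_2$ to be short when $\text{vol}(P_i(\sigma_2))\leq V'<\text{vol}(P_i)$, not all of them, so one must Dehn fill along that short slope and recurse on $P_i(s)$ with one fewer cusp. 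The paper runs exactly this induction on the cusp-count difference $n-m-k$ in its proof of Theorem~\ref{thm:volumetheorem}, from which Theorem~\ref{thm:compute_volt} is then immediate.
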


In fact we show that:

\begin{theorem}\label{thm:volumetheorem}
There is an algorithm that, given $k\in \mathbb N$ and a compact orientable 3-manifold $M$ with, possibly empty, toroidal boundary, outputs a finite list that includes all minimal volume $k$-component hyperbolic links in $M$ up to homeomorphisms of the link complement in $M$. %\textcolor{red}{Normal curve?}
\end{theorem}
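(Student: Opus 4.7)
The plan is to reduce the problem to a finite enumeration via the Dehn parental test. Write $b$ for the number of components of $\partial M$. A $k$-component hyperbolic link $L\subset M$ corresponds, up to homeomorphism of complements, to a complete hyperbolic 3-manifold $N$ with $k+b$ cusps, equipped with a distinguished set of $k$ cusps whose Dehn filling recovers $M$. The proof combines J\o rgensen--Thurston finiteness with the Dehn parental test promised in the abstract.

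First, I would produce a computable upper bound $V$ on $\text{volt}_k(M)$, or detect that no $k$-component hyperbolic link exists in $M$ (outputting the empty list in that case). One concrete route is to enumerate triangulations of $M$, drill $k$-component sublinks of their $1$-skeleta, certify hyperbolicity of the complement via Manning's algorithm, and take the first successfully realised volume as $V$; effective bounds from \cite{voltM} can be used to decide when to give up on the search.

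Next, J\o rgensen--Thurston finiteness ensures that the set $\mathcal N$ of complete finite-volume hyperbolic 3-manifolds with exactly $k+b$ cusps and volume at most $V$ is finite, and it can be enumerated effectively by running through ideal triangulations and applying Manning's recognition algorithm. For each $N\in \mathcal N$ and each choice of $k$ of its cusps, I would then apply the Dehn parental test to decide whether some Dehn filling along those $k$ cusps yields $M$, with the remaining $b$ cusps identified with $\partial M$. Each success contributes a homeomorphism class of $k$-component hyperbolic link complements in $M$; pruning to those $N$ of minimum volume produces the desired finite list.

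The main obstacle is this last step: deciding, for every candidate $N$ in the enumeration, whether $N$ can be Dehn filled to recover $M$. This is precisely the content of the Dehn parental test developed elsewhere in the paper, and the reduction to quadratic Diophantine (in)equalities mentioned in the abstract is what makes this effective. The remaining ingredients---the upper bound $V$ and the enumeration of $\mathcal N$---are more classical, drawing on computational 3-manifold topology and hyperbolisation.
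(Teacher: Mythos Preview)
Your proposal contains a genuine gap at the second step. The claim that the set $\mathcal{N}$ of hyperbolic 3-manifolds with exactly $k+b$ cusps and volume at most $V$ is finite is \emph{false}. For instance, Dehn filling one cusp of the Whitehead link complement produces infinitely many one-cusped hyperbolic manifolds (the twist knot complements among them), all of volume strictly less than that of the Whitehead link. J{\o}rgensen--Thurston finiteness says only that there is a finite list of \emph{parent} manifolds such that every hyperbolic manifold of volume at most $V$ is obtained from one of them by Dehn filling; it does not bound the number of such fillings, even with a prescribed number of cusps. Consequently your enumeration of $\mathcal{N}$ never terminates, and the proof breaks down.

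The paper's argument confronts exactly this infinitude. It uses the algorithmic J{\o}rgensen--Thurston result (Theorem~\ref{johansen_algo}) to produce finitely many parents $N_i$, and then for each $N_i$ proceeds by induction on the number of cusps. The Dehn parental test (Corollary~\ref{thm:hyperbolic_Dehn_Ancestry}) is used not merely to decide whether \emph{some} filling of $N_i$ is a $k$-component link complement in $M$, but to actually exhibit one; this gives a volume $V_{N_i}$ strictly less than $\operatorname{vol}(N_i)$. The effective volume estimate of Futer--Kalfagianni--Purcell (Theorem~\ref{thm:volume_change}) then yields a finite list of slopes such that any filling avoiding them has volume exceeding $V_{N_i}$, so the minimal-volume link complements must arise from $N_i(s)$ for $s$ in this finite list, and one recurses. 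The interplay between the parental test and the quantitative volume bound is what converts the infinite search into a finite one; your outline omits this mechanism entirely.
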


\begin{theorem}\label{thm:biglist_volumetheorem}
There is an algorithm that, given a compact orientable 3-manifold $M$ with, possibly empty, toroidal boundary, outputs a finite list that includes all minimal volume hyperbolic links in $M$ up to homeomorphisms of the link complement in $M$.
\end{theorem}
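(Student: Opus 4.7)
The plan is to reduce to Theorem \ref{thm:volumetheorem} by bounding, a priori, the number of components of any minimum volume hyperbolic link in $M$. The key input is Adams' cusp-volume inequality: there is a universal constant $V_0 > 0$ such that every orientable finite-volume hyperbolic 3-manifold with $n$ cusps has volume at least $n V_0$. Writing $c$ for the number of toroidal boundary components of $M$, this yields $\text{vol}(M \setminus L) \geq (k+c) V_0$ for any $k$-component hyperbolic link $L$ in $M$. So if we can produce an explicit upper bound $V^*$ on $\text{volt}(M)$, then every minimum volume hyperbolic link has at most $K := \lfloor V^*/V_0 \rfloor - c$ components.

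To produce such a $V^*$, I would construct a single hyperbolic link in $M$ and compute its volume. By Myers' theorem, every compact orientable 3-manifold with toroidal boundary (outside a small topologically obstructed class) contains a hyperbolic knot, and the construction can be made algorithmic starting from a triangulation of $M$. Computing the volume of the complement to arbitrary precision via Thurston's gluing equations then gives $V^* = \text{vol}(M \setminus L_0)$ for the constructed knot $L_0$. The degenerate cases in which no hyperbolic link exists (for instance when $M$ is reducible, so that every complement contains an essential sphere) can be detected from the prime and JSJ decompositions of $M$, in which case the algorithm outputs the empty list.

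With $V^*$ and hence $K$ in hand, one applies Theorem \ref{thm:volumetheorem} for each $k \in \{0, 1, \ldots, K\}$ to obtain finite lists $\mathcal{L}_k$, and outputs $\bigcup_{k=0}^K \mathcal{L}_k$. By the cusp bound, this finite list contains every minimum volume hyperbolic link in $M$. The main obstacle is making the Myers-style construction fully algorithmic so that an initial hyperbolic link (and hence the upper bound $V^*$) can be effectively produced; once this is done the remaining steps are bookkeeping on top of Adams' inequality and Theorem \ref{thm:volumetheorem}.
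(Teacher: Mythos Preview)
Your proposal is correct and follows essentially the same approach as the paper: construct one hyperbolic link via Myers to get a volume bound $V^*$, invoke Adams' cusp-volume inequality to bound the number of components, and then take the union of the lists from Theorem~\ref{thm:volumetheorem} over the finitely many admissible $k$. One small remark: your caveat about ``degenerate cases in which no hyperbolic link exists'' is unnecessary, since Myers' theorem applies to \emph{every} compact connected orientable 3-manifold with (possibly empty) toroidal boundary, including reducible ones; the paper accordingly makes no such case distinction.
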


For completeness, and to aid the exposition, we will also prove the following much easier result: 

\begin{proposition}\label{prop:min_volume__n_cusp}
There is an algorithm that, given $n\in \mathbb N$, outputs a finite list that includes all minimal volume orientable hyperbolic 3-manifolds with exactly $n$ boundary tori. 
\end{proposition}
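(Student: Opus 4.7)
The strategy is a reduction to enumerating hyperbolic 3-manifolds below a computable volume threshold. First I would exhibit an explicit orientable hyperbolic 3-manifold with exactly $n$ toroidal boundary components in order to obtain an upper bound $V$ for the minimum: for $n\geq 2$ the complement of the $n$-chain link in $S^3$ works, and for $n=1$ the figure-eight knot complement suffices. Its volume is computable from Thurston's gluing equations to any desired precision, so we have a concrete $V$.

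Next I would invoke Jørgensen--Thurston to conclude that among orientable hyperbolic 3-manifolds of volume at most $V$, the cusp number is bounded above by a computable function of $V$ (using Adams' lower bound on the volume of a maximal cusp neighbourhood), and that for each fixed cusp count the set is finite. Combined with the well-ordering of the set of hyperbolic 3-manifold volumes, this shows the minimum-volume $n$-cusped orientable hyperbolic 3-manifold exists and that there are only finitely many realising it.

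The main step is to enumerate the finite set of orientable hyperbolic 3-manifolds of volume at most $V$ algorithmically. I would enumerate ideal triangulations by increasing number of tetrahedra, test each for a positively-oriented hyperbolic structure via Thurston's gluing equations, retain those with exactly $n$ cusps and approximate volume below $V$, and identify homeomorphic duplicates using the Epstein--Penner canonical decomposition together with Mostow rigidity. Finally, one sorts by computed volume and outputs the sublist of minima (to any prescribed tolerance).

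The main obstacle in this plan is the stopping criterion in the enumeration: one needs a computable bound, in terms of $V$, on the number of tetrahedra required in some ideal triangulation of a hyperbolic 3-manifold of volume at most $V$. Such a bound follows, classically, by combining the Epstein--Penner decomposition with a controlled subdivision estimate, but its statement takes some care. Notably, this proposition does not use the Dehn parental test of the present paper; it serves as a warm-up before the harder relative setting of Theorems~\ref{thm:volumetheorem} and \ref{thm:biglist_volumetheorem}, where one must additionally control which Dehn fillings produce the fixed ambient manifold $M$.
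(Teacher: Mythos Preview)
Your argument has a genuine gap: the claim that ``for each fixed cusp count the set [of orientable hyperbolic $3$-manifolds of volume at most $V$] is finite'' is false in general. If there exists any $(n{+}1)$-cusped hyperbolic manifold $W$ with $\text{vol}(W)\le V$, then Dehn filling one cusp of $W$ along longer and longer slopes produces infinitely many pairwise non-homeomorphic $n$-cusped hyperbolic manifolds, all of volume strictly less than $\text{vol}(W)\le V$. So your enumeration of ideal triangulations has no valid stopping criterion, and no ``computable tetrahedra bound in terms of $V$'' can exist for the set you are trying to list, because that set is typically infinite. (Your diagnosis of the obstacle as a subdivision estimate for Epstein--Penner is therefore misplaced: the issue is not that the bound is hard to state, but that no such bound can hold.) Whether your particular choice of $V$ happens to dodge this for some small $n$ is irrelevant; the argument as written asserts finiteness as a general consequence of J\o rgensen--Thurston, which it is not.

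The paper's proof gets around exactly this difficulty. It also starts by fixing an upper bound $V$, but then uses the computable J\o rgensen--Thurston theorem (Theorem~\ref{johansen_algo}) to produce a \emph{finite} list of hyperbolic ancestors $M_1,\dots,M_k$ such that every hyperbolic manifold of volume $\le V$ is a strongly non-exceptional Dehn filling of some $M_i$. It then argues, for each ancestor $N$, by induction on the number of cusps (and JSJ-components): after discarding finitely many short/fibred slopes, either $N$ is Seifert fibred (so no hyperbolic fillings remain), or $N$ is hyperbolic and one exhibits a single filling of volume $V_N<\text{vol}(N)$ and uses the effective volume bound of Theorem~\ref{thm:volume_change} to conclude that any filling of volume $\le V_N$ must use at least one slope from a further computable finite list. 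This reduces to ancestors with fewer cusps, and the recursion terminates. The key idea you are missing is this use of the quantitative volume-change estimate to force minimal-volume fillings through finitely many slopes; without it there is no way to cut the infinite family of $n$-cusped fillings down to a finite list.
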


%Recall that {\it Dehn filling} on an oriented 3-manifold $N$ with toroidal boundary is the operation of gluing solid tori into some number of boundary components of $N$. 
We say an oriented 3-manifold $M$ is a {\it Dehn filling} of an oriented 3-manifold $N$ with toroidal boundary if $M$ is orientation-preservingly homeomorphic to the result of gluing solid tori into some number of boundary components of $N$. ``The Dehn filling of $N$'' will refer both to the process of gluing in solid tori and the resulting 3-manifold.\\

 A key step in proving Theorem \ref{thm:volumetheorem} will be the following algorithm, that might be of independent interest. 

\begin{theorem}
\label{thm:maintheorem}
There is an algorithm that, given two compact oriented 3-manifolds $M,N$ with, possibly empty, toroidal boundary, decides whether $M$ is a Dehn filling of $N$. 
\end{theorem}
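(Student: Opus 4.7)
I would reduce the problem to finitely many instances of the following: fix a subset $T\subseteq \pi_0(\partial N)$ of the boundary tori of $N$ to be filled, and decide whether some slope system $\mathbf{s}=(s_t)_{t\in T}$ satisfies $N(\mathbf{s})\cong M$. There are only $2^{|\pi_0(\partial N)|}$ such subsets, and each is handled independently.

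First, I would compute the JSJ decomposition of $N$ (algorithmic via normal surface theory and geometrization), and identify the hyperbolic pieces.

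In the \emph{hyperbolic case}---where $N$ itself is hyperbolic---I bound the candidate slopes as follows. The Gromov norm inequality gives $\text{vol}(M)\le \text{vol}(N)$, with equality forcing $M\cong N$, so one may assume $\text{vol}(M)<\text{vol}(N)$; both quantities are computable. The Neumann--Zagier quadratic expansion of volume under Dehn filling shows that $\text{vol}(N(\mathbf{s}))\to \text{vol}(N)$ as $|\mathbf{s}|\to\infty$, with defect asymptotically a positive-definite quadratic form in the slope reciprocals. This yields an effective bound $|\mathbf{s}|\le R$ on slopes compatible with $\text{vol}(N(\mathbf{s}))=\text{vol}(M)$. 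The non-hyperbolic (exceptional) slopes are separately bounded by the $6$-theorem of Agol--Lackenby. For each of the finitely many remaining candidate slopes, I invoke the decidability of the 3-manifold homeomorphism problem.

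In the \emph{graph and mixed case}, where $N$ has a non-trivial JSJ decomposition, a filling of $N$ that gives $M$ must be compatible piece-by-piece with the JSJ structure. Hyperbolic pieces are handled as above. For Seifert fibered pieces, the Seifert invariants of a filled piece depend on the filling coefficients as explicit rational functions, and the conditions for the resulting graph manifold to be homeomorphic to the corresponding portion of $M$ translate into a system of quadratic Diophantine equations and inequalities, which fall into the classical decidable class alluded to in the abstract.

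\textbf{Main obstacle.} I expect the hardest step to be the graph-manifold case: packaging the homeomorphism conditions on the filled JSJ pieces into a genuinely quadratic (rather than higher-degree) Diophantine system of a decidable form, and correctly handling degenerate fillings that collapse JSJ pieces or produce reducible manifolds, thereby altering the JSJ structure of the output. By contrast, the hyperbolic case reduces to a bounded search once Neumann--Zagier and the $6$-theorem provide effective radii.
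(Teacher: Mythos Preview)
Your overall architecture---reduce to JSJ pieces, bound hyperbolic fillings by a finite search, and encode the Seifert-fibred gluing conditions as a decidable quadratic Diophantine system---is the same as the paper's, and you have correctly identified the graph-manifold degenerations (pieces collapsing to solid tori or $T^2\times I$, fibres lining up across JSJ tori) as where the real work lies.

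There is, however, a concrete gap in your hyperbolic step. The Neumann--Zagier expansion (or any effective version such as the Futer--Kalfagianni--Purcell inequality) only tells you that if \emph{every} slope is long then the volume of the filling is close to $\mathrm{vol}(N)$; when $N$ has more than one cusp this yields merely that \emph{at least one} filling slope is short, not that the entire slope system lies in a finite set. Concretely, with two cusps the volume defect is roughly $c_1/|s_1|^2+c_2/|s_2|^2$, and fixing this equal to $\mathrm{vol}(N)-\mathrm{vol}(M)$ leaves $|s_1|$ unbounded. So you cannot ``invoke the homeomorphism algorithm on the finitely many remaining candidate slopes'' directly. The fix is to recurse: produce a finite list of short slopes, fill one, and repeat the question for the resulting manifold with one fewer boundary torus---but that manifold need not be hyperbolic, so the recursion immediately throws you into the full JSJ/Seifert machinery rather than staying in a self-contained hyperbolic case. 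The paper organises this as an induction on the number of filled tori, and for the short-slope list it uses systole bounds (Hodgson--Kerckhoff theory, made effective by Futer--Purcell--Schleimer) rather than volume: long fillings produce core geodesics shorter than $\mathrm{sys}(M)$, a contradiction. This sidesteps the issue that Neumann--Zagier is asymptotic rather than effective and avoids having to compare transcendental volumes exactly.
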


In fact, we will prove that the question of which Dehn fillings of $N$ are homeomorphic to $M$ is answered by a set of particularly simple Diophantine equations.

\begin{definition} A {\it Diophantine system} in variables $\alpha_1, \ldots, \alpha_k$ is a finite collection of sets $\mathcal{S}_i$ each of which consists of finitely many (in)equalities of the form $p(\alpha_1, \ldots, \alpha_k)\diamond 0$, with $p$ a polynomial with integer coefficients and each $\diamond$ one of $=, <, \leq$. The {\it solution set} to such a Diophantine system is the subset of $\mathbb Z^k$ of values for $\alpha_i$ satisfying all (in)equalities in at least one of the $\mathcal{S}_i$. For a Diophantine system $\mathcal{D}$ in variables $\alpha_1, \ldots, \alpha_k$ and a subset $J\subset \{1, \ldots, k\}$, we say values for $\alpha_j, j\in J$ are {\it part of a solution} to $\mathcal{D}$ if they can be extended by values for $\alpha_j, j\notin J$ to a solution of $\mathcal{D}$.
\end{definition}

There is no algorithm that determines whether a general Diophantine system has empty solution set \cite{Hilbert10}. We will therefore consider only particularly simple Diophantine systems.

\begin{definition}
A {\it linear system} is a Diophantine system in which all polynomials appearing in the (in)equalities have degree at most $1$. A {\it mono-quadratic system} is a Diophantine system $\mathcal{Q}$ such that for each set $\mathcal{S}_i$ of $\mathcal{Q}$ there is a partition of the variables satisfying:
\begin{itemize}
\item[-] no (in)equality of $\mathcal{S}_i$ uses variables from multiple parts of the partition; 
\item[-] for each part there is at most one equality of degree $2$ in $\mathcal{S}_i$ using the variables from that part and all other (in)equalities have degree at most $1$. 
\end{itemize}
In particular, in a mono-quadratic system there are no equalities of degree 3 or higher and all quadratic (in)equalities must be equalities. 
\end{definition}

\begin{definition}
Let $M$ be a compact oriented 3-manifold with toroidal boundary.
A {\it $\partial$-framing} of $M$ is an ordering of its boundary tori and an $H_1$-basis for each boundary torus of $M$. Here an $H_1$-basis for a torus $T$ is an oriented, ordered basis of $H_1(T_i)\cong \mathbb Z^2$. A {\it $\partial$-framed manifold} is a 3-manifold equipped with a $\partial$-framing and $\partial$-framed manifolds admitting orientation-preserving homeomorphisms respecting their $\partial$-framings are {\it $\partial$-framed homeomorphic}. Given a $\partial$-framed 3-manifold $N$, a Dehn filling of $N$ in its first $k$ boundary tori is determined by the homology classes $(p_i, q_i)\in \mathbb Z^2=H_1(T_i)$ of the images of meridians $\partial D^2\times \{1\}\subset D^2\times S^1$ in the boundary tori $T_i\subset \partial N$ under gluing. We call such a collection of slopes in distinct boundary tori of $N$ a {\it multi-slope} on $N$ and we let $N(p_1/q_1, \ldots, p_k/q_k)$ denote this Dehn filling of $N$. When specifying a Dehn filling of $N$ to $N(p_1/q_1, \ldots, p_k/q_k)$ we will require that $p_i, q_i$ are coprime.
\end{definition}

\begin{theorem}\label{prop:mainprop}
Let $M, N$ be $\partial$-framed manifolds. There is a mono-quadratic system $\mathcal{Q}$ such that $\gcd(p_1, q_1)=\ldots=\gcd(p_{k}, q_{k})=1$ and the Dehn filling $N(p_1/q_1, \ldots, p_{k}/q_{k})$ is orientation-preservingly homeomorphic to $M$ if and only if $p_1, q_1, \ldots, p_{k}, q_{k}$ are part of a solution to $\mathcal{Q}$. Moreover, there is an algorithm that, given $M,N$, outputs $\mathcal{Q}$.
\end{theorem}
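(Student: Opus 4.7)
The plan is to reduce the condition $N(p_1/q_1, \ldots, p_{k}/q_k) \cong M$ to a finite disjunction of piecewise matching problems via the JSJ decompositions of $M$ and $N$, and then to verify that the matching constraint on each JSJ piece contributes at most one quadratic equality.

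First, I would compute the JSJ decompositions of $M$ and $N$, which is algorithmic. Any Dehn filling of $N$ homeomorphic to $M$ induces a combinatorial correspondence between the two decompositions: each filled boundary of $N$ lies in a JSJ piece $P$ of $N$; the piece $P$ becomes part of a JSJ piece $P'$ of $M$ after filling; and each JSJ torus of $N$ either persists as a JSJ torus of $M$ or becomes inessential in the filled manifold. There are only finitely many such combinatorial correspondences, which I would enumerate, producing one disjunct $\mathcal{S}_c$ of the mono-quadratic system for each correspondence $c$. Fixing $c$, the constraint decomposes over the pieces of $N$: for each piece $P$ with filled-boundary indices $I_P$, the slopes $\{(p_i, q_i) : i \in I_P\}$ must cause $P$ to fill to the prescribed $P'$ up to $\partial$-framed homeomorphism matching the gluings to neighbouring pieces. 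Grouping the variables $(p_i, q_i)$ by the piece of $N$ containing the $i$-th filled boundary yields the partition required by the definition of a mono-quadratic system.

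For each piece $P$ the analysis splits into two cases. If $P$ is \emph{hyperbolic}, Mostow rigidity together with standard bounds on the mapping class group forces only finitely many slopes to fill $P$ to any fixed $P'$; the constraints are then purely linear (each option is a finite list of integer tuples). If $P$ is \emph{Seifert fibered}, the classification of Seifert fibered spaces provides the constraint: in a basis adapted to the Seifert fibration, each filled boundary contributes an exceptional fiber whose Seifert invariants are linear in $(p_i, q_i)$, and the isomorphism of the filled piece with $P'$ reduces to a single lens-space-type congruence $a a' \equiv \pm 1 \pmod{n}$ between these invariants. Introducing auxiliary integer variables — including Bezout coefficients that encode the coprimality conditions $\gcd(p_i, q_i) = 1$ as linear equalities — converts this into one quadratic equality per piece, with all remaining constraints linear.

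The main obstacle is the Seifert case in which a single piece $P$ has several filled boundaries, so that the Seifert invariants of the filled piece depend on all filling slopes jointly. The resolution is that each filled boundary contributes only a linear change of Seifert data (one exceptional fiber, linearly determined by $(p_i, q_i)$), and the only genuinely quadratic constraint arises from the comparison of normalized rational Euler numbers, which collapses to the single congruence above. Degenerate sub-cases — filling along a fiber slope producing a lens space from a Seifert piece, orientation-preservation issues, and graph-manifold-like unions of Seifert pieces where boundary JSJ tori become inessential — require additional bookkeeping but follow the same scheme. Every step (JSJ computation, enumeration of correspondences, piecewise analysis, and extraction of explicit constraints) is effective, so $\mathcal{Q}$ is computable from $M$ and $N$.
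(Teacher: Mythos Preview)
Your high-level strategy---compute JSJ decompositions, enumerate correspondences, and analyse piece by piece---is the same as the paper's, but there is a genuine gap in your partition argument. You propose to partition the variables $(p_i,q_i)$ according to which JSJ piece of $N$ contains the $i$-th filled torus, and claim each part then carries at most one quadratic equality. This fails precisely in the ``degenerate sub-cases'' you wave at: when a Seifert piece of $N$ fills to a solid torus, it is absorbed into its neighbour, and the resulting piece of $M$ may be a lens space whose recognition equation is quadratic in filling slopes coming from \emph{different} original pieces of $N$. Concretely, take $N=A\cup_\psi B$ with $A=[D^2-\{x\},r_A/s_A]$, $B=[D^2-\{x\},r_B/s_B]$ glued so that fibre goes to section; filling $A$ along $q_1/1$ and $B$ along $q_2/1$ gives $[S^2, \tfrac{r_A+q_1 s_A}{s_A}, \tfrac{s_B}{r_B+q_2 s_B}]$, and the condition that this be $S^3$ is $(r_A+q_1 s_A)(r_B+q_2 s_B)+s_A s_B=\pm 1$, a quadratic mixing $q_1$ and $q_2$. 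Under your partition these lie in separate parts, so the system is not mono-quadratic.

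The paper's fix is substantial, not mere bookkeeping: it introduces a formalism of $\partial$-parameterisations and $\partial$-equation systems that track, through an induction, how partition parts must be \emph{merged} whenever a piece collapses to $D^2\times S^1$, $T^2\times I$, or $K^2\tilde{\times} I$. The $T^2\times I$ case is especially delicate: when a chain of pieces fills to a thickened torus, the induced gluing between the surviving neighbours is a product of matrices $\begin{pmatrix}1 & n_i\\0&-1\end{pmatrix}$ with $n_i$ integral-affine in the filling slopes, and one must show (Lemma~\ref{lemma:sl2z}) that conjugacy of such products reduces to a \emph{linear} system, so that the only genuinely quadratic constraints come from lens-space recognition. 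Your sketch does not supply this reduction, and without it the merged parts could a priori carry several quadratic equalities. You would need to either reproduce this machinery or find a different argument that controls how variables from distinct pieces interact after collapse.
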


\begin{definition}
Consider a hyperbolic 3-manifold $N$ and a Dehn filling of $N$ along a multi-slope $S$. We say this Dehn filling is {\it strongly non-exceptional} if every Dehn filling of $N$ along a subset of $S$ is a hyperbolic manifold. In particular, since $S$ is a subset of itself, a strongly non-exceptional Dehn filling of $N$ is hyperbolic.
\end{definition}

\begin{corollary} \label{thm:hyperbolic_Dehn_Ancestry}
Consider $\partial$-framed manifolds $N, M$ with $N$ hyperbolic and $k\in \mathbb N$. There is a mono-quadratic system $\mathcal{Q}_{\text{hyp}}$ such that $\gcd(p_1, q_1)=\ldots=\gcd(p_{k}, q_{k})=1$, the Dehn filling of $N$ to $N(p_1/q_1, \ldots, p_k/q_k)$ is strongly non-exceptional, and $N(p_1/q_1, \ldots, p_k/q_k)$ admits a Dehn filling to $M$ if and only if $p_1, q_1, \ldots, p_{k}, q_{k}$ are part of a solution to $\mathcal{Q}_{\text{hyp}}$. Moreover, there is an algorithm that, given $M,N$, and $k$, outputs $\mathcal{Q}_{\text{hyp}}$.
\end{corollary}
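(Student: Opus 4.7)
The plan is to separately encode the two conditions of the corollary --- ``$N(p_1/q_1,\ldots,p_k/q_k)$ admits a Dehn filling to $M$'' and ``every sub-filling of $N$ along a subset of $\{p_i/q_i\}_{i=1}^k$ is hyperbolic'' --- and then combine them while preserving the mono-quadratic structure. Theorem~\ref{prop:mainprop} will handle the ancestry condition, while effective versions of Thurston's hyperbolic Dehn surgery theorem, together with decidability of hyperbolicity, will handle the strong non-exceptional condition.

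For the ancestry part, let $c$ denote the number of boundary tori of $N$. The manifold $N(p_1/q_1,\ldots,p_k/q_k)$ admits a Dehn filling to $M$ iff there exist $m\in\{k,\ldots,c\}$, an injection $\sigma\colon\{k+1,\ldots,m\}\hookrightarrow\{k+1,\ldots,c\}$ specifying which of the remaining tori are further filled and in what order, and coprime pairs $(p_{k+1},q_{k+1}),\ldots,(p_m,q_m)$ such that $N$, with $\partial$-framing reordered via $\sigma$ and filled along $(p_1/q_1,\ldots,p_m/q_m)$, is orientation-preservingly homeomorphic to $M$. For each such $(m,\sigma)$ I apply Theorem~\ref{prop:mainprop} to produce a mono-quadratic system in the variables $p_1,q_1,\ldots,p_m,q_m$, relabel it to live in the common $2c$-variable space $p_1,q_1,\ldots,p_c,q_c$, and take the finite union over $(m,\sigma)$ to obtain a mono-quadratic system $\mathcal{Q}_{\mathrm{anc}}$ such that $(p_1,q_1,\ldots,p_k,q_k)$ is part of a solution iff the ancestry condition holds (with every slope coprime).

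For the strong non-exceptionality part, for each non-empty $T\subseteq\{1,\ldots,k\}$ let $E_T$ denote the set of multi-slopes on the tori indexed by $T$ whose Dehn filling of $N$ fails to be hyperbolic. Thurston's hyperbolic Dehn surgery theorem ensures $E_T$ is finite, and effectivity is obtained by combining Agol's $6$-theorem --- which bounds the length of exceptional slopes, leaving only finitely many short candidates per cusp --- with Manning's algorithm for deciding hyperbolicity, iterated by induction on $|T|$. The condition that $(p_i,q_i)_{i\in T}$ does not represent any multi-slope in $E_T$ is a conjunction, over $E_T$ and sign choices $\varepsilon\in\{\pm1\}^T$, of disjunctions over $i\in T$ of single-variable atoms of the form $p_i\neq a$ and $q_i\neq b$, each itself a disjunction of two strict linear inequalities. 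Taking the conjunction over all non-empty $T$ and converting to disjunctive normal form yields a linear Diophantine system $\mathcal{R}$ in $p_1,q_1,\ldots,p_k,q_k$ whose solutions are exactly the strongly non-exceptional multi-slopes.

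I then set $\mathcal{Q}_{\mathrm{hyp}}:=\{\mathcal{S}_\alpha\cup\mathcal{T}_\beta\mid \mathcal{S}_\alpha\in\mathcal{Q}_{\mathrm{anc}},\ \mathcal{T}_\beta\in\mathcal{R}\}$, whose solution set is the intersection of the solution sets of $\mathcal{Q}_{\mathrm{anc}}$ and $\mathcal{R}$. Because every (in)equality contributed by $\mathcal{T}_\beta$ is of degree at most one and involves only a single variable, it fits inside whichever block of the partition witnesses $\mathcal{S}_\alpha$'s mono-quadraticity and introduces no new quadratic equality; thus $\mathcal{Q}_{\mathrm{hyp}}$ remains mono-quadratic and, by construction, encodes exactly the conditions of the corollary. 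The main obstacle will be the effective enumeration of the exceptional sets $E_T$ when $|T|\geq 2$: Thurston's theorem gives only qualitative finiteness for multi-cusp fillings, so one must carefully iterate the $6$-theorem's length bound and a hyperbolicity decider, inducting on $|T|$ to deal with intermediate partial fillings that may themselves fail to be hyperbolic.
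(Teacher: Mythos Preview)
Your overall architecture --- encode the ancestry condition via Theorem~\ref{prop:mainprop} (summed over the choices of which further cusps get filled), encode strong non-exceptionality by a linear system, and then take the block-wise union --- matches the paper's strategy, and your combination step is fine because your added atoms are single-variable linear inequalities.

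The gap is in the strong non-exceptionality step: your claim that $E_T$ is finite is false once $|T|\geq 2$. Thurston's theorem (Theorem~\ref{thm:hyperbolicDehnfilling}) only says that there is a finite list of slopes \emph{per cusp} such that avoiding all of them yields a hyperbolic filling; it does \emph{not} say the set of non-hyperbolic multi-slopes is finite. Concretely, take $N$ the Whitehead link exterior and $T=\{1,2\}$: filling the first cusp along its meridian yields a solid torus, so $(1/0,s)\in E_T$ for \emph{every} slope $s$ on the second cusp, and $E_T$ is infinite. Your scheme of writing one clause per element of $E_T$ therefore does not produce a finite Diophantine system, and the inductive sketch you gesture at in the last paragraph does not repair this, because for each exceptional slope on one cusp you would still need to express ``avoid that slope \emph{or} lie in a (slope-dependent) finite list on the other cusps,'' and these lists vary with the first slope.

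The paper handles this differently. Rather than enumerating exceptional multi-slopes, it builds an iterated finite set $\mathcal{E}_{\max}$ of single slopes on $\partial N$ (apply the effective short-slope bound of Theorem~\ref{theorem:longFillingGeometry} to $N$, then to each hyperbolic $N(S')$ for $S'$ a multi-slope in the slopes already collected, and repeat $b+1$ times for $b=|\partial N|$). The key observation is that any multi-slope $S$ decomposes as $S_1\cup S_2$ with $S_1\subset\mathcal{E}_{\max}$ and $S_2\cap\mathcal{E}(S_1)=\emptyset$, and in that case $N\to N(S)$ is strongly non-exceptional iff $N\to N(S_1)$ is. Since there are only finitely many candidate $S_1$, one decides strong non-exceptionality for each and encodes ``$S$ contains some good $S_1$ and avoids the corresponding finite set $\mathcal{E}(S_1)$'' as a linear system $\mathcal{L}$ whose atoms are of the form $(p_i,q_i)=(a,b)$ or $(p_i,q_i)\neq(a,b)$. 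This replaces your infinite enumeration by a finite disjunction over the possible ``short parts'' $S_1$ of the multi-slope.
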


Example \ref{ex:monoquad_needed} will give a heuristic for why solving mono-quadratic systems is a key step in deciding whether a manifold $N$ is a link-complement in a manifold $M$. In fact, mono-quadratic systems are used even in the case of $M\cong S^3$ as is detailed in Example \ref{ex:monoquad_needed}. However this is not an issue since, by a direct application of \cite[Theorem 0.1]{Monoquadratic_Algo}, we obtain the following algorithm to study the solution set of mono-quadratic systems. 

\begin{theorem} \label{lemma:linearsystemsolution} There is an algorithm that, given a mono-quadratic system, decides whether its solution set is empty, finite, or infinite. If the solution set is finite, the algorithm produces the solution set; if infinite, given a number $n$, the algorithm produces $n$ members of the solution set.
\end{theorem}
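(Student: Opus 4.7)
The plan is to invoke \cite[Theorem 0.1]{Monoquadratic_Algo} as a black box on appropriate sub-problems and then reassemble the answers by straightforward combinatorics. That cited result handles a single system consisting of at most one quadratic equality together with finitely many linear (in)equalities in integer variables, which is exactly the structure we will isolate inside a general mono-quadratic system.

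First, I would exploit the outer disjunction built into the definition. The solution set of $\mathcal{Q}$ is the union of the solution sets of its constituent sets $\mathcal{S}_i$. Consequently $\mathcal{Q}$ has empty solution set iff each $\mathcal{S}_i$ does; its solution set is finite iff the solution set of each $\mathcal{S}_i$ is finite, in which case the solution set of $\mathcal{Q}$ is simply the union; and when some $\mathcal{S}_i$ has an infinite solution set, $n$ members of the solution set of $\mathcal{Q}$ can be produced by interleaving enumerations across the $\mathcal{S}_i$.

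Next, I would use the partition built into each $\mathcal{S}_i$. Because no (in)equality of $\mathcal{S}_i$ couples variables from distinct parts, the system factorises: its solution set is the Cartesian product of the solution sets of the sub-systems $\mathcal{S}_i^j$ formed by the (in)equalities supported on the $j$-th part of the partition. By hypothesis, each $\mathcal{S}_i^j$ has at most one quadratic equality with all remaining (in)equalities of degree at most $1$, which is precisely the input format required by \cite[Theorem 0.1]{Monoquadratic_Algo}. (Parts that contain no quadratic equality reduce to a classical linear Diophantine system, whose emptiness/finiteness and enumeration are in any case standard, or else one can append a trivial equality such as $0=0$ to fit the black box.) Applying the cited theorem to each $\mathcal{S}_i^j$ returns its emptiness/finiteness status, the full solution list when finite, and an on-demand enumeration when infinite.

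Finally, I would combine these outputs. Within a single $\mathcal{S}_i$, the product is empty iff some $\mathcal{S}_i^j$ is empty, finite iff all $\mathcal{S}_i^j$ are non-empty and finite, and infinite iff all $\mathcal{S}_i^j$ are non-empty with at least one infinite. In the finite case the product is enumerated directly; in the infinite case a standard diagonal interleaving of the component enumerations gives $n$ distinct elements, and a further interleaving across $i$ produces $n$ distinct elements of the solution set of $\mathcal{Q}$. I do not anticipate any substantive mathematical obstacle: the only care required is bookkeeping to match our mono-quadratic format against the precise input hypotheses of \cite[Theorem 0.1]{Monoquadratic_Algo} and to handle the purely linear parts uniformly with the quadratic ones.
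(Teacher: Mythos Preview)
Your proposal is correct and follows essentially the same approach as the paper: both use the outer disjunction to reduce to a single $\mathcal{S}_i$, then the partition to factor into a Cartesian product of sub-systems each admissible for \cite[Theorem~0.1]{Monoquadratic_Algo}, and then recombine. The only cosmetic difference is that the paper handles the infinite case by brute-force enumeration of all tuples rather than your diagonal interleaving.
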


We record that Theorem \ref{thm:maintheorem} is a direct corollary of Theorem \ref{prop:mainprop} and Theorem \ref{lemma:linearsystemsolution}. We will explain how Theorem \ref{lemma:linearsystemsolution} follows from \cite[Theorem 0.1]{Monoquadratic_Algo} in Section \ref{section:mono_quadratic}. \\

%Finally, we show that in the hyperbolic case things are particularly well-behaved. 
%
%\begin{theorem}\label{prop:hyperbolic_finite}
%Let $N$ and $M$ be $\partial$-framed hyperbolic 3-manifolds. There are finitely many multi-slopes $p_1/q_1, \ldots, p_k/q_k$ such that the Dehn filling $N(p_1/q_1, \ldots, p_k/q_k)$ is homeomorphic to $M$. Moreover, there is an algorithm that, given $N,M$, outputs all such multi-slopes.
%\end{theorem}

\tableofcontents

\subsection{Conventions}

Throughout the article we adopt the following conventions: 
\begin{itemize}
\item[-] All 3-manifolds will be compact and have, possibly empty, toroidal boundary.
\item[-] All 3-manifolds will be oriented and homeomorphisms and Dehn-fillings between 3-manifolds will always be orientation-preserving. 
\item[-] Boundaries of oriented 3-manifolds will always be oriented to point out of the 3-manifold.
\item[-] Hyperbolic manifolds with always be complete and of finite-volume. 
\item[-] Manifolds will not necessarily be connected, but will always have finitely many connected components. 
\item[-] Manifold will be input/output as triangulations. Submanifolds of a manifold will be input/output as subcomplexes of an iterated barycentric subdivision of the triangulation of the ambient manifold.
\item[-] {\it Integral-affine function} will mean a degree 1 polynomial with integer coefficients. 
\item[-] $S^2, K^2, D^2, \text{M\"ob}, I$ will respectively denote the 2-sphere, Klein bottle, 2-disk, M\"obius band, and interval $[0,1]$. 
\item[-] It will often be implicit that certain symbols represent values taken by certain variables. In particular $a_i, b_i, p_i, q_i$ will represent the values taken by the variables $\alpha_i, \beta_i, \rho_i, \sigma_i$ respectively. 
\item[-] {\it Slopes} refer to simple closed curves in the toroidal boundary of a 3-manifold. We often differentiate between oriented and unoriented slopes. With respect to a fixed $H_1$-basis the coefficients of an oriented slope will be expressed as $(p,q)$, while the coefficient of an unoriented slope will be expressed as $p/q$. 
\end{itemize}

\section{Acknowledgements}
First of all, we would like to thank Jonathan Spreer and Stephan Tillmann for suggesting this problem and an approach to tackling it. The author is profoundly grateful to Marc Lackenby for his continued support. Finally, the author owes a great deal of thanks to Adele Jackson who contributed substantially and was offered co-authorship but declined. 

\section{Preparatory Results}\label{section:prep}

The following result is a corollary of Thurston-Perelman Geometrization. There are a number of expositions on this topic, see for example \cite[Theorem 10.9]{Sela_iso_algo}, \cite[\S1.4.1]{Geometrisation_French_Book}, \cite{Kuperberg_Homeo} or \cite{Homeo_closed_algo}. 
%The only account of the toroidal boundary case appears to be notes from lectures by Jaco at the Peking Summer School in 2005, which are no longer publicly available.}

\begin{theorem}
\label{thm:homeomorphismalgo} There is an algorithm to decide whether two oriented 3-manifolds with, possibly empty, toroidal boundary are orientation-preservingly homeomorphic.
\end{theorem}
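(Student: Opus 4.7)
The plan is to reduce the homeomorphism problem to comparison of geometric pieces via the JSJ decomposition, which by Thurston--Perelman geometrization admits either a hyperbolic or Seifert fibered structure. First I would invoke the algorithm of Jaco--Tollefson (or Matveev) to compute the prime and JSJ decompositions of $M$ and $N$: these algorithms, based on normal surface theory, produce finite lists of incompressible tori splitting each manifold into geometric pieces, together with the combinatorial gluing data (an element of $\mathrm{GL}(2,\mathbb{Z})$ for each JSJ torus once boundary framings are chosen). Since orientation-preserving homeomorphism of $M$ and $N$ is equivalent to existence of a bijection between their JSJ pieces that is realized by homeomorphisms compatible with the gluings, it suffices to (i) decide when two geometric pieces are homeomorphic and (ii) decide when the gluings agree.

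For hyperbolic pieces I would appeal to Mostow--Prasad rigidity: a finite-volume hyperbolic 3-manifold is determined up to isometry by its fundamental group, and one can algorithmically compute the hyperbolic structure to arbitrary precision (e.g.\ by solving Thurston's gluing equations on an ideal triangulation, or by using Manning's algorithm). Homeomorphism of two hyperbolic pieces is then detected by computing and comparing a complete set of invariants, such as the canonical Epstein--Penner cell decomposition, which is computable from the hyperbolic structure and is a complete invariant. For Seifert fibered pieces I would use the classical classification by base orbifold, Seifert invariants, and (for closed pieces) Euler number, all of which are algorithmically extractable from a triangulation.

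To compare gluings, note that once homeomorphism types of the pieces are fixed, the mapping class group of each piece acts on the set of $\partial$-framings of its boundary tori by a subgroup of $\prod_T \mathrm{GL}(2,\mathbb{Z})$ which is computable (for hyperbolic pieces this is a finite group by Mostow rigidity combined with the Margulis lemma; for Seifert fibered pieces it is determined by the mapping class group of the base orbifold and is well-understood). The question of whether the JSJ gluings for $M$ and $N$ agree then reduces to deciding whether a specified element of $\prod_T \mathrm{GL}(2,\mathbb{Z})$ lies in the product of these mapping class group images, composed with the permutations of the pieces coming from a combinatorial isomorphism of the JSJ graph. This is a finite check.

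The main obstacle is organising the bookkeeping for step (iii), since the JSJ graphs may have automorphisms and the Seifert pieces in particular can have infinite mapping class groups whose action on framings must be computed explicitly; this is precisely where one must invoke the detailed classification of self-homeomorphisms of Seifert fibered spaces (e.g.\ as in Waldhausen and the references cited in the excerpt). Handling manifolds with boundary requires only a mild adaptation: boundary tori are treated on equal footing with JSJ tori, except that no gluing partner is present and one only needs the piece containing that boundary torus to be identified correctly. Since every step above is algorithmic, the composition yields the desired decision procedure.
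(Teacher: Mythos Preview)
The paper does not supply its own proof of this theorem; it merely records it as a consequence of Thurston--Perelman geometrization and points to the expositions of Sela, Bessi\`eres et al., Kuperberg, and Scott--Short. Your outline---prime and JSJ decomposition, recognition of hyperbolic pieces via the Epstein--Penner canonical decomposition (or Manning's algorithm), classification of Seifert fibred pieces by their numerical invariants, and matching of gluings up to the computable action of the mapping class groups of the pieces on boundary framings---is exactly the standard strategy carried out in those references, so there is nothing further to compare.
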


We will now compile some results on the geometrisation of 3-manifolds and how the geometry of 3-manifolds can change under Dehn filling.

\subsection{The JSJ-Decomposition}

\begin{definition}
A 3-manifold is {\it prime} if every separating embedded 2-sphere in it bounds a 3-ball. A {\it prime decomposition} of a 3-manifold $M$ is a set of prime 3-manifolds whose orientation-preserving connect sum is $M$. We call the elements of such a set the {\it components} of the prime decomposition. Every manifold is known to admit a unique prime decomposition \cite{Kneser_prime, Milnor_prime}.
%A {\it prime decomposition} of a 3-manifold $M$ is a minimal set of separating disjoint embedded 2-spheres in $M$ so that every 2-sphere in the complement of this collection either bounds a 3-ball or is boundary-parallel. Every manifold is known to admit a unique prime decomposition up to isotopy \cite{Kneser_prime, Milnor_prime}. A prime decomposition expresses an oriented manifold as a connect sum of prime oriented manifolds. 
\end{definition}

\begin{theorem}[Algorithm 7.1~\cite{JSJ_algo}] \label{thm:prime_decomp}
There is an algorithm that, given a 3-manifold $M$, produces the components of its prime decomposition.
\end{theorem}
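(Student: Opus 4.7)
The plan is to iteratively locate essential $2$-spheres in $M$ using normal surface theory. Starting from the given triangulation of $M$, I would set up Haken's matching equations and enumerate the finitely many vertex normal surfaces. By a theorem of Jaco--Tollefson, if $M$ admits an essential $2$-sphere (non-separating, or separating but not bounding a $3$-ball), then such a sphere occurs among the vertex solutions. This reduces the search to a finite combinatorial problem.

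For each candidate vertex normal $2$-sphere $S$, I would test essentiality. A non-separating normal $2$-sphere is automatically non-trivial. If $S$ separates, cut $M$ along it to obtain two pieces (each with a new $S^2$ boundary component) and apply the Rubinstein--Thompson $3$-ball recognition algorithm to each; $S$ is inessential exactly when one of the two pieces is a $3$-ball. If no essential $2$-sphere is found, then $M$ is prime and the algorithm returns $\{M\}$. Otherwise, after picking an essential $S$, cap off the new boundary spheres with $3$-balls (producing an updated triangulation) and recurse on each resulting component. A non-separating essential sphere contributes a factor of $S^1\times S^2$: capping produces a manifold $M'$ with $M\cong M'\#(S^1\times S^2)$.

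Termination is guaranteed by Kneser--Haken finiteness, which bounds the number of pairwise disjoint, pairwise non-parallel essential $2$-spheres in $M$ by an explicit function of the number of tetrahedra. This caps the recursion depth, and the returned list is a prime decomposition by the classical Kneser--Milnor theorem.

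The main obstacle is supplying the deep auxiliary subroutines on which this plan rests: the Jaco--Tollefson realisation of essential $2$-spheres as vertex normal surfaces, which requires $0$-efficient triangulations and delicate cut-and-paste arguments, and the Rubinstein--Thompson $3$-ball recognition algorithm, which itself relies on almost normal surfaces and thin position. Once these are granted, the enumeration, essentiality testing, cutting, and recursion fit together in a straightforward way.
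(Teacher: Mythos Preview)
The paper gives no proof of this theorem; it is quoted directly from the literature (Algorithm~7.1 of Jaco--Tollefson), so there is nothing to compare against in the paper itself. Your outline is essentially the standard argument underlying that cited algorithm: enumerate vertex normal surfaces, use Jaco--Tollefson's theorem that an essential $2$-sphere (if one exists) can be found among them, test essentiality via $3$-sphere/$3$-ball recognition, split and recurse, with Kneser--Haken finiteness guaranteeing termination.

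One small correction: the Jaco--Tollefson result that an essential $2$-sphere appears as a vertex normal surface does \emph{not} require $0$-efficient triangulations; that notion is a later refinement due to Jaco--Rubinstein aimed at improving efficiency, not a prerequisite for correctness. Also bear in mind that the paper's convention allows $M$ to have (toroidal) boundary, so your ``$3$-ball recognition'' step should more generally be recognition of $B^3$ after capping, which the same machinery handles. Otherwise your sketch is sound.
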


\begin{definition} A {\it JSJ-decomposition} of a manifold $M$ is a minimal collection of disjoint incompressible tori cutting $M$ into components that are hyperbolic or Seifert fibred. We say a manifold {\it admits a JSJ-decomposition} if it can be written as a union of Seifert fibred and hyperbolic spaces, with some of their boundary tori identified in pairs. Every prime manifold is known to admit a unique JSJ-decomposition up to isotopy \cite{Johannson_book, JSJ_book, Thurston_geometrisation, Perelman_1, Perelman_2}. An equivalent characterisation of a JSJ-decomposition is:

\end{definition} 
\begin{proposition}[Proposition V.5.2~\cite{JSJ_book}]
\label{prop:jsjdecompcharacterisation} A collection of disjoint incompressible tori in a 3-manifold $M$ is a JSJ-decomposition if:
 \begin{itemize}
 \item[-] it cuts $M$ into hyperbolic or Seifert fibred components; 
 \item[-] no two adjacent Seifert fibred components admit Seifert fibrations with isotopic fibres; 
 \item[-] if any complementary piece is $T^2\times I$, its two boundary components are the same JSJ-torus and the induced homeomorphism $T^2\to T^2$ from pushing the gluing map through the product structure is not periodic. 
 \end{itemize}
\end{proposition}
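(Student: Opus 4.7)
The plan is to show that a collection $\mathcal{T}$ of disjoint incompressible tori satisfying the three listed conditions is minimal among such collections that cut $M$ into hyperbolic or Seifert fibred pieces; combined with the Thurston--Perelman geometrisation already quoted, this is the defining property of a JSJ-decomposition, and uniqueness then identifies $\mathcal{T}$ with \emph{the} JSJ-decomposition of $M$. Accordingly, I would argue by contradiction: suppose that some $T \in \mathcal{T}$ is redundant, in the sense that $\mathcal{T}' := \mathcal{T} \setminus \{T\}$ also cuts $M$ into hyperbolic or Seifert fibred pieces. The analysis then splits according to whether $T$ is separating or non-separating relative to $\mathcal{T}'$.

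In the separating case, $T$ cuts a single piece $P$ of $M \setminus \mathcal{T}'$ into two pieces $P_1, P_2$ of $M \setminus \mathcal{T}$. Since $T$ is incompressible and sits in the interior of $P$, the piece $P$ cannot be hyperbolic and so is Seifert fibred. I would then invoke the classical fact that any incompressible torus in a Seifert fibred manifold is isotopic to a vertical (fibre-preserving) torus, except in a short list of exceptional small Seifert fibred manifolds where horizontal tori also occur. In the vertical case, the Seifert fibration of $P$ restricts to compatible Seifert fibrations on $P_1, P_2$ whose fibres on $T$ agree up to isotopy; this both forces $P_1, P_2$ to be Seifert fibred (so they cannot have been hyperbolic to begin with) and directly contradicts condition 2. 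The horizontal case is reduced to the vertical one by an explicit isotopy, using the classification of the exceptional Seifert fibred manifolds.

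In the non-separating case, $T$ lies in the interior of a single piece $P$ of $M \setminus \mathcal{T}'$, while the adjacent piece $Q$ of $M \setminus \mathcal{T}$ has two boundary tori both identified with $T$. Condition 3 is tailored exactly to the sub-case $Q \cong T^2 \times I$: the glued manifold $P$ is then the mapping torus of the induced self-homeomorphism $\phi$ of $T^2$, and such a mapping torus is Seifert fibred if and only if $\phi$ has finite order. Condition 3 rules this out, so $P$ cannot be Seifert fibred (it is a Sol manifold or splits further). The remaining possibilities for $Q$ with two boundary tori identified to $T$ can be excluded by the same vertical/horizontal analysis applied to the Seifert structure of $Q$, ruling out the collapse of $T$.

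The main obstacle I anticipate is the delicate handling of horizontal incompressible tori and the classification of Seifert fibrations on those Seifert fibred manifolds that admit multiple non-isotopic fibrations, since this is where the real content of JSJ theory lives. Because these are exactly the technical results developed by Jaco--Shalen and Johannson, in a full write-up I would defer to \cite{JSJ_book} for the geometric case analysis and view the proposition primarily as a convenient repackaging of their minimality theorem in a form suitable for the algorithmic applications that follow.
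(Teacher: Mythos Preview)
The paper does not prove this proposition at all; it is stated with a citation to \cite[Proposition V.5.2]{JSJ_book} and used as a black box. Your outline is therefore not comparable to any argument in the paper, but it is a reasonable sketch of why the cited result holds, and you correctly recognise at the end that the real work lives in Jaco--Shalen/Johannson.

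One genuine gap worth flagging: in the non-separating case with $Q\cong T^2\times I$, you use condition~3 to exclude periodic monodromy and then assert that $P$ ``is a Sol manifold or splits further.'' This is not right. Non-periodic monodromy includes the reducible case $|\mathrm{tr}(\phi)|=2$, and the resulting torus bundle is Seifert fibred (Nil or Euclidean), not Sol. So condition~3 alone does not force $P$ out of the Seifert fibred class. The paper itself notes this immediately after stating the proposition: it is the \emph{middle} condition (matching fibres on adjacent pieces) that excludes reducible-monodromy torus bundles, since in that case the invariant slope on $T^2$ furnishes a Seifert fibration of $Q=T^2\times I$ whose fibres agree on the two sides of $T$. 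Your ``remaining possibilities'' sentence gestures at this but does not pin it down. Once you route the reducible case through condition~2 rather than condition~3, the argument closes.
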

The last condition here insists that we have no extraneous $T^2\times I$ pieces (for example between two hyperbolic pieces) and that a torus bundle with periodic monodromy has empty JSJ-decomposition. The middle condition implies, in particular, that a torus bundle with reducible monodromy has empty JSJ-decomposition.\\

The following result is a well-known consequence of some foundational results.

\begin{theorem}
\label{thm:jsjalgo} There is an algorithm that, given a prime manifold $M$, outputs a JSJ-decomposition. 
\end{theorem}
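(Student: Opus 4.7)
The plan is to combine normal surface theory with the geometric recognition algorithms already at our disposal, using the characterisation in Proposition~\ref{prop:jsjdecompcharacterisation} as the verification criterion. Since $M$ is prime, any JSJ-torus is essential, so I first want a finite list of candidate systems of disjoint, pairwise non-parallel, essential tori in $M$. Haken's finiteness theorem bounds the number of tori in any such system by a constant computable from the triangulation of $M$, and classical normal surface theory provides a finite generating set (the fundamental surfaces) of the normal-surface cone. Essential normal tori can be detected algorithmically: incompressibility and boundary-incompressibility tests for normal surfaces are part of the standard toolkit (Jaco--Oertel, Jaco--Tollefson), as is testing whether two normal surfaces are isotopic. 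Enumerating disjoint unions (using Haken's bound) of such essential tori yields a finite, explicit list of candidate collections $\mathcal{T}_1, \ldots, \mathcal{T}_N$ to check.

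For each candidate $\mathcal{T}_i$, I would cut $M$ along $\mathcal{T}_i$ and test the three conditions of Proposition~\ref{prop:jsjdecompcharacterisation}. For the first condition, recall that a prime 3-manifold with toroidal boundary is Seifert fibered or hyperbolic if and only if it admits no essential tori beyond boundary-parallel ones; this too is an essential-torus detection problem handled by normal surfaces. Equivalently, there are direct algorithms of Matveev (and Rubinstein) to recognise Seifert fibered spaces, and by Thurston--Perelman geometrisation, any prime, atoroidal piece with no Seifert fibration is hyperbolic. For the second condition, recognising when two adjacent Seifert fibered pieces have isotopic fibres on their common boundary torus reduces to comparing the primitive homology classes of the regular fibres, which one can extract from a computed Seifert fibration. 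For the third condition, detecting $T^2 \times I$ pieces is algorithmic, and checking whether the gluing-induced monodromy $T^2 \to T^2$ is periodic reduces to checking whether an explicit matrix in $\mathrm{GL}_2(\mathbb{Z})$ has finite order, which is trivial.

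Among the finitely many candidate systems, the minimal one satisfying all three conditions is a JSJ-decomposition; by the uniqueness part of the JSJ theorem, it is the JSJ-decomposition up to isotopy. The algorithm outputs such a minimal $\mathcal{T}_i$.

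The main obstacle is not conceptual but is the packaging of normal surface routines: specifically, producing a complete list of isotopy classes of disjoint unions of essential tori, and reliably certifying essentiality for each normal torus in the list. These are established algorithms, but they are delicate to state cleanly; the simplest way to handle this in the write-up would be to invoke Matveev's recognition algorithm for Seifert fibered pieces and Jaco--Tollefson-style essential surface enumeration as black boxes, and to emphasise that Proposition~\ref{prop:jsjdecompcharacterisation} turns the verification into a finite check.
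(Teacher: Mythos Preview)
The paper's proof is a two-line reduction to the literature: after disposing of $S^2\times S^1$ and $D^2\times S^1$ (the only prime manifolds that are not irreducible and boundary-irreducible) and of non-Haken manifolds (which have empty JSJ-decomposition), it simply invokes \cite[Algorithm~8.2]{JSJ_algo} for the Haken case. Your proposal instead tries to sketch how such an algorithm would actually run, using normal surface enumeration and Proposition~\ref{prop:jsjdecompcharacterisation} as a certificate.

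The gap in your sketch is in the candidate-generation step. You assert that the fundamental normal surfaces give a finite pool from which all essential tori (up to isotopy) can be drawn, and that disjoint unions of bounded size then exhaust the possible JSJ systems. But a Seifert fibred piece over a higher-genus base contains infinitely many isotopy classes of vertical essential tori, so there is no finite list of isotopy classes of essential tori in general. What one actually needs is the more delicate fact that the \emph{canonical} JSJ tori (not arbitrary essential tori) can be located among vertex normal surfaces, and that the full system can be realised disjointly; this is precisely the content of the Jaco--Tollefson machinery you end up invoking as a black box. A secondary slip: your ``if and only if'' for the first condition of Proposition~\ref{prop:jsjdecompcharacterisation} is false in one direction, since large Seifert fibred spaces contain essential tori yet are still Seifert fibred; you need the direct recognition algorithms you mention afterwards, not the atoroidality test.

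Your final paragraph concedes exactly this and falls back to citing Jaco--Tollefson and Matveev as black boxes, which is what the paper does from the start. So the honest write-up is the paper's: handle the two reducible/boundary-reducible exceptions, handle the non-Haken case, and cite the existing algorithm.
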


\begin{proof}
The only prime manifolds that are not irreducible or boundary-irreducible are $S^2\times S^1$ and $D^2\times S^1$. We can check whether $M$ is one of these, and in either case we have an empty JSJ-decomposition.
There is an algorithm that, given a prime manifold, detects if the manifold is non-Haken and thus has empty JSJ-decomposition~\cite{Haken_recognition}.
We may thus assume that $M$ is Haken, irreducible, and boundary-irreducible, so the result is given in \cite[Algorithm 8.2]{JSJ_algo}.
\end{proof}

\subsection{Hyperbolic Manifolds}\label{subsection:hyperbolic}
We say a 3-manifold is {\it hyperbolic} if it admits a complete, finite-volume Riemannian metric of constant curvature $-1$. Such a metric is known to be unique if it exists. We wish to describe hyperbolic manifolds using triangulations that are suited to their geometry.

\begin{definition} A {\it geometric tetrahedron} is a metric space isometric to the convex hull of four non-coplanar points in the compactified hyperbolic space $\hat{\mathbb H^3}$ with ideal vertices, that is vertices in $\partial\mathbb H^3$, removed. A {\it geometric triangulation} of a hyperbolic manifold $N$ is a decomposition of $N$ into geometric tetrahedra, which are glued isometrically along faces to give the complete hyperbolic metric on $N$.
\end{definition}

We can specify a geometric triangulation by giving a combinatorial triangulation of a manifold $N$ along with the dihedral angles of each geometric tetrahedron, and the data of which vertices are ideal -- that is, on the boundary of $\hat{\mathbb H^3}$.
See \cite[Section 4.1]{Thurston_book} for further details. Given a geometric triangulation of a hyperbolic manifold, we can compute the volume to arbitrary precision. 

\begin{theorem}\cite[Lemma 5.7]{Kuperberg_Homeo}
\label{thm:hyperbolicTriangulation}
There is an algorithm that, given a 3-manifold, determines whether it is hyperbolic and, if it is hyperbolic, produces a geometric triangulation.
\end{theorem}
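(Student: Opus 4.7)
The plan is to address the two tasks in the statement separately: first deciding whether the input manifold $M$ is hyperbolic, and second, assuming it is, producing a geometric triangulation. For the decision step, I would first run the prime decomposition algorithm of Theorem~\ref{thm:prime_decomp}; since hyperbolic $3$-manifolds are irreducible and in particular prime, $M$ can be hyperbolic only if this returns $\{M\}$. Assuming so, apply the JSJ algorithm of Theorem~\ref{thm:jsjalgo}. By geometrisation together with the uniqueness of the JSJ decomposition, $M$ is hyperbolic if and only if the JSJ collection of tori is empty and the sole remaining piece is hyperbolic rather than Seifert fibred. The dichotomy between hyperbolic and Seifert fibred can be decided algorithmically, since Seifert fibrations are recognisable from a triangulation; equivalently, one may assume the JSJ algorithm itself distinguishes these two cases in its output, which is standard.

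Once hyperbolicity is confirmed, a geometric triangulation can be produced by a search procedure. Enumerate triangulations $T$ of $M$ (for instance by iteratively applying Pachner moves), and for each $T$ write down Thurston's gluing and completeness equations on the shape parameters of the tetrahedra, with ideal vertices corresponding to components of $\partial M$. Attempt to solve these to high precision using interval arithmetic; a solution in which every shape parameter has positive imaginary part certifies that $T$ is geometric and allows the complete hyperbolic volume to be computed rigorously. By Mostow rigidity the hyperbolic structure is unique, so any solution found gives the correct geometric triangulation.

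The main obstacle is guaranteeing termination of this search. It is not known in general that an arbitrary triangulation of a cusped hyperbolic $3$-manifold admits a geometric realisation, let alone that a geometric one can be reached from a given triangulation by a bounded number of Pachner moves; the situation for closed manifolds requires additional care and (semi-)ideal subdivisions. Kuperberg circumvents this issue by instead computing the discrete faithful representation $\rho:\pi_1(M)\to \mathrm{Isom}(\mathbb{H}^3)$ first: candidate representations are enumerated as matrix tuples with entries in algebraic number fields, and discreteness and faithfulness are certified by constructing a Dirichlet fundamental domain using rigorous interval arithmetic. Once $\rho$ is certified, the Dirichlet polyhedron is decomposed into geometric tetrahedra whose face gluings give the sought-after geometric triangulation of $M$. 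The delicate step, and the heart of \cite[Lemma 5.7]{Kuperberg_Homeo}, is the rigorous verification that the candidate $\rho$ has the required properties and quotient homeomorphic to $M$, closing the loop and ensuring the search halts.
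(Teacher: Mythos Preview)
The paper does not prove this statement at all; it is quoted as a black box from \cite[Lemma~5.7]{Kuperberg_Homeo}. Your sketch is therefore not to be compared against a proof in the paper but against Kuperberg's original argument, and on that score it is broadly accurate: you correctly flag that the naive search over triangulations and gluing equations has no termination guarantee, and that Kuperberg instead enumerates candidate representations $\rho:\pi_1(M)\to\mathrm{PSL}_2(\mathbb{C})$ over algebraic number fields, certifies discreteness and faithfulness via a rigorously computed Dirichlet domain, and extracts a geometric triangulation from that domain.

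One point of contrast worth noting: you separate the decision problem (is $M$ hyperbolic?) from the construction problem, handling the former via prime decomposition, JSJ, and Seifert recognition. This is a perfectly valid route given the tools already available in the paper (Theorems~\ref{thm:prime_decomp}, \ref{thm:jsjalgo}, \ref{SFS_recognition}). Kuperberg's own argument is more unified: the same search that finds and certifies $\rho$ also, in parallel, searches for a certificate of non-hyperbolicity (an essential sphere, torus, or Seifert structure), and geometrisation guarantees that one of the two searches halts. Your separation is cleaner to state but relies on the full JSJ machinery; Kuperberg's is more self-contained but requires running two searches in parallel.
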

%\textcolor{red}{I'm not super satisfied with these citations for the first part of this theorem -- that you can detect whether a manifold is hyperbolic. The closed case is fine -- use Manning or Sela or what have you -- but the case with boundary is maybe a bit messier. Dahmani-Groves let you solve isomorphism on the group level for these (and hence isometry), but.... idk. Maybe use the version of~\cite[Theorem 2.3]{Thurston_Dehn_Filling} as formulated in~\cite[Theorem 5]{HarawayHoffman}? but then you've got to do some normal surface stuff...}

We now discuss how Dehn fillings of hyperbolic manifolds modify their geometry.
It is a well-known result of Thurston that Dehn filling avoiding a finite list of exceptional slopes preserves hyperbolicity. 

\begin{theorem}\cite[Theorem 5.8.2]{Thurston_book}
\label{thm:hyperbolicDehnfilling}
Given a $\partial$-framed hyperbolic 3-manifold $N$, there is a finite list of slopes for each cusp $T_i$ such that a Dehn filling $N(p_1/q_1,\ldots,p_k/q_k)$ admits a complete hyperbolic structure so long as, for each $i$, the slope $p_i/q_i$ is not in the list of slopes associated to $T_i$.
\end{theorem}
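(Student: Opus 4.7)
The plan is to follow Thurston's original deformation-theoretic proof via gluing equations on an ideal triangulation. First, by Theorem \ref{thm:hyperbolicTriangulation} we have a geometric ideal triangulation of $N$, and each ideal tetrahedron can be parameterized by a shape parameter $z_j$ in the upper half plane of $\mathbb{C}$. Thurston's gluing equations — one equation per interior edge requiring that the product of shape parameters around the edge equals $1$ (with the appropriate log branch), together with cusp equations requiring trivial translational holonomy around the cusp cross-sections — cut out an algebraic variety $\mathcal{V}$ in $(\mathbb{C}\setminus\{0,1\})^T$, where $T$ is the number of tetrahedra. The complete hyperbolic structure corresponds to a distinguished point $z^\ast \in \mathcal{V}$ whose shape parameters all lie in the upper half-plane.

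Next I would invoke the Neumann--Zagier analysis: after relaxing the cusp translation equations, a suitable projection onto the cusp-holonomy logarithms $(u_1, v_1, \ldots, u_k, v_k)$, where $u_i, v_i$ are the logs of the holonomies of the fixed $H_1$-basis $(\mu_i, \lambda_i)$ on $T_i$, exhibits $\mathcal{V}$ near $z^\ast$ as a complex manifold of dimension $k$, with the $u_i$ (equivalently the $v_i$) serving as local coordinates. A nearby point of $\mathcal{V}$ gives an incomplete hyperbolic structure on $N$; such a structure metrically completes to the Dehn filling $N(p_1/q_1, \ldots, p_k/q_k)$ precisely when the generalised Dehn surgery equation $p_i u_i + q_i v_i = 2\pi\sqrt{-1}$ holds for every filled cusp (and the $i$-th cusp is left unfilled exactly when $u_i = v_i = 0$).

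Then, via the implicit function theorem, I would choose an open neighbourhood $U$ of $z^\ast$ on which every $z_j$ still lies in the upper half plane, so the resulting structure is a genuine positively-oriented hyperbolic structure. Because the cusp modulus $v_i/u_i$ at $z^\ast$ is a non-real complex number, the map $(p_i, q_i) \mapsto (u_i, v_i)$ determined by the Dehn surgery equation sends lattice points $(p_i, q_i) \in \mathbb{Z}^2$ of large norm into any prescribed neighbourhood of the origin. Hence, for each cusp $T_i$, all but finitely many slopes $p_i/q_i$ produce a solution inside $U$; declaring those finitely many exceptions to be the forbidden list gives the theorem.

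The main obstacle is verifying that an algebraic solution to the gluing equations with positively oriented tetrahedra actually yields a \emph{complete} hyperbolic metric on the topological Dehn filling with the prescribed meridian, rather than on some other topological space or with a pathological completion. This requires Thurston's argument that, sufficiently close to the complete structure, the developing map behaves tamely at each cusp, so that the metric completion of the incomplete structure on $N$ is obtained by attaching solid tori whose meridians are precisely the slopes dictated by the Dehn surgery equations. Everything else in the proof is essentially dimension counting combined with the implicit function theorem.
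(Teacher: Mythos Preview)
The paper does not give its own proof of this theorem; it is simply quoted from Thurston's notes \cite[Theorem 5.8.2]{Thurston_book} with no accompanying argument. Your outline is a faithful and essentially correct sketch of Thurston's original deformation-theoretic proof via shape parameters on an ideal triangulation, the gluing and cusp equations, and the Neumann--Zagier local parameterisation by cusp holonomies, and you have correctly identified the completion step as the one genuinely delicate point.

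It is worth noting that, for its algorithmic purposes, the paper does not actually rely on this non-effective statement: whenever it needs to produce a \emph{computable} finite list of exceptional slopes, it invokes instead the effective Hodgson--Kerckhoff bounds packaged as Theorem~\ref{theorem:longFillingGeometry} (via \cite{FPS22}), which give an explicit normalised-length threshold beyond which all fillings are hyperbolic. So Theorem~\ref{thm:hyperbolicDehnfilling} is included for context, while the constructive work is done by the quantitative version.
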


Hodgson-Kerckhoff angle deformation theory \cite{Hodgson_Kerckhoff} allows us to produce the finite list of slopes described in Theorem~\ref{thm:hyperbolicDehnfilling} and provides effective control on the hyperbolic geometry under sufficiently complicated Dehn fillings. The following theorem, which is an immediate consequence of results from \cite{FPS22}, provides the key input from Hodgson-Kerckhoff theory for this section.\\

Suppose that $s\in \pi_1(T)$ is a slope in a boundary torus $T$ of $N$. The boundary torus $T$ has a closed neighbourhood $C$ called a {\it cusp} that lifts to a horosphere in $\mathbb H^3$ and with Euclidean induced metric on $\partial C$. The slope $s$ has a geodesic representative in $\partial C$ of length $l_{\partial C}(s)$. The {\it normalised length} $\hat{L}(s)$ of $s$ is $l_{\partial C}(s)/\sqrt{\text{Area}(\partial C)}$, which is independent of the choice of cusp. The {\it total normalised length} $\hat{L}(s_1, \ldots, s_b)$ of a collection of slopes is defined by: $$\frac{1}{\hat{L}(s_1, \ldots, s_b)^2}: = \sum_{i=1}^b \frac{1}{\hat{L}(s_i)^2}.$$

\begin{theorem} \cite{FPS22}
\label{theorem:longFillingGeometry}
 Let $N$ be a cusped hyperbolic manifold. Fix $\delta > 0$. 
 Consider a Dehn filling $M$ of $N$ on $b$ boundary components, where for each component, the filling slope $s$ satisfies:
 $$\hat{L}(s) \geq \max\left\{7.823, \sqrt{\left(2\pi/\delta + 28.78\right)}\right\}\cdot \sqrt{b}.$$
 Then $M$ is hyperbolic and the core curves of the Dehn filling are isotopic to geodesics of length at most $\delta$.
\end{theorem}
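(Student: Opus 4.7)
The plan is to convert the per-component hypothesis on normalized slope length into a single bound on the total normalized length $\hat{L}(s_1,\ldots,s_b)$, and then apply the two main filling-geometry theorems of \cite{FPS22}.

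First I would do the pooling calculation. Set $C := \max\{7.823, \sqrt{2\pi/\delta + 28.78}\}$, so that the hypothesis reads $\hat{L}(s_i)^2 \geq C^2 b$ for every $i$. Summing reciprocals,
$$\frac{1}{\hat{L}(s_1, \ldots, s_b)^2} \;=\; \sum_{i=1}^b \frac{1}{\hat{L}(s_i)^2} \;\leq\; \sum_{i=1}^b \frac{1}{C^2 b} \;=\; \frac{1}{C^2},$$
which gives $\hat{L}(s_1,\ldots,s_b) \geq C$. The $\sqrt{b}$ factor in the hypothesis is chosen precisely so that this averaging step produces a bound on the total normalized length independent of the number of filled components.

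Next I would invoke the hyperbolicity half of the FPS22 filling theorem: whenever the total normalized length of the multi-slope is at least $7.823$, the Dehn filling admits a complete hyperbolic structure. Since $C \geq 7.823$, this applies and $M$ is hyperbolic, so in particular the cores of the filling solid tori are homotopic to geodesics in $M$.

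Finally, to bound the length of these core geodesics I would apply the quantitative Hodgson--Kerckhoff/FPS22 estimate, which takes the form $\ell_{\text{core}} \leq 2\pi/(\hat{L}^2 - 28.78)$. Asking for this to be at most $\delta$ rearranges to $\hat{L}^2 \geq 2\pi/\delta + 28.78$, which $C$ satisfies by construction; together with the hyperbolicity step this yields the conclusion. The one non-routine part of the argument, and thus the main obstacle, is pinning down the exact statements in \cite{FPS22} that produce the numerical constants $7.823$ and $28.78$: these come from explicit cone-deformation integrals and depend on the normalization conventions for $\hat{L}$, so care is needed to make sure that the bounds quoted from FPS22 line up precisely with the definition of normalized length being used here.
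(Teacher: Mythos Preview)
Your proposal is correct and follows essentially the same route as the paper: pool the per-slope bounds into a bound on the total normalized length, then apply the FPS22 hyperbolicity theorem and the FPS22 core-length estimate $\ell_{\text{core}}\le 2\pi/(\hat L^2-28.78)$. The only cosmetic difference is that the paper cites a slightly sharper hyperbolicity threshold (Theorem~5.17 of \cite{FPS22}, with constant $\sqrt{I(1/\sqrt{3})}<7.584$) and reserves the constant $7.823$ for the core-length estimate (Corollary~6.13), whereas you use $7.823$ for both; since $7.823>7.584$ this is harmless.
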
 

\begin{proof} First observe that if each slope has normalised length at least $\sqrt{b}\cdot c$, then the total normalised length, denoted by $\hat{L}$, is at least $c$. \cite[Theorem 5.17]{FPS22} states that if $\hat{L}\geq \sqrt{I(1/\sqrt{3})}$, then the Dehn filling $M$ is hyperbolic. Here $\sqrt{I(1/\sqrt{3})}$ denotes a constant less than $7.584$. \cite[Corollary 6.13]{FPS22} states that if $M$ and $N$ are hyperbolic and $\hat{L}\geq 7.823$, then the core curves from the Dehn filling are isotopic to geodesics of length at most $2\pi/(\hat{L}^2-28.78)$. Rearranging this inequality for $\hat{L}$ completes the proof. 
\end{proof}

\begin{proposition}\label{prop:find_short_slopes}
There is an algorithm that, given $c>0$ and a hyperbolic manifold $N$ with a chosen boundary torus $T$, outputs a finite list that includes all slopes $s\in \pi_1(T)$ of normalised length $\hat{L}(s)$ less than $c$.
\end{proposition}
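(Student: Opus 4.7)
The plan is to reduce the problem to enumerating short vectors in a rank-$2$ lattice, using the fact that a geometric triangulation determines the Euclidean cusp geometry. First I would apply Theorem~\ref{thm:hyperbolicTriangulation} to produce a geometric triangulation of $N$, specified by the combinatorial triangulation together with the shape parameters of each ideal tetrahedron, computable to arbitrary precision.

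Next I would reconstruct the Euclidean structure on a horospherical cross-section $\partial C$ of the cusp at $T$. Each ideal vertex of a tetrahedron adjacent to $T$ is cut off by a horosphere to give a Euclidean triangle whose shape is determined by the shape parameter of the tetrahedron at that vertex; gluing these triangles according to the combinatorial triangulation and tracking holonomy along a set of loops generating $\pi_1(T)$ yields computable representatives $e_1, e_2 \in \mathbb{R}^2$ of a basis of $\pi_1(T)\cong H_1(T;\mathbb{Z})$. After rescaling the horosphere so that $\operatorname{Area}(\partial C)=1$, the normalised length of the slope $(p,q)$ is simply $\hat{L}(p,q) = \lVert p e_1 + q e_2 \rVert$, the Euclidean length in the lattice $\Lambda = \mathbb{Z}e_1\oplus\mathbb{Z}e_2 \subset \mathbb{R}^2$.

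It remains to enumerate all $(p,q)\in\mathbb{Z}^2$ with $\lVert p e_1 + q e_2\rVert < c$. Running a Gauss/Minkowski reduction on $(e_1,e_2)$ replaces it by a reduced basis $(e_1',e_2')$ of the same lattice with $\lVert e_1'\rVert \le \lVert e_2'\rVert$ and angle in $[\pi/3,\pi/2]$; in such a basis any lattice vector $p e_1' + q e_2'$ has squared length bounded below by a positive multiple of $p^2\lVert e_1'\rVert^2 + q^2\lVert e_2'\rVert^2$, so $\lVert p e_1' + q e_2'\rVert < c$ forces $|p|,|q|\le B$ for an explicit integer $B$ depending only on $c$ and the reduced basis. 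One then checks the finitely many pairs in $[-B,B]^2$ directly.

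The only real care needed is numerical: the coordinates of $e_1,e_2$ are computed with finite precision. However, the proposition permits the output list to contain extraneous slopes, and the function $(p,q)\mapsto \hat{L}(p,q)$ is continuous in the basis. Thus I would compute with enough precision that, using any basis $(\tilde e_1,\tilde e_2)$ within tolerance $\eta$ of a true basis, every $(p,q)$ with true normalised length $<c$ satisfies $\lVert p\tilde e_1 + q\tilde e_2\rVert < c + \tfrac{1}{2}$, and the enumeration bound $B$ is finite; accepting every such pair produces a finite superset of the desired slopes. The main obstacle is simply this precision bookkeeping — making sure that rounding errors in the shape parameters propagate to controlled errors in $(\tilde e_1,\tilde e_2)$ and hence in the enumeration — and this can be made effective because every step (solving the gluing equations, Gauss reduction, and short-vector enumeration) is continuous in the input data.
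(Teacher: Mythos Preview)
Your proposal is correct and follows essentially the same approach as the paper: use the geometric triangulation from Theorem~\ref{thm:hyperbolicTriangulation} to read off the Euclidean cusp cross-section, extract a basis of $H_1(T)$ with computable lengths and angle, and then enumerate the finitely many lattice points of norm below the required bound. The paper is terser---it writes the length of $as_1+bs_2$ as an explicit positive-definite quadratic form and simply observes that an algorithm can list all $(a,b)$ below the bound---whereas you spell out the Gauss reduction and the precision bookkeeping; but these are elaborations of the same argument, not a different route.
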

\begin{proof}
By Theorem~\ref{thm:hyperbolicTriangulation}, we can obtain a geometric triangulation of $N$. By truncating the ideal vertex of this triangulation that corresponds to $T$, we obtain a Euclidean triangle for each incident tetrahedron, given by their angles and side lengths, that glue isometrically to form the induced Euclidean geometry on a cusp $C$ at $T$. We can compute, to arbitrary precision, the area of each triangle and hence $\text{Area}(T)$. Take an arbitrary basis $s_1, s_2$ of $\pi_1(T)$. We compute, to arbitrary precision, $l_1:=l_{\partial C}(s_1), l_2:=l_{\partial C}(s_2)$, and the angle $\theta$ between geodesic representatives of $s_1, s_2$. 
Now observe that $l_{\partial C}^2(as_1+bs_2)=(b\cdot l_2\cos(\theta)+al_1)^2+(b\cdot l_2\sin(\theta))^2.$ 
Hence an algorithm can output a finite list of slopes containing all pairs $a,b$ with $l_{\partial C}(as_1+bs_2)\leq c\cdot \sqrt{\text{Area}(\partial C)}$ and therefore all slopes $s$ with $\hat{L}(s)\leq c$. 
\end{proof}

Given a hyperbolic manifold $M$, the length of the shortest geodesic is called the {\it systole length} and is denoted $\text{sys}(M)$.

\begin{proposition}
\label{prop:hypDehnfilling}
 There is an algorithm that, given two manifolds $N$ and $M$ where $N$ is hyperbolic, outputs a finite set of slopes in $\partial N$ such that any Dehn filling of $N$ to $M$ uses at least one of these slopes.
\end{proposition}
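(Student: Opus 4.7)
My plan is to use Theorem~\ref{theorem:longFillingGeometry} to argue that any sufficiently long Dehn filling of $N$ is hyperbolic with very short core geodesics, while $M$ either fails to be hyperbolic or has a positive systole bounding the length of its geodesics below. Hence a Dehn filling $N(p_1/q_1,\ldots,p_k/q_k)\cong M$ must include at least one slope of bounded normalised length, and Proposition~\ref{prop:find_short_slopes} then produces the required finite list.

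In detail, I would first apply Theorem~\ref{thm:hyperbolicTriangulation} to test whether $M$ is hyperbolic. If $M$ is not hyperbolic, set $\delta:=1$; if $M$ is hyperbolic, use the geometric triangulation provided by Theorem~\ref{thm:hyperbolicTriangulation} to compute a positive lower bound $\delta_0$ on $\text{sys}(M)$ and set $\delta:=\delta_0/2$. Let $b$ be the number of boundary tori of $N$ and set
$$
C:=\max\{7.823,\sqrt{2\pi/\delta+28.78}\}\cdot\sqrt{b}.
$$
If $N(p_1/q_1,\ldots,p_k/q_k)$ has every slope of normalised length at least $C$, then by Theorem~\ref{theorem:longFillingGeometry} this filling is hyperbolic with core curves of length at most $\delta$. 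In the non-hyperbolic case this already contradicts the filling being $M$; in the hyperbolic case the filling has a geodesic of length $\leq\delta<\text{sys}(M)$, again contradicting the filling being $M$. Thus any Dehn filling of $N$ to $M$ has at least one slope $p_i/q_i$ with $\hat{L}(p_i/q_i)<C$, and applying Proposition~\ref{prop:find_short_slopes} to each boundary torus of $N$ with threshold $C$ yields the desired finite superset of slopes.

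The main obstacle is producing the positive algorithmic lower bound on $\text{sys}(M)$ when $M$ is hyperbolic. From the geometric triangulation one recovers the holonomy representation $\rho\colon\pi_1(M)\to\mathrm{PSL}_2(\mathbb{C})$, and the translation length of any loxodromic element is computable to arbitrary precision from its trace. Since $\pi_1(M)$ acts properly discontinuously on $\mathbb{H}^3$, for any chosen basepoint and any $R>0$ only finitely many group elements translate the basepoint by at most $R$, and these can be enumerated by word length in the generators. Covering a compact fundamental domain for the thick part by finitely many small balls and taking $R$ sufficiently large then certifies that no closed geodesic has length below a prescribed value, giving the required lower bound on $\text{sys}(M)$.
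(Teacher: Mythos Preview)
Your overall strategy is exactly the paper's: reduce to producing an algorithmic positive lower bound on $\text{sys}(M)$ when $M$ is hyperbolic, then invoke Theorem~\ref{theorem:longFillingGeometry} and Proposition~\ref{prop:find_short_slopes}. The first two paragraphs are fine and match the paper.

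The divergence, and the gap, is in your last paragraph. The paper does not attempt to compute $\text{sys}(M)$ from the geometric triangulation directly; instead it quotes two external results: for closed $M$, \cite[Theorem~5.9]{Diameter_bound} gives a computable $R$ with $\text{sys}(M)>1/(R\cdot l(P))$ in terms of a presentation $P$ of $\pi_1(M)$; for cusped $M$, \cite[Corollary~7.24]{FPS22} says that a suitable long Dehn filling $X$ of $M$ is closed hyperbolic with $\text{sys}(M)\geq\min\{0.056,\,0.6034\cdot\text{sys}(X)\}$, reducing to the closed case. Your sketch instead proposes enumerating group elements and reading off translation lengths. Two points are not justified. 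First, ``these can be enumerated by word length in the generators'' hides the real difficulty: you need a \emph{termination certificate}, i.e.\ an a priori bound on the word length of all $\gamma$ with $d(x,\gamma x)\le R$. For cocompact groups this follows from the quasi-isometry between the word metric and the orbit metric, but for cusped $M$ the fundamental domain is non-compact and this bound is not automatic. Second, ``taking $R$ sufficiently large'' needs to be made effective: a loxodromic with very short translation length can still move the basepoint far if its axis is far from $x$ (e.g.\ the core of a deep Margulis tube), so you must first bound the distance from $x$ to some lift of each closed geodesic, which requires a computed bound on the diameter of the compact core, not just of the thick part. Algorithms along your lines do exist (Hodgson--Weeks length spectrum computations), but making them rigorous is exactly where the work lies, and your sketch does not supply it. The paper's route via \cite{Diameter_bound} and \cite{FPS22} sidesteps these issues entirely.
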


\begin{proof}
If $M$ isn't hyperbolic, or we have a lower bound on $\text{sys}(M)$, we are done by Proposition \ref{prop:find_short_slopes} and Theorem~\ref{theorem:longFillingGeometry}. It is thus sufficient to find an algorithm that, given a hyperbolic manifold $M$, outputs a positive lower bound on $\text{sys}(M)$.

For closed $M$, \cite[Theorem 5.9]{Diameter_bound} states that there is a computable $R>0$ such that $\text{sys}(M)>1/(R\cdot l(P))$, for $l(P)$ the total word length of all relators in some presentation $P$ of $\pi_1(M)$. Hence assume that $M$ has $b>0$ cusps and let $X$ denote the Dehn filling of $M$ along slopes $s_1, \ldots, s_b$. Corollary 7.24 of \cite{FPS22} implies that if the slopes $s_i$ each have normalised length $\hat{L}(s_i)\geq \sqrt{b}\cdot 11.23$, then $X$ is hyperbolic and $\text{sys}(M)\geq \text{min}\{0.056, 0.6034\cdot \text{sys}(X)\}$. By Proposition \ref{prop:find_short_slopes}, we can find slopes $s_1, \ldots, s_b$ satisfying this condition and thus find a positive lower bound for $\text{sys}(M)$. This completes the proof.
\end{proof}

Practical estimates for a finite collection of slopes in $\partial N$ satisfying Proposition \ref{prop:hypDehnfilling} are described in \cite{Practical_Dehn_Ancestry_Haraway}.

\begin{proposition}\cite[Theorem 6.1]{Kuperberg_Homeo} \label{prop:list_MCG}
 There is an algorithm that, given an oriented hyperbolic manifold M, outputs all homeomorphisms $M\to M$ up to isotopy.
\end{proposition}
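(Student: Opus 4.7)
The plan is to reduce the problem to computing the isometry group of $M$, then to enumerate this finite group by identifying a canonical, isometry-invariant combinatorial structure on $M$. By Mostow--Prasad rigidity, every self-homeomorphism of a finite-volume hyperbolic 3-manifold is isotopic to a unique isometry, and $\mathrm{Isom}(M)$ is finite. Hence it suffices to algorithmically enumerate $\mathrm{Isom}(M)$.

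First I would invoke Theorem~\ref{thm:hyperbolicTriangulation} to produce a geometric triangulation $\mathcal{T}$ of $M$. It is tempting to remark that an isometry is determined by the image of any one tetrahedron together with a vertex bijection, since an isometry of $\mathbb{H}^3$ is determined by its action on four non-coplanar points. The catch, however, is that a general isometry need not preserve $\mathcal{T}$, so I cannot simply enumerate combinatorial symmetries of $\mathcal{T}$. The remedy is to upgrade $\mathcal{T}$ to a canonical, isometry-invariant decomposition $\mathcal{D}$: when $M$ has cusps, one uses the Epstein--Penner convex-hull decomposition, algorithmically computable from $\mathcal{T}$ by evaluating the combinatorics of maximal horoball packings; when $M$ is closed, one uses a canonical piece of geometric data such as the collection of shortest closed geodesics and their Margulis tubes, which is likewise extractable from $\mathcal{T}$ to arbitrary precision.

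Because $\mathcal{D}$ is isometry-invariant with finitely many cells, every element of $\mathrm{Isom}(M)$ induces a combinatorial symmetry of $\mathcal{D}$ that respects all attached geometric data (dihedral angles, edge lengths, horospherical cross-sections, or geodesic lengths). The algorithm enumerates the finitely many such candidate combinatorial symmetries and, for each, verifies whether it lifts to a genuine isometry. This verification amounts to pushing the geometric data through the candidate bijection and comparing algebraic numbers; by Mostow rigidity these numbers lie in a number field of bounded complexity, and effective separation bounds make the comparison decidable from sufficiently precise numerical approximations. Finally, an isotopy class representative for each surviving isometry can be read off directly from its action on $\mathcal{D}$ lifted back to $\mathcal{T}$.

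The main obstacle is the closed case, where there is no analogue of the Epstein--Penner decomposition: here one must argue that the Margulis-tube data (or Dirichlet-domain data centered at a canonically chosen point, e.g.\ a vertex of a sufficiently thick part of the thick-thin decomposition) is genuinely canonical and computable from the geometric triangulation. A secondary nuisance is exact arithmetic with the algebraic numbers describing the hyperbolic geometry, which is handled by standard techniques combining arbitrary-precision approximation with \emph{a priori} separation bounds on algebraic integers of controlled degree and height.
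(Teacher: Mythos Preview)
The paper does not prove this proposition; it merely cites \cite[Theorem 6.1]{Kuperberg_Homeo}. Your sketch is a reasonable outline of precisely that argument: reduce to $\mathrm{Isom}(M)$ via Mostow--Prasad rigidity (together with Waldhausen/Gabai to pass from homotopy to isotopy), compute a canonical isometry-invariant cell decomposition, and enumerate its combinatorial automorphisms with exact arithmetic over a number field.

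Two remarks. First, your invocation of Mostow--Prasad alone is slightly glib: rigidity gives a unique isometry in each \emph{homotopy} class, and the upgrade to \emph{isotopy} genuinely requires Waldhausen's theorem in the cusped (Haken) case and Gabai--Meyerhoff--N.~Thurston in the closed case. Second, your treatment of the closed case is where the real work lies, as you acknowledge; Kuperberg's solution is not to build a canonical Dirichlet domain directly but rather to drill out a canonically chosen (finite, isometry-invariant) collection of short geodesics, reducing to the cusped case where Epstein--Penner applies, and then to recover the isometries of $M$ from those of the drilled manifold that respect the drilling data. Your alternative suggestions (canonical basepoint for a Dirichlet domain, or Margulis-tube combinatorics) are plausible in spirit but would require substantial additional argument to make algorithmic, whereas the drilling reduction is clean.
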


Although there are (in general) infinitely many hyperbolic manifolds of volume at most some bound $V$, a result of J\o{}rgensen-Thurston \cite[Section 5.12]{Thurston_book} states that for every $V$, there is a finite list of hyperbolic manifolds $M_1,\ldots,M_k$ such that every hyperbolic manifold of volume at most $V$ can be obtained from one of the $M_i$ by strongly non-exceptional Dehn filling. By following the exposition of Kobayashi-Rieck, this statement can be made computable. 

\begin{theorem}\cite{Jorgenson_bound}\label{johansen_algo}
There exists an algorithm that, given $V>0$, outputs a list of hyperbolic 3-manifolds $M_1, \ldots, M_k$ such that every hyperbolic 3-manifold $M$ with $\text{vol}(M)\leq V$ is a strongly non-exceptional Dehn filling of one of the $M_i$. 
\end{theorem}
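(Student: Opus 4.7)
The plan is to follow the algorithmic version of the Jørgensen-Thurston theorem indicated by Kobayashi-Rieck's exposition. The parents $M_i$ will be the hyperbolic 3-manifolds with systole bounded below by a computable $\delta>0$ and volume bounded above by a computable $V'=V'(V,\delta)$. Step one is to choose $\delta>0$ via Theorem~\ref{theorem:longFillingGeometry}: pick $\delta$ small enough that any closed geodesic $\gamma$ of length at most $\delta$ in a hyperbolic $3$-manifold $M$ can be viewed as the core of a Dehn filling from a hyperbolic manifold $N$ (obtained by drilling $\gamma$) along a slope whose normalised length exceeds the threshold of Theorem~\ref{theorem:longFillingGeometry}. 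I would further shrink $\delta$ so that drilling \emph{any} subcollection of geodesics shorter than $\delta$ yields a hyperbolic manifold, which ensures that the Dehn filling recovering $M$ is strongly non-exceptional.

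Step two is an iterative drilling argument. Starting from any hyperbolic $M$ with $\mathrm{vol}(M)\leq V$, drill out a geodesic of length $\leq\delta$ whenever one exists, and repeat on the result. Each drilling increases the number of cusps by one and strictly increases volume, but the increase at each stage is controlled by Hodgson-Kerckhoff angle deformation (as used in Theorem~\ref{theorem:longFillingGeometry}): the difference $\mathrm{vol}(N)-\mathrm{vol}(M)$ admits a computable upper bound depending only on the normalised length thresholds and $\delta$. Since each cusp contributes at least Meyerhoff's universal positive constant to volume, the number of drillings is bounded in terms of $V'$, so the process terminates at a hyperbolic $N$ with systole $\geq\delta$ and $\mathrm{vol}(N)\leq V'$, from which $M$ is obtained by strongly non-exceptional Dehn filling.

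Step three is finiteness and enumeration. By Jørgensen-Thurston compactness and Mostow rigidity, only finitely many hyperbolic manifolds have systole $\geq\delta$ and volume $\leq V'$; moreover each admits a geometric triangulation whose combinatorial complexity is bounded by a computable function of $V'$ and $\delta$, via the Margulis thick-thin decomposition (bounded number of cusps with cusp neighbourhoods of controlled shape, and a thick part of bounded diameter covered by finitely many balls of Margulis radius). The algorithm then enumerates all combinatorial triangulations up to this complexity, applies Theorem~\ref{thm:hyperbolicTriangulation} to each to detect hyperbolicity and compute volume and the systole (by listing short geodesics in the geometric triangulation), retains those with volume $\leq V'$ and systole $\geq\delta$, and deduplicates using Theorem~\ref{thm:homeomorphismalgo}.

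The main obstacle is making step three effective: converting the soft Jørgensen-Thurston compactness into an explicit complexity bound requires tracking constants through the thick-thin decomposition and relating geometric diameter to triangulation size. A secondary issue is bookkeeping in step two to ensure that iterated drilling along successive short geodesics really does preserve the strongly non-exceptional property of the overall filling, which is why the choice of $\delta$ in step one must take into account the cumulative effect of filling along all short slopes simultaneously rather than one at a time. Both of these steps are handled in the exposition of \cite{Jorgenson_bound}, which I would follow.
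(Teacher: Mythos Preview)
Your overall strategy is correct in spirit---Jørgensen--Thurston compactness made effective via thick--thin decomposition and an enumeration of bounded-complexity triangulations---but the paper executes it quite differently, and more directly, than your three-step outline.

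The paper does \emph{not} iterate: it drills all short geodesics from $M$ simultaneously, i.e.\ it takes $X$ to be (a neighbourhood of) the $\mu$-thick part of $M$ for an explicit Margulis constant $\mu=0.103$. The key input is then the explicit Kobayashi--Rieck bound that $X$ admits a triangulation with at most $C\cdot\mathrm{vol}(M)$ tetrahedra for a computable $C$. This bypasses your Step~2 entirely: there is no need to bound a derived volume $V'$, and hence no need to worry about the circularity in your argument (you bound the number of drillings by $V'$, but $V'$ is defined in terms of the number of drillings; this can be untangled only if the per-step volume increase is strictly smaller than the per-cusp volume contribution, which you do not verify). The enumeration in the paper is then simply over all gluings of at most $C\cdot V$ tetrahedra.

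For the strongly non-exceptional property, the paper does not use Theorem~\ref{theorem:longFillingGeometry} at all. Instead it invokes the fact (from \cite{Geodesic_Complement}) that the complement of \emph{any} geodesic link in a hyperbolic manifold is hyperbolic. Since every sub-multi-slope of the filling $X\to M$ corresponds to drilling a geodesic sublink of $M$, this gives strong non-exceptionality in one line. Your proposal to derive it by shrinking $\delta$ via Theorem~\ref{theorem:longFillingGeometry} runs the filling theorem backwards: that theorem tells you fillings along long slopes are hyperbolic, but to apply it to a partial filling $X(S')$ you would need to control normalised lengths of the remaining slopes in $X(S')$, which is not immediate. The paper's route avoids this entirely.
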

\begin{proof}
Fix $d>0$ and let $\mu>0$ be a Margulis constant for $\mathbb H^3$ - that is, a constant such that the $\mu$-thin part of any hyperbolic 3-manifold $M$, $M_{[0,\mu)}$, consists of truncated cusps and tubes around geodesics. It is shown in \cite[Theorem 1.1, Proposition 1.2]{Jorgenson_bound} that: 
\begin{itemize}
\item[-] There exists a computable constant $C$ such that, if $M$ is a finite volume hyperbolic 3-manifold, then the $\mu$-thick part, $X:= N_d(M_{[\mu,\infty)})$, can be triangulated using at most $C\cdot \vol(M)$ tetrahedra; 
\item[-] $X$ is obtained from $M$ by removing a finite number of geodesics. 
\end{itemize}
By \cite{Geodesic_Complement} the complement of a geodesic link in a hyperbolic 3-manifold is hyperbolic. In particular $X$ is hyperbolic and admits a strongly non-exceptional Dehn filling to $M$. Finally, note that $\mu=0.103$ is a suitable Margulis constant as shown in \cite{Meyerhoff_Margulis_bound}. In summary there is a computable $C$ such that any hyperbolic 3-manifold $M$ with $\text{vol}(M)\leq V$ is a strongly non-exceptional Dehn filling of a hyperbolic manifold triangulated using at most $C\cdot V$ tetrahedra. We may list all finitely-many ways of gluing $C\cdot V$ tetrahedra and delete from this list all gluing which are not hyperbolic 3-manifolds. The resulting set of manifolds gives the desired list $M_1, \ldots, M_k$. 
\end{proof}
%\begin{corollary}\label{johansen_algo} There exists an algorithm that, given $V>0$, outputs a finite list of of 3-manifolds $M_1, \ldots, M_k$ such that every hyperbolic 3-manifold $M$ with $\text{vol}(M)\leq V$ is a Dehn filling of one of the $M_i$. 
%\end{corollary}
%\begin{proof} Note that for $\mu$ a Margulis constant for the hyperbolic 3-manifold $M$, $M$ is a Dehn filling of $M_{[\mu, \infty)}$. By \cite{Meyerhoff_Margulis_bound}, $\mu=0.103$ is a suitable Margulis constant. For $d<\mu/4$, we know that $N_d(M_{[\mu, \infty)})\subset M$ is homeomorphic to $M_{[\mu, \infty)}$. Hence for $K$ as given in Theorem \ref{thm:effectivejorgensenthurston}, we know that $M$ is a Dehn filling of a manifold that can be triangulated using at most $K\vol(M)$ tetrahedra. We may list all possible ways of gluing $K\cdot V$ tetrahedra and the subset of these gluing which are manifolds give the desired list $M_1, \ldots, M_k$. 
%\end{proof}
Finally, we mention some results on volume change under Dehn filling. 

\begin{theorem} \label{thm:volume_decrease} \cite[Proposition 5.6.2., Lemma 6.5.4.]{Thurston_book} Suppose $N$ is a hyperbolic manifold and $M$ is a hyperbolic non-trivial Dehn filling of $N$. Then $\vol(N)>\vol(M)$. 
\end{theorem}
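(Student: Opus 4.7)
The plan is to use Gromov's theorem identifying hyperbolic volume with the simplicial (Gromov) norm: for any finite-volume hyperbolic 3-manifold $X$, one has $\vol(X) = v_3 \cdot \|X\|$, where $\|X\|$ is the simplicial volume (computed using locally finite cycles in the cusped case) and $v_3$ is the volume of a regular ideal tetrahedron in $\mathbb{H}^3$. This identity is due to Gromov in the closed case and was extended by Thurston to the cusped case via straightening of ideal simplices.

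First I would establish the weak inequality $\|M\| \le \|N\|$. Since $M$ is obtained from $N$ by gluing solid tori to some boundary components, any relative fundamental cycle for $(N,\partial N)$ can be extended across each attached solid torus by pieces whose contribution to the Gromov norm is arbitrarily small, exploiting the amenability of $\pi_1$ of the boundary tori and cores (the standard ``telescoping'' trick of subdividing along the $S^1$-factor). This produces a fundamental cycle for $M$ of norm at most $\|N\| + \varepsilon$ for every $\varepsilon > 0$, and hence $\vol(M) \le \vol(N)$ via Gromov's theorem.

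For the strict inequality, I would invoke the rigidity of norm-minimizing cycles on hyperbolic 3-manifolds. Thurston's analysis shows that any sequence of fundamental cycles whose norms approach $\|X\|$ must, after straightening and passing to a smearing limit, be supported on \emph{regular} ideal tetrahedra of maximal volume $v_3$. Suppose for contradiction that $\vol(N) = \vol(M)$. Then pushing an efficient cycle on $N$ forward into $M$ via the collapsing map $N \to M$ (which collapses each filled cusp onto its core curve) would produce an efficient cycle on $M$; straightening both would force the simplices near a filled cusp of $N$ to have an ideal vertex at that cusp, while the corresponding simplices in $M$ would have the analogous vertex pushed onto the closed geodesic core, which is not an ideal point. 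This geometric incompatibility between regular ideal tetrahedra in $N$ and their straightened images in $M$ yields the contradiction and thus the strict inequality.

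The main obstacle is this strict-inequality step: monotonicity of the Gromov norm gives the weak bound essentially for free, but strictness requires the full strength of the Gromov--Thurston rigidity that norm-minimizing cycles are, in a measure-theoretic sense, uniquely supported on regular ideal tetrahedra. As an alternative route valid for Dehn fillings sufficiently close to the complete structure on $N$, one can use the Neumann--Zagier asymptotic volume formula, which directly shows that the volume function on hyperbolic Dehn surgery space is strictly maximized at the complete structure; one then reduces the general case to the nearby case by iterating or by a compactness argument on surgery space.
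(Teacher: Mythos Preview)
The paper does not supply its own proof of this theorem: it is simply quoted from Thurston's notes, so there is nothing to compare against beyond the cited source. Your outline via the Gromov norm is precisely Thurston's argument in Chapter~6 of the notes (the reference to Lemma~6.5.4 is the identification $\vol = v_3\cdot\|\,\cdot\,\|$, and the strict decrease is Theorem~6.5.6 there), so you are following the intended route.

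One comment on the strict-inequality step: your description of the contradiction is slightly off. The actual mechanism in Thurston is not about individual tetrahedra having vertices ``pushed onto the core geodesic''; rather, one shows that if $\|M\|=\|N\|$ then the degree-one collapsing map $N\to M$ would, after straightening the smeared efficient chain, have to be homotopic to a local isometry (a covering map), which is impossible for a genuine Dehn filling since $M$ and $N$ have different numbers of cusps. Your Neumann--Zagier alternative is fine for long fillings but, as you note, does not directly cover all fillings, and the reduction of the general case to the long-filling case is not obvious without already knowing the Gromov-norm argument.
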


\begin{theorem}\cite[Theorem 1.1]{FKP08} \label{thm:volume_change}
Suppose $M$ is a hyperbolic manifold, $s_1, \ldots, s_k$ are boundary slopes in distinct boundary tori of $M$ each with normalised length $\sqrt{3}\cdot \hat{L}(s_i)/2 \geq l_{min}>2\pi$. Then the non-trivial Dehn filling $M(s_1, \ldots, s_k)$ is hyperbolic and satisfies: 
$$\vol(M(s_1, \ldots, s_k))\geq \left(1-\left(\frac{2\pi}{l_{min}}\right)^2\right)^{3/2}\vol(M).$$
\end{theorem}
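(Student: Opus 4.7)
The plan is to prove this via Hodgson--Kerckhoff cone deformation theory. Consider a one-parameter family of hyperbolic cone manifold structures $M_t$, for $t \in [0,1]$, on the underlying topological manifold of $M(s_1,\ldots,s_k)$, where the cone angle along the core curve of the $i$-th filled solid torus equals $2\pi t$. At $t=1$, $M_1$ is the sought smooth complete hyperbolic structure on $M(s_1,\ldots,s_k)$; at $t=0$ each cone angle is zero, so the singular locus opens into a rank-one cusp and $M_0$ is isometric to the complete hyperbolic structure on $M$. Volume along this family compares the two endpoints directly.

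Assuming the family exists, I would differentiate volume along the deformation using the Schl\"afli-type formula for hyperbolic cone manifolds,
\[
\frac{d}{dt}\vol(M_t) \;=\; -\tfrac{1}{2}\sum_{i=1}^k \ell_i(t)\,\frac{d\theta_i}{dt},
\]
where $\ell_i(t)$ is the length of the $i$-th singular geodesic and $\theta_i(t)=2\pi t$. Using Hodgson--Kerckhoff tube-radius estimates to control $\ell_i(t)$ in terms of $\hat{L}(s_i)$ and $\theta_i(t)$, one obtains an integrable upper bound for $\ell_i(t)$; integrating the resulting differential inequality from $0$ to $1$ produces the factor $\bigl(1-(2\pi/l_{\min})^{2}\bigr)^{3/2}$, where the exponent $3/2$ reflects integrating a quantity modelled on the area of a circular tube cross-section.

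The main obstacle is establishing existence and non-degeneration of the entire cone-deformation family, rather than just a local Hodgson--Kerckhoff deformation near a single parameter value. One must verify that embedded tubes of controlled radius persist around each singular component for every $t\in[0,1]$, so that the deformation cannot collapse or escape the space of cone-manifold structures as $t$ runs across the interval. The hypothesis $\sqrt{3}\,\hat{L}(s_i)/2\geq l_{\min}>2\pi$ is calibrated precisely for this: it forces the filling slopes to be long enough that tube radii stay uniformly bounded below throughout the interpolation, which keeps the integrand in the Schl\"afli estimate finite and simultaneously ensures the smooth target $M(s_1,\ldots,s_k)$ is itself hyperbolic, justifying in particular that the deformation reaches $t=1$.
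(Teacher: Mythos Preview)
The paper does not prove this theorem at all: it is quoted as \cite[Theorem~1.1]{FKP08} and treated as a black box. The only argument the paper supplies is a one-line remark after the statement explaining the passage from the original FKP formulation (which uses the actual slope length in a maximal cusp neighbourhood) to the present formulation in terms of normalised length $\hat L$: in a maximal cusp every slope has length at least $1$, hence the cusp torus has area at least $\sqrt{3}/2$, so the actual length $l(s_i)=\hat L(s_i)\sqrt{\mathrm{Area}}$ is at least $\sqrt{3}\,\hat L(s_i)/2$, and the FKP hypothesis is met.

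Your proposal instead sketches the proof of the cited FKP08 theorem itself --- the Hodgson--Kerckhoff cone-deformation interpolation combined with the cone-manifold Schl\"afli formula. As a high-level outline of that argument it is accurate: this is exactly the machinery Futer--Kalfagianni--Purcell use, and the role of the length hypothesis in guaranteeing a uniform tube radius (hence global existence of the deformation) is correctly identified. But you are reproving the cited result rather than doing what the paper does, and you omit the one step the paper actually carries out, namely the normalised-versus-actual length comparison. If the goal is to match the paper, a single sentence invoking \cite{FKP08} plus the area bound $\mathrm{Area}\geq\sqrt{3}/2$ for maximal cusps is all that is required.
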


Let us note that the original formulation of \cite[Theorem 1.1]{FKP08} uses the (non-normalised) length of the $s_i$ in a collection of disjoint cusps $C_i$. However, in any collection of maximal cusps every boundary slope has length at least $1$, thus every cusp has area at least $\sqrt{3}/2$, and our reformulation of the theorem holds.

\subsection{Seifert Fibred Spaces} \label{section:Seifert_section}

\begin{definition}
\label{defn:sfs}
A {\it model Seifert fibred space} is a 3-manifold $M$ built in the following manner:
Take a compact surface $\Sigma'$ with $b > 0$ boundary components.
As $\Sigma'$ has non-empty boundary, there is a unique orientable circle bundle over $\Sigma'$.
Let $M' \cong \Sigma\pmttimes{\times} S^1$ be this 3-manifold and endow it with an orientation.
In this circle bundle $M'$ fix a section $\Sigma'\pmttimes{\times}\{1\}$. This construction gives a pair of oriented slopes $\mu_i, \lambda_i$ on each of the $b$ tori $T_i$ of $\partial M'$, with $\mu_i$ a component of $\partial \Sigma' \pmttimes{\times}\{1\}$ and $\lambda_i$ a circle fibre of the bundle. We fix orientations of $\mu_i$ and $\lambda_i$ such that the algebraic intersection $i(\mu_i, \lambda_i)$ is $+1$ and such that isotopies between fibres $\lambda_i$ in distinct boundary components are orientation-preserving. Let $M$ be obtained from $M'$ by Dehn filling along slopes $q_i\lambda_i + p_i\mu_i, p_i\neq 0$ in $n\leq b$ of the boundary tori. Unlike some authors, we allow for $p_i=\pm 1$. Such an $M$ is a model Seifert fibred space. 
\\
 A {\it Seifert structure} on a 3-manifold $N$ is an orientation-preserving homeomorphism from $N$ to a model Seifert fibred space and a {\it Seifert fibred space} is a 3-manifold that admits a Seifert structure. We say that two Seifert structures on $M$ are {\it isomorphic} if there is an orientation-preserving homeomorphism $M\to M$ taking the fibres of one to the fibres of the other.
They are {\it isotopic} if such a homeomorphism is isotopic to the identity. The {\it Seifert data} corresponding to the above Seifert structure on $M$ is $[\Sigma, q_1/p_1,\ldots,q_n/p_n]$, with $\Sigma$ obtained from $\Sigma'$ by capping-off $n$ boundary components. Seifert data determines a Seifert structure up to isomorphism. We adopt the following conventions: 
\begin{itemize}
\item[-] $[\Sigma, q_1/p_1,\ldots,q_n/p_n]$ often denotes a 3-manifold equipped with the corresponding Seifert data; 
\item[-] $[\Sigma]:=[\Sigma, 0/1]$; 
\item[-] in Seifert data each pair of coefficient $p_i, q_i$ is coprime.
\end{itemize} \end{definition}

By the following result there is an algorithm recognising whether a 3-manifold $M$ is a Seifert fibred space. If so we may find Seifert data for a Seifert structure on $M$ by listing all possible Seifert data and applying Theorem \ref{thm:homeomorphismalgo}.

\begin{theorem}\cite[Algorithm 8.1]{JSJ_algo}, \cite[Theorem 1]{Small_SFS_algo} \label{SFS_recognition}
 There is an algorithm that, given a 3-manifold, determines whether it is Seifert fibred. 
\end{theorem}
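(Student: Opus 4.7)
The plan is to reduce Seifert fibred recognition to the prime/JSJ machinery already available, treating the ``small'' (non-Haken) case separately since it slips past the usual Haken-normal-surface algorithms.

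First, I would apply the prime decomposition algorithm (Theorem \ref{thm:prime_decomp}). Any Seifert fibred manifold is either prime or a connect sum of the form $S^3 \# \cdots \# S^3 \# P$ where $P$ is prime and Seifert fibred (indeed, among prime 3-manifolds, only $S^1 \times S^2$ and $D^2 \times S^1$ are reducible as Seifert fibred spaces in the unusual sense, and these are easy to recognise directly, e.g.\ via Theorem \ref{thm:homeomorphismalgo} against a fixed list of trivial models). So using Theorem \ref{thm:homeomorphismalgo} to discard any $S^3$-summands, I reduce to the case where $M$ is prime; if more than one non-$S^3$ summand appears, I output \textbf{no}.

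Second, for a prime $M$, I would attempt to run the JSJ algorithm of Theorem \ref{thm:jsjalgo}. This step requires knowing that $M$ is Haken, which is detectable by \cite{Haken_recognition}. If $M$ is Haken, Theorem \ref{thm:jsjalgo} returns its JSJ decomposition; by Proposition \ref{prop:jsjdecompcharacterisation}, $M$ is Seifert fibred exactly when this decomposition is empty and the single piece (namely $M$) is not hyperbolic, which I test using Theorem \ref{thm:hyperbolicTriangulation}. Note that a non-trivial JSJ forces $M$ to be a graph manifold with more than one Seifert block and hence not itself Seifert fibred, unless some block is hyperbolic in which case the answer is \textbf{no} as well.

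The main obstacle is the non-Haken case: the small Seifert fibred spaces (for example lens spaces and other spherical space forms, or small SFS over $S^2$ with three exceptional fibres whose fundamental group is finite or nearly so) contain no essential surface, so the Haken/JSJ pipeline gives no information. Here I would fall back on a direct enumeration/recognition strategy of the kind in \cite{Small_SFS_algo}: combine a computable upper bound on the complexity of the Seifert data (using, e.g., bounds on the order of $\pi_1$ or on the Heegaard genus derivable from the triangulation) with the homeomorphism algorithm (Theorem \ref{thm:homeomorphismalgo}) applied to each candidate model from Definition \ref{defn:sfs} within the bound. Producing such an explicit complexity bound is the delicate technical point; once it is in hand, one tests finitely many candidates and accepts if any one matches, rejecting otherwise.
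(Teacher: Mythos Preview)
The paper does not give its own proof of this statement; it is simply cited from \cite[Algorithm 8.1]{JSJ_algo} and \cite[Theorem 1]{Small_SFS_algo} as a black box. So there is nothing to compare against, and your sketch is essentially reconstructing the architecture of those references: reduce to prime, handle the Haken case via JSJ, and treat the small (non-Haken) Seifert fibred spaces separately.

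That said, your sketch has one genuine error and one acknowledged gap. The error is in the prime step: it is not true that a Seifert fibred manifold is prime up to $S^3$ summands. The manifold $\mathbb{RP}^3 \# \mathbb{RP}^3$ is Seifert fibred (it carries $S^2 \times \mathbb{R}$ geometry and fibres over $\mathbb{RP}^2$), so your rule ``if more than one non-$S^3$ summand appears, output \textbf{no}'' would wrongly reject it. This is easily patched by adding $\mathbb{RP}^3 \# \mathbb{RP}^3$ to your short list of exceptional cases checked directly via Theorem~\ref{thm:homeomorphismalgo}.

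The gap you already flag is the real one: in the non-Haken case you need an \emph{a priori} bound on the Seifert invariants in terms of the input triangulation, so that only finitely many model Seifert fibred spaces need be tested. You do not supply such a bound, and producing one is precisely the content of \cite{Small_SFS_algo}; it is not a routine exercise. Without it, the algorithm does not terminate on small Seifert fibred spaces. So your outline is correct in shape but incomplete exactly where the cited reference does the work.
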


We now introduce a notation that allows us to keep track of $H_1$-basis of Seifert fibred spaces. 

\begin{definition}A model Seifert fibred space $M$ as above has an $H_1$-basis on every boundary torus consisting of $\lambda_i, \mu_i$, Hence a Seifert structure on a manifold gives an $H_1$-basis of its boundary tori, which are canonical up to swapping orientations of all basis elements. There is a homeomorphism of $M$ arbitrarily permuting the boundary tori and respecting these $H_1$-basis. Hence, taking an arbitrary ordering of boundary tori, a Seifert structure on $M$ gives a $\partial$-framing on $M$, which is canonical up to reordering boundary tori and up to swapping orientations of all basis elements. Let us call this the $\partial$-framing {\it given} by the Seifert structure. 
Moreover, let us call the oriented slopes $\lambda_i$ the {\it fibred slopes} and $\mu_i$ the {\it section slopes}. We say the oriented slope $q\lambda_i+p\mu_i$ is {\it given by} $(q,p)$ {\it with respect to the Seifert structure}, whilst the corresponding unoriented slope is {\it given by} $q/p$.
\end{definition}

\begin{definition}
Consider two spaces $M, M'$ equipped with fixed Seifert structures and with a choice of oriented slopes in some of their boundary tori. We say $M$ and $M'$ are {\it slope-isomorphic} if there is an isomorphism of their Seifert structures that maps the specified oriented boundary slopes to each other and respects orientations on fibred slopes. \\
Suppose the Seifert structure on $M$ has Seifert data $[\Sigma, q_1/p_1,\ldots,q_n/p_n]$ and the chosen oriented slopes are given by $(q_{n+1},p_{n+1}), \ldots, (q_{n+h},p_{n+h})$ with respect to the Seifert structure. Then we call $[\Sigma, q_1/p_1,\ldots,q_n/p_n \mid (q_{n+1},p_{n+1}), \ldots, (q_{n+h},p_{n+h})]$ the {\it Seifert and slope data} of $M$. 
\end{definition}

%\begin{example}\label{flip_SFS_example}
%Consider the circle bundle $M' \cong \Sigma\pmttimes{\times} S^1$ of a non-closed surface $\Sigma'$ as above. There is an orientation-preserving homeomorphism $\psi: M'\to M'$ that preserve the $S^1$-fibration and the fixed section $\Sigma'\pmttimes{\times}\{1\}$ as a set but flips the orientation of all fibres. Then $\psi(\lambda_i)=-\lambda_i$ and $\psi(\mu_i)=-\mu_i$. Hence $\psi$ extends to an isomorphism of the Seifert fibred space $M$ built by Dehn filling $M'$ as above. Moreover if $M$ has Seifert and slope data $[\Sigma, q_1/p_1,\ldots,q_n/p_n \mid (q_{n+1},p_{n+1}), \ldots, (q_{n+h},p_{n+h})]$, then $\psi(M)$ has Seifert and slope data $[\Sigma, q_1/p_1,\ldots,q_n/p_n \mid (-q_{n+1},-p_{n+1}), \ldots, (-q_{n+h},-p_{n+h})]$.
%\end{example}

We will be interested in precisely determining how $\partial$-framed Seifert fibred spaces change under Dehn filling.
First, most Seifert fibred spaces have unique Seifert structures up to isomorphism, and, in fact, up to isotopy.

\begin{theorem} \cite[Theorem 10.1]{Waldhausen_SFS}
\label{thm:nonisomorphicSFS}
 Seifert fibred spaces admit unique Seifert stuctures up to isomorphism, aside from:
 \begin{enumerate}
 \item the solid torus fibres with Seifert data $[D^2]$ and $[D^2, q/p]$;
 \item The $I$-bundle $K^2\tilde{\times} I$ over the Klein bottle $K^2$ fibres with Seifert data $[D^2, 1/2, -1/2]$ and $[\text{M\"ob}]$;
 \item $[S^2, 1/2, -1/2, q/p] $ and $[\mathbb R P^2, -p/q]$ are Seifert data for the same manifold;
 \item $[S^2, 1/2, 1/2, -1/2, -1/2] $ and $[K^2]$ are Seifert data for the same manifold;
 \item lens spaces (including $S^1\times S^2$) fibre in many ways.
 \end{enumerate}
\end{theorem}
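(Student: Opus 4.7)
The plan is to characterise the fibres of a Seifert fibration $\mathcal{F}$ on $M$ topologically via essential surfaces, so that any two fibrations of $M$ must agree on such surfaces and hence be isomorphic; the listed exceptions are exactly the cases where this characterisation fails because the base orbifold is too simple to support enough essential vertical surfaces.

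The main tool is the classical Waldhausen result that in a Seifert fibred space, every essential annulus (when $\partial M\neq\emptyset$) or essential torus (when $M$ is closed) can be isotoped to be \emph{vertical}, i.e.\ a union of regular fibres of $\mathcal{F}$. First I would reduce to the irreducible Haken case: the non-irreducible or non-Haken Seifert fibred spaces are $S^3$, $S^2\times S^1$, lens spaces, $\mathbb{R}P^3\#\mathbb{R}P^3$ and solid-torus-like manifolds, all of which already lie in (or are easily reduced to) the exceptional list of the statement.

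Given two Seifert structures $\mathcal{F}_1,\mathcal{F}_2$ on such a Haken $M$, I would pick a maximal collection of disjoint, non-parallel essential annuli/tori. By the vertical surface theorem applied to each of $\mathcal{F}_1$ and $\mathcal{F}_2$, each such surface may be isotoped to be simultaneously vertical in both structures, so the fibred slopes of $\mathcal{F}_1$ and $\mathcal{F}_2$ agree on these surfaces. Cutting along them produces pieces whose base orbifolds have strictly smaller $|\chi^{\text{orb}}|$, and a descending induction on this quantity identifies the two structures up to isomorphism, reducing the problem to the small-base base cases.

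The base cases are exactly the small-orbifold-base Seifert fibred spaces ($\chi^{\text{orb}}\geq 0$ with few cone points), for which I would explicitly construct the alternative fibrations listed in the theorem: changing meridian disks realises $[D^2]\leftrightarrow[D^2,q/p]$; the two $I$-bundle projections on $K^2\tilde{\times}I$ give $[D^2,1/2,-1/2]\leftrightarrow[\text{M\"ob}]$; decomposing $\mathbb{R}P^2$ into a disk union a M\"obius band yields $[S^2,1/2,-1/2,q/p]\leftrightarrow[\mathbb{R}P^2,-p/q]$; doubling this isomorphism along the boundary gives $[S^2,1/2,1/2,-1/2,-1/2]\leftrightarrow[K^2]$; and the classical description of the lens space $L(p,q)$ via its genus-one Heegaard splittings exhausts its many fibrations. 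I expect the main obstacle to be establishing the vertical surface theorem itself, whose proof uses delicate innermost-disc cut-and-paste arguments on intersections of essential surfaces with vertical tori, together with a careful enumeration of the small-base cases to ensure no further exceptions appear.
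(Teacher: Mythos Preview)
The paper does not give a proof of this theorem at all: it is simply quoted as \cite[Theorem 10.1]{Waldhausen_SFS} and used as a black box throughout. Your sketch is the standard Waldhausen argument (reduce to the irreducible Haken case, use that essential annuli/tori can be isotoped to be vertical, then enumerate the small-base exceptions), so there is nothing in the paper to compare it against beyond noting that your outline matches the approach of the cited source.
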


\begin{lemma}\cite{Waldhausen_SFS} \cite[Lemma 3]{SFS_Involution}
\label{lemma:uniquefibres}
Let $M$ be an irreducible, $\partial$-irreducible Haken Seifert fibred space that is not $T^3$, $T^2 \times I$, $K^2\tilde{\times} I$ or $K^2\tilde{\times} S^1$.
Then $M$ has only one Seifert fibred structure up to isotopy.
\end{lemma}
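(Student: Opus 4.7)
The plan is to characterise the regular fibre of a Seifert fibration by a purely group-theoretic invariant of $\pi_1(M)$, and then use Waldhausen's uniqueness theorem for essential simple closed curves in Haken manifolds to conclude that the regular fibres of any two Seifert structures on $M$ are isotopic.

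First, using the standard presentation of $\pi_1$ of a model Seifert fibred space from Definition~\ref{defn:sfs}, I would exhibit the central short exact sequence
\[
1 \longrightarrow \langle h \rangle \longrightarrow \pi_1(M) \longrightarrow \pi_1^{\mathrm{orb}}(\mathcal{O}) \longrightarrow 1,
\]
where $h$ is the class of a regular fibre and $\mathcal{O}$ is the base orbifold. I would then do a case analysis on $\mathcal{O}$ to show that $\pi_1^{\mathrm{orb}}(\mathcal{O})$ is centerless, and hence $Z(\pi_1(M)) = \langle h \rangle$, unless $\mathcal{O}$ belongs to a short list of degenerate types (a closed orientable or non-orientable surface with non-negative orbifold Euler characteristic, an annulus, or a Möbius band with no cone points). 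Under the assumption that $M$ is irreducible, $\partial$-irreducible, and Haken, these degenerate bases produce exactly the excluded manifolds $T^3$, $T^2\times I$, $K^2\tilde{\times}S^1$, and $K^2\tilde{\times}I$ (together with the solid torus and the thickened Klein bottle, which are not Haken in the relevant sense or are covered by Theorem~\ref{thm:nonisomorphicSFS}~(1)--(2)).

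Given two Seifert structures on $M$ with regular fibres $h_1, h_2$, the centrality computation forces $h_1$ and $h_2$ to generate the same infinite-cyclic centre up to sign. Fixing simple closed curve representatives $\gamma_1, \gamma_2$, both are essential (in fact, incompressible tori or annuli arise as their regular neighbourhood boundaries when $M$ has boundary) and freely homotopic up to orientation. Waldhausen's theorem on Haken manifolds \cite{Waldhausen_SFS} then implies that $\gamma_1$ and $\gamma_2$ are isotopic as unoriented simple closed curves in $M$.

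To upgrade this to an isotopy of the full Seifert structures, I would take an ambient isotopy carrying $\gamma_1$ onto $\gamma_2$, excise an open fibred solid torus neighbourhood of the common image, and observe that what remains is a Haken manifold equipped with two $S^1$-bundle structures (away from exceptional fibres) over a compact surface with boundary. Uniqueness of such $S^1$-bundle structures up to isotopy, plus a local model argument at each exceptional fibre invoking Theorem~\ref{thm:nonisomorphicSFS}~(1), delivers the isotopy of Seifert structures. The main obstacle is the orbifold case analysis isolating precisely the four excluded manifolds; a secondary technicality is the exceptional-fibre step, which must handle the possibility that the two structures assign different multiplicities to the same isotopy class of exceptional fibre, ruled out in turn by the centre computation after passing to a finite cover.
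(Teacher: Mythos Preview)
The paper does not supply a proof of this lemma; it simply cites \cite{Waldhausen_SFS} and \cite[Lemma~3]{SFS_Involution}. So there is no in-paper argument to compare against, and your proposal should be judged on its own.

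Your overall strategy is the classical one, but two steps are not correct as written.

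First, the sequence $1\to\langle h\rangle\to\pi_1(M)\to\pi_1^{\mathrm{orb}}(\mathcal{O})\to 1$ is \emph{not} central when the base $\mathcal{O}$ is non-orientable: a loop in $\mathcal{O}$ along which the $S^1$-bundle is non-trivial conjugates $h$ to $h^{-1}$, so $h\notin Z(\pi_1(M))$ in that case. The invariant you actually want is that $\langle h\rangle$ is the unique maximal infinite cyclic \emph{normal} subgroup of $\pi_1(M)$ (outside the degenerate list). Your case analysis must therefore show that $\pi_1^{\mathrm{orb}}(\mathcal{O})$ has no non-trivial cyclic normal subgroup, not merely that it is centreless; these coincide for most $2$-orbifold groups but the argument is different.

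Second, the passage ``$\gamma_1$ and $\gamma_2$ are freely homotopic, so by Waldhausen they are isotopic'' is not a theorem. Waldhausen's results in \cite{Waldhausen_SFS} concern incompressible \emph{surfaces} and homotopy equivalences of Haken manifolds; there is no general statement that freely homotopic essential simple closed curves in a Haken $3$-manifold are isotopic. The usual route is instead to take an essential vertical torus or annulus containing a regular fibre of the first structure, isotope it (via Waldhausen's surface theorem) to be vertical or horizontal for the second structure, and argue it cannot be horizontal. This gives a common invariant torus/annulus, and one then inducts on a hierarchy rather than excising a single fibre. Your final bootstrap step inherits the same difficulty: after removing a fibred solid torus you still need a surface-level, not curve-level, isotopy argument to align the remaining fibres.
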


We also have a good understanding of these special cases. While lens spaces have infinitely many Seifert fibrations, these are well understood as is discussed in Subsection \ref{subsection:lens}. Moreover, if we have non-isomorphic Seifert fibrations we often have a good understanding of how these are related. 

\begin{lemma}\cite[Lemma II.2.12]{JSJ_book}
\label{lemma:thickenedkleinbottlestructures}
The manifold $K^2\tilde{\times} I$ admits Seifert structures with Seifert data $[D^2, 1/2, -1/2]$ and $[\text{M\"ob}]$ and these are the only Seifert structures up to isotopy. Let $\lambda_1, \mu_1, \lambda_2, \mu_2$ be the fibred and section slope on $[D^2, 1/2, -1/2]$ and $\lambda_2, \mu_2$ respectively. There is a orientation-preserving homeomorphism $\phi\colon [D^2, 1/2, -1/2]\to [\text{M\"ob}]$ with $\phi(\lambda_1)=\mu_2$ and $\phi(\mu_1)=-\lambda_2$. 
%
%
%
%\textcolor{red}{Moreover, let $X,Y$ be model Seifert fibred spaces for the Seifert data $[D^2, 1/2, -1,2]$ and $[\text{M\"ob}]$ and let $\lambda_X, \mu_X\in \partial X$ and $\lambda_Y, \mu_Y\in \partial Y$ be fibred and section slopes in $X,Y$ respectively. Then there is an orientation-preserving homeomorphism $\phi:X\to Y$ with $\phi(\lambda_X)=\mu_Y$ and $\phi(\mu_X)=-\lambda_Y$.}
%Moreover the fibred slope in one of these Seifert structures is the section slope in the other and vice versa.
\end{lemma}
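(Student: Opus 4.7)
The plan is to construct both Seifert structures on $K^2\tilde{\times} I$ explicitly, identify their boundary framings on the common boundary torus, and conclude the uniqueness statement by citing Theorem~\ref{thm:nonisomorphicSFS}.

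For the structure $[D^2, 1/2, -1/2]$, I would decompose the Klein bottle as $K^2 = M_1\cup_c M_2$, where $M_1, M_2$ are M\"obius bands glued along a common boundary circle $c$. Thickening gives $K^2\tilde{\times} I = (M_1\tilde{\times} I) \cup_{c\times I} (M_2\tilde{\times} I)$. Each $M_i\tilde{\times} I$ is a solid torus admitting a Seifert fibration $[D^2, 1/2]$ with the core of $M_i$ as the exceptional fiber of order $2$. The two fibrations agree on the gluing annulus $c\times I$ (both restricting to the obvious $S^1$-bundle over $c$), so they glue to produce $[D^2, 1/2, -1/2]$. For $[\text{M\"ob}]$, I would observe that the unique orientable $S^1$-bundle over $\text{M\"ob}$ has orientation double cover an annulus crossed with $S^1$, namely $T^2\times I$, matching the orientation double cover of $K^2\tilde{\times} I$; hence the total space is $K^2\tilde{\times} I$. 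The statement that these are the only Seifert fibrations up to isotopy then follows from Theorem~\ref{thm:nonisomorphicSFS} item (2), together with the standard argument that on this manifold each isomorphism class of Seifert fibration contains a unique isotopy class.

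For the slope correspondence I would work on the boundary torus $T$ in a single $H_1$-basis. In the $[D^2, 1/2, -1/2]$ picture, $\lambda_1$ is a generic $S^1$-fiber which, on $T$, lifts the gluing curve $c\subset K^2$; the section $\mu_1$ lies over $\partial D^2$ and, on $T$, runs along the $I$-direction transverse to $c$. In the $[\text{M\"ob}]$ picture, $\lambda_2$ is a generic $S^1$-fiber of the circle bundle over $\text{M\"ob}$ and runs on $T$ in the $I$-direction, hence parallel to $\mu_1$; while $\mu_2$ is a section over $\partial \text{M\"ob}$ running parallel to the core of $\text{M\"ob}$, hence parallel to $\lambda_1$. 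So $\phi(\lambda_1) = \pm\mu_2$ and $\phi(\mu_1) = \pm\lambda_2$, and the content of the lemma is pinning down the signs.

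The main obstacle is precisely this sign computation. Both bases satisfy $i(\mu_j, \lambda_j) = +1$ with respect to the outward orientation of $\partial (K^2\tilde{\times} I)$ induced by the fixed orientation of $K^2\tilde{\times} I$, so the change-of-basis matrix $\phi_*$ must have determinant $+1$. Combined with the parallelism established above, exactly one of the two signs must be negative. A direct geometric check, for instance using the parameterization $T = \mathbb{R}^2/\mathbb{Z}^2$ together with the covering involution $(x,y)\mapsto (x+1/2, -y)$ of the double cover $T\to K^2$ and tracking how each of the four slopes lifts, places the negative sign on $\phi(\mu_1) = -\lambda_2$, as claimed.
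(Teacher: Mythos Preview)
The paper does not prove this lemma; it is quoted directly from \cite[Lemma II.2.12]{JSJ_book} with no argument supplied. So there is nothing to compare your proposal against at the level of the paper's own reasoning.

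Taken on its own, your sketch is broadly sound in outline but has two soft spots. First, Theorem~\ref{thm:nonisomorphicSFS} only gives uniqueness up to \emph{isomorphism}, and Lemma~\ref{lemma:uniquefibres} explicitly excludes $K^2\tilde{\times} I$ from the isotopy-uniqueness statement; your appeal to ``the standard argument that on this manifold each isomorphism class of Seifert fibration contains a unique isotopy class'' is exactly the content that needs to be supplied, and is not a consequence of anything stated in the paper. You would need to argue, for instance, via the mapping class group of $K^2\tilde{\times} I$ acting on the set of fibrations, or appeal directly to the cited source. Second, your sign computation is acknowledged as the main obstacle and then dispatched by a one-line reference to a covering-space check; this is the part of the statement the paper actually uses later (in the $K^2\tilde{\times} I$ cases of Lemma~\ref{lemma:technicallemma}), so if you were writing a self-contained proof it would deserve a genuine coordinate calculation rather than a gesture. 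The construction of the two fibrations themselves, via the two-M\"obius-band decomposition of $K^2$ and via the orientable circle bundle over $\text{M\"ob}$, is correct and is the natural approach.
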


Given two sets of Seifert data, it is simple to check if the associated Seifert structures are isomorphic.

\begin{theorem}\cite[Corollary 10.3.13]{Martelli_Book}
\label{thm:SFSclassification}
The Seifert fibrations $$[\Sigma, q_1/p_1, \ldots, q_h/p_h]\text{ and }[\Sigma', q'_1/p'_1, \ldots, q'_{h'}/p'_{h'}]$$ with $p_i, p'_i>0$ are orientation-preservingly isomorphic if and only if $\Sigma\cong\Sigma', e:=\sum_{i=1}^h \frac{q_i}{p_i}=e':=\sum_{i=1}^{h'}\frac{q'_i}{p'_i}$ (where if $S$ has boundary $e,e'$ are taken modulo $\mathbb Z$), and after removing all appearances of $(0, 1)$ the multisets $\{(q_1 \text{ mod } p_1, p_1), \ldots, (q_h \text{ mod } p_h, p_h)\}$ and $\{(q'_1 \text{ mod } p'_1, p'_1), \ldots, (q'_{h'} \text{ mod }p'_{h'}, p'_{h'})\}$ are equal.
The corresponding isomorphism takes a singular fibre with coefficients $(q_i, p_i)$ to a singular fibre with coefficients $(q'_j, p'_j)$ such that $p_i=p'_j$ and $q_i\equiv q'_j\pmod{p_i}$. 
\end{theorem}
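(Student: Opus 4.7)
The plan is to prove both directions by separating the Seifert data into local and global invariants: the base surface and the reduced pairs $(q_i \bmod p_i, p_i)$ are local, while the Euler number $e$ (or $e \bmod \mathbb Z$ in the bounded case) is global.

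For necessity, I would argue that any orientation-preserving isomorphism $\phi \colon M \to M'$ of Seifert structures descends to a homeomorphism of orbit spaces, forcing $\Sigma \cong \Sigma'$. At a singular fibre of coefficient $(q,p)$, a fibred neighbourhood is the quotient of $D^2 \times S^1$ by a free $\mathbb Z_p$-action combining a rotation of $D^2$ with a $q/p$-rotation of the $S^1$ factor. Since only $p$ and $q \bmod p$ are intrinsic to this local model up to orientation-preserving fibre-preserving homeomorphism, matching singular fibres must carry matching reduced pairs. For the Euler number, when $\Sigma$ is closed, $e$ is the classical obstruction to a global section and is hence a topological invariant. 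When $\Sigma$ has boundary, modifying a section by an integer twist along a boundary circle of $\Sigma$ shifts the corresponding $q_i/p_i$ by an integer, so only $e \bmod \mathbb Z$ is canonical in that case.

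For sufficiency, I would assemble an isomorphism as follows. First choose a homeomorphism $\Sigma \to \Sigma'$ taking each singular point to one with equal local invariants $(q_i \bmod p_i, p_i)$; this is possible exactly because the multisets agree. The complement of small fibred neighbourhoods of singular points is an oriented circle bundle over a surface with nonempty boundary, which is trivial, and the base homeomorphism lifts to a fibre-preserving bundle isomorphism. Over each singular neighbourhood the extension is determined up to isotopy by the local model. In the closed case the equality $e = e'$ guarantees these local extensions glue into a global isomorphism; in the bounded case, $e \equiv e' \pmod{\mathbb Z}$ ensures any residual discrepancy is an integer Dehn twist along a boundary torus of $M$, which is fibre-preserving and isotopic to the identity, and can be absorbed.

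The main obstacle will be the bookkeeping that links section changes to shifts in Seifert data in the boundary case, together with consistently tracking orientations of base and fibre so that the final isomorphism is orientation-preserving. One must verify that the normalisation conventions in Definition \ref{defn:sfs} ($p_i > 0$, coprimality of $(p_i, q_i)$, and the prescribed intersection sign $i(\mu_i, \lambda_i) = +1$) pin down the Seifert data precisely up to the ambiguities listed in the theorem. The final clause, asserting that an isomorphism matches singular fibres with equal reduced pairs, then follows directly from the local model argument above.
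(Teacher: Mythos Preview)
The paper does not give its own proof of this theorem: it is stated with a citation to \cite[Corollary 10.3.13]{Martelli_Book} and used as a black box, so there is no in-paper argument to compare your proposal against. Your sketch is a reasonable outline of the standard textbook proof (essentially the one in Martelli or in Hatcher's notes on 3-manifolds): separate the data into the base orbifold, the local types $(q_i \bmod p_i, p_i)$ at singular fibres, and the global Euler invariant, then prove necessity from invariance of these and sufficiency by building the isomorphism over the complement of singular fibres and extending. The only points you would need to tighten in a full write-up are exactly the ones you flag yourself: the bookkeeping that twisting the section near one boundary/singular torus shifts a single $q_i$ by a multiple of $p_i$ while shifting $e$ by an integer, and the orientation conventions of Definition~\ref{defn:sfs} that make the reduced pair $(q \bmod p, p)$ (rather than $(\pm q \bmod p, p)$) the correct local invariant for orientation-preserving isomorphism.
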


Let $M$ be a Seifert fibred space with fixed Seifert structure and corresponding Seifert data $[\Sigma, q_1/p_1,\ldots,q_n/p_n]$. If we Dehn fill a component of $M$ along a non-fibred slope, that is $q\lambda_i+p\mu_i, p\neq 0$, we obtain another Seifert fibred space $M'$ with a Seifert structure $[\Sigma', q_1/p_1,\ldots,q_n/p_n, q/p]$. Here $\Sigma'$ is $\Sigma$ with one boundary component capped-off. We call this a {\it non-fibre-parallel} Dehn filling. The $H_1$-basis on $\partial M'$ given by the Seifert structures on $M\subset M'$ and $M'$ are equal, and hence we say the Seifert structure on $M$ gives a $\partial$-framing on $M'$. \\

\begin{proposition}
\label{prop:slopeisomorphic}
 Two manifolds $M$ and $M'$ equipped with oriented boundary slopes that have Seifert and slope data 
 $$[\Sigma, q_1/p_1, \ldots, q_n/p_n\mid (q_{n+1},p_{n+1}), \ldots, (q_{n+h},p_{n+h})]$$ and $$[\Sigma', q'_1/p'_1, \ldots, q'_{n'}/p'_{n'}\mid (q'_{n'+1},p'_{n'+1}), \ldots, (q'_{n'+h'},p'_{n'+h'})],$$
 with $p_i, p'_j\neq 0$ for $1\leq i\leq n+h, 1\leq j\leq n'+h'$ and $p_i, p'_j>0$ for $1\leq i\leq n, 1\leq j\leq n'$ are {slope-isomorphic} if and only if:
 \begin{itemize}
 \item[-] $\Sigma \cong \Sigma'$;
 \item[-] $h=h'$;
 \item[-] $\sum_{i=1}^{n+h} \frac{q_i}{p_i} = \sum_{j=1}^{n'+h'} \frac{q'_i}{p'_i}$ (where if some boundary component does not have a specified slope, the sum is defined modulo $\mathbb Z$);
 \item[-] up to permutation and adding and removing copies of $q/1$: $p_i=p'_i$ and $q_i \equiv q'_i\pmod {p'_i}$ for $1\leq i\leq n$;
 \item[-] up to permutation: $p_i = p'_{n'-n+i}$ and $q_i \equiv q'_{n'-n+i} \pmod {p'_{n'-n+i}}$ for $n< i\leq n+h$.
 \end{itemize}
\end{proposition}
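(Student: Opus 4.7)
The plan is to reduce to Theorem~\ref{thm:SFSclassification} by using the specified boundary slopes to close up both Seifert structures. Since each specified slope has $p_i\neq 0$, Dehn filling along it is non-fibre-parallel and extends the Seifert structure on $M$ to one on the filled manifold $\overline{M}$ with Seifert data
\[
[\Sigma_{\mathrm{full}}, q_1/p_1, \ldots, q_{n+h}/p_{n+h}],
\]
where $\Sigma_{\mathrm{full}}$ denotes $\Sigma$ with its remaining $h$ boundary components capped off. Define $\overline{M'}$ analogously. (If any specified $p_i<0$, first normalise by replacing $(q_i,p_i)$ with $(-q_i,-p_i)$; by the fifth bullet's sign-matching this is done consistently on both sides and does not affect the filled manifold.)

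For the forward direction, a slope-isomorphism $\phi\colon M\to M'$ extends uniquely via the specified slopes to an orientation-preserving isomorphism $\overline{\phi}\colon\overline{M}\to\overline{M'}$ of the extended Seifert structures. Theorem~\ref{thm:SFSclassification} then yields $\Sigma_{\mathrm{full}}\cong\Sigma'_{\mathrm{full}}$, the Euler-number equality, and a matching of singular-fibre coefficients satisfying $p_i=p'_j$ and $q_i\equiv q'_j\pmod{p_i}$. Because $\overline{\phi}$ restricts to $\phi$, this matching must separately pair the first $n$ singular fibres with each other and the last $h$ with each other, yielding $\Sigma\cong\Sigma'$, $h=h'$, and the two partitioned matchings of the fourth and fifth bullets.

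For the backward direction, the stated conditions directly supply the hypotheses of Theorem~\ref{thm:SFSclassification} for $\overline{M}$ and $\overline{M'}$, producing an orientation-preserving Seifert-structure isomorphism $\overline{\phi}\colon\overline{M}\to\overline{M'}$. Since singular fibres with matching coefficients can be permuted by isotopies of the base surface lifting to Seifert-structure isomorphisms, we may post-compose $\overline{\phi}$ so that it sends the last $h$ specified-slope singular fibres of $\overline{M}$ to those of $\overline{M'}$ according to the fifth bullet's permutation. Restricting $\overline{\phi}$ to the complement of these filled solid tori gives an orientation-preserving Seifert-structure isomorphism $\phi\colon M\to M'$.

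The delicate step---and the one requiring the most care---is verifying that $\phi$ is a slope-isomorphism. By construction, the specified oriented slope $q_i\lambda_i+p_i\mu_i$ on $T_i\subset\partial M$ is the oriented meridian of the filled solid torus $V_i\subset\overline{M}$, and the restriction $\overline{\phi}|_{V_i}\colon V_i\to V'_{\sigma(i)}$ is an orientation-preserving homeomorphism of solid tori preserving the fibre orientation on the central singular fibre. Since the intersection pairing on $\partial V$ pins down the meridian orientation once the central-fibre orientation is fixed, $\overline{\phi}|_{V_i}$ automatically sends oriented meridian to oriented meridian; hence $\phi$ takes the specified oriented slope on $T_i$ to that on $T'_{\sigma(i)}$. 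Unpacking: $\overline{\phi}(\lambda_i)=\lambda'_{\sigma(i)}$ and $\overline{\phi}(\mu_i)=\mu'_{\sigma(i)}+k_i\lambda'_{\sigma(i)}$ for some $k_i\in\mathbb Z$, so
\[
\overline{\phi}(q_i\lambda_i+p_i\mu_i)=(q_i+k_ip_i)\lambda'_{\sigma(i)}+p_i\mu'_{\sigma(i)},
\]
and the fifth bullet's conditions $p_i=p'_{\sigma(i)}$ and $q_i\equiv q'_{\sigma(i)}\pmod{p_i}$ supply exactly the integer $k_i$ making this equal $q'_{\sigma(i)}\lambda'_{\sigma(i)}+p'_{\sigma(i)}\mu'_{\sigma(i)}$, while simultaneously ensuring Theorem~\ref{thm:SFSclassification}'s coefficient-matching holds on the specified-slope fibres in the first place.
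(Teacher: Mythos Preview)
Your proof is correct and follows essentially the same approach as the paper: Dehn fill both manifolds along the specified (non-fibre-parallel) slopes, then invoke Theorem~\ref{thm:SFSclassification} on the resulting closed-up Seifert fibrations, using that the isomorphism carries core curves of the filled tori to core curves. The paper's write-up is terser---it dispatches the oriented-slope check in one line (``since $p_i = p'_{n'-n+i} \neq 0$'') where you unpack the meridian computation explicitly---but the underlying argument is the same. One small expositional point: your parenthetical normalisation of signs appeals to the fifth bullet, which in the slope-isomorphic $\Rightarrow$ conditions direction has not yet been established; the paper handles this instead by observing that a slope-isomorphism preserves the algebraic intersection with the fibred slope, hence preserves the sign of each $p_i$.
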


\begin{proof} Let $N:=M(q_{n+1}/p_{n+1}, \ldots, q_{n+h}/p_{n+h})$ and $N':=M'(q'_{n'+1}/p'_{n'+1}, \ldots, q'_{n'+h'}/p'_{n'+h'})$ denote the Dehn fillings of $M, M'$ along the specified boundary curves. As discussed, the Seifert fibrations on $M, M'$ extend to Seifert fibrations on $N, N'$.\\
Suppose the above list of conditions is satisfied and without loss of generality assume the final condition holds without permutation. By Theorem~\ref{thm:SFSclassification}, the above list of conditions is equivalent the Seifert structures on $N, N'$ being isomorphic via a map taking the singular fibres of slopes $p_i/q_i$ to the singular fibres of slopes $p'_{n'-n+i}/q'_{n'-n+i}$ for $n<i\leq n+h$. Denote such an isomorphism by $\phi\colon N\to N'$ and observe that it maps the core curves of the Dehn filling $M\to N$ to the core curves of the Dehn filling $M'\to N'$. Hence $\phi$ restricts to an isomorphism from $M$ to $M'$ taking the unoriented slopes $q_{n+1}/p_{n+1}, \ldots, q_{n+h}/p_{n+h}$ in $\partial M$ to the unoriented slopes $q'_{n'+1}/p'_{n'+1}, \ldots, q'_{n'+h}/p'_{n'+h}$ respectively. Since $p_i= p'_{n'-n+i}\neq 0$ this isomorphism also respects the orientations of all the slopes $(q_{n+1}, p_{n+1}), \ldots, (q_{n+h}, p_{n+h})$. Hence $M, M'$ are slope-isomorphic.\\
Conversely suppose we have a slope-isomorphism $\phi\colon M\to M'$. By considering the oriented intersection number with the fibred slope such a slope-isomorphism must map an oriented slope $(q_i, p_i)$ with $p_i<0$ to an oriented slope $(q'_j, p'_j)$ with $p'_j<0$. Hence by swapping orientations of some of the oriented slopes, we may assume $p_i, p'_j>0$ for all $i,j$. Since $M, M'$ are slope-isomorphic, we know $N, N'$ are isomorphic via a homeomorphism taking singular fibres of slopes $p_i/q_i$ to the singular fibres of slopes $p'_j/q'_j$ with $p_i=p'_j$ and $q_i\equiv q'_j\pmod{p_i}$ for $n<i\leq n+h$. By Theorem~\ref{thm:SFSclassification}, the above list of conditions follows. 
\end{proof}

Let us consider Seifert structures on a few specific manifolds, which we will need later on.

\begin{example}[The solid torus] 
\label{ex:solidtorus}
As described in Theorem~\ref{thm:nonisomorphicSFS}, the Seifert fibrations of $D^2\times S^1$ are $[D^2]$ and $[D^2, q/p]$.
Considering the associated model Seifert fibred space, we can build the fibration $[D^2, q/p]$ of $D^2\times S^1$ by a Dehn filling of $[S^1\times I]$ along the slope $p\mu_2+q\lambda_2$, where $\mu_2$
is the section slope and $\lambda_2$ is the fibred slope.
In this case, the boundary of the essential disc in $[D^2, q/p]$ is $p\mu_1 + q\lambda_1$.
This is almost the only way to construct a Seifert fibred structure on the solid torus as a non-fibre-parallel Dehn filling of a Seifert fibred space $M$: as this Seifert fibred structure has at most one singular fibre, and the base orbifold has no genus, the base orbifold of $M$ must be planar, with at most one singular fibre, and all filling slopes but one (if $M$ had no singular fibres) must be fillings by $e/1$ for some $e\in \mathbb Z$ so as not to introduce extraneous singular fibres.
Thus the possible non-fibre-parallel Dehn fillings to $D^2\times S^1$ are:
\begin{itemize}
\item[-] $M$ is $[D^2-\{x_1,\ldots,x_{k-1}\}, q/p]$ and the filling slopes are $a_1/1,\ldots,a_{k-1}/1$ which gives a Seifert structure isomorphic to $[D^2, q/p]$, and
\item[-] $M$ is $[D^2-\{x_1,\ldots,x_k\}]$ and filling slopes are $a_1/1,\ldots,a_{k-1}/1, q/p$ which gives a Seifert structure isomorphic to $[D^2, q/p]$.
\end{itemize}
Let us find the contractible unoriented boundary slope on $[D^2, a_1/1, \ldots, a_{k-1}/1, q/p]$ with respect to the $H_1$-basis given by the Seifert structure. By Proposition~\ref{prop:slopeisomorphic}, setting $a:=\sum_{i=1}^{k-1} a_i$, the Seifert structure with slope data $[D^2, a_1/1, \ldots, a_{k-1}/1, q/p \mid (q+ap,p)]$ is slope-isomorphic to $[D^2, (q+ap)/p\mid (q+ap,p)]$.
As the boundary slope on the right side is the boundary of the essential disc, the disc boundary is also $(q+ap)/p$ in the original Seifert structure.
\end{example}

\begin{example}[The thickened torus]
\label{ex:thickenedtorus}
The thickened torus $T^2\times I$ has a unique Seifert fibred structure $[S^1\times I]$ up to isomorphism, but infinitely many up to isotopy.
It is determined by the induced foliation on a boundary torus.
As the base orbifold is an annulus with no singular fibres, the only way to obtain $T^2\times I$ by non-fibre-parallel Dehn filling of a Seifert fibred space is to fill $[S^1\times I-\{x_1,\ldots,x_k\}]$ along slopes $a_1/1,\ldots,a_k/1$.\\
In the thickened torus $T^2\times I$ isotopy through the product structure induces an orientation-reversing homeomorphism $\psi$ between the two boundary tori. Let $\lambda_i, \mu_i, i=0,1$, respectively, denote the oriented fibred and the section slopes in the two boundary tori $T^2\times \{0\}$, $T^2\times\{1\}$ with respect to some Seifert structure with data $[S^1\times I]$. Observe that $\psi(\lambda_0)=\lambda_1$ and $\psi(\mu_0)=-\mu_1$. Now let $\lambda'_i, \mu'_i$, respectively, denote the boundary slopes on $T^2\times \{0\}, T^2\times \{1\}$ that are the image of the fibred and the section slopes on $[S^1\times I-\{x_1,\ldots,x_k\}]$ after Dehn filling. We see that $\psi(\lambda'_0)=\lambda'_1$. Let us find $\psi(\mu'_0)$ in terms of $\lambda'_1, \mu'_1$. \\
By Proposition~\ref{prop:slopeisomorphic} for $a:= \sum_{i=1}^k a_i$ there is a slope-isomorphism between $[S^1\times I, a_1/1,\ldots,a_k/1 \mid (0,1),$ $(a,-1)]$ and $[S^1\times I\mid (0,1), (0,-1)]$. The indicated slopes in the first Seifert structure here are $\mu'_0$ and $a\cdot \lambda'_1-\mu'_1$ and the indicated slopes in the second Seifert structure are $\mu_0$ and $-\mu_1$. Hence $\psi(\mu'_0)=a\cdot \lambda'_1-\mu'_1$. In summary, with respect to the $H_1$-basis on $\partial(T^2\times I)$ inherited from the Seifert structure $[S^1\times I-\{x_1,\ldots,x_k\}]$, the change-of-coordinates map through the product structure is given by 
$\begin{pmatrix}
 1 & a\\
 0 & -1
\end{pmatrix}.$\\
Finally observe that the $\partial$-framing on $T^2\times I$ given by some Seifert structure $[S^1\times I]$ and the $\partial$-framing inherited from $[S^1\times I-\{x_1,\ldots,x_k\}]$ are not necessarily equal. Since $T^2\times I$ admits many non-isotopic Seifert structures, we may choose the $H_1$-basis $\lambda_0, \mu_0$ and $\lambda'_0, \mu'_0$ given by such Seifert structure on $T^2\times \{0\}$ to be equal. Having done so, the corresponding $H_1$-basis $\lambda_1, \mu_1$ and $\lambda'_1, \mu'_1$ on $T^2\times \{1\}$ are related as follows: $\lambda_1=\psi(\lambda_0)=\psi(\lambda'_0)=\lambda'_1, \mu'_1=a\cdot \lambda'_1-\psi(\mu'_0)=a\cdot \lambda_1-\psi(\mu_0)=a\cdot \lambda_1+\mu_1$. In summary, the two $H_1$-basis on $T^2\times \{1\}$ are related by the matrix $\begin{pmatrix} 1 & a \\ 0 & 1
\end{pmatrix}.$
\end{example}

\begin{example}[Gluing Seifert fibred spaces]
\label{ex:gluingsfs}
If we glue two manifolds with Seifert structures together (by an orientation-preserving map), we obtain an induced Seifert structure if the fibred slopes are identified.
(If the map is orientation reversing, we still obtain a Seifert structure, but without an induced orientation.)
Suppose we glue a pair of boundary components of the Seifert fibred spaces 
$$M = [\Sigma, q_1/p_1, \ldots, q_h/p_h]\text{ and }M'= [\Sigma', q'_1/p'_1, \ldots, q'_{h'}/p'_{h'}],$$
where we identify the boundary tori $T_1$ and $T_1'$ by a gluing map that takes the fibred slope $\lambda_1$ to the fibred $\pm\lambda'_1$ up to sign.
We may swap the signs of all $H_1$-basis elements on $M'$ such that $\lambda_1$ is identified with $\lambda'_1$. Since the gluing preserves orientation on the 3-manifolds and so is orientation-reversing on the boundary tori, $\mu_1$ is glued to $-\mu_1'+q\lambda'_1$ for some $q\in \mathbb Z$. There is a slope-isomorphism from $[\Sigma, q_1/p_1, \ldots, q_h/p_h \mid (q,-1)]$ to $M^1 = [\Sigma, -q/1, q_1/p_1, \ldots, q_h/p_h \mid (0,-1)]$ and the gluing map from $M^1$ to $M'$ is now $\lambda^1_1 \mapsto \lambda'_1$ and $\mu^1_1 \mapsto -\mu'_1$.
If we glue $M^1$ and $M'$, the Seifert structures glue up, so $M \cup_{T_1 = T'_1} M'$ has the following Seifert structure:
$$ [\Sigma\#_\partial \Sigma', -q/1, q_1/p_1, \ldots, q_h/p_h, q'_1/p'_1,\ldots, q'_{h'}/p'_{h'}].$$
\end{example}

\subsection{Lens Spaces} \label{subsection:lens}
\begin{definition}\label{def:lens} A {\it lens space} is a pair of solid tori with their boundary tori identified. We let $L(p,q)$ be the lens space obtained as follows: for $i=1,2$ let $A_i$ be an oriented solid torus, let $T_i=\partial A_i$, let $m_i, l_i$ be a positively oriented basis of $\pi_1(T_i)$ with $m_i$ the contractible meridian curve in $A_i$, and finally let $\varphi\colon T_2\to T_1$ be an orientation reversing homeomorphism with $\varphi(m_2)=qm_1+pl_1$. Then $L(p,q)$ denotes the gluing $A_1\cup_\varphi A_2$. In particular, $L(p,q)$ comes equipped with an orientation. Unlike some authors, we do not require that $p\geq 0$. 
\end{definition}

The coefficients $p,q$ are not uniquely specified by the homeomorphism class of $L(p,q)$ but this ambiguity is specified by the following classical result. We provide a proof as we could not find the orientation preserving case readily available in the literature. 

\begin{lemma}\label{lemma:classification_lens}
The lens space $L(p,q)$ and $L(p',q')$ are orientation preservingly homeomorphic if and only if for some $\varepsilon\in \{\pm 1\}$ we have $p'=\varepsilon \cdot p$ and $q'\equiv \varepsilon \cdot q^{\pm}\pmod p$. 
\end{lemma}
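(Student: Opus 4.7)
The approach is to treat the two directions separately. For the ``if'' direction, I exhibit three explicit orientation-preserving moves on the gluing data $(p,q)$. First, simultaneously negating both basis elements $m_1$ and $l_1$ leaves the positively-oriented basis condition intact (its change-of-basis matrix has determinant $(-1)(-1)=+1$) and the unoriented meridian fixed; since this is simply a relabeling of the same construction, one obtains the literal identity $L(p,q)=L(-p,-q)$ of oriented manifolds. Second, replacing $l_1$ by the still positively-oriented longitude $l_1-km_1$ rewrites $\varphi(m_2)=qm_1+pl_1$ as $(q+kp)m_1+p(l_1-km_1)$, exhibiting $L(p,q)$ as orientation-preservingly homeomorphic to $L(p,q+kp)$. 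Third, interchanging the labels of $A_1$ and $A_2$ replaces $\varphi$ by $\varphi^{-1}$; computing the inverse matrix under the constraint $\det\varphi=-1$ produces an orientation-preserving homeomorphism from $L(p,q)$ to $L(p,q^*)$ whenever $qq^*\equiv 1\pmod{p}$. Every transformation $(p,q)\mapsto(\varepsilon p,\varepsilon q^{\pm 1}\bmod p)$ in the lemma is a composition of these three moves.

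For the ``only if'' direction, let $f\colon L(p,q)\to L(p',q')$ be an orientation-preserving homeomorphism. The abelianisation $H_1(L(p,q))$ determines $|p|$, so $|p'|=|p|$, and after applying the first ``if''-direction move to the target I may take $p'=\varepsilon p$ for some sign $\varepsilon$. The degenerate cases $p\in\{0,\pm 1\}$ correspond to $S^1\times S^2$ and $S^3$, for which the lemma is immediate. Otherwise, using the uniqueness up to isotopy of the genus-one Heegaard splitting of any lens space other than $S^3$ and $S^1\times S^2$, I isotope $f$ so that it sends the pair $\{A_1,A_2\}$ to the pair $\{A_1',A_2'\}$. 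In the non-swapping case, each restriction $f|_{A_i}$ is an orientation-preserving self-homeomorphism of the solid torus and therefore sends $m_i$ to $\pm m_i'$; applying $f_*$ to $\varphi(m_2)=qm_1+pl_1$ and comparing with $\varphi'(m_2')=q'm_1'+pl_1'$ then extracts $q'\equiv\varepsilon q\pmod p$. The swap case is handled analogously using the inverse gluing as in the third ``if''-direction move, yielding $q'\equiv\varepsilon q^{-1}\pmod p$.

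The main obstacle I anticipate is careful sign bookkeeping in the ``only if'' step, since an orientation-preserving self-homeomorphism of the solid torus can send a boundary basis $(m,l)$ to either $(m,l+km)$ or $(-m,-l+km)$, producing four possible sign patterns on each of $T_1$ and $T_2$. To manage this cleanly, I would first normalise $f$ by composing it with the moves established in the ``if'' direction so that without loss of generality $f_*(m_1)=m_1'$; this reduces the argument to a single matrix computation on $T_2$ and makes the sign $\varepsilon$ extracted from the comparison of gluings unambiguously match the one appearing in the ``if'' direction.
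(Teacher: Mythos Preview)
Your proposal is correct and follows essentially the same approach as the paper: both arguments rest on the uniqueness (up to isotopy) of the Heegaard torus in a lens space, and then enumerate the ambiguities in the construction data---changing the longitude by a multiple of the meridian, flipping orientations of the basis, and swapping the two solid tori---with the swap handled by the same matrix inversion. The paper compresses the two directions into a single ``the only ambiguities are\ldots'' argument, whereas you separate the ``if'' and ``only if'' directions and explicitly flag the degenerate cases $p\in\{0,\pm1\}$, but the mathematical content is identical.
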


\begin{proof}
It is shown, for example in \cite[Proof of Theorem 10.1.12]{Martelli_Book} that a lens space contains, up to isotopy, a unique torus cutting it into two solid tori. Hence we know a lens space is uniquely given by the construction in Definition \ref{def:lens} up to changing the basis $m_i, l_i$ and swapping the solid tori $A_1, A_2$. As unoriented curves the $m_i$ are uniquely defined by being contractible in $A_i$. The unoriented curves $l_i$ are then fixed up to adding multiples of $m_i$, which corresponds to changing $q$ by multiples of $p$. Ambiguity in orientating $m_i, l_i$ swaps the sign of both $p$ and $q$. Hence the only ambiguity we need to account for is swapping the solid tori $A_1, A_2$. 
In the construction of Definition \ref{def:lens} we have that, for some $x,y\in \mathbb Z$ with $qy-px=-1$:
$$\begin{pmatrix} \varphi(m_2)\\ \varphi(l_2)\end{pmatrix}=\begin{pmatrix} q & p \\ x & y \end{pmatrix} \cdot \begin{pmatrix} m_1 \\ l_1 \end{pmatrix} \Rightarrow 
\begin{pmatrix} m_1 \\ l_1 \end{pmatrix} = \begin{pmatrix} -y & p \\ x & -q\end{pmatrix}\cdot \begin{pmatrix} \varphi(m_2)\\ \varphi(l_2)\end{pmatrix}.
$$
Hence swapping $A_1, A_2$ corresponds to the ambiguity $L(p,q)\cong L(p, -y)$. However $y\equiv -q^{-1} \pmod p$, concluding the proof. 
\end{proof}

%\begin{lemma}\cite{Lens_SFS, Lens_SFS_correction}
%\label{lemma:lensspacecharacterisation}
%The Seifert fibred structures on the lens spaces are all over $S^2$ with at most two singular fibres, or over $\mathbb R P^2$ with at most one.
%A lens space $L(p, q)$ has the following Seifert structures, up to isomorphism:
%\begin{enumerate}
%%\item all Seifert fibrations of $S^2\times S^1 = L(0, 1)$ are $[S^2, b/a, -b/a]$, $a>0, b\geq 0$,
%\item if $p=4k$ and $q=2k\pm 1$, $k\not=0$, then $L(p, q)$ fibres as $[\mathbb R P^2, \mp 1/k]$;
%%\item if $p\not= 0$ then $L(p, q)$ fibres as $[S^2, p/a]$ for all $a$ such that $a_2\equiv q^{\pm 1}\pmod{p}$,
%\item the remaining Seifert fibrations of $L(p, q)$ are $[S^2, b_1/a_1, b_2/a_2]$ with $\varepsilon \cdot p=a_1b_2+a_2b_1, \varepsilon \cdot q=a_1b'_2+a'_2b_1$, where $a'_2, b'_2$ satisfies $a_2b'_2-a'_2b_2=1$ and $\varepsilon \in \{\pm 1\}$.
%\end{enumerate}
%\end{lemma}

Let us now discuss the Seifert fibrations of lens spaces.

\begin{lemma}\label{lemma:lensspacecharacterisation}
The Seifert fibrations of a lens space are over $S^2$ with at most two singular fibres, or over $\mathbb R P^2$ with at most one singular fibre. Moreover: 
\begin{enumerate}
\item $L(p,q)$ fibres as $[\mathbb RP^2, b/a]$ if and only if for some $k\neq 0$: $p=4k, q=2k\pm 1, b/a=\mp 1/k]$; 
\item Fix $q_{+}=q$ and $q_{-}\in \mathbb Z$ with $q\cdot q_{-}\equiv 1 \pmod p$. Then $L(p,q)$ fibres as $[S^2, b_0/a_0, b_1/a_1]$ if and only if for some $x, y\in \mathbb Z, \varepsilon \in \{\pm 1\}$: $$\varepsilon \cdot p=b_1\cdot a_0+a_1\cdot b_0, ~~\varepsilon \cdot q_\pm = -y\cdot b_1-a_1\cdot x ,\text{ and }b_0y-a_0x=-1.$$
\end{enumerate}
\end{lemma}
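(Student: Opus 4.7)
The plan is as follows. The first statement is a consequence of the standard classification of closed orientable Seifert fibred 3-manifolds with cyclic fundamental group: since $\pi_1(L(p,q))$ is cyclic of order $|p|$ (or is $\mathbb{Z}$ when $p=0$), the base orbifold of any Seifert fibration must have positive orbifold Euler characteristic, forcing it to be $S^2$ or $\mathbb{RP}^2$, and a direct count of exceptional fibres using the presentation of $\pi_1$ from the Seifert data yields the stated bounds; see for instance \cite[Chapter 10]{Martelli_Book}.

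For part (2), I would present $M := [S^2, b_0/a_0, b_1/a_1]$ as a union $M = A_0\cup A_1$ of solid tori glued along their common boundary torus $T$ by a map identifying the fibres, where $A_i := [D^2, b_i/a_i]$. By Example~\ref{ex:solidtorus} the meridian $m_i$ of $A_i$ has slope $a_i\mu_i+b_i\lambda_i$ in the basis $\{\mu_i,\lambda_i\}$ coming from the Seifert structure. Matching fibres forces $\lambda_1 = \varepsilon'\lambda_0$ with $\varepsilon'\in\{\pm 1\}$, and orientation-reversal of the gluing then forces $\mu_1 = -\varepsilon'\mu_0+\gamma\lambda_0$ for some $\gamma\in\mathbb Z$. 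Shifting the section of the underlying circle bundle acts by an equivalence on the Seifert data (Theorem~\ref{thm:SFSclassification}) that leaves the invariant $a_0b_1+a_1b_0$ unchanged, and allows one to arrange $\gamma=0$. Pick a longitude $l_0 = y\mu_0+x\lambda_0$ of $A_0$ making $\{m_0,l_0\}$ a positively-oriented basis of $\pi_1(T)$; this is exactly the condition $b_0y-a_0x=-1$. Inverting the basis change to express $\mu_0,\lambda_0$ in terms of $\{m_0,l_0\}$ and substituting into $m_1 = a_1\mu_1+b_1\lambda_1$ yields
\[
m_1 \;=\; -\varepsilon'(a_1x+b_1y)\,m_0 \;+\; \varepsilon'(a_0b_1+a_1b_0)\,l_0,
\]
so Definition~\ref{def:lens} identifies $M$ with $L(p',q')$ where $p' = \varepsilon'(a_0b_1+a_1b_0)$ and $q' = -\varepsilon'(a_1x+b_1y)$. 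Finally, Lemma~\ref{lemma:classification_lens} translates $L(p',q')\cong L(p,q)$ into the relations in the statement, with $\varepsilon$ absorbing the lens-space sign ambiguity and the two cases $q_\pm$ encoding the inversion $q\mapsto q^{-1}$. Varying $(x,y)$ over the affine lattice of solutions to $b_0y-a_0x=-1$ exactly realises all integers in the relevant residue class modulo $|p|$, giving the "if" direction as well.

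For part (1), I would apply Theorem~\ref{thm:nonisomorphicSFS}(3) to rewrite $[\mathbb{RP}^2,b/a]$ as a three-fibre Seifert fibration $[S^2, 1/2, -1/2, \alpha/\beta]$ over $S^2$, with $(\alpha,\beta)$ determined by $(a,b)$ up to sign. By the first statement a lens space admits no Seifert fibration over $S^2$ with three singular fibres, so the marked fibre $\alpha/\beta$ must be regular, forcing $|b|=1$. Absorbing this trivial fibre into the section gives reduced data $[S^2, b_0/2, b_1/2]$ with $b_0,b_1$ odd and $b_0+b_1=\pm 2a$; applying part (2) yields $\varepsilon p = 2(b_0+b_1)=\pm 4a$ and a parallel computation gives $q = 2k\pm 1$ with $k=\pm a$, matching the stated formulas after tracking the sign correspondence $b/a=\mp 1/k$. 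The main obstacle throughout is the careful bookkeeping of sign and section-choice ambiguities, ensuring that the $(\varepsilon,x,y,\pm)$ parameters in the lemma correctly encode both the orientation freedom in Definition~\ref{def:lens} and the inversion/sign ambiguities of Lemma~\ref{lemma:classification_lens}.
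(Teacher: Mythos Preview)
Your argument for part~(2) is essentially the same as the paper's: both express $[S^2,b_0/a_0,b_1/a_1]$ as two solid tori glued along a torus, write the meridian $m_1$ in a positive basis $\{m_0,l_0\}$ with $b_0y-a_0x=-1$, read off the lens space parameters, and then invoke Lemma~\ref{lemma:classification_lens} together with the observation that sliding $(x,y)$ along the affine line of solutions shifts the second parameter by multiples of $p$, turning the congruence into an equality. The one difference is that the paper works directly with the product Seifert structure on $T^2\times I$, where $\lambda_1=\lambda_0$ and $\mu_1=-\mu_0$ are forced by the isotopy through the product; this eliminates your auxiliary parameters $\varepsilon'$ and $\gamma$ from the outset rather than normalising them away afterwards. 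Your normalisation step (absorbing $\gamma$ by shifting the section) is correct, but it is a detour the paper's setup avoids.

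For the first sentence and for part~(1) the approaches diverge. The paper simply cites \cite{Lens_SFS} and \cite{Lens_SFS_correction} for both the classification of possible base orbifolds and the explicit $\mathbb{RP}^2$ case. Your route---using positivity of the orbifold Euler characteristic for the first sentence, and for~(1) passing through Theorem~\ref{thm:nonisomorphicSFS}(3) to the $S^2$ model $[S^2,1/2,-1/2,\alpha/\beta]$, forcing $|\beta|=1$ by the two-singular-fibre bound, and then feeding the reduced data into part~(2)---is a legitimate and more self-contained alternative. The trade-off is exactly what you flag: the correspondence in Theorem~\ref{thm:nonisomorphicSFS}(3) together with the sign ambiguities in Lemma~\ref{lemma:classification_lens} makes the final matching of $(p,q,b/a)$ with $(4k,2k\pm1,\mp1/k)$ delicate, and your sketch stops short of pinning down the correlated $\pm/\mp$ signs. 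If you pursue this route you should carry that bookkeeping through explicitly; otherwise the argument is sound.
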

\begin{proof}
In \cite[Lemma 4.1]{Lens_SFS} and its correction \cite[Lemma 2.1]{Lens_SFS_correction} it is shown that a Seifert fibration of a lens space is over $S^2$ with at most two singular fibres, or over $\mathbb R P^2$ with at most one singular fibre. Moreover, by \cite[Proposition 3.1]{Lens_SFS_correction} (1) gives all Seifert fibration of a lens space over $\mathbb RP^2$ up to isomorphism. Hence it remains to determine which Dehn fillings of $T^2\times I$ give $L(p,q)$.\\

Consider the oriented basis $\mu_0, \lambda_0\in T^2\times \{0\}, \mu_1, \lambda_1\in T^2\times \{1\}$ given by the product Seifert fibration on $T^2\times I$. Hence $\lambda_0, \lambda_1$ are isotopic and $\mu_0, -\mu_1$ are isotopic. Then $[S^2, b_0/a_0, b_1/a_1]$ is a Dehn filling of $T^2\times I$ along $m_0=b_0\lambda_0+a_0\mu_0$ and $m_1=b_1\lambda_1+a_1\mu_1=b_1\lambda_0-a_1\mu_0$. Let $m_0, l_0$ form a positively oriented basis for $T^2\times \{0\}$, so $l_0=x\lambda_0+y\mu_0$ with $b_0y-a_0x=-1$. In summary we have: 
$$\begin{pmatrix} m_0 \\ l_0\end{pmatrix}=\begin{pmatrix} b_0 & a_0 \\ x & y\end{pmatrix}\cdot \begin{pmatrix} \lambda_0 \\ \mu_0\end{pmatrix}\Rightarrow \begin{pmatrix} \lambda_0 \\ \mu_0\end{pmatrix}=\begin{pmatrix} -y & a_0 \\ x & -b_0\end{pmatrix}\cdot \begin{pmatrix} m_0 \\ l_0\end{pmatrix}\Rightarrow$$
$$m_1=b_1(-y\cdot m_0+a_0\cdot l_0)-a_1(x\cdot m_0-b_0\cdot l_0)=(-y\cdot b_1-a_1\cdot x)\cdot m_0+(b_1\cdot a_0+a_1\cdot b_0)\cdot l_0.$$
In summary for any $x,y$ with $b_0y-a_0x=-1$, we know that $[S^2, b_0/a_0, b_1/a_1]$ is homeomorphic to $L(b_1\cdot a_0+a_1\cdot b_0, -y\cdot b_1-a_1\cdot x)$. Hence $[S^2, b_0/a_0, b_1/a_1]$ is homeomorphic to $L(p,q)$ if and only if for some $\varepsilon \in \{\pm 1\}$ we have $\varepsilon \cdot p=b_1\cdot a_0+a_1\cdot b_0, \varepsilon \cdot q_\pm \equiv -y\cdot b_1-a_1\cdot x\pmod p$. Finally observe that replacing $x$ by $x+b_0$ and $y$ by $y+a_0$ we still satisfy $b_0y-a_0x=-1$ and change $-yb_1-a_1x$ by $p$. Hence we may take the equality $\text{mod }p$ to be over $\mathbb Z$. This gives the desired equalities. Similar work also appears in \cite[Chapter I.4]{Jankins_Neumann-SFS}. 
\end{proof}

\begin{corollary} \label{Lens_recognition}
There is an algorithm that, given a 3-manifold $M$, determines whether $M$ is a lens space. 
\end{corollary}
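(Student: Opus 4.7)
The plan is to reduce to Seifert-fibred space recognition and then to a finite combinatorial check using Lemma~\ref{lemma:lensspacecharacterisation}. First I would invoke Theorem~\ref{SFS_recognition} to decide whether $M$ is Seifert fibred; if not, then $M$ is certainly not a lens space and the algorithm outputs ``no''. If $M$ is Seifert fibred, then, as described in the paragraph following Theorem~\ref{SFS_recognition}, I can algorithmically produce Seifert data $[\Sigma, q_1/p_1, \ldots, q_n/p_n]$ for $M$ by enumerating candidate Seifert data of increasing complexity and testing each against $M$ with the homeomorphism algorithm of Theorem~\ref{thm:homeomorphismalgo}; this enumeration is guaranteed to terminate because $M$ does admit at least one Seifert structure.

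The key observation is then the \emph{negative} content of Lemma~\ref{lemma:lensspacecharacterisation}: every Seifert structure on a lens space has base orbifold $S^2$ with at most two singular fibres or base orbifold $\mathbb{R}P^2$ with at most one singular fibre. Thus, having obtained Seifert data for $M$, I read off $\Sigma$ and $n$; if $(\Sigma,n)$ is not one of $(S^2,0), (S^2,1), (S^2,2), (\mathbb{R}P^2,0), (\mathbb{R}P^2,1)$, the algorithm outputs ``no''. In each of the remaining cases, the manifold in question is either manifestly a lens space (any Seifert fibration over $S^2$ with $\le 2$ singular fibres arises as two solid tori glued along their boundary) or its lens-space-ness is pinned down by the explicit equations of Lemma~\ref{lemma:lensspacecharacterisation}(1), which ask only whether $p=4k$, $q=2k\pm 1$ and $b/a=\mp 1/k$ can be solved for some integer $k\neq 0$; this is a trivial integer check. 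In the base $\mathbb{R}P^2$ with no singular fibre case, $[\mathbb{R}P^2]$ is not a lens space, so the algorithm outputs ``no'' there.

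The main subtlety, rather than an obstacle, is that a single manifold may admit many non-isomorphic Seifert structures (Theorem~\ref{thm:nonisomorphicSFS}(5)), so the Seifert data we happen to enumerate first need not be in the ``obvious'' lens-space form. This is precisely what Lemma~\ref{lemma:lensspacecharacterisation} handles: it asserts that \emph{every} Seifert structure on a lens space already has the restricted form above, so no matter which structure the enumeration returns we will detect it from the base orbifold and number of exceptional fibres alone. Hence the test is a bounded check once the Seifert data is in hand, and the algorithm terminates with a correct yes/no answer (and, if desired, with the specific $L(p,q)$ obtained from the parameters in Lemma~\ref{lemma:lensspacecharacterisation}(2)).
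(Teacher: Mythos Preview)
Your proposal is correct and follows essentially the same approach as the paper: invoke Theorem~\ref{SFS_recognition} to recognise Seifert fibred spaces, produce Seifert data, and then test that data against the characterisation in Lemma~\ref{lemma:lensspacecharacterisation}. You spell out a few details the paper leaves implicit (why $[S^2,\ldots]$ with at most two singular fibres is automatically a lens space, and the $[\mathbb{R}P^2]$ edge case), but the argument is the same.
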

\begin{proof}
By Theorem \ref{SFS_recognition} we may check whether $M$ is Seifert fibred and if so find a Seifert structure for $M$. If $M$ has a Seifert structure with base surface a sphere with at most two singular fibres, or with Seifert data isomorphic to $[\mathbb RP^2, \pm 1/k]$ then $M$ is a lens space. Otherwise, by Lemma \ref{lemma:lensspacecharacterisation}, $M$ is not a lens space. 
\end{proof}

%\begin{lemma}\cite[Theorem 4.10]{Lens_SFS} \label{lemma:lensspacegenerate}
%One obtains all Seifert fibrations $[S^2, b_1/a_1, b_2/a_2]$ of a fixed lens space $L(p,q)$ from suitable coprime integers $a^0_1, a^0_2$ as follows: 
%\begin{itemize}
% \item[-] Find integers $r,s$ such that: $pr+qs=1$; 
% \item[-] Set $a=p/\gcd(p, s\cdot a_1^0-a_2^0)$, $a'_1=(s\cdot a_1^0-a_2^0)/\gcd(p, s\cdot a_1^0-a_2^0)$, $a_1=a\cdot a_1^0$, and $a_2=a\cdot a_2^0$; 
% \item[-] Find integers $b_1, b'_1$ such that $a_1b'_1-a'_1b_1=1$ and set $b_2=-sb_1+pb'_1$. 
%\end{itemize}
%Moreover any such constructed $a_1, b_1, a_2, b_2$ have $\gcd(a_1,b_1)=\gcd(a_2, b_2)=1$ and give a Seifert fibration of $L(p,q)$ which does not depend on our choices of $r,s,b_1, b'_1$.
%\end{lemma}

\subsection{Torus Bundles}
\begin{definition}
Torus bundles are mapping tori of homeomorphisms $\phi\colon T^2\to T^2$ of the torus. Any such $\phi$ is isotopic to a map induced by a matrix $X\in SL_2(\mathbb Z)$ and we define $\text{tr}(\phi):=\text{tr}(X)$. There are three cases for $\phi$, namely: 
\begin{enumerate}
\item $|\text{tr}(\phi)|=2$ in which case $\phi$ is reducible; 
\item $|\text{tr}(\phi)|<2$ in which case $\phi$ has finite order; 
\item $|\text{tr}(\phi)|>2$ in which case $\phi$ is called {\it Anosov}.
\end{enumerate}
\end{definition} 

In the first two cases, the torus bundle is Seifert fibred and in the last case the torus bundle has non-trivial JSJ-decomposition consisting of a torus-fibre of the bundle. Let us briefly prove the following result: 

\begin{proposition} \label{Recognising_torus_bundle}
There is an algorithm that, given a manifold $M$, determines whether $M$ is a torus bundle. 
\end{proposition}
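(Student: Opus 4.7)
The plan is to use the JSJ decomposition to split into two cases based on the monodromy classification given just before the proposition: an Anosov torus bundle has JSJ consisting of a single torus with $T^2\times I$ complement, while the others (periodic or reducible monodromy) have empty JSJ and are Seifert fibred. Since torus bundles are closed and prime, I first check that $M$ is closed, apply Theorem~\ref{thm:prime_decomp} to confirm primality (output ``no'' if either fails), and then compute the JSJ decomposition of $M$ via Theorem~\ref{thm:jsjalgo}.

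If the JSJ is non-trivial, I check that it consists of exactly one torus $T$ and that the complement $N = M\setminus T$ is homeomorphic to $T^2\times I$, using Theorem~\ref{thm:homeomorphismalgo}. If so, the gluing of the two boundary tori of $N$ in $M$, pushed through any fixed product structure on $T^2\times I$, yields a matrix $\phi\in SL_2(\mathbb{Z})$ whose entries are directly readable from the triangulation data. I then check $|\text{tr}(\phi)|>2$: if this holds, $M$ is the Anosov torus bundle with monodromy $\phi$; if any of the above checks fail, output ``no''.

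If the JSJ is empty, use Theorem~\ref{SFS_recognition} to decide whether $M$ is Seifert fibred. A hyperbolic $M$ cannot be a torus bundle (its $\pi_1$ has no $\mathbb{Z}^2$ subgroup), so output ``no'' in that case. If $M$ is Seifert fibred, enumerate a finite list of candidate non-Anosov torus bundles and test each against $M$ via Theorem~\ref{thm:homeomorphismalgo}. The periodic monodromies have order in $\{1,2,3,4,6\}$ and form only finitely many conjugacy classes in $SL_2(\mathbb{Z})$, yielding finitely many candidates. The reducible family $\pm\begin{pmatrix}1 & n \\ 0 & 1\end{pmatrix}$, $n\in\mathbb{Z}$, is infinite, but the parameter $|n|$ is bounded by computable invariants of $M$: for trace $+2$, the first homology of the mapping torus is $\mathbb{Z}^2\oplus\mathbb{Z}/n$, which determines $|n|$; for trace $-2$, a refined analysis combining $H_1(M)$ with the Seifert Euler number extracted from $M$'s Seifert data bounds $|n|$. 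Each remaining candidate is then explicitly built as a mapping torus $T^2\times I/\phi$ and compared with $M$.

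The main obstacle is the non-Anosov case, where a priori infinitely many Seifert-fibred torus bundles arise. The JSJ dichotomy isolates the search to the Seifert subcase, and then computable invariants of $M$ (its first homology and Seifert data) cut the search down to finitely many candidates, each of which can be tested for orientation-preserving homeomorphism with $M$ via Theorem~\ref{thm:homeomorphismalgo}.
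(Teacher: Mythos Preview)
Your overall structure matches the paper's: both split on the JSJ decomposition, handle the Anosov case by checking that the complement of the JSJ tori is $T^2\times I$, and handle the remaining case via Seifert-fibred recognition. Your extra checks (closedness, primality, the trace condition in the Anosov branch) are either necessary prerequisites for invoking Theorem~\ref{thm:jsjalgo} or harmless redundancies.

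The difference is in the Seifert-fibred branch. The paper invokes a known explicit list (from Hatcher) of all Seifert data realised by torus bundles, namely $[T^2, n/1]$, $[\Sigma_{-2}, n/1]$, $[T^2]$, $[S^2, 1/2, 1/2, -1/2, -1/2]$, $[S^2, 2/3, -1/3, -1/3]$, $[S^2, 1/2, -1/4, -1/4]$, $[S^2, 1/2, -1/3, -1/6]$, and then simply computes Seifert data for $M$ and checks via Theorem~\ref{thm:SFSclassification} whether it is isomorphic to an entry of this list. Your route instead enumerates candidate torus bundles, using $H_1$ and the Seifert Euler number to cut the reducible families down to finitely many possibilities, and then tests each candidate against $M$ via Theorem~\ref{thm:homeomorphismalgo}. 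This works --- once you have computed Seifert data for $M$ (which you need anyway to read the Euler number in the trace $-2$ case), the integer $n$ in the infinite families is pinned down --- but it is a detour: having the Seifert data of $M$ in hand already lets you check membership in the list directly, without building candidates and invoking the homeomorphism algorithm. The paper's approach is shorter and avoids the somewhat vague ``refined analysis'' you allude to for trace $-2$; your approach has the minor virtue of not needing to cite the Hatcher classification.
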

\begin{proof} It is proved in \cite[Section 2]{Hatcher_book} that Seifert data of a Seifert structure on a torus bundle is isomorphic to one of the following: $[T^2, n/1], [\Sigma_{-2}, n/1], [T^2], [S^2, 1/2, 1/2, -1/2, -1/2], [S^2, 2/3, -1/3, -1/3],$ $[S^2, 1/2, -1/4, -1/4]; [S^2, 1/2, -1/3, -1/6].$ Here $\Sigma_{-2}$ is the genus 2 unorientable surface. By the discussion of Section \ref{section:Seifert_section} we may check whether $M$ is Seifert fibred, if so find Seifert data for $M$, and check whether this Seifert data is isomorphic to one of the above list. In doing so we determine whether $M$ is a Seifert fibred torus bundle. If $M$ has non-empty JSJ-decomposition $\mathcal{T}_M$, we check whether $M-\mathcal{T}_M$ is a thickened torus $T^2\times I$. In doing so we determine whether $M$ is an Anosov torus bundle, concluding the proof.
\end{proof}

\section{Mono-Quadratic Systems}\label{section:mono_quadratic}
In this section we reduce several questions that will appear later on to mono-quadratic systems. But first to gain some familiarity, we describe a superficial augmentation of the class of linear systems. 

\begin{lemma}\label{lemma:linearSystemGeneralise}
There is an algorithm that, given an expression composed of linear equations and inequalities in variables $\{\alpha_i\}$, where some equations are possibly modulo a fixed non-zero integer, joined by \texttt{and} ($\land$), \texttt{or} ($\lor$) and \texttt{not} ($\lnot$) operators, constructs a linear system $\mathcal{L}$ in variables $\{\alpha_i, \beta_j\}$ such that values of the $\alpha_i$ are a solution to the initial expression if an only if they are part of a solution to $\mathcal{L}$. 
\end{lemma}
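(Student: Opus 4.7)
The plan is to reduce the general boolean combination to a disjunction of conjunctions of linear (in)equalities by three successive rewrites: eliminate the $\lnot$ operators, convert to disjunctive normal form, and encode each surviving modular equation using a fresh integer variable.

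First, I would push every $\lnot$ inward using De Morgan's laws until it lands on an atom. Each atomic negation has an explicit equivalent that I can substitute syntactically: $\lnot(p \leq 0)$ becomes $-p < 0$; $\lnot(p < 0)$ becomes $-p \leq 0$; $\lnot(p = 0)$ becomes $(p < 0) \lor (-p < 0)$; and $\lnot(p \equiv 0 \pmod n)$ becomes $\bigvee_{d=1}^{|n|-1}(p - d \equiv 0 \pmod n)$. Performing these rewrites and the De Morgan rewrites until no $\lnot$ remains produces an equivalent negation-free formula $\phi$ built from atomic linear (in)equalities (some possibly modular) using only $\land$ and $\lor$.

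Next, I would distribute $\land$ over $\lor$ to put $\phi$ into disjunctive normal form $\bigvee_{i=1}^{m} \mathcal{C}_i$, where each clause $\mathcal{C}_i$ is a conjunction of atomic linear (in)equalities. In each $\mathcal{C}_i$, for every atom of the form $p \equiv 0 \pmod n$ I introduce a new integer variable $\beta$, local to that clause, and replace the atom by the honest linear equation $p - n\beta = 0$. Collecting the resulting (in)equalities into a set $\mathcal{S}_i$ for each clause gives the linear system $\mathcal{L} = \{\mathcal{S}_1, \ldots, \mathcal{S}_m\}$ in variables $\{\alpha_i\} \cup \{\beta_j\}$. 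The verification is immediate: integer values of the $\alpha_i$ satisfy $\mathcal{C}_i$ precisely when the quotients $p(\alpha_i)/n$ arising from its modular atoms are integers, which is precisely when those values extend to a solution of $\mathcal{S}_i$ by setting each fresh $\beta$ equal to the corresponding quotient.

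The only real obstacles are bookkeeping: correctly expanding the negation of a modular atom into the disjunction of its non-trivial residues, and keeping the fresh variables scoped per-clause after DNF conversion so that distinct clauses do not accidentally constrain one another through shared $\beta$'s. Both are mechanical. The exponential blow-up inherent in DNF conversion is irrelevant here since the statement only asks for the existence of an algorithm that outputs such a system.
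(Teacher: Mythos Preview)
Your proof is correct and follows essentially the same approach as the paper: push negations down via De~Morgan and atomic rewrites, convert to disjunctive normal form, and replace each modular congruence by a genuine linear equation using a fresh auxiliary variable. The only difference is the order of operations---the paper introduces the $\beta_j$ for modular equations \emph{before} eliminating negations, while you do it \emph{after}; your order is in fact cleaner, since your explicit expansion of $\lnot(p\equiv 0\pmod n)$ into the disjunction over nonzero residues handles negated congruences transparently, whereas replacing a congruence under a $\lnot$ by an equation in a fresh existentially quantified variable (as the paper's ordering would do) requires care to remain sound.
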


\begin{proof}
For each linear equation that is modulo a fixed non-zero integer $p$, introduce an additional variable $\beta_j$, and replace the expression ``$\equiv r\pmod p$'' with ``$=\beta_j\cdot p + r$''.
For example, an expression such as ``$\alpha_i \equiv 5 \pmod{7}$'' would become ``$\alpha_i = 7\beta_j + 5$''.

Use the rules that $\lnot(x\land y)\equiv \lnot x \lor \lnot y$ and $\lnot (x\lor y) \equiv \lnot x \land \lnot y$ to move the $\lnot$ operators to the bottom level of the expression, so each acts on a single equation.

Convert negated inequalities to non-negated inequalities (e.g. $\lnot(x\geq y)$ to $x < y$. Moreover, convert negated equalities (e.g.\ $\lnot(x = 5)$) to the equivalent pair of inequalities joined by $\lor$ (e.g.\ $x \leq 4 \lor x \geq 6$).
We are left with only $\lor$ and $\land$ operators.

Then use the rule 
$x \land (y\lor z) \equiv (x\land y) \lor (x\land z)$ to move the $\lor$ operators to the top level of the expression, so that it becomes an expression of the form $A_1\lor A_2 \lor \ldots$, where the $A_i$ contain only $\land$ operators and contains no negations or modulo equations. Define sets $\mathcal{S}_i$ consisting of all equalities and inequalities appearing in each $A_i$. The union of the $\mathcal{S}_i$ gives the desired linear system $\mathcal{L}$.
\end{proof}

Being somewhat more careful to preserve the partition properties, one may similarly encode a set of linear Diophantine (in)equalities and quadratic Diophantine equalities as a mono-quadratic system. \\

Let us recall Theorem \ref{lemma:linearsystemsolution} and discuss how it follows directly from \cite[Theorem 0.1]{Monoquadratic_Algo}. \\

\noindent
{\bf Theorem \ref{lemma:linearsystemsolution}.} There is an algorithm that, given a mono-quadratic system, decides whether its solution set is empty, finite, or infinite. If the solution set is finite, the algorithm produces the solution set; if infinite, given a number $n$, the algorithm produces $n$ members of the solution set.

\begin{proof} Consider first a mono-quadratic system $\mathcal{Q}$ consisting of exactly one set of (in)equalities $\mathcal{S}$, which contains at most one degree 2 equality. In other words, the solution set of $\mathcal{Q}$ is a subset of $\mathbb Z^k$ at which a single degree 2 polynomial vanishes and additionally finitely many degree 1 (in)equalities are satisfied. \cite[Theorem 0.1]{Monoquadratic_Algo} states that there is an algorithm which, given such $\mathcal{Q}$, determines whether the solution is empty, finite, or infinite and if the solution set is finite outputs all solutions. We want to upgrade this to the case of general mono-quadratic systems. \\

Consider instead a mono-quadratic system $\mathcal{Q}$ consisting of exactly one set of (in)equalities $\mathcal{S}$. The partition $\mathcal{P}$ of variables of $\mathcal{Q}$ induces a partition of the (in)equalities of $\mathcal{S}$ whose parts we denote $\mathcal{S}^{(1)}, \ldots, \mathcal{S}^{(l)}$. For $j=1, \ldots, l$ let $\mathcal{Q}^{(j)}$ denote the mono-quadratic system in variables the $j$-th part of $\mathcal{P}$ and with (in)equality set $\mathcal{S}^{(j)}$. Then the solution set of $\mathcal{Q}$ is exactly the product of the solution sets of $\mathcal{Q}^{(j)}$. However each of the $\mathcal{Q}^{(j)}$, by assumption, contain only a single degree 2 equality and are of the form discussed above. Hence we may decide whether each of the $\mathcal{Q}^{(j)}$ has empty, finite, or infinite solution set. We thus may decide whether $\mathcal{Q}$ has empty, finite, or infinite solution set. Similarly if $\mathcal{Q}$ has finite solution set we have an algorithm outputting all solutions. \\

Finally consider some general mono-quadratic system $\mathcal{Q}$ in variables $\alpha_1, \ldots, \alpha_k$ consisting of the sets of (in)equalities $\mathcal{S}_1, \ldots, \mathcal{S}_h$. For $j=1, \ldots, h$, let $\mathcal{Q}_j$ denote the mono-quadratic system given by the single set of (in)equalities $\mathcal{S}_j$. Then the solution set of $\mathcal{Q}$ is the union of the solution sets of the $\mathcal{Q}_j$. However, by the above discussion there is an algorithm that, given $\mathcal{Q}_j$, decides whether $\mathcal{Q}_j$ has empty, finite, or infinite solution set. Hence we may decide whether $\mathcal{Q}$ has empty, finite, or infinite solution and similarly we have an algorithm that outputs all solutions of $\mathcal{Q}$ if there are finitely many. Finally if there are infinitely many solutions we may test all possible values of $\alpha_1, \ldots, \alpha_k$ to produce a list of solutions of arbitrary length. 
\end{proof}

Let us conclude by reducing several 3-manifold questions to linear or mono-quadratic systems. 

\begin{proposition}\label{prop:SFS_linear} There is an algorithm that, given Seifert and slope data $[\Sigma', q'_1/p'_1, \ldots, q'_{n'}/p'_{n'} \mid (q'_{n'+1}, p'_{n'+1}), $ $\ldots, (q'_{n'+h'},p'_{n'+h'})]$, a surface $\Sigma$, and $n,h\in \mathbb Z_{\geq 0}$, outputs a linear system $\mathcal{L}$ such that $p_1, q_1, \ldots, p_{n+h}, q_{n+h}$ are part of a solution to $\mathcal{L}$ if and only if $\gcd(p_1, q_1)=\ldots=\gcd(p_{n+h}, q_{n+h})=1$ and the given data is slope-isomorphic to $[\Sigma, q_1/p_1, \ldots, q_n/p_n \mid $ $(q_{n+1},p_{n+1}), \ldots, $ $(q_{n+h},p_{n+h})]$. 

\end{proposition}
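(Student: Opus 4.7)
The plan is to translate the slope-isomorphism criterion of Proposition \ref{prop:slopeisomorphic} directly into a Boolean combination of linear equalities and linear congruences, and then invoke Lemma \ref{lemma:linearSystemGeneralise} to package this as a linear system. The central observation that makes the construction \emph{linear} rather than polynomial is that, once a concrete matching of singular fibres and of boundary slopes has been fixed, every denominator $p_i$ is forced to equal a known constant from the input, so the a priori nonlinear Euler-number identity $\sum q_i/p_i = \sum q'_j/p'_j$ reduces, after clearing by the now-known common denominator, to a single integer-linear equation or congruence in the $q_i$.

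First I would verify the combinatorial preconditions $\Sigma \cong \Sigma'$ and $h = h'$; if either fails, the algorithm outputs the trivially unsatisfiable system $\{\{0 = 1\}\}$. Otherwise, set $J' := \{j : 1 \leq j \leq n',\ p'_j \neq 1\}$, and enumerate the finitely many matchings consisting of an injection $\sigma \colon J' \hookrightarrow \{1, \dots, n\}$ (specifying which of the $p_i$ will match a non-regular $p'_j$) together with a permutation $\tau$ of $\{1, \dots, h\}$ (matching the boundary slopes). If $|J'| > n$ then no injection exists and the output is again $\{\{0 = 1\}\}$.

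For each matching I would form one clause of a large disjunction, consisting of: the equalities $p_{\sigma(j)} = p'_j$ and congruences $q_{\sigma(j)} \equiv q'_j \pmod{p'_j}$ for $j \in J'$; the equalities $p_i = 1$ for $i \in \{1, \dots, n\} \setminus \sigma(J')$ (leaving $q_i$ unconstrained, which correctly models the freedom to add and remove $(q,1)$-pairs on the left); the equalities $p_{n+k} = p'_{n' + \tau(k)}$ and congruences $q_{n+k} \equiv q'_{n' + \tau(k)} \pmod{p'_{n'+\tau(k)}}$ for $1 \leq k \leq h$; and finally the Euler-number condition, taken as an equality in $\mathbb Q$ when every boundary component of $\Sigma$ carries a specified slope and modulo $\mathbb Z$ otherwise, which becomes an integer-linear equation (respectively a congruence modulo the common denominator $L$ of the $p_i, p'_j$) once multiplied through by $L$. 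Feeding the disjunction of these clauses through Lemma \ref{lemma:linearSystemGeneralise} yields the required linear system $\mathcal{L}$.

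Correctness then follows by direct bookkeeping against Proposition \ref{prop:slopeisomorphic}: every tuple $(p_i, q_i)$ that is part of a solution to $\mathcal{L}$ realises some matching and hence certifies slope-isomorphism, with coprimality $\gcd(p_i, q_i) = 1$ automatic, because either $p_i = 1$ or else $p_i = p'_j$ with $\gcd(q'_j, p'_j) = 1$ and $q_i \equiv q'_j \pmod{p'_j}$, forcing $\gcd(q_i, p_i) = \gcd(q'_j, p'_j) = 1$; conversely, a slope-isomorphism picks out a matching and hence a solution tuple to the corresponding clause. The main obstacle, and the main bookkeeping pitfall, is properly treating the ``up to adding and removing copies of $q/1$'' freedom: the enumeration must run over subsets of $\{1, \dots, n\}$ of size $|J'|$ (encoding which $p_i$ take the value $1$) rather than over bijections $\{1, \dots, n\} \leftrightarrow \{1, \dots, n'\}$, since $n$ and $n'$ need not agree.
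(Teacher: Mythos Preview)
Your approach is essentially identical to the paper's: enumerate the finitely many matchings of singular fibres and boundary slopes, translate Proposition~\ref{prop:slopeisomorphic} into linear equalities and congruences for each matching (using that each $p_i$ is pinned to a known constant), take the disjunction, and invoke Lemma~\ref{lemma:linearSystemGeneralise}. The paper parametrises matchings by a permutation $\sigma_1$ of $\{1,\dots,n\}$ together with a cut-off $k$, while you use an injection from $J'=\{j:p'_j\neq 1\}$; these are equivalent bookkeeping devices, and your handling of the ``add/remove copies of $q/1$'' freedom via injections rather than bijections is exactly right.

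One small omission: since the $p_i$ in the variable data are arbitrary integers (the unoriented slope $q_i/p_i$ equals $(-q_i)/(-p_i)$), your clauses should read $p_{\sigma(j)}=\pm p'_j$ with $q_{\sigma(j)}\equiv \pm q'_j\pmod{p'_j}$, and $p_i=\pm 1$ for the unmatched indices, rather than fixing the positive sign. The paper's proof includes these signs; without them your system would reject, say, $(p_1,q_1)=(-2,-3)$ even when $(2,3)$ is accepted, breaking the ``if'' direction. This is a one-line patch and does not affect the structure of the argument.
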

\begin{proof}
This result relies predominantly on Proposition \ref{prop:slopeisomorphic}. By reordering and multiplying $p'_i, q'_i$ by $-1$, we may assume that $p'_1\geq \ldots \geq p'_{n'}> 0$. If $h\neq h'$ or $\Sigma \not \cong \Sigma'$ then no values of $p_i, q_i$ will make the data slope-isomorphic and we may take $\mathcal{L}$ with empty solution set. Hence suppose $h=h'$ and $\Sigma \cong \Sigma'$. Fix $k\leq n, n'$, a permutation $\sigma_1$ of $\{1, \ldots, n\}$, and a bijection $\sigma_2\colon \{n+1, \ldots, n+h\}\to \{n'+1, \ldots, n'+h'\}$. By Lemma \ref{lemma:linearSystemGeneralise} find a linear system $\mathcal{L}_{\sigma_1,\sigma_2, k}$ such that $p_1, q_1, \ldots, p_{n+h}, q_{n+h}$ are part of a solution to $\mathcal{L}_{\sigma_1, \sigma_2, k}$ if and only if: 
\begin{itemize}
\item[-] for $1\leq i\leq h$: $p_{\sigma_2(n+i)}= p'_{n'+i}$ and $q_{\sigma_2(n+i)}\equiv q'_{n'+i} \pmod {p'_{n'+i}}$; 
\item[-] for $1\leq i\leq k$: we have $p_{\sigma_1(i)}=\pm p'_{i}$ and $q_{\sigma_1(i)}\equiv \pm q'_i \pmod{p'_i}$; 
\item[-] for $k<i\leq n$: $p_{\sigma_1(i)}=\pm 1$ and for $k<i\leq n'$: $p'_i=1$; 
\item[-] if $\Sigma$ has exactly $h$ boundary components then: 
$$\sum_{i=1}^k \frac{q_{\sigma_1(i)}}{\pm p'_i}+\sum_{i=k+1}^n \frac{q_{\sigma_1(i)}}{\pm 1}+\sum_{i=1}^h \frac{q_{\sigma_2(n+i)}}{p'_{n'+i}}=
\sum_{i=1}^k \frac{q'_{i}}{p'_i}+\sum_{i=k+1}^{n'} \frac{q'_{i}}{1}+\sum_{i=1}^h \frac{q_{n+i}}{p'_{n'+i}}.$$
By clearing denominators this is a linear equality with integer coefficents; 
\item[-] if $\Sigma$ has fewer than $h$ boundary components then the above sums are equal modulo $\mathbb Z$. By clearing denominators this is a linear equality modulo $\prod_{i=1}^{n'+h'}p'_i$. 
\end{itemize}
Since $\gcd(p'_1, q'_1)=\ldots=\gcd(p'_{n'+h'}, q'_{n'+h'})=1$, if the pairs $(p_1, q_1), \ldots, (p_{n+h}, q_{n+h})$ satisfy the first two points then they are all coprime. Moreover, by Proposition \ref{prop:slopeisomorphic}, $p_1, q_1, \ldots, p_{n+h}, q_{n+h}$ give slope-isomorphic data if and only if they are part of a solution to $\mathcal{L}_{\sigma_1, \sigma_2, k}$ for some $\sigma_1, \sigma_2, k$. Hence taking the union of all the $\mathcal{L}_{\sigma_1, \sigma_2, k}$ gives the desired linear system.
\end{proof}

\begin{lemma}\label{lemma:lens_quadratic} There is an algorithm that, given a lens space $L(p,q)$, outputs a mono-quadratic system $\mathcal{Q}$ such that $a_1, b_1, a_2, b_2$ are part of a solution to $\mathcal{Q}$ if and only if $\gcd(a_1, b_1)=\gcd(a_2, b_2)=1$ and $[S^2, b_1/a_1, b_2/a_2]$ gives a Seifert fibration for $L(p,q)$. 
\end{lemma}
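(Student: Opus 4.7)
The plan is to apply Lemma~\ref{lemma:lensspacecharacterisation}(2), eliminate the auxiliary variables $x, y$ by Cramer's rule so that only one quadratic equation remains, and then encode the coprimality conditions as linear constraints after a finite case split.

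By Lemma~\ref{lemma:lensspacecharacterisation}(2), assuming $p\neq 0$, $[S^2, b_1/a_1, b_2/a_2]$ is a Seifert fibration of $L(p,q)$ if and only if there exist integers $x, y$ and $\varepsilon \in \{\pm 1\}$ such that
\[
\varepsilon p = a_1 b_2 + a_2 b_1, \qquad \varepsilon q_\pm = -y b_2 - a_2 x, \qquad b_1 y - a_1 x = -1,
\]
with $q_+ = q$ and $q_-$ a fixed integer satisfying $q q_- \equiv 1 \pmod p$. Viewing the last two equations as a $2\times 2$ linear system in $(y,x)$, its coefficient determinant equals $\varepsilon p$ by the first equation, so Cramer's rule shows that integer $(x,y)$ exist if and only if $a_2 \equiv -\varepsilon q_\pm a_1 \pmod p$ and $b_2 \equiv \varepsilon q_\pm b_1 \pmod p$. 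The condition reduces to one quadratic equation plus two linear congruences.

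Next, I would observe that the two congruences together with the quadratic equation force $\gcd(a_1,b_1) = \gcd(a_2,b_2) =: d$ (using $\gcd(q,p) = \gcd(q_-,p) = 1$) and $d^2 \mid p$. Hence $d = 1$ is equivalent to requiring, for each of the finitely many primes $\ell$ with $\ell^2 \mid p$, that at least one of $a_1, b_1$ is coprime to $\ell$. Distributivity converts this into a finite disjunction of conjunctions of linear non-divisibility conditions.

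To assemble $\mathcal{Q}$, the algorithm enumerates the four choices of $(\varepsilon, q_\pm) \in \{\pm 1\}\times\{q, q_-\}$ (producing $q_-$ via the extended Euclidean algorithm) together with the finitely many coprimality branches. Each resulting set $\mathcal{S}_i$ contains the single quadratic equation $a_1 b_2 + a_2 b_1 = \varepsilon p$, the linear equations $b_2 - \varepsilon q_\pm b_1 = p k_1$ and $a_2 + \varepsilon q_\pm a_1 = p k_2$ (introducing auxiliary $k_1, k_2$), and linear encodings of the chosen non-divisibility conditions using auxiliary remainder variables. Since each $\mathcal{S}_i$ contains only one degree-$2$ equation, the trivial partition placing all variables in a single part witnesses mono-quadraticity. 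The degenerate case $p = 0$, where the lens space is $S^2\times S^1$, is handled separately by direct enumeration of its few Seifert fibrations over $S^2$. The main obstacle is coprimality: a naive B\'ezout encoding $u_i a_i + v_i b_i = 1$ would introduce additional quadratic equations sharing variables with the primary quadratic and would destroy the partition property; the Cramer-reduced system's bound $d^2 \mid p$ is precisely what allows coprimality to be captured by linear conditions after a finite case split.
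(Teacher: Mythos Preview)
Your approach and the paper's are essentially the same. Both invoke Lemma~\ref{lemma:lensspacecharacterisation}(2) and reduce the three equations to one quadratic plus linear constraints; the paper keeps $x,y$ and uses $b_1y-a_1x=-1$ as its quadratic, while you keep $a_1b_2+a_2b_1=\varepsilon p$. Substituting your two linear relations into your quadratic yields $a_1k_1+b_1k_2=\varepsilon$, which is exactly the paper's B\'ezout equation under $k_1=\varepsilon x$, $k_2=-\varepsilon y$, so the two systems coincide. In particular $\gcd(a_1,b_1)=1$ is already forced by the system, and the congruences together with $\gcd(q_\pm,p)=1$ then force $\gcd(a_2,b_2)=1$ as well; thus your coprimality branches and the paper's $\bigwedge_{r=2}^{p}$ clause are both redundant (harmless, but unnecessary).

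One slip: for $p=0$ the manifold $S^2\times S^1$ has \emph{infinitely} many Seifert fibrations over $S^2$, namely $[S^2,\,b/a,\,-b/a]$ for every coprime pair $(a,b)$, so ``direct enumeration'' does not work. The fix is immediate, though: with $p=0$ your linear relations already give $(a_2,b_2)=\pm(-a_1,b_1)$, and a single B\'ezout quadratic $ua_1+vb_1=1$ encodes coprimality, so this case fits the same mono-quadratic template.
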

%Recall that Lemma \ref{lemma:lensspacegenerate} describes how to obtain any Seifert structure of $L(p,q)$ over $S^2$ by introducing auxiliary variables $a_1^0, a_2^0, r, s, a, a'_1, b'_1, a_1, b_1, a_2, b_2$, describing a set of equation in these variables, and proving that $[S^2, b_1/a_1, b_2/a_2]$ is a Seifert data for $L(p,q)$ if and only if they are part of a solution to these equations for suitable coprime $a_1^0, a_2^0$. In fact the given equations are mostly linear with only one quadratic equation. This almost immediately gives Lemma \ref{lemma:lens_quadratic}. However, by Bezout's theorem, the condition $\gcd(a_1^0, a_2^0)=1$ is also encoded by a quadratic equation. We cannot immediately combine these two equations into a mono-quadratic system. However, in the following proof we will combine these two quadratic equations into a single quadratic equation and finitely many linear equations, therefore obtaining the desired mono-quadratic system. 

\begin{proof}
Let $q_+=q$ and fix $q_-\in \mathbb Z$ such that $q\cdot q_{-}\equiv 1\pmod p$. Recall from Lemma \ref{lemma:lensspacecharacterisation} that $[S^2, b_1/a_1, b_2/a_2]$ is a Seifert fibration of $L(p,q)$ if and only if for some $\varepsilon \in \{\pm 1\}$ and $x,y\in \mathbb Z$ we satisfy the following three equations: 
$$(1)~ \varepsilon \cdot p=b_2\cdot a_1+a_2\cdot b_1, ~~(2)~ \varepsilon \cdot q_\pm = -y\cdot b_2-a_2\cdot x ,\text{ and }(3)~ b_1\cdot y-a_1\cdot x=-1.$$
Fix $\varepsilon \in \{\pm 1\}$ and suppose $a_i, b_i, x, y$ satisfy equation (3). Then every solution to equation (1) is of the form: 
$$b_2=x\cdot \varepsilon \cdot p+s\cdot b_1,\qquad a_2=-y\cdot \varepsilon \cdot p-s\cdot a_1\text{ for some }s\in \mathbb Z.$$ Plugging this into equation (2) we obtain: $\varepsilon \cdot q_\pm=-y\cdot s\cdot b_1+x\cdot s\cdot a_1=s$. In summary, we see that $[S^2, b_1/a_1, b_2/a_2]$ is a Seifert fibration of $L(p,q)$ if and only if for some $\varepsilon, x, y$ we have: $$b_1\cdot y-a_1\cdot x=-1, \qquad b_2=x\cdot \varepsilon \cdot p+\varepsilon \cdot q_\pm \cdot b_1, a_2=-y\cdot \varepsilon \cdot p-\varepsilon \cdot q_{\pm}\cdot a_1.$$
This is a mono-quadratic system $\mathcal{Q}'$. It only remains to deal with coprimality, that is find a mono-quadratic system $\mathcal{Q}$ such that $a_1, b_1, a_2, b_2$ are part of a solution to $\mathcal{Q}$ if and only if they are part of a solution to $\mathcal{Q}'$ and $\gcd(a_1, b_1)=\gcd(a_2, b_2)=1$. If $a_1, b_1, a_2, b_2$ are part of a solution to $\mathcal{Q}'$ then by equation (3) $\gcd(a_1, b_1)=1$ and by equation (1) $\gcd(a_2, b_2)\mid p$. Hence a suitable mono-quadratic system $\mathcal{Q}$ is: $$\mathcal{Q}:= \mathcal{Q}'\wedge \bigwedge_{r=2}^p \left(\{a_2\not\equiv 0 \pmod r\}\vee\{b_2\not\equiv 0 \pmod r\}\right).$$
\end{proof}

\begin{example}\label{ex:monoquad_needed}
Consider $A=[D^2-\{x_A\}, \frac{r_A}{s_A}], B=[D^2-\{x_B\}, \frac{r_B}{s_B}]$ with $s_A, s_B\neq \pm 1$. Let $T_A$ be a boundary torus of $A$, let $\mu_A, \lambda_A\in H_1(T_A)$ respectively be the section and fibred slopes, and let $T_B, \mu_B, \lambda_B$ be defined similarly. Let $\psi\colon T_B \to T_A$ be the homeomorphism given by $\psi(\lambda_B)=\mu_A, \psi(\mu_B)=\lambda_A$ and let $N=A\cup_\psi B$. Equip $N$ with the $\partial$-framing inherited from the Seifert structures on $A,B$. We want to ask whether $N(\frac{q_1}{p_1}, \frac{q_2}{p_2})$ can be $S^3$. If neither $A$ nor $B$ Dehn fill to become a solid torus, then $N(\frac{q_1}{p_1}, \frac{q_2}{p_2})$ cannot Seifert fibre over the sphere. We may thus, without loss of generality, assume $p_2=1$. 
Hence $B(\frac{q_2}{p_2})=[D^2, \frac{q_2}1, \frac{r_B}{s_B}]=[D^2, \frac{r_B+q_2\cdot s_B}{s_B}]$.
%
%Hence $A(q_1/p_1)=[D^2, q_1/1, r_A/s_A]=[D^2, \frac{r_A+q_1\cdot s_A}{s_A}]$. Similarly $B(q_2/p_2)=[D^2, \frac{r_B+q_2\cdot s_B}{s_B}]$.
By Example \ref{ex:solidtorus} we see that hence $N(\frac{q_1}{p_1}, \frac{q_2}{p_2})$ is the Dehn filling of $[D^2, \frac{q_1}{p_1}, \frac{r_A}{s_A}]$ along $\psi(s_B\cdot \mu_B+(r_B+q_2\cdot s_B)\cdot \lambda_B)=s_B\cdot \lambda_A+(r_B+q_2\cdot s_B)\cdot \mu_A$. Hence $N(\frac{q_1}{p_1}, \frac{q_2}{p_2})=[S^2, \frac{q_1}{p_1}, \frac{r_A}{s_A}, \frac{s_B}{r_B+q_2\cdot s_B}]$. 
% If $p_1\neq \pm 1$ then, for all but possibly one value of $q_2$, this Seifert fibration has more than three singular fibres and hence is not a Seifert fibration of $S^3$.Hence let us assume $p_1=1$. 
We know that all Seifert fibration of $S^3$ have base surface $S^2$ and at most two singular fibres. Moreover, by Lemma \ref{lemma:lensspacecharacterisation}, $[S^2, \frac{b_1}{a_1}, \frac{b_2}{a_2}]\cong S^3\cong L(1,0)$ if and only if $b_1\cdot a_2+b_2\cdot a_1=\pm1$. Hence $N$ admits a Dehn filling to $S^3$ if and only if there are $q_1, q_2\in \mathbb Z$ such that one of the following holds: 
\begin{itemize}
\item[-] $p_1=1$ and $(r_A+q_1\cdot s_A)\cdot (r_B+q_2\cdot s_B)+s_A\cdot s_B=\pm1$, or
\item[-] $r_B+q_2\cdot s_B=\varepsilon $, $s_A\cdot (q_1+\varepsilon\cdot s_B\cdot p_1)+r_A\cdot p_1=\pm 1$, for some $\varepsilon \in \{\pm 1\}$. 
\end{itemize}
The former equation is a mono-quadratic equation in variables $q_1, q_2$. By chaining together more Seifert fibred components we may obtain more complicated mono-quadratic systems in a similar manner. We hope this gives the reader a heuristic as to why considering the general class mono-quadratic equations is useful when deciding whether two 3-manifolds are related by Dehn filling. 
% let $N$ be obtained from $A,B$ by identifying a boundary torus from each of these via a map that swaps fibred and section slopes of 
%
%Let $N=[D^2-\{x\}, r/s]$ fix a $\partial$-framing on $N$ given by the Seifert structure. Then for $N(q_1/p_1, q_2/p_2)$ to be $S^3$ one of $p_1, p_2$ must be $\pm 1$. Hence suppose $p_2=1$. Then $N(q_1/p_1, q_2/p_2)=[S^2, q_1/p_1, q_2/1, r/s]=[S^2, q_1/p_1, (r+q_2\cdot s)/s].$ However, by Lemma \ref{}, $[S^2, b_1/a_1, b_2/a_2]\cong S^3\cong L(1,0)$ if and only if $b_1\cdot a_2+b_2\cdot a_1=1$. Hence, with the above losses of generality, $N(q_1/p+1, q_2/p_2)\cong S^3$ if and only if $q_1\cdot s+p_1\cdot (r+q_2\cdot s)=1$. 
\end{example}

%\begin{lemma}
%\label{lemma:sl2z} Consider matrices $X_1, \ldots, X_{k+1}\in SL_2(\mathbb Z)$ and let \begin{equation*}
% X:= X_1\cdot \begin{pmatrix}
% 1 & n_1 \\
% 0 & 1
% \end{pmatrix} \cdot X_2 \cdots X_k\cdot 
% \begin{pmatrix}
% 1 & n_k \\
% 0 & 1 
% \end{pmatrix}\cdot X_{k+1}
%\end{equation*}
%There is an algorithm that, given $X_1, \ldots, X_{k+1}, Y\in SL_2(\mathbb Z)$, outputs a linear system $\mathcal{L}$ such that $n_1, \ldots, n_k\in \mathbb Z$ are part of a solution to $\mathcal{L}$ if and only if $X,Y$ are conjugate in $SL_2(\mathbb Z)$. In particular, taking $Y=\text{id}$, there is an algorithm that, given $X_1, \ldots, X_{k+1}$, outputs a linear system $\mathcal{L}$ such that $n_1, \ldots, n_k$ are part of a solution to $\mathcal{L}$ if and only if $X=\text{id}$. 
%\end{lemma}

\begin{lemma} \label{lemma:sl2z}
Consider matrices $X_1, \ldots, X_{k+1}\in GL_2(\mathbb Z)$ and let \begin{equation*}
 X:= X_1\cdot \begin{pmatrix}
 1 & n_1 \\
 0 & 1
 \end{pmatrix} \cdot X_2 \cdots X_k\cdot 
 \begin{pmatrix}
 1 & n_k \\
 0 & 1 
 \end{pmatrix}\cdot X_{k+1}.
\end{equation*}
There is an algorithm that, given $X_1, \ldots, X_{k+1}, Y\in GL_2(\mathbb Z)$, outputs a linear system $\mathcal{L}$ such that $n_1, \ldots, n_k\in \mathbb Z$ are part of a solution to $\mathcal{L}$ if and only if $X,Y$ are conjugate by a matrix in $SL_2(\mathbb Z)$. In particular, taking $Y=\text{id}$, there is an algorithm that, given $X_1, \ldots, X_{k+1}$, outputs a linear system $\mathcal{L}$ such that $n_1, \ldots, n_k$ are part of a solution to $\mathcal{L}$ if and only if $X=\text{id}$. 
\end{lemma}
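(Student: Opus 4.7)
The second statement is a consequence of the first: since the only element of $SL_2(\mathbb Z)$ conjugate to the identity is the identity itself, taking $Y = I$ reduces the conjugacy condition to the equation $X = I$. So I focus on the first statement.

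My strategy is a \emph{peeling} argument, supplemented by a finite case analysis that dovetails with Lemma~\ref{lemma:linearSystemGeneralise}. First consider the simpler question of when $X = Y$ exactly. Setting $W := X_1^{-1} Y X_{k+1}^{-1}$, the equation rearranges to $T^{n_1} X_2 T^{n_2} \cdots X_k T^{n_k} = W$. The key observation is that left-multiplication by $T^{n_1}$ is the row operation that adds $n_1$ times the second row to the first, while right-multiplication by $T^{n_k}$ is the column operation that adds $n_k$ times the first column to the second. Consequently, the $(2,1)$-entry of the left-hand side involves only $n_2, \ldots, n_{k-1}$; the $(1,1)$-entry involves $n_1$ but not $n_k$; the $(2,2)$-entry involves $n_k$ but not $n_1$; and only the $(1,2)$-entry involves both. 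Matching the $(2,1)$-, $(1,1)$-, and $(2,2)$-entry equations against $W$ and substituting those three relations into the $(1,2)$-entry equation systematically cancels the outermost $n_1 n_k$ cross-terms, leaving a linear system in $n_1, n_k$ modulo a simpler system for the shorter word $X_2 T^{n_2} \cdots X_k$. A finite case analysis on the vanishing of certain input-dependent coefficients (which governs whether specific substitutions are valid or whether some $n_i$ are forced to satisfy divisibility conditions, the latter encodable via modular relations through Lemma~\ref{lemma:linearSystemGeneralise}) produces a finite disjunction of linear sub-systems, which assemble into the desired linear system by recursion on the length of the word.

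For the general conjugacy, rewrite the equation $A^{-1} X A = Y$ as $(A^{-1} X_1) T^{n_1} X_2 \cdots T^{n_k} (X_{k+1} A) = Y$. This has the shape of the equality case with the two outermost matrices now depending linearly on the four entries $\alpha, \beta, \gamma, \delta$ of $A \in SL_2(\mathbb Z)$. I introduce these four entries as auxiliary variables. The peeling procedure of the equality case now produces equations linear in both the $n_i$ and the $A$-entries, because $A$ enters only through multiplication by the fixed matrices $X_1, X_{k+1}$ and is consumed at the outermost peeling step. The determinant constraint $\alpha \delta - \beta \gamma = 1$, being quadratic, would exceed a linear system; I bypass it by parameterising $A$ via its first column as a primitive vector in $\mathbb Z^2$ and determining the second column modulo an integer shift by the first—a specification involving only linear relations and coprimality conditions encodable via Lemma~\ref{lemma:linearSystemGeneralise}.

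\textbf{Main obstacle.} The central technical difficulty is to verify that the substitutions in the peeling recursion genuinely cancel \emph{all} bilinear and higher-order cross-terms, not merely the leading $n_1 n_k$ term: the inductive statement must be strong enough to preserve linearity through the recursion on shorter words, while the accompanying finite case analysis on the vanishing of input-dependent coefficients must be enumerated carefully so that the overall linear system remains finite and computable. Integrating the four $A$-entry auxiliaries into this recursion without re-introducing nonlinearity requires particular care.
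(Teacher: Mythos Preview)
Your peeling recursion does not linearise the system. After one left-peel of $T^{n_1}$ from $T^{n_1} X_2 T^{n_2} X_3 T^{n_3} = W$ and absorbing $X_2$, the row-$2$ entries of the new target for $T^{n_2} X_3 T^{n_3}$ are $r(W_{1j} - n_1 W_{2j}) + s W_{2j}$ (where $X_2^{-1} = \left(\begin{smallmatrix} p & q \\ r & s\end{smallmatrix}\right)$), which already depend on $n_1$ unless $r=0$, i.e.\ unless $X_2$ is upper-triangular. The next peel subtracts $n_2$ times this row-$2$ target from row~$1$, producing a genuine $r W_{2j}\, n_1 n_2$ term; since $W\in GL_2(\mathbb Z)$ forces $(W_{21},W_{22})\neq(0,0)$, this bilinearity is unavoidable once $X_2$ is not upper-triangular --- precisely the case one cannot absorb away. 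The same happens with your two-sided peel: substituting the $(1,1)$-, $(2,1)$-, $(2,2)$-relations into the $(1,2)$-equation leaves $n_1 W_{22} + n_k W_{11} + M_{12} - n_1 n_k W_{21} = W_{12}$, and the $n_1 n_k W_{21}$ term does not cancel. So your ``main obstacle'' is the actual obstruction, not a technicality. For conjugacy the gap widens: your plan to bypass $\alpha\delta-\beta\gamma=1$ by parameterising $A$ through a primitive first column requires encoding $\gcd(\alpha,\gamma)=1$ for \emph{variable} $\alpha,\gamma$, and Lemma~\ref{lemma:linearSystemGeneralise} only handles congruences modulo a \emph{fixed} integer, so cannot express this.

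The paper's argument is entirely different and attempts no algebraic cancellation. After normalising so that $X_{k+1}=\text{id}$ and no $X_i$ is of the form $\left(\begin{smallmatrix}\pm1 & n\\ 0 & \pm1\end{smallmatrix}\right)$ (absorbing any such $X_i$ into adjacent $T$-powers, thereby lowering $k$), it passes to $PSL_2(\mathbb Z)\cong\mathbb Z/2*\mathbb Z/3$ and proves via a word-length estimate in the free product that there is a computable $C$ for which $|n_1|,\ldots,|n_k|\geq C$ forces the cyclically reduced length of $X$ to exceed that of $Y$, precluding conjugacy. Hence at least one $n_i$ lies in $(-C,C)$; fixing each such value and recursing on $k$ yields the linear system as a finite disjunction $\bigvee_{i,\,|n|<C}\bigl(\{n_i=n\}\wedge\mathcal L_{n,i}\bigr)$, with the base case $k=0$ handled by a decidable mono-quadratic conjugacy test.
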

A similar result, with $Y$ always $\text{id}$ but with more general equations, is given in \cite[Theorem 3.2]{Baroni_group_algo}.

\begin{proof} Let us proceed by induction on $k$. For $k=0$, $X,Y$ are conjugate by a matrix in $SL_2(\mathbb Z)$ if and only if we can find $a,b,c,d\in \mathbb Z$ such that: $ad-bc=1$ and $\begin{pmatrix} a & b \\ c & d\end{pmatrix}\cdot X=Y\cdot \begin{pmatrix} d& -b \\ -c & a\end{pmatrix}.$
Observe that this is a mono-quadratic system and by Theorem \ref{lemma:linearsystemsolution} there is an algorithm to decide whether such $a,b,c,d$ exist. Hence let us assume $k>0$ and proceed by induction on $k$. \\

{\bf Claim 1}: We may assume that $X_{k+1}=\text{id}$ and that:
\begin{itemize}
\item[-] If $k>1$: $X_1, \ldots, X_k\neq \begin{pmatrix} \varepsilon_1 & n \\ 0 & \varepsilon_2\end{pmatrix}$ for any $n\in \mathbb Z, \varepsilon_1, \varepsilon_2\in \{\pm 1\}$; 
\item[-] If $k=1$: $X_1=\text{id}$ or $X_1\neq \begin{pmatrix} \varepsilon_1 & n \\ 0 & \varepsilon_2 \end{pmatrix}$ for any $n\in \mathbb Z, \varepsilon_1, \varepsilon_2 \in \{\pm 1\}$.
\end{itemize}
Proof of Claim 1: Conjugating $X$ by a suitable matrix in $SL_2(\mathbb Z)$, we may assume $X_{k+1}=\begin{pmatrix} 1 & 0 \\ 0 & -1 \end{pmatrix}$ or $X_{k+1}=\text{id}$. Observe that: $$X_k\cdot \begin{pmatrix}1 & n_k \\ 0 & 1\end{pmatrix} \cdot \begin{pmatrix} 1 & 0 \\ 0 & -1 \end{pmatrix} = \left(X_k\cdot \begin{pmatrix} 1 & 0 \\ 0 & -1\end{pmatrix}\right) \cdot \begin{pmatrix} 1 & -n_k \\ 0 & 1 \end{pmatrix}.$$
Hence, replacing $n_k$ by $-n_k$ and suitably modifying $X_k$, we may assume $X_{k+1}=\text{id}$.\\

Suppose first that $k>1$. If for some $i=2, \ldots, k$: $X_i=\begin{pmatrix} 1 & n \\ 0 & 1 \end{pmatrix}, n\in \mathbb Z$, then: $$\begin{pmatrix} 1 & n_{i-1} \\ 0 & 1 \end{pmatrix} \cdot X_i\cdot \begin{pmatrix} 1 & n_i \\ 0 & 1 \end{pmatrix}=\begin{pmatrix} 1 & n_{i-1}+n+n_i\\ 0 & 1 \end{pmatrix}.$$ Hence we are done by induction on $k$. Similarly if $X_1=\begin{pmatrix} 1 & n \\ 0 & 1 \end{pmatrix}, n\in \mathbb Z$, we may conjugate and are done by induction. If for some $i=2, \ldots, k$: $X_i=\begin{pmatrix} 1 & n \\0 & -1 \end{pmatrix}$, then $$X_{i-1}\cdot \begin{pmatrix} 1 & n_{i-1} \\ 0 & 1 \end{pmatrix}\cdot X_i=\left(X_{i-1}\cdot \begin{pmatrix} 1 & 0 \\ 0 & -1 \end{pmatrix}\right) \cdot \begin{pmatrix} 1 & -n_{i-1} \\ 0 & 1 \end{pmatrix}\cdot \begin{pmatrix} 1 & n \\ 0 & 1 \end{pmatrix}.$$
Hence, by replacing $n_{i-1}$ with $-n_{i-1}$ and suitably modifying $X_{i-1}$, we are in the case of $X_i=\begin{pmatrix} 1 & n \\ 0 & 1\end{pmatrix}$ and are done by induction as above. Similarly if $X_1=\begin{pmatrix} 1 & n \\ 0 & -1\end{pmatrix}$, by replacing $n_1$ with $-n_1$ and suitably modifying $X_2$, we are done by induction. Finally suppose that for some $i=1, \ldots, k$: $X_i=\begin{pmatrix} \pm1 & n \\ 0 & 1\end{pmatrix}$. Since $-\text{id}$ is in the centre of $GL_2(\mathbb Z)$, we may replace $X_i$ by $-X_i$ and $Y$ by $-Y$ and then apply induction on $k$ as above. \\

Suppose now that $k=1$ and $X_1=\begin{pmatrix} 1 & n \\ 0 & 1 \end{pmatrix}$, then we may replace $X_1$ by $\text{id}$ and $n_1$ by $n+n_1$. Suppose that instead $X_1=\begin{pmatrix} 1 & n \\ 0 & -1 \end{pmatrix}$ and therefore $\text{tr}(X)=\text{tr}\begin{pmatrix} 1 & n+n_1 \\ 0 & -1\end{pmatrix}=0$. Thus we may assume $\text{tr}(Y)=0$ as else we are immediately done. It is shown in \cite[Theorem 3]{GL2Z_tr0} that traceless matrices in $GL_2(\mathbb Z)$ are conjugate in $GL_2(\mathbb Z)$. Hence there exists $Z\in GL_2(\mathbb Z)$ such that $ZXZ^{-1}=Y$. We also have $(ZX)X(ZX)^{-1}=Y$ and one of $Z, ZX$ must be in $SL_2(\mathbb Z)$. So $X,Y$ are conjugate in $SL_2(\mathbb Z)$ and we are done. Finally, suppose that $X_1=\begin{pmatrix} -1 & n \\ 0 & \pm1\end{pmatrix}$. Then multiplying $X_1$ and $Y$ by $-\text{id}$ we reduce to the aforementioned cases. This completes the proof of Claim 1. \\

Assume from now on that $X_1, \ldots, X_{k+1}$ satisfy the properties laid out in Claim 1. \\

{\bf Claim 2.} There exists $C\in \mathbb N$ such that if $|n_1|, \ldots, |n_k|\geq C$, then $X,Y$ are not conjugate by a matrix in $SL_2(\mathbb Z)$. Moreover there exists an algorithm that, given $X_1, \ldots, X_k, Y$, outputs such $C$. \\
Proof of Claim 2: Observe that if $X^2, Y^2$ are not conjugate, then neither are $X,Y$. Hence it is sufficient to prove this result for $X,Y$ replaced by $X^2,Y^2$. We have the following expression for $X^2$: 
$$X^2=X_1\cdot \begin{pmatrix} 1 & n_1 \\ 0 & 1 \end{pmatrix} \cdot X_2 \cdots X_k \cdot \begin{pmatrix} 1 & n_k \\ 0 & 1 \end{pmatrix} \cdot X_1\cdot \begin{pmatrix} 1 & n_1 \\ 0 & 1 \end{pmatrix} \cdot X_2 \cdots X_k\cdot \begin{pmatrix} 1 & n_k \\ 0 & 1\end{pmatrix}.$$
Unless $k=1$ and $X_1=\text{id}$, this new expression satisfies the properties of Claim 1. If $k=1$ and $X_1=\text{id}$, then $X=\begin{pmatrix} 1 & 2\cdot n_1 \\ 0 & 1\end{pmatrix}$ is an expression for $X^2$ satisfying the properties of Claim 1. Clearly $\det(X^2)=\det(Y^2)=1$. In summary, it is sufficient to prove Claim 2 under the assumption $\det(X)=\det(Y)=1$.\\

Suppose that for some $1\leq j\leq k$: $\det(X_j)=-1$ but $\text{det}(X_i)=1$ for all $i<j$. In particular, $j< k$ as otherwise $\det(X)=-1$. Observe that:
 \begin{equation*}
 X_j\cdot 
 \begin{pmatrix}
 1 & n_j \\ 0 & 1 
 \end{pmatrix}
 \cdot X_{j+1}
 = 
 \left[
 X_j\cdot 
 \begin{pmatrix}
 1 & 0 \\ 0 & -1
 \end{pmatrix}
 \right]
 \cdot 
 \begin{pmatrix}
 1 & -n_j \\ 0 & 1
 \end{pmatrix}\cdot 
 \left[
 \begin{pmatrix}
 1 & 0 \\ 0 & -1
 \end{pmatrix}\cdot 
 X_{j+1}
 \right]
 \end{equation*}
By replacing $n_j$ by $-n_j$ and suitably modifying $X_{j+1}$, we may thus assume $\text{det}(X_i)=1$ for $i\leq j$. Repeating this step, we may assume $\text{det}(X_i)=1$ for all $i$. Observe that if $X_i\neq \begin{pmatrix} \varepsilon_1 & n \\ 0 & \varepsilon_2 \end{pmatrix}$, for $\varepsilon_1, \varepsilon_2\in \{\pm 1\}, n\in \mathbb Z$, then $\begin{pmatrix} 1 & 0 \\ 0 & -1\end{pmatrix}\cdot X_i, X_i\cdot \begin{pmatrix} 1 & 0 \\ 0 & -1 \end{pmatrix}\neq \pm \begin{pmatrix} 1 & n \\ 0 & 1 \end{pmatrix}, n\in \mathbb Z$. Hence we may also assume that in our new expression for $X$ we have $X_i\neq \pm \begin{pmatrix} 1 & n \\ 0 & 1 \end{pmatrix}$ or we have $k=1$ and $X_1=\text{id}$. \\

Recall that there is an isomorphism $\varphi:G:=\mathbb Z/2\mathbb Z * \mathbb Z/3\mathbb Z=\langle a\rangle * \langle b \rangle \to PSL_2(\mathbb Z)$
 given by the following map on the generators:
 \begin{eqnarray*}
 a \mapsto \begin{pmatrix}
 0 & -1 \\ 1 & 0 
 \end{pmatrix}, ~~~~~
 b\mapsto \begin{pmatrix}
 1 & -1 \\ 1 & 0
 \end{pmatrix}.
 \end{eqnarray*}
% \begin{align*}
% a&\mapsto 
% \begin{pmatrix}
% 0 & -1 \\ 1 & 0 
% \end{pmatrix},&
% b&\mapsto 
% \begin{pmatrix}
% 1 & -1 \\ 1 & 0
% \end{pmatrix}.
% \end{align*}
By enumerating all possible words, we may find words $v_1, \ldots, v_{k+1}, w$ in alphabet $a,b, a^{-1}, b^{-1}$ such that $\varphi(v_i)=X_i/\pm, \varphi(w)=Y/\pm$. Notice that $\varphi(ba)=\begin{pmatrix} 1 & 1 \\ 0 & 1 \end{pmatrix}/\pm$. Hence we have that $v_i$ are not of the form $(ba)^n$ for $n\in \mathbb Z$ or we have that $k=1$ and $v_1=\text{id}$. Moreover, $\varphi(v)=X/\pm$ for $v:=v_1\cdot (ba)^{n_1}\cdot v_2\cdot\ldots \cdot v_k\cdot (ba)^{n_k}.$ Hence to prove Claim 2 it is sufficient, given such $v_1, \ldots, v_k, w\in \mathbb Z/2\mathbb Z*\mathbb Z/3\mathbb Z$, find $C\in \mathbb N$ such that if $|n_1|, \ldots, |n_k|\geq C$, then $v,w$ are not conjugate.\\

We adopt the following notation: a {\it word} is a string in the alphabet $\{a, b, b^{-1}\}$. A {\it reduced word} is a word not containing $aa, bb, b^{-1}b^{-1}, bb^{-1}$, or $b^{-1}b$. A {\it cyclically reduced word} is a reduced word such that either its first or its last letter is $a$ but not both. Every word represents an element in $G$ and every element in $g\in G$ is represented by a unique reduced word of length $l(g)$. A cyclically reduced word will have minimal length among all words representing elements of a fixed conjugacy class in $G$. We will require the following claim, which we will prove after completing the proof of Claim 2. \\

{\bf Claim 3}. Consider $g\in \mathbb Z/2\mathbb Z*\mathbb Z/3\mathbb Z$ that is not a power of $ba$ and $m,n\in \mathbb Z$ with $|m|, |n|>l(g)$, then $(ba)^n\cdot g\cdot (ba)^m$ is represented by a reduced word that starts with the same letter as $(ba)^n$, ends with the same letter as $(ba)^m$, and has length at least $2|m|+2|n|-l(g)-4$. \\

Let us now conclude the proof of Claim 2. Suppose that for each $i\in \{1, \ldots, k\}: |n_i|>2\cdot \text{max}\{l(v_1), \ldots, l(v_k)\}+1$. Then $v$ is conjugate to the following: 
$$ (ba)^{c_k}\cdot v_1\cdot (ba)^{b_2}\cdot (ba)^{c_2}\cdot v_2 \cdot \ldots \cdot (ba)^{c_{k-1}}\cdot v_k \cdot (ba)^{b_k},
$$
with $b_i+c_i=n_i$, with $b_i, c_i$ having the same sign, and $|b_i|, |c_i|>\text{max}\{l(v_1), \ldots, l(v_k)\}$. Thus by the Claim 3 $v$ is conjugate to a cyclically reduced word of length at least $2|n_1|+\ldots+2|n_k|-l(v_1)-\ldots -l(v_k)-4k$. In summary there is an algorithmically computable $C$ such that if all $|n_i|>C$ then $v$ has a cyclically reduced word of length greater than $l(w)$ and in particular is not conjugate to $w$. By the above discussion this is the desired $C$ in Claim 2. \\

Let us now deliver on the promised proof of Claim 3.\\

Proof of Claim 3: Take a reduced word for $g$ of length $l(g)$ and consider the corresponding word of length $2|m|+2|n|+l(g)$ representing $(ba)^n\cdot g\cdot (ba)^m$. From this word repeatedly delete appearances of $aa, bb^{-1}, b^{-1}b$ until no more deletions are possible or until all letters of $g$ have been deleted. Call the resulting word $u$. So $u$ represents $(ba)^n\cdot g\cdot (ba)^m$ and has length at least $2|m|+2|n|-l(g)$. Let us separately consider the two reasons for the deletion process to terminate: \\
\indent Case 1. No more deletions are possible and letters of $g$ remain: Observe that the word for $g$ and the words $(ba)^n, (ba)^m$ are reduced and hence all deletions occur at the boundary between these two sets of letters. Hence $u$ is reduced apart from at most two appearances of $bb$ or $b^{-1}b^{-1}$ on the boundary between the letters of $g$ and the letters of $(ba)^n, (ba)^m$. If the only letter of $g$ left remaining is $b$ and it is flanked in $u$ by two other letters $b$, then we remove this $bbb$ and proceed as in Case 2 with a word $u$ of length at least $2|m|+2|n|-l(g)-1$. We proceed similarly if the only letter of $g$ left remaining is $b^{-1}$ flanked by two $b^{-1}$. Moreover, observe that in this subcase $m,n$ must have the same sign for their to be two $b$ or two $b^{-1}$ flanking the remainder of $g$. In all other situations, we may replace the instances of $bb$ or $b^{-1}b^{-1}$ by $b^{-1}$ or $b$ respectively, and we obtain a reduced word for $(ba)^n\cdot g\cdot (ba)^m$ of length at least $2|m|+2|n|-l(g)-2$. \\
\indent Case 2. All letters of $g$ have been deleted: We assume $\text{length}(u)\geq 2|m|+2|n|-l(g)-1$. If $u$ is reduced we are immediately done. If $m,n>0$, then possibly replacing $abaaba$ or $abba$ by $ab^{-1}a$ we obtain a reduced word of length at least $2|m|+2|n|-l(g)-4$. Similarly if $m, n<0$, then possibly replacing $ab^{-1}aab^{-1}a$ or $ab^{-1}b^{-1}a$ by $aba$ we obtain a reduced word of length at least $2|m|+2|n|-l(g)-4$. Hence suppose the $m,n$ have opposite sign. First of all, this implies we didn't delete $bbb$ or $b^{-1}b^{-1}b^{-1}$ to obtain $u$. If we have only deleted letters from $(ba)^n$ and $g$, then $g$ must be a power of $(ba)$. Hence we deleted letters from $g, (ba)^n$, and $(ba)^m$ and obtained a non-reduced word. Thus the reduced word $r$ for $g$ splits into two subwords the first being an initial segment $r_1$ of $(ba)^{-n}$ and the latter being a terminal segment $r_2$ of $(ba)^{-m}$. For the resulting $u$ not to be a reduced word, $r_1$ and $r_2$ must both be even or both be odd length. In particular the last letter of $r_1$ is inverse to the first letter of $r_2$ and $r$ is not reduced, a contradiction. This proves Claim 3. \\

Let us complete the proof of the Lemma. By induction on $k$, for each $n\in \mathbb Z$ and each $i\in \{1, \ldots, k\}$ we can find a linear system $\mathcal{L}_{n,i}$ such that $n_1, \ldots, n_{i-1}, n_{i+1}, \ldots, n_k$ are part of a solution to $\mathcal{L}_{n,i}$ if and only if the corresponding $v,w$ for $n_i=n$ are conjugate. Recall that if all $|n_i|\geq C$, then $X,Y$ are not conjugate. Thus the desired linear system is given by: 
$$\mathcal{L}:=\bigvee_{n, i\in \mathbb Z: |n|<C, 1\leq i\leq k} ((n_i=n) \wedge \mathcal{L}_{n,i}).$$\end{proof}

\section{Finding Minimal Volume $n$-Cusped Manifolds} 
In this section we prove Proposition \ref{prop:min_volume__n_cusp}. We prove this result for completeness and because the proof gives a hint at the ideas required to prove Theorem \ref{prop:mainprop}. \\

\noindent
{\bf Proposition \ref{prop:min_volume__n_cusp}.} There is an algorithm that, given $n\in \mathbb N$, outputs a finite list that includes all minimal volume orientable hyperbolic 3-manifolds with exactly $n$ cusps.

\begin{proof}
Construct an arbitrary $n$-cusped hyperbolic 3-manifold and by the discussion of Subsection \ref{subsection:hyperbolic} compute an upper bound $V$ on its volume. By Theorem \ref{johansen_algo} we may find a finite list of 3-manifolds such that any hyperbolic 3-manifold with volume at most $V$ is a Dehn filling of one of the 3-manifolds of the list. Hence it is sufficient to provide an algorithm that, given $n\in \mathbb N$ and a compact orientable 3-manifold $N$ with, possibly empty, toroidal boundary, outputs a finite list containing all minimal volume $n$-cusped 3-manifolds that are a Dehn filling of $N$. Let us proceed by induction on the number $k$ of cusps of $N$. The case of $k\leq n$ is immediate.\\

By Theorem \ref{thm:prime_decomp}, given $N$, we may find a prime decomposition $N=N_1\#\ldots \#N_p$. Any Dehn filling of $N$ is a connect sum of the corresponding Dehn fillings of the $N_i$. In particular, a Dehn filling of $N$ is hyperbolic only if $p-1$ of the $N_i$ Dehn fill to $S^3$. Thus a hyperbolic Dehn filling of $N$ will be a Dehn filling of one of the $N_i$ and we may assume $N$ is prime.\\

Since $N$ is prime, $N$ admits a JSJ-decomposition. By the discussion of Subsections \ref{subsection:hyperbolic} and \ref{section:Seifert_section} we may find a finite collection of slopes $S$ in $\partial M$ such that $S$ includes all fibred slopes in Seifert fibred JSJ-components of $N$ and such that Dehn filling hyperbolic components JSJ-components of $N$ avoiding slopes in $S$ preserves hyperbolicity. For each slope $s\in S$, we may consider $N(s)$ separately by induction. Hence, it is sufficient to find a finite list containing all minimal volume $n$-cusped 3-manifolds that are a Dehn filling of $N$ not using slopes in $S$. \\

Any such Dehn filling of a Seifert fibred $N$ will be Seifert fibred and, in particular, not hyperbolic. Suppose instead that $N$ is hyperbolic. Consider some arbitrary Dehn filling of $N$, not using slopes in $S$, and compute an upper bound $V<\text{vol}(N)$ for its volume. By Theorem \ref{thm:volume_change}, we may find a constant $C$ such that a Dehn filling of $N$ using slopes of total normalised length at least $C$ will have volume greater than $V$. By Proposition \ref{prop:find_short_slopes}, we may find a finite list of slopes $S'$ containing all slopes of normalised length less than $C$. We are then done by considering $N(s), s\in S'$ separately by induction on $k$. Hence we may assume $N$ has multiple JSJ-components $A_1, \ldots, A_l$ and proceed by induction on $l$. \\

A Dehn filling $M$ of $N$ avoiding slopes in $S$ will be obtained, by identifying some boundary tori in pairs, from the corresponding Dehn fillings $B_1, \ldots, B_l$ of the JSJ-components $A_1, \ldots, A_l$. Recall that the only boundary-incompressible Seifert fibred space is the solid torus $D^2\times S^1$. Hence, if none of the $B_i$ are solid tori then by an innermost disc argument, we see that $M$ contains an incompressible torus and is not hyperbolic. So for $M$ to be hyperbolic some $B_i\cong D^2\times S^1$ and, in particular, $M$ is a Dehn filling of $N-A_i$. We are thus done by induction on the number of JSJ-components.
\end{proof}

\section{Finding Minimal Volume Links}\label{section:min_volume}
In this section we prove Theorems \ref{thm:compute_volt}, \ref{thm:volumetheorem}, and \ref{thm:biglist_volumetheorem} assuming Corollary \ref{thm:hyperbolic_Dehn_Ancestry}. \\

\noindent
{\bf Theorem \ref{thm:volumetheorem}.} There is an algorithm that, given $k\in \mathbb N$ and a compact orientable 3-manifold $M$ with, possibly empty, toroidal boundary, outputs a finite list that includes all minimal volume $k$-component hyperbolic links in $M$ up to homeomorphisms of the link complement in $M$. 

\begin{proof} [Proof of Theorem \ref{thm:volumetheorem} assuming Corollary \ref{thm:hyperbolic_Dehn_Ancestry}] Observe first that it is sufficient to work under the assumption that $M$ is connected. For $k=0$ we need only check whether $M$ is itself hyperbolic, which can be done by Theorem \ref{thm:hyperbolicTriangulation}. Hence assume $k>0$. It is shown in \cite[Theorem 6.1]{Myers_Knot_Construction} that any compact connected orientable 3-manifold $A$ with, possibly empty, toroidal boundary contains a hyperbolic knot $K$. Moreover, the proof of \cite[Theorem 6.1]{Myers_Knot_Construction} gives a construction of $K$ from a triangulation of $A$.\footnote{The construction of the hyperbolic knot $K$ in $A$ consists of finding a handle decomposition for $A$ as detailed in \cite[Lemma 5.1]{Myers_Knot_Construction} and replacing each $0$-cell with the complement of the, so called, true lover's tangle.} Repeating this construction we obtain an algorithm that, given $M, k$, outputs a $k$-component hyperbolic link $L$ in $M$. As discussed in Subsection \ref{subsection:hyperbolic} we may compute an upper bound for $\vol(M-L)$ and hence compute some $V>0$ such that any minimal volume hyperbolic $k$-component link in $M$ has volume less than $V$. \\

By Corollary \ref{johansen_algo} we may produce a finite set $\{N_i\}$ of hyperbolic 3-manifolds such that any hyperbolic manifold of volume at most $V$ is obtained by strongly non-exceptional Dehn filling one of the $N_i$. Given a 3-manifold $N$, let $\mathcal{I}(M, k, N)$ denote the set of homeomorphism-classes of hyperbolic 3-manifolds that are a strongly non-exceptional Dehn filling of $N$ and are the complement of a $k$-component link in $M$. Let $\mathcal{I}_{min}(M, k, N)\subset \mathcal{I}(M, k, N)$ denote the set of those 3-manifolds with minimal volume in $\mathcal{I}(M, k, N)$. By the above discussion the complement of a minimal volume $k$-component hyperbolic link in $M$ will be contained in one of the $\mathcal{I}_{min}(M, k, N_i)$. Hence it is sufficient to find an algorithm that, given $M, k$ and a hyperbolic 3-manifold $N$, outputs a finite set containing $\mathcal{I}_{min}(M, k, N)$. \\

Let $m,n$ denote the number of cusps on $M,N$ respectively and prove such an algorithm exists by induction on $n-m-k$. For $n-m-k<0$ the set $\mathcal{I}_{min}(M, k, N)$ is empty. For $n-m-k=0$ we have $\mathcal{I}_{min}(M, k, N)\subset\{N\}$. Thus let us assume $n-m-k=b>0$. Choose an arbitrary $\partial$-framing on $N$. By Corollary \ref{thm:hyperbolic_Dehn_Ancestry} we may find a mono-quadratic system $\mathcal{Q}_{\text{hyp}}$ such that $\gcd(p_1, q_1)=\ldots =\gcd(p_b, q_b)=1$, the Dehn filling of $N$ to $N(p_1/q_1, \ldots, p_b/q_b)$ is strongly non-exceptional, and $N(p_1/q_1, \ldots, p_b/q_b)$ admits a Dehn filling to $M$ if and only if $p_1, q_1, \ldots, p_b, q_b$ is part of a solution to $\mathcal{Q}_{\text{hyp}}$. Therefore, by Theorem \ref{lemma:linearsystemsolution} there is an algorithm that, given $M, k, N$, decides whether $\mathcal{I}(M, k, N)$ is empty and if not outputs an element of $\mathcal{I}(M, k, N)$. \\

If $\mathcal{I}(M, k, N)$ is empty, we are immediately done. Hence we may assume $\mathcal{I}(M, k, N)\neq \emptyset$ and we may compute an upper bound $V_N<\vol(N)$ for the volume of some element in $\mathcal{I}(M, k, N)$. In particular, any element of $\mathcal{I}_{min}(M, k, N)$ has volume at most $V_N$. Observe that by Theorem \ref{thm:volume_change} and Proposition \ref{prop:find_short_slopes} we may find a finite list of slopes $S$ in the boundary tori of $N$ such that any Dehn filling of $N$ avoiding these slopes has volume greater than $V_N$. In particular, any element of $\mathcal{I}_{min}(M, k, N)$ is a strongly non-exceptional Dehn filling of $N$ using a slope in $S$. Any strongly non-exceptional Dehn filling of $N$ using a slope in $S$ is a strongly non-exceptional Dehn filling of $N(s)$ for $s\in S$ and $N(s)$ hyperbolic. However, by induction on $n-m-k$, we can find finite sets containing $\mathcal{I}_{min}(M, k, N(s))$ for every hyperbolic $N(s), s\in S$ and thus a finite set containing $\mathcal{I}_{min}(M, k, N)$.
\end{proof}

Observe that this result outputs a finite list containing all minimal-volume $k$-components hyperbolic links in $M$ up to homeomorphism of the link complement, but possibly also containing other link complements in $M$. To the author's best knowledge there is no known algorithm that, given two hyperbolic 3-manifolds, decides whether they have the same volume. Such an algorithm is exactly what would be required to find the exact list of all minimal volume $k$-component hyperbolic links in $M$ up to homeomorphism of the link complement. \\

\noindent
{\bf Theorem \ref{thm:biglist_volumetheorem}.}
There is an algorithm that, given a compact orientable 3-manifold $M$ with, possibly empty, toroidal boundary, outputs a finite list that includes all minimal volume hyperbolic links in $M$ up to homeomorphisms of the link complement in $M$.

\begin{proof}
As in the proof of Theorem \ref{thm:volumetheorem} we find some hyperbolic link in $M$ and compute an upper bound $V$ for the volume of this hyperbolic link. It is shown in \cite{Adams_Lower_Bound} that a hyperbolic manifold with $b$ boundary components has volume at least $b\cdot 1.01$. Hence any minimal volume hyperbolic link in $M$ has at most $V$ components. Thus the theorem follows by taking the union of finitely many lists from Theorem \ref{thm:volumetheorem}.
\end{proof}

Let us now recall and prove Theorem \ref{thm:compute_volt}: \\

\noindent
{\bf Theorem \ref{thm:compute_volt}.} There is an algorithm that, given $k\in \mathbb Z_{\geq 0}, \varepsilon>0$, and a compact orientable 3-manifold $M$ with, possibly empty, toroidal boundary, outputs $\text{volt}_k(M)$ and $\text{volt}(M)$ within an error of $\varepsilon$.

\begin{proof} To compute $\text{volt}_k(M)$ we find the minimal volume, within an error of $\varepsilon$, of the finite list of hyperbolic manifolds given by Theorem \ref{thm:volumetheorem}. To compute $\text{volt}(M)$ we find the minimal volume, within an error of $\varepsilon$, of the finite list of hyperbolic manifolds given by Theorem \ref{thm:biglist_volumetheorem}.
\end{proof}

\section{Recognising Dehn Ancestry} \label{section:Dehn_Ancestry}

In this section we prove Theorem \ref{prop:mainprop} and Corollary \ref{thm:hyperbolic_Dehn_Ancestry}. Let us recall Theorem \ref{prop:mainprop}: \\

\noindent
{\bf Theorem \ref{prop:mainprop}.} Let $M, N$ be $\partial$-framed manifolds. There is a mono-quadratic system $\mathcal{Q}$ such that $\gcd(p_1, q_1)=\ldots=\gcd(p_{k}, q_{k})=1$ and the Dehn filling $N(p_1/q_1, \ldots, p_{k}/q_{k})$ is orientation-preservingly homeomorphic to $M$ if and only if $p_1, q_1, \ldots, p_{k}, q_{k}$ are part of a solution to $\mathcal{Q}$. Moreover, there is an algorithm that, given $M,N$, outputs $\mathcal{Q}$. 
%
%Observe first that it is sufficient to prove this theorem for connected $M$. We then obtain the general case of disconnected $M$ as follows: For each pair of connected components $N'\subset N, M'\subset M$ find the corresponding mono-quadratic $\mathcal{Q}'$ equation in the above theorem for $N,M$ replaced by $N', M'$. Using $\vee$ and $\wedge$ we may combine the $\mathcal{Q}'$ into a single suitable mono-quadratic equation $\mathcal{Q}$. \\

\subsection{Proof Sketch}

The upcoming proof is based on a simple idea, but becomes somewhat involved due to a few ill-behaved special cases. As an aide to the reader we now describe a sketch of how the proof would work in lieu of these special cases, then mention some of the technical challenges and how we plan to address these. The following is a proof sketch whose only purpose is to aide the exposition. A proper proof is given in Subsection \ref{subsection:main_proof_new}. As an additional aide to the reader we consider several larger examples in Subsection \ref{subsection:examples}.\\

To check whether $N(p_1/q_1, \ldots, p_k/q_k)\cong M$, we want to simplify $N,M$ until both are a disjoint union of ``well-behaved'' Seifert fibred spaces and hyperbolic spaces. To achieve this we want to only consider Dehn fillings under which the JSJ-decomposition of $N$ becomes the JSJ-decomposition of $M$ and then cut $N,M$ along their JSJ-decompositions. We can, by induction on $k$, separately consider Dehn fillings using short slopes in hyperbolic JSJ-components of $N$ or fibred slopes in Seifert fibred JSJ-components of $N$. Dehn fillings avoiding short slopes and fibred slopes are known to often preserve the JSJ-decomposition. When we cut $N,M$ along their JSJ-decompositions $\mathcal{T}_N, \mathcal{T}_M$ we want to keep track of how the JSJ-components glue back to form $N,M$. To this end, we will introduce the additional data of a, so called, $\partial$-equation system $\mathcal{F}$ such that $p_1, q_1, \ldots, p_k, q_k$ gives a Dehn filling $(N-\mathcal{T}_N)\to (N-\mathcal{T}_N)(p_1/q_1, \ldots, p_k/q_k)$ satisfying the $\partial$-equation system if and only if $N(p_1/q_1, \ldots, p_k/q_k)\cong M$. Consider a JSJ-torus $T_N\in \mathcal{T}_N$, let $T'_N, T''_N$ denote the two boundary tori of $N-\mathcal{T}_N$ corresponding to $T_N$, and let $\tau_N\colon T'_N\to T''_N$ be the regluing map. Let $T_M\in \mathcal{T}_M$ be the corresponding JSJ-torus of $M$ and let $T'_M, T''_M, \tau_M$ be defined analogously. Then a $\partial$-equation system should ensure that there is a homeomorphism $\phi\colon (N-\mathcal{T}_N)(p_1/q_1, \ldots, p_k/q_k)\to M-\mathcal{T}_M$ such that $\phi\circ \tau_N|_{T'_N}=\tau_M\circ \phi|_{T'_N}$. Given $\partial$-framings on $N,M$ we will see this can be encoded by the equality or conjugacy of certain matrices in $GL_2(\mathbb Z)$. In summary, we try to simplify $N,M$ to be a disjoint union of Seifert fibred and hyperbolic spaces at the cost of introducing the additional data of $\partial$-equation systems. \\

Having simplified $N,M$, by Proposition \ref{prop:hypDehnfilling}, Dehn fillings of hyperbolic components of $N$ using only sufficiently long slopes in $N$ will not be homeomorphic to any component of $M$. Again, by induction on $k$, we separately consider Dehn fillings using short slopes or fibred slopes. Hence, we may assume we do not Dehn fill hyperbolic components of $N$. Moreover, given Seifert data for $N$, the Seifert data for $N(p_1/q_1, \ldots, p_k/q_k)$ has coefficients depending linearly on $p_1, q_1, \ldots, p_k, q_k$. In most cases this Seifert structure must be isomorphic to a fixed choice of Seifert structure on the corresponding component of $M$. Now applying the classification of Seifert fibred spaces in Theorem \ref{thm:SFSclassification}, we see that $N(p_1/q_1, \ldots, p_k/q_k)\cong M$ if and only if $p_1, q_1, \ldots, p_k, q_k$ satisfy some linear system. Moreover, the concept of slope-isomorphisms for Seifert fibred spaces will allow us to additionally address the $\partial$-equation systems. \\

There are several technical issues in the above proof sketch. The bulk of technical definitions are introduced to handle these issues. To motivate the setup in the upcoming section we list several of the definitions and describe the technical issues that motivate them: 
\begin{itemize}
\item[-] {\bf Mono-quadratic systems}: Once we have simplified $N,M$, we rely on the fact that a fixed Seifert structures on components of $M$ are extension of Seifert structures on components of $N$. However, we must separately consider the case of $M$ having infinitely many Seifert structures, only some of which are isomorphic to the extension of the Seifert structure on $N$. Such a setup is described in Example \ref{ex:monoquad_needed}. These ill-behaved special cases are what necessitate the introduction of mono-quadratic systems. \\

\item[-] {\bf $\partial$-equation systems}: In the above discussion we mention introducing, so called, $\partial$-equation systems to ensure that a Dehn filling respects regluing along a JSJ-decomposition. However, the $\partial$-equation systems we introduce in the upcoming subsection are more general. In the above proof-sketch, we only mentioned the case when $N$ has a JSJ-decomposition which remains a JSJ-decomposition after Dehn filling. If $N$ admits a JSJ-decomposition and we Dehn fill avoiding fibred slopes in Seifert fibred JSJ-components and short slopes in hyperbolic JSJ-components, the JSJ-components remain Seifert fibred and hyperbolic. By Proposition \ref{prop:jsjdecompcharacterisation}, in most cases this implies the JSJ-decomposition of $N$ becomes the JSJ-decomposition of $M$. A key exception to this is if a JSJ-component $A$ of $N$ becomes a thickened torus $T^2\times I$ under Dehn filling. As discussed in Example \ref{ex:thickenedtorus}, in this case $A$ has Seifert data $[S^1\times I-\{x_1, \ldots, x_b\}]$ and we Dehn fill along slopes $a_1/1, \ldots, a_b/1$. Consider the $H_1$-basis on $\partial(T^2\times I)$ inherited from the Seifert structure on $[S^1\times I-\{x_1, \ldots, x_b\}]$. Isotopy through the product structure on $T^2\times I$ will relate these $H_1$-basis via the matrix $\begin{pmatrix} 1 & a \\ 0 & -1\end{pmatrix}, a=a_1+\ldots+a_b$. Let $T\subset M$ be the torus obtained from collapsing down the thickened torus Dehn filling of $A$. Then we wish to consider Dehn fillings of $N-A$ to $M-T$ that respect regluing across $A$ and $T$. Such a regluing condition would not be given by equality of fixed matrices in $GL_2(\mathbb Z)$, as discussed above, but is more generally an equation involving matrices of the form $\begin{pmatrix} 1 & a \\ 0 & -1\end{pmatrix}$ with $a$ an integral-affine function. This more general class of peripheral conditions will be called $\partial$-equation systems. \\

\item[-] {\bf $\partial$-parametrisations}: There will be Dehn fillings of $N$, such as exceptional fillings of hyperbolic components, which we consider separately. To do so, we will reduce such ill-behaved Dehn fillings of $N$ to Dehn fillings of a manifold $N'$ of ``lower complexity'' and study such Dehn fillings by induction on the ``complexity'' of $N$. The coefficients of the filling slopes on $N'$ will be integral-affine functions in the coefficients of the filling slopes in $N$. We would like to encode the idea of such a reduction into a proof by induction. To this end we will consider the more general setup where the filling slopes on $N$ are given by pairs $(\rho_1, \sigma_1), \ldots, (\rho_k, \sigma_k)$ of integral-affine functions in variables $\alpha_1, \beta_1, \ldots, \alpha_l, \beta_l, \gamma_1, \ldots, \gamma_m.$ Here one should think of $(\alpha_1, \beta_1), \ldots, (\alpha_l, \beta_l)$ as the filling slopes on some initial ``most-complicated'' manifold and one should think of $\gamma_1, \ldots, \gamma_m$ as just some additional variables. \\

We will keep track of two more pieces of data as part of a $\partial$-parametrisation. First of all, when we reduce to Dehn fillings of a manifold $N'$ of a ``lower complexity'', we might not want to consider all Dehn fillings of $N'$. The Dehn fillings we want to consider will be given by a linear system. Therefore, we record a linear system $\mathcal{L}$ in variables $\alpha_1,\beta_1, \ldots, \alpha_l, \beta_l, \gamma_1, \ldots, \gamma_m$ and only consider Dehn fillings given by variables satisfying this linear system.\\

Eventually, we will complete our reductions and find equations in variables $\rho_1, \sigma_1, \ldots, \rho_k, \sigma_k$ that detect which Dehn fillings of $N$ give $M$. Some of these equations will be quadratic. To ensure these equations are mono-quadratic in variables $\alpha_1, \beta_1, \ldots, \alpha_l, \beta_l, \gamma_1, \ldots, \gamma_m$ we need to ensure that we do not mix variables of distinct quadratic equations. Thus a $\partial$-parametrisation will also come equipped a partition $\mathcal{P}$ that interacts well with the linear system and integral-affine functions. \\

Finally, recall that our desired mono-quadratic system should only have solutions $p_1, q_1, \ldots, p_k, q_k$ with $\gcd(p_1,q_1)=\ldots=\gcd(p_k, q_k)=1$. Hence, we will require that in a $\partial$-equation system, assuming that $\alpha_1, \beta_1, \ldots, \alpha_l, \beta_l, \gamma_1, \ldots, \gamma_m$ satisfy $\mathcal{L}$, the pairs $(\rho_1, \sigma_1, \ldots, \rho_k, \sigma_k)$ are coprime if and only if the pairs $(\alpha_1, \beta_1), \ldots, (\alpha_l, \beta_l)$ are coprime. The data of these integral-affine functions, linear system, and partition satisfying these properties will be called a $\partial$-parameterisation.\\

\item[-] {\bf 6-tuples $(M, N, \mathcal{F}, (\rho_j, \sigma_j), \mathcal{L}, \mathcal{P})$}: In the proof we will repeatedly simplify our manifold. In each step we want to keep track of not just the manifold, but also $\partial$-equation systems and $\partial$-parameterisations. We combine all this data into a 6-tuple of the form $(M, N, \mathcal{F}, (\rho_j, \sigma_j), \mathcal{L}, \mathcal{P})$. Moreover, we want our $\partial$-equation systems and $\partial$-parametrisations to have variables the coefficients of the filling slopes on our initial manifolds before we begin simplifying. Therefore, all our $\partial$-equation systems and $\partial$-parametrisations will be in variables $\alpha_1, \beta_1, \ldots, \alpha_l, \beta_l, \gamma_1, \ldots, \gamma_m$. 
\end{itemize}

Now that we have informally discussed the technical setup let us give formal definitions.

\subsection{Formalism for Recognising Dehn Ancestry}\label{subsection:formalism}

We introduce the following conventions:

\begin{definition} A {\it $\partial$-parameterisation} $(\mathcal{L}, (\rho_j, \sigma_j), \mathcal{P})$ {\it in variables} $\alpha_1, \beta_1, \ldots, \alpha_l, \beta_l, \gamma_1, \ldots, \gamma_m$ is the following: 
\begin{itemize}
\item[-] A linear system $\mathcal{L}$ in variables $\alpha_1, \beta_1, \ldots, \alpha_l, \beta_l, \gamma_1, \ldots, \gamma_m$, 
\item[-] A, possibly empty, set $\{\rho_1, \sigma_1, \ldots, \rho_k, \sigma_k\}$ of integral-affine functions in variables $\alpha_1, \beta_1, \ldots, \alpha_l, \beta_l, \gamma_1, \ldots, \gamma_m$, 
\item[-] A partition $\mathcal{P}$ of the variables $\alpha_1, \beta_1, \ldots, \alpha_l, \beta_l, \gamma_1, \ldots, \gamma_m$.
\end{itemize}
We require that this data satisfies the following conditions: 
\begin{itemize}
\item[-] Each equation of $\mathcal{L}$ and each integral-affine function $\rho_j, \sigma_j$ have variables in at most one part of $\mathcal{P}$. 
\item[-] For $\chi_i\in \{\rho_i, \sigma_i\}, \chi_j\in \{\rho_j, \sigma_j\}, i\neq j$ the two integral-affine functions $\chi_i, \chi_j$ do not have variables in the same part of $\mathcal{P}$, 
\item[-] For each $i$: $\alpha_i$ and $\beta_i$ are in the same part of $\mathcal{P}$, 
\item[-] If $\gcd(a_1, b_1)=\ldots=\gcd(a_l, b_l)=1$ and $a_1, b_1, \ldots, a_l, b_l, c_1, \ldots, c_m$ are part of a solution to $\mathcal{L}$, then $\gcd(\rho_1, \sigma_1)=\ldots =\gcd(\rho_k, \sigma_k)=1$ when evaluated at $\alpha_i=a_i, \beta_i=b_i, \gamma_i=c_i$,
\item[-] If $k>0$, $a_1, b_1, \ldots, a_l, b_l, c_1, \ldots, c_m$ are part of a solution to $\mathcal{L}$, and $\gcd(\rho_1, \sigma_1)=\ldots =\gcd(\rho_k, \sigma_k)=1$ when evaluated at $\alpha_i=a_i, \beta_i=b_i, \gamma_i=c_i$, then $\gcd(a_1, b_1)=\ldots=\gcd(a_l, b_l)=1$, 
\item[-] If $k=0$ and $a_1, b_1, \ldots, a_l, b_l, c_1, \ldots, c_m$ are part of a solution to $\mathcal{L}$, then $\gcd(a_1, b_1)=\ldots=\gcd(a_l, b_l)=1$. \\
\end{itemize}

%We allow the degenerate case of $k=0$, but adopt the convention that if $k=0$ and $a_1, b_1, \ldots, a_l, b_l$ are part of a solution to $\mathcal{L}$, then $\gcd(a_1, b_1)=\ldots=\gcd(a_l, b_l)=1$.\\
A $\partial$-parameterisation $(\mathcal{L}, (\rho_j, \sigma_j), \mathcal{P})$, $l$ coprime pairs $a_1, b_1, \ldots, a_l, b_l$, and $m$ integers $c_1, \ldots, c_m$ that are part of solution to $\mathcal{L}$ {\it specify} a Dehn filling $N(p_1/q_1, \ldots, p_k/q_k)$ of a $\partial$-framed manifold $N$ where $p_j, q_j$ are equal to $\rho_j, \sigma_j$ evaluated at $\alpha_i=a_i, \beta_i=b_i, \gamma_i=c_i$.
\end{definition}

\begin{definition} \label{def:framing_pairing} Consider $\partial$-framed manifolds $N,M$, a homeomorphism $\phi:N\to M$, and a boundary torus $T\in \partial M$. Let $\mathcal{M}_{\phi, T}=\mathcal{M}_T\in SL_2(\mathbb Z)$ denote the matrix inducing the map $\phi|_{\phi^{-1}(T)}$ with respect to the $H_1$-basis on $\phi^{-1}(T), T$ given by the $\partial$-framings. A {\it $\partial$-equation} on a $\partial$-framed manifold $M$ in variables $\alpha_1, \beta_1, \ldots, \alpha_l, \beta_l$ is an equation of the form: 
$$\mathcal{M}_{\phi,T}\cdot \left(X_1\cdot \begin{pmatrix} 1 & n_1\\ 0 & 1\end{pmatrix} \cdot X_2 \cdots X_h \cdot \begin{pmatrix} 1 & n_h\\ 0 & 1\end{pmatrix} \cdot X_{h+1}\right)=X'_1\cdot \begin{pmatrix} 1 & n'_1\\ 0 & 1\end{pmatrix}\cdot X'_2 \cdots X'_{h'}\cdot \begin{pmatrix} 1 & n'_{h'}\\ 0 & 1\end{pmatrix} \cdot X'_{h'+1}, \text{ or}$$
$$\mathcal{M}_{\phi,T}\cdot \left(X_1\cdot \begin{pmatrix} 1 & n_1\\ 0 & 1\end{pmatrix} \cdots \begin{pmatrix} 1 & n_h\\ 0 & 1\end{pmatrix} \cdot X_{h+1}\right)=Y\cdot \mathcal{M}_{\phi, T'}\cdot \left(X'_1\cdot \begin{pmatrix} 1 & n'_1\\ 0 & 1\end{pmatrix} \cdots \begin{pmatrix} 1 & n'_{h'}\\ 0 & 1\end{pmatrix} \cdot X'_{h'+1}\right),$$
for $T, T'$ boundary tori of $M$; fixed matrices $X_i, X'_i, Y \in GL_2(\mathbb Z)$; and $n_i, n'_i$ integral-affine functions in variables $\alpha_1, \beta_1, \ldots, \alpha_l, \beta_l$. We say $T$ is {\it involved} in the former $\partial$-equation. Similarly we say $T$ and $T'$ are {\it involved} in the latter $\partial$-equation.

%First of all notice that given a fixed $H_1$-basis $\{b_1, b_2\}$ of a torus, any other $H_1$-basis $\{c_1, c_2\}$ corresponds to a unique matrix $X\in GL_2(\mathbb Z)$ with $c_i=X_{1,i}\cdot b_1+X_{2,i}\cdot b_i$. Consider $\partial$-framed manifolds $N,M$, a homeomorphism $\phi:N\to M$, a boundary torus $T\in \partial M$, and $B\in GL_2(\mathbb Z)$. With respect to the $\partial$-framing on $N$ the matrix $B$ corresponds to a $H_1$-basis $\mathcal{B}$ on $\phi^{-1}(T)\subset\partial L$ which is pushed forward to a $H_1$-basis $\phi(\mathcal{B})$ on $T$. Let $\mathcal{M}_{\phi, T}(B)=\mathcal{M}_T(B)\in GL_2(\mathbb Z)$ denote the matrix corresponding to $\phi(\mathcal{B})$ with respect to the $\partial$-framing on $M$. 

A {\it $\partial$-equation system} $\mathcal{F}$ in variables $\alpha_1, \beta_1, \ldots, \alpha_l, \beta_l$ is a family $\mathcal{F}$ of $\partial$-equations in variables $\alpha_1, \beta_1, \ldots, \alpha_l, \beta_l$ on a, not necessarily connected, manifold $M$ such that: 
\begin{itemize}
\item[-] Every boundary torus is involved in at most one $\partial$-equation; 
\item[-] The boundary tori appearing in a $\partial$-equation are distinct; 
%\item[-] \textcolor{red}{If a component $C$ of $M$ has $D^2\times S^1$ in its prime decomposition, then no boundary torus of $C$ is involved in a $\partial$-equation; }
\item[-] No $\partial$-equation involves a boundary torus of a solid torus $D^2\times S^1$; 
%\item[-] \textcolor{red}{If a component $C$ of $M$ has $T^2\times I$ in its prime decomposition, then a boundary torus of $C$ is only involved in a $\partial$-equation if $C\cong T^2\times I$ and the $\partial$-equation involves both tori of $C$;}
\item[-] If a $\partial$-equation involves a boundary torus of a thickened torus $T^2\times I$, then it is of the second type and involves both boundary tori of this $T^2\times I$;
\item[-] If a component of $M$ is not prime then none of its boundary tori are involved in a $\partial$-equation.
\end{itemize}
\end{definition}

\begin{example}\label{ex:partial-system} 
%Consider $\partial$-framed manifolds $M, M'$ with boundary tori $T\subset \partial M, T'\subset \partial M'$ and a homeomorphism $\tau:T\to T'$. Then a homeomorphism $\phi:M\to M'$ taking $T$ to $T'$ has $\phi|_T=\tau$ if and only if $\phi$ satisfies a $\partial$-equation of the form $\mathcal{M}_{T'}(X)=X'$. Here we take some arbitrary $H_1$-basis $\mathcal{B}$ of $T$, and let $X, X'\in GL_2(\mathbb Z)$ be the matrices expressing $\mathcal{B}, \tau(\mathcal{B})$ in terms of the $\partial$-framings. 
Consider $\partial$-framed manifolds $N, M$ each with a pair of boundary tori $T'_N, T''_N$ and $T'_M, T''_M$ and suppose we fix homeomorphisms $\tau_N:T'_N\to T''_N$ and $\tau_M:T'_M\to T''_M$. Then a homeomorphism $\phi:N\to M$ taking $T'_N$ and $T'_M$ to $T''_N$ and $T''_M$ respectively satisfies $\tau_M\circ \phi|_{T'_N}=\phi\circ \tau_N|_{T'_N}$ if and only if $Y\cdot \mathcal{M}_{\phi, T'_M}=\mathcal{M}_{\phi, T''_M}\cdot X_1$. Observe this is a $\partial$-equation of the second kind above with $h=h'=0$ and $X'_1=\text{id}$.
%
%
%
%$\phi$ satisfies a $\partial$-equation of the form . Here we let $X_1,Y\in GL_2(\mathbb Z)$, respectively, be the matrices representing $\tau_N, \tau_M$ with respect to the $H_1$-basis on $T'_N, T'_M, T''_N, T''_M$.}
\end{example}

%\begin{definition} Consider a $\partial$-framed manifold $M$ and $\partial$-equation system $\mathcal{F}$ on $M$. Consider the finest equivalence relation on $\pi_0(M)$ such that two connected components of $M$ which have boundary tori involved in the same $\partial$-equation of $\mathcal{F}$ are equivalent. Call equivalence classes of this relation {\it $\partial$-components} of $(M, \mathcal{F})$ and call $(M, \mathcal{F})$ {\it $\partial$-connected} if $M$ is a single $\partial$-component.
%\end{definition}

\begin{definition}\label{def:bound-param}
Consider (not necessarily connected) $\partial$-framed 3-manifolds $M,N$ and the following data: 
\begin{itemize}
\item[-] a $\partial$-parameterisation $(\mathcal{L}, (\rho_j, \sigma_j), \mathcal{P})$ in variables $\alpha_1, \beta_1, \ldots, \alpha_l, \beta_l, \gamma_1, \ldots, \gamma_m$,
\item[-] a $\partial$-equation system $\mathcal{F}$ on $M$ in variables $\alpha_1, \beta_1, \ldots, \alpha_l, \beta_l, \gamma_1, \ldots, \gamma_m$,
%\item[-] \textcolor{red}{a bijection $\omega:\pi_0(N)\to \pi_0(M)$}, 
\item[-] integers $a_1,b_1, \ldots, a_l, b_l, c_1, \ldots, c_m$, 
\item[-] the values $p_j, q_j$ of $\rho_j, \sigma_j$ evaluated at $\alpha_i=a_i, \beta_i=b_i, \gamma_i=c_i$.
\end{itemize}
We say that $a_1, b_1 \ldots, a_l, b_l, c_1, \ldots, c_m$ give a {\it $\partial$-framed filling} of $N$ to $(M, \mathcal{F})$ via $(\mathcal{L}, (\rho_j, \sigma_j), \mathcal{P})$ if: 
\begin{itemize}
\item[-] $\gcd(a_1, b_1)=\ldots =\gcd(a_l, b_l)=1$,
\item[-] $a_1, b_1, \ldots, a_l, b_l, c_1, \ldots, c_m$ are part of a solution to $\mathcal{L}$, 
\item[-] each $\partial$-equation of $\mathcal{F}$ has variables in at most one part of $\mathcal{P}$, distinct $\partial$-equations have variables in distinct parts of $\mathcal{P}$, and no $\rho_j$ or $\sigma_j$ has variables in the same part of $\mathcal{P}$ as a $\partial$-equation,
\item[-] the Dehn filling $N(p_1/q_1, \ldots, p_k/q_k)$, with $\partial$-framing inherited from $N$, admits a homeomorphism $\phi$ to $M$ such that: 
\begin{itemize}
%\item[-] \textcolor{red}{$\phi$ and $\omega$ induce the same bijection $\pi_0(N(p_1/q_1, \ldots, p_k/q_k))\to \pi_0(M)$},
\item[-] $\phi$ respects the ordering of boundary tori,
\item[-] all equations of $\mathcal{F}$ with respect to $\phi$ are true when evaluated at $\alpha_i=a_i, \beta_i=b_i, \gamma_i=c_i$.
\end{itemize}
\end{itemize}
\end{definition}

\begin{definition}
With the same convention as Definition \ref{def:bound-param}, we say a Diophantine system $\mathcal{D}$ {\it detects} $a_1, b_1, \ldots, a_l, b_l$ giving a $\partial$-framed filling of $N$ to $(M, \mathcal{F})$ via $(\mathcal{L}, (\rho_j, \sigma_j), \mathcal{P})$ if: 
\begin{itemize}
 \item[-] $a_1, b_1, \ldots, a_l, b_l$ are part of a solution to $\mathcal{D}$; 
 \item[-] If $a'_1, b'_1, \ldots, a'_l, b'_l, c'_1, \ldots, c'_m$ are also part of a solution to $\mathcal{D}$, then they give a $\partial$-framed filling of $N$ to $(M, \mathcal{F})$ via $(\mathcal{L}, (\rho_j, \sigma_j), \mathcal{P})$. 
\end{itemize}
\end{definition}

\begin{definition}
We say the 6-tuple $(M', N', \mathcal{F}', (\rho'_j, \sigma'_j), \mathcal{L}', \mathcal{P}')$ {\it detects} $a_1, b_1, \ldots, a_l, b_l$ giving a $\partial$-framed filling of $N$ to $(M, \mathcal{F})$ via $(\mathcal{L}, (\rho_j, \sigma_j), \mathcal{P})$ if: 
\begin{itemize}
	\item[-] $M', N'$ are $\partial$-framed 3-manifolds; $\mathcal{F}'$ is a $\partial$-equation system on $M'$ in variables $\alpha_1, \beta_1, \ldots, \alpha_l, \beta_l$, $\gamma'_1, \ldots, \gamma'_{m'}$; and $(\mathcal{L}', (\rho'_j, \sigma'_j), \mathcal{P}')$ is a $\partial$-parameterisation of $N'$ in variables $\alpha_1, \beta_1, \ldots, \alpha_l, \beta_l, \gamma'_1, \ldots, \gamma'_{m'}$;
	\item[-] there exist integers $c'_1, \ldots, c'_{m'}$ such that $a_1, b_1, \ldots, a_l, b_l, c'_1, \ldots, c'_{m'}$ give a $\partial$-framed filling of $N'$ to $(M', \mathcal{F}')$ via $(\mathcal{L}', (\rho'_j, \sigma'_j), \mathcal{P}')$; 
	\item[-] If integers $a'_1, b'_1, \ldots, a'_l, b'_l, c'_1, \ldots, c'_{m'}$ also give a $\partial$-framed filling of $N'$ to $(M', \mathcal{F}')$ via $(\mathcal{L}', (\rho'_j, \sigma'_j), \mathcal{P}')$, then exists integers $c_1, \ldots, c_m$ such that $a'_1, b'_1, \ldots, a'_l, b'_l, c_1, \ldots, c_m$ give a $\partial$-framed filling of $N$ to $(M, \mathcal{F})$ via $(\mathcal{L}, (\rho_j, \sigma_j), \mathcal{P})$. 
\end{itemize}
\end{definition}

Now that we have introduced the above setup, let us state our main technical result.

\begin{proposition}\label{prop:Rephrased_main_prop}
Consider $\partial$-framed (not necessarily connected) 3-manifolds $N, M$, a $\partial$-parameterisation $(\mathcal{L}, (\rho_j, \sigma_j), \mathcal{P})$, and a $\partial$-equation system $\mathcal{F}$ on $M$ in variables $\alpha_1,\beta_1, \ldots, \alpha_l, \beta_l, \gamma_1, \ldots, \gamma_m$. There is a mono-quadratic system $\mathcal{Q}$ which detects every $a_1,b_1, \ldots, a_l, b_l$ giving a $\partial$-framed filling of $N$ to $(M, \mathcal{F})$ via $(\mathcal{L}, (\rho_j, \sigma_j), \mathcal{P})$. Moreover, there is an algorithm that, given $M, N, \mathcal{F}, \mathcal{L}, (\rho_j, \sigma_j), \mathcal{P}$, outputs $\mathcal{Q}$. 
\end{proposition}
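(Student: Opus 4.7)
The plan is induction on a complexity measure of $N$ — say, the number of JSJ pieces together with the number of filling variables $l$. The base cases (e.g.\ $l = 0$ with $N$ a disjoint union of hyperbolic and simple Seifert-fibred pieces) are handled by direct invocation of Theorem \ref{thm:homeomorphismalgo} together with Proposition \ref{prop:SFS_linear}, Lemma \ref{lemma:lens_quadratic}, and Lemma \ref{lemma:sl2z} to translate the residual $\partial$-equation system into a mono-quadratic system.

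For the inductive step, I would first reduce to connected prime $N$ using Theorem \ref{thm:prime_decomp}: the filling can only be homeomorphic to $M$ if all but one prime factor fills to $S^{3}$, and each choice partitions the filling variables $(\rho_{j},\sigma_{j})$ between factors (updating $\mathcal{P}$ and $\mathcal{L}$ accordingly). For prime $N$, I compute the JSJ-decomposition via Theorem \ref{thm:jsjalgo}. Using Proposition \ref{prop:find_short_slopes} and Theorem \ref{theorem:longFillingGeometry} on hyperbolic JSJ pieces, and the description of fibred slopes in Section \ref{section:Seifert_section}, I produce a finite list $S$ of boundary slopes of $N$ with the following property: a Dehn filling along slopes avoiding $S$ leaves each hyperbolic JSJ piece hyperbolic and each Seifert fibred JSJ piece Seifert fibred with its Seifert structure extending the original. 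For each $s \in S$ and each relevant boundary torus, I separately handle the case where one of the filling slopes equals $s$ by adding a linear constraint to $\mathcal{L}$ and invoking induction on the filling of $N(s)$, which has fewer variables.

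For fillings avoiding $S$, the extended JSJ pieces glue back together to give either the JSJ-decomposition of the filled manifold or collapse of certain pieces to thickened tori (by Proposition \ref{prop:jsjdecompcharacterisation} and Theorem \ref{thm:nonisomorphicSFS}). I enumerate the finitely many bijections between JSJ pieces of $N$ and of $M$ and, for each, cut $N$ and $M$ along their JSJ tori. This converts the gluing data into $\partial$-equations of Definition \ref{def:framing_pairing}: ordinary gluings contribute constant $X_{i}$, while a thickened-torus collapse contributes $\partial$-equations of the second type whose matrix $\begin{pmatrix} 1 & a \\ 0 & -1 \end{pmatrix}$ has $a$ an integral-affine function of the filling variables, computed as in Example \ref{ex:thickenedtorus}. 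After cutting, each component is hyperbolic (handled via Theorem \ref{thm:homeomorphismalgo} and Proposition \ref{prop:list_MCG} to enumerate possible $\partial$-framings) or Seifert fibred with a Seifert structure whose data are integral-affine functions of the filling variables. Proposition \ref{prop:SFS_linear}, combined with Lemma \ref{lemma:lens_quadratic} for lens-space factors and the enumeration in Theorem \ref{thm:nonisomorphicSFS} of non-unique Seifert structures, then produces a mono-quadratic system detecting slope-isomorphism piece-by-piece. The residual $\partial$-equation system is turned into a linear system via Lemma \ref{lemma:sl2z} applied to the matrix conjugacy/equality problem. Taking the conjunction over all pieces and the disjunction over all matchings and all choices of whether filling slopes lie in $S$ yields the required mono-quadratic $\mathcal{Q}$, and at each stage the construction is algorithmic.

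The main obstacle is the handling of manifolds with non-unique Seifert fibrations. Lens spaces are genuinely parameterised by a quadratic Diophantine equation (Lemma \ref{lemma:lensspacecharacterisation}), and this is the source of mono-quadratic rather than purely linear conditions in the final answer, as Example \ref{ex:monoquad_needed} already illustrates. The remaining exceptional pieces from Theorem \ref{thm:nonisomorphicSFS} — solid tori, $K^{2}\tilde{\times} I$, and torus bundles with reducible or periodic monodromy (Proposition \ref{Recognising_torus_bundle}) — must each be treated individually using the explicit coordinate changes in Lemma \ref{lemma:thickenedkleinbottlestructures} and Example \ref{ex:thickenedtorus}. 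A secondary bookkeeping difficulty is preserving the partition $\mathcal{P}$ under all these reductions: every new auxiliary variable introduced (for example when resolving an $H_{1}$-basis ambiguity) must be placed into a single part of $\mathcal{P}$, no quadratic equation may acquire variables from two parts, and the coprimality clauses in the $\partial$-parameterisation must be maintained — this is the technical content that makes the formalism of Subsection \ref{subsection:formalism} necessary in the first place.
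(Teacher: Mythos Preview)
Your outline is essentially the paper's own argument: induction via prime decomposition, JSJ decomposition, a finite list of exceptional/fibred slopes handled separately, cutting along JSJ tori to convert gluings into $\partial$-equations, and then piece-by-piece analysis using Proposition~\ref{prop:SFS_linear}, Lemma~\ref{lemma:lens_quadratic}, Lemma~\ref{lemma:sl2z}, and the exceptional Seifert structures of Theorem~\ref{thm:nonisomorphicSFS}. The paper packages the JSJ-exceptional reductions (solid-torus collapse, $K^2\tilde\times I$, chains of thickened tori) into a separate Lemma~\ref{lemma:technicallemma}, but the content is what you describe.

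One correction: your prime-decomposition step is misstated. It is not true that ``all but one prime factor fills to $S^3$'' unless $M$ is itself prime. In general $M = M^{(1)}\#\cdots\# M^{(b)}$, and the filling of $N = N^{(1)}\#\cdots\# N^{(a)}$ is the connect sum of the fillings of the $N^{(i)}$; so you must enumerate ordered partitions of $\{1,\dots,b\}$ into $a$ (possibly empty) blocks and require $N^{(i)}$ to fill to $\#_{j\in P_i} M^{(j)}$. Also, the induction parameter should be $k$ (the number of pairs $(\rho_j,\sigma_j)$), not $l$: when you specialise a filling slope you drop a pair $(\rho_h,\sigma_h)$ and add a linear constraint to $\mathcal{L}$, which decreases $k$ but leaves $l$ unchanged.
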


Observe that Theorem \ref{prop:mainprop} for $M$ follows directly from Proposition \ref{prop:Rephrased_main_prop} by setting $\mathcal{L},\mathcal{F}=\emptyset, k=l, m=0, \rho_1=\alpha_1, \sigma_1=\beta_1, \ldots, \rho_k=\alpha_l, \sigma_k=\beta_l$, and $\mathcal{P}=\{\{\alpha_i, \beta_i\} \mid i=1, \ldots, l\}$.\\

% \textcolor{red}{Let us briefly show how Theorem \ref{prop:mainprop} for general $M$ follows from Proposition \ref{prop:Rephrased_main_prop}.}
 
% \begin{proof}[Proof of Theorem \ref{prop:mainprop} assuming Proposition \ref{prop:Rephrased_main_prop}]
%\textcolor{red}{Suppose that $N,M$ are connected. By Theorem \ref{thm:prime_decomp}, we compute the prime decompositions $N=N^{(1)}\#\ldots \#N^{(a)},$ $M=M^{(1)}\# \ldots\#M^{(b)}$. Any Dehn filling of $N$ will be the connect sum of the corresponding Dehn fillings of $N^{(i)}$. Hence fix an ordered partition $\mathcal{P}=(P_1, \ldots, P_a)$ of $\{1, \ldots, b\}$ into $a$, possibly empty, sets. By Proposition \ref{prop:Rephrased_main_prop} with $\mathcal{L},\mathcal{F}=\emptyset, k=l$ we may find a mono-quadratic system $\mathcal{Q}_\mathcal{P}$ satisfying the theorem with $N$ replaced by $\bigsqcup_{i=1}^a N^{(i)}$ and $M$ replaced by $\bigsqcup_{i=1}^a\#_{j\in P_i}M^{(j)}$. Here we set the empty connect sum $\#_{j\in \emptyset}M^{(j)}$ to be $S^3$. Then $\mathcal{Q}:=\bigvee_\mathcal{Q}\mathcal{Q}_{\mathcal{P}}$, where we index over all ordered partitions $\mathcal{P}$, would give the desired mono-quadratic system $\mathcal{Q}$. We similarly find $\mathcal{Q}$ if $N,M$ are not connected. Hence we may assume $N$ is prime, that is admits no essential spheres. }\\
% \end{proof}

In the upcoming proof, we want to assume that the JSJ-decomposition of $N$ is preserved under Dehn filling. Lemma \ref{lemma:technicallemma} will allow us to do so or else detect the coefficients of the Dehn filling by a 6-tuple of lower ``complexity''. Let us first specify what we mean by the JSJ-decomposition being preserved under Dehn filling.

\begin{definition}
A Dehn filling of manifold $N$ admitting a JSJ-decomposition $\mathcal{T}_N$ to another manifold $M$ is not {\it JSJ-exceptional} if:
\begin{enumerate}
 \item[-] The image of $\mathcal{T}_N$ is a JSJ-decomposition of $M$; 
 \item[-] Hyperbolic JSJ-components of $N$ become hyperbolic JSJ-components of $M$; 
 \item[-] Any Seifert structure on a JSJ-components of $N$ extends to a Seifert structure on the resulting JSJ-components of $M$. 
\end{enumerate}
\end{definition}

\begin{lemma}\label{lemma:technicallemma}
Consider a $\partial$-framed 3-manifold $N$ with boundary, no component of which is $T^2\times I$ or $D^2\times S^1$, and which admits a JSJ-decomposition $\mathcal{T}_N$. Consider also a $\partial$-framed 3-manifold $M$, a $\partial$-equation system $\mathcal{F}$ on $M$, and a $\partial$-parameterisation $(\mathcal{L}, (\rho_j, \sigma_j), \mathcal{P})$ in variables $\alpha_1,\beta_1, \ldots, \alpha_l, \beta_l, \gamma_1, \ldots, \gamma_m$. There exists a finite list of 6-tuples $\mathcal{N}_{red}:=(N_i, M_i, \mathcal{F}_i, (\rho_j^{(i)}, \sigma_j^{(i)}), \mathcal{L}_i, \mathcal{P}_i)_{i\in I}$ such that:
\begin{itemize}
 \item[-] for each $i\in I$: $M_i, N_i$ are $\partial$-framed manifolds, $\mathcal{F}_i$ is a $\partial$-equation system on $M_i$, and each $(\mathcal{L}_i, (\rho_j^{(i)}, \sigma_j^{(i)}), \mathcal{P}_i)$ is a $\partial$-parameterisation in variables $\alpha_1,\beta_1, \ldots, \alpha_l, \beta_{l}, \gamma^{(i)}_1, \ldots, \gamma^{(i)}_{m_i}$; 
 \item[-] If $a_1, b_1, \ldots, a_l, b_l, c_1, \ldots, c_m$ give a JSJ-exceptional $\partial$-framed filling of $N$ to $(M, \mathcal{F})$ via $(\mathcal{L}, (\rho_j, \sigma_j), \mathcal{P})$, then $a_1, b_1, \ldots, a_l, b_l$ are detected by some 6-tuple of $\mathcal{N}_{red}$; 
 \item[-] If for some $i\in I$ some integers $a^{(i)}_1, b^{(i)}_1, \ldots, a^{(i)}_l, b^{(i)}_l, c^{(i)}_1, \ldots, c^{(i)}_{m_i}$ give a $\partial$-framed filling of $N_i$ to $(M_i, \mathcal{F}_i)$ via $(\mathcal{L}_i, (\rho^{(i)}_j, \sigma^{(i)}_j), \mathcal{P}_i)$, then exists integers $c_1, \ldots, c_m$ such that $a^{(i)}_1, b^{(i)}_1, \ldots, a^{(i)}_l, b^{(i)}_l, c_1, \ldots, c_m$ give a $\partial$-framed filling of $N$ to $(M, \mathcal{F})$ via $(\mathcal{L}, (\rho_j, \sigma_j), \mathcal{P})$,
 \item[-] Each $N_i$ has at most as many boundary tori as $N$, 
 \item[-] Each $N_i$ has fewer boundary tori than $N$ or has a JSJ-decomposition with fewer JSJ-tori than $N$,
 \item[-] There is an algorithm that, given $N, M, \mathcal{F}, \mathcal{L}, (\rho_j, \sigma_j), \mathcal{P}$, outputs such $\mathcal{N}_{red}.$
\end{itemize}
\end{lemma}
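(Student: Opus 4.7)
The plan is to enumerate a finite collection of ways in which a filling of $N$ can be JSJ-exceptional, producing one reduction 6-tuple per type. There are two fundamental kinds of reduction. The first, which I call a \emph{slope restriction}, consists of specifying the filling slope on one boundary torus of $N$ to a particular value $u/v$: we add the linear constraint forcing $(\rho_j, \sigma_j) = \pm(u,v)$ to $\mathcal{L}$ using Lemma \ref{lemma:linearSystemGeneralise}, perform the Dehn filling of that torus in $N$, and obtain a new manifold $N_i$ with one fewer boundary torus. The second, a \emph{piece collapse}, is used when some Seifert-fibred JSJ-component $A$ of $N$ becomes $D^2 \times S^1$ or $T^2 \times I$ under the filling: then $A$ is absorbed into its JSJ-neighbours, eliminating the JSJ-tori on $\partial A$ and reducing the JSJ-torus count of $N_i$, while the change-of-coordinates induced by collapsing $A$ becomes a new $\partial$-equation added to $\mathcal{F}_i$.

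First I would use Theorems \ref{thm:jsjalgo}, \ref{SFS_recognition}, and \ref{thm:hyperbolicTriangulation} to compute the JSJ-decomposition $\mathcal{T}_N$ and the JSJ-components $A_1, \ldots, A_s$, classifying each as hyperbolic or Seifert fibred and recording Seifert data or a geometric triangulation. Then I enumerate the JSJ-exceptional cases guided by Proposition \ref{prop:jsjdecompcharacterisation}. The first case is that a hyperbolic $A_j$ has a boundary torus $T \subset \partial N$ filled along an exceptional slope; by Theorem \ref{thm:hyperbolicDehnfilling} together with Proposition \ref{prop:find_short_slopes} these slopes form a finite computable list, and each yields a slope restriction. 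The second case is that a Seifert-fibred $A_j$ has a boundary torus in $\partial N$ filled along the fibred slope of that torus; again each such slope yields a slope restriction.

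The third and most delicate case is that a Seifert-fibred $A_j$ collapses to a solid or thickened torus after filling. By Examples \ref{ex:solidtorus} and \ref{ex:thickenedtorus}, this requires all but at most one of the filling slopes on $\partial A_j \cap \partial N$ to have the form $(a, \pm 1)$ in the basis given by the Seifert fibration, and the base orbifold must be planar. There are only finitely many combinatorial patterns under which this can occur, indexed by the choice of which slope is ``singular''. For each such pattern I add the corresponding linear constraints to $\mathcal{L}$ and carry out a piece collapse. When $A_j$ becomes a thickened torus its two JSJ-neighbours are reglued via a matrix of the form $\begin{pmatrix} 1 & a \\ 0 & -1 \end{pmatrix}$, with $a$ an integral-affine function of the remaining free filling coefficients; this is precisely the form of a permitted $\partial$-equation, so the regluing is recordable in $\mathcal{F}_i$. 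When $A_j$ becomes a solid torus it absorbs into its single JSJ-neighbour, the contractible slope of $A_j$ (computable via Example \ref{ex:solidtorus}) becomes a slope restriction across the vanishing JSJ-torus, and the JSJ-torus count again drops. The remaining case, that adjacent Seifert pieces acquire isotopic fibres across a JSJ-torus without any piece collapsing, is determined entirely by the initial Seifert data because non-fibred fillings preserve the fibre on every other torus; it is therefore either always or never the case, and when it occurs we simply merge the two pieces, dropping the JSJ-torus count.

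The main obstacle I expect is the bookkeeping in the third case: tracking how the free parameters from a collapsed $T^2 \times I$ propagate into the integral-affine entries of the new regluing $\partial$-equation, while simultaneously updating the partition $\mathcal{P}$ and preserving all the axioms of Definitions \ref{def:framing_pairing} and \ref{def:bound-param}. Additional care is required to verify that after the reduction the new 6-tuple still satisfies the coprimality implication linking $(\rho_j, \sigma_j)$ to $(\alpha_i, \beta_i)$, and that distinct $\partial$-equations in $\mathcal{F}_i$ use variables in distinct parts of $\mathcal{P}$. Once these invariants are maintained, the lemma follows by letting $\mathcal{N}_{\mathrm{red}}$ be the union of the 6-tuples produced across all cases.
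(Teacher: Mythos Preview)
Your overall architecture matches the paper's closely: both proceed by building $\mathcal{N}_{\mathrm{red}}$ case by case, first handling exceptional slopes in hyperbolic pieces and fibre-parallel slopes in Seifert pieces via slope restrictions, then treating collapses to $D^2\times S^1$ and $T^2\times I$. However, there is one genuine gap in your case analysis and one point where your description of the reduction is not quite right.

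The missing case is that a Seifert JSJ-component $A_j$ fills (non-fibre-parallel) to $K^2\tilde\times I$. This is neither a solid torus nor a thickened torus, so it is not covered by your ``piece collapse'', and yet by Lemma~\ref{lemma:thickenedkleinbottlestructures} it carries a second Seifert structure whose fibred slope is different from the one inherited from $A_j$. Hence your claim in the ``remaining case'' --- that whether adjacent Seifert pieces acquire matching fibres across a JSJ-torus is determined by the initial Seifert data alone --- is false: after filling, $A_j\cong K^2\tilde\times I$ may line up with its neighbour via the \emph{other} fibration, and whether this happens depends on the filling coefficients through an integral-affine expression. The paper devotes a separate step to this, producing 6-tuples in which $A_j$ is replaced by a Seifert piece over $\text{M\"ob}\#_\partial\Sigma$ with an extra puncture carrying a new slope variable.

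The second issue is in your description of the $T^2\times I$ collapse. You say the two JSJ-neighbours of $A_j$ are reglued via a matrix $\begin{pmatrix}1 & a\\ 0 & -1\end{pmatrix}$ and that this is ``recordable in $\mathcal{F}_i$''. But $\mathcal{F}_i$ is a $\partial$-equation system on $M_i$, and the torus across which you want to reglue is a JSJ-torus of $N$, not a boundary torus of $M$. To make this work you must either (i) also cut $M$ along a corresponding JSJ-torus $T\in\mathcal{T}_M$, set $M_i=M-T$, $N_i=N-A_j$, and add the $\partial$-equation $\mathcal{M}_{T'}=\mathcal{M}_{T''}\cdot Z$ on the two new boundary tori of $M_i$; or (ii) when the fibres of the neighbours line up through the collapsed $T^2\times I$, merge $A_{j-1}\cup A_j\cup A_{j+1}$ into a single explicit Seifert piece over $\Sigma\#_\partial\Sigma'$ with a new auxiliary variable. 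The paper does both, and further distinguishes the edge cases where a chain of adjacent pieces all become $T^2\times I$ and the chain terminates at $\partial N$, is adjacent to a hyperbolic piece, or closes up into a torus bundle. Your single-piece-at-a-time description could in principle be iterated through the outer induction, but as stated it does not yield a well-defined $N_i$ because the regluing map depends on the filling parameters.
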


We now prove Proposition \ref{prop:Rephrased_main_prop} assuming Lemma \ref{lemma:technicallemma} and leave the proof of Lemma \ref{lemma:technicallemma} to Subsection \ref{section:Lemma_proof}. However, first let us consider some examples that illustrate the setup and parts of the proof.

\subsection{Examples of Recognising Dehn Ancestry}\label{subsection:examples}
The following two examples serve to illustrate what the data in a 6-tuple $(N, M, \mathcal{F}, (\rho_j,\sigma_j), \mathcal{L}, \mathcal{P})$ might look like, but also to observe some of the main phenomena that will appear in the upcoming proofs. 

\begin{example} Consider the following manifolds: 
$$N:=N_1 \sqcup N_2\sqcup N_3, \text{ for }N_1:=T^2\times I, N_2:=S^3-K_{9\_32}, N_3:=S^3-K_{9\_33},$$
$$M:=M_1 \sqcup M_2\sqcup M_3, \text{ for }N_1:=S^3, M_2:=S^3-K_{9\_32}, M_3:=S^3-K_{9\_33}.$$
Here $K_{9\_32}, K_{9\_32}$ are nine-crossing knots of the knot census. Consider the following data: 
\begin{itemize}
\item[-] The $\partial$-framing on $N$ is given as follows: Let $T_0^N:=T^2\times \{0\}, T_1^N:=T^2\times \{1\}\subset \partial N_1, T_2^N:=\partial N_2$, and $T_3^N:=\partial N_3$ denote the boundary tori of $N$. Order the boundary tori as $T_0^N, T_1^N, T_2^N, T_3^N$. Choose arbitrary $H_1$-basis on $T_2^N, T_3^N$. Choose $H_1$-basis on $T_0^N, T_1^N$ given by a Seifert structure on $T^2\times I$; 
\item[-] The $\partial$-framing on $M$ is given as follows: Let $T_2^M:=T_2^N, T_3^M:=T_3^N$ denote the boundary tori of $M$. Order the boundary tori as $T_2^M, T_3^M$. Choose the same $H_1$-basis on $T_2^M, T_3^M$ as we chose on $T_2^N, T_3^N$; 
\item[-] Consider the following $\partial$-parametrisation $(\mathcal{L}, (\rho_j, \sigma_j), \mathcal{P})$ in variables $\alpha_1, \beta_1, \ldots, \alpha_4, \beta_4, \gamma_1$: 
\begin{itemize}
\item[-] $\mathcal{L}$ consists of the equations $\{\alpha_1-\beta_2=1, \beta_1=1, \alpha_2=1, \alpha_3=2, \beta_3\equiv 1\pmod 2\}$; 
\item[-] $(\rho_j, \sigma_j)$ consists of the integral-affine functions $\rho_1=\alpha_1+3\beta_2, \sigma_1=2\alpha_1+7\beta_2, \rho_2=\alpha_4, \sigma_2=\beta_4$; 
\item[-] $\mathcal{P}$ is the following partition $\{\{\alpha_1, \beta_1, \alpha_2, \beta_2\}, \{\alpha_3, \beta_3, \gamma_1\}, \{\alpha_4, \beta_4\}\}$; 
\end{itemize}
\item[-] $\mathcal{F}$ consists of the single $\partial$-equation $\mathcal{M}_{T_2^M}\cdot \begin{pmatrix}1 & 7\alpha_3+\beta_3-11\gamma_1+2\\ 0 & 1\end{pmatrix}=\mathcal{M}_{T_3^M}.$
\end{itemize}
We can check that this data satisfies all the condition of Proposition \ref{prop:Rephrased_main_prop}. In this special case, Proposition \ref{prop:Rephrased_main_prop} states that there is a mono-quadratic system $\mathcal{Q}$ such that values $a_1, b_1, \ldots, a_4, b_4, c_1$ of $\alpha_1, \beta_1, \ldots, \alpha_4, \beta_4, \gamma_1$ are part of a solution to $\mathcal{Q}$ if and only if the following holds: 
\begin{itemize}
\item[-] $\gcd(a_1, b_1)=\ldots =\gcd(a_4, b_4)=1$; 
\item[-] $a_1-b_2=1, b_1=1, a_2=1, a_3=2$, and $b_3\equiv 1 \pmod 2$; 
\item[-] There is a homeomorphism respecting $\partial$-framings: $$\phi:N_1\left(\frac{a_1+3b_2}{2a_1+7b_2}, \frac{a_4}{b_4}\right) \sqcup N_2\sqcup N_2\to M;$$ 
\item[-] Moreover if for the above $\phi$, the induced boundary maps $\phi|_{T_2^N}:T_2^N\to T_2^M, \phi|_{T_3^N}:T_3^N\to T_3^M$ are given by the matrix $Z_2, Z_3$ with respect to the chosen $H_1$-basis, then $$Z_2\cdot \begin{pmatrix}1 & 7a_3+b_3-11c_1+2\\ 0 & 1\end{pmatrix}=Z_3.$$
\end{itemize}
Let us find such $\mathcal{Q}$ in this example. As can be verified by SnapPy \cite{SnapPy} the mapping class groups of both $N_2$ and $N_3$ are trivial. Hence for any such homeomorphism $\phi$ we know that $Z_2=Z_3=1$ and thus the $\partial$-equation becomes $7a_3+b_3-11c_1+2=0$. Also observe that if $a_1, b_1, \ldots, a_4, b_4, c_1$ are part of a solution to $\mathcal{L}$, then $\gcd(a_1, b_1)=\ldots=\gcd(a_3, b_3)=1$. In summary we want $\mathcal{Q}$ such that $a_1, b_1, \ldots, a_4, b_4, c_1$ is part of a solution to $\mathcal{Q}$ if and only if we have: 
\begin{itemize}
\item[-] $\gcd(a_4, b_4)=1$; 
\item[-] $a_1-b_2=1, b_1=1, a_2=1, a_3=2, b_3\equiv 1\pmod 2, 7a_3+b_3-11c_1+2=0$; 
\item[-] $N_1\left(\frac{a_1+3b_2}{2a_1+7b_2}, \frac{a_4}{b_4}\right)\cong S^3$. 
\end{itemize}
By construction $N_1\left(\frac{a_1+3b_2}{2a_1+7b_2}, \frac{a_4}{b_4}\right)$ has a Seifert structure with data $\left[S^2, \frac{a_1+3b_2}{2a_1+7b_2}, \frac{a_4}{b_4}\right]$. By Lemma \ref{lemma:lensspacecharacterisation} this is Seifert data for $S^3$ if and only if $b_4\cdot (a_1+3b_2)+a_4\cdot (2a_1+7b_2)=\pm 1$. Hence our desired mono-quadratic system is: 
$$\{\alpha_1-\beta_2=1\} \wedge \{\beta_1=1\}\wedge \{\alpha_2=1\}\wedge \{\alpha_3=2\}\wedge \{\beta_3\equiv 1\text{ (mod } 2)\}\wedge \{7\alpha_3+\beta_3-11\gamma_1+2=1\}\wedge$$ $$\{\beta_4(\alpha_1+3\beta_2)+\alpha_4(2\alpha_1+7\beta_2)=\pm 1\}.$$
%
%
%
%\\
%
%
%
%
%By Lemma \ref{lemma:lens_quadratic} there is a mono-quadratic system $\mathcal{Q}'$ in variables $\alpha_1, \beta_2, \alpha_4, \beta_4$ such that $a_1, b_2, a_4, b_4$ are part of a solution to $\mathcal{Q}'$ if and only if the first and third bullet point of the above list are true. Adding to $\mathcal{Q}'$ the linear equations appearing in the second bullet point of the above list, we obtain the desired mono-quadratic system $\mathcal{Q}$. 
\end{example}

\begin{example} We will now construct manifolds $N_1, N_2, N_3, N_4, M_1, M_2$ and homeomorphisms $\psi_1, \psi_2, \psi_3, \theta$ between some of their boundary tori. Having done so we will consider the following manifolds obtained by gluing the $N_i, M_i$ along the homeomorphisms: $$N:=N_1\cup_{\psi_1}N_2\cup_{\psi_2} N_3\cup_{\psi_3}N_4 \text{ and } M:=M_1\cup_\theta M_2.$$
Let $\Sigma_{g,n}$ denote the compact surface of genus $g$ with $n$ boundary components. Define $N_1, N_2, N_3, N_4$ as: 
$$N_1:=[\Sigma_{2,1}, 5/8];~ N_2:=[\Sigma_{0,3}]=\Sigma_{0, 3}\times S^1; ~N_3:=[\Sigma_{2,2}, -1/4]; ~N_4:=S^3-L,$$
for $L$ the link shown in Figure \ref{fig:link}. 
\vskip 0.2cm
{\centering
\includegraphics[width=8cm]{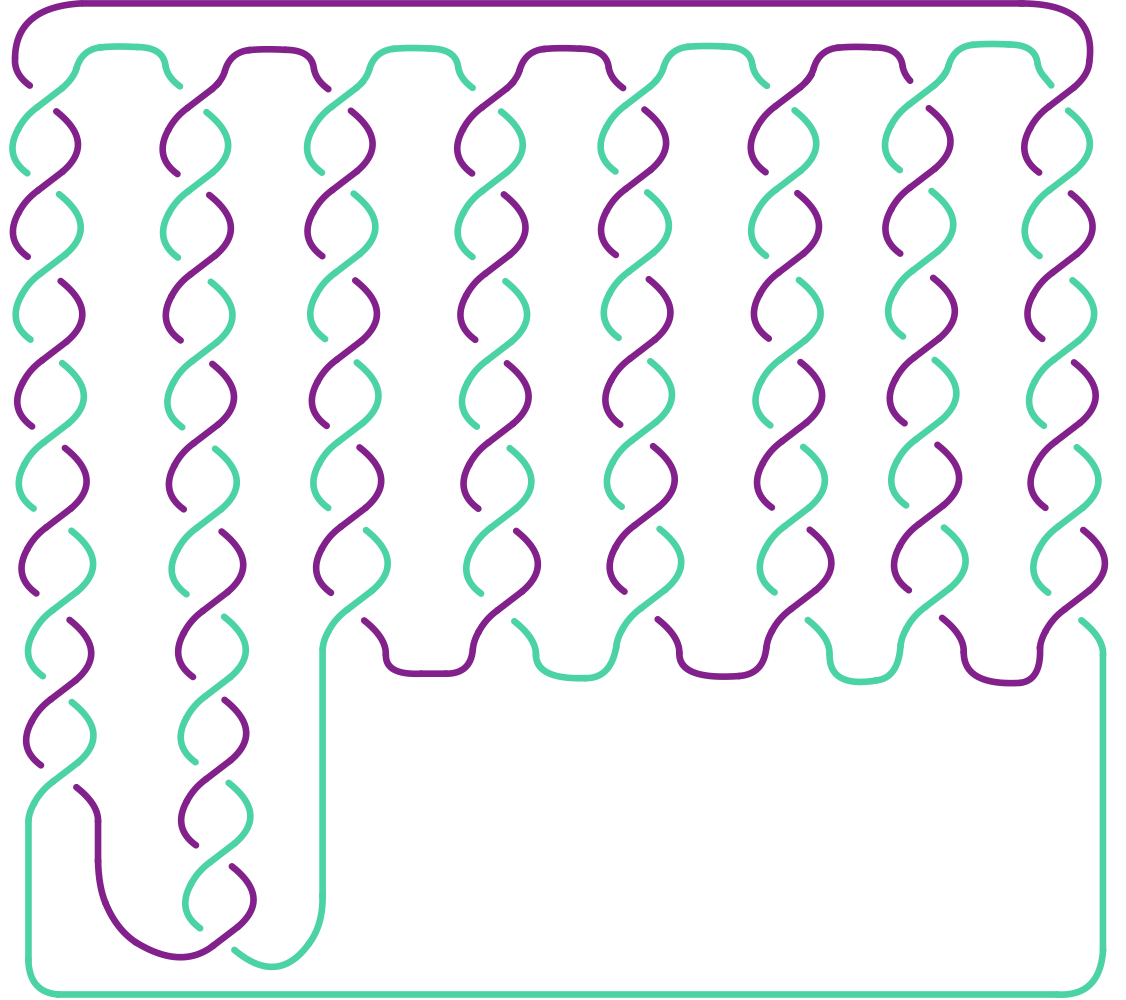}
\captionof{figure}{Link $L$ in $S^3$ with first and second components coloured green and purple respectively.}
\label{fig:link}
}
\vskip 0.2cm
Let us denote the boundary torus of $N_1$ by $T_1^N$, the boundary tori of $N_2$ by $T_2^N, T_3^N, T_4^N$, and the boundary tori of $N_3$ by $T_5^N, T_6^N$. Finally let $T_7^N$ and $T_8^N$ be the first and second boundary torus of $N_4$, with ordering as given by the SnapPy. Equip the boundary tori of $N_1, N_2, N_3$ with the $H_1$-basis given by the above Seifert structures. Equip the boundary tori of $N_4$ with the $H_1$-basis given by meridian and longitude in $S^3$. With respect to these $H_1$-basis we define homeomorphisms: 
$$\psi_1:T_1^N\to T_2^N, \psi_2:T_4^N\to T_5^N, \psi_3:T_6^N\to T_7^N \text{ respectively given by } \begin{pmatrix} -9 & -22 \\ -7 & -17\end{pmatrix}, \begin{pmatrix} 3 & 1 \\ 7 & 2\end{pmatrix}, \begin{pmatrix} 1 & 1 \\ 3 & 2\end{pmatrix}.$$
We now define $N$, as mentioned above, as the gluing of $N_1, N_2, N_3, N_4$ by $\psi_1, \psi_2, \psi_3$. We take the $\partial$-framing on $N$ with $H_1$-basis as discussed above and the boundary tori ordered as $T_3^N$ first and $T_5^N$ second. \\
\vskip 0.2cm
{\centering
\includegraphics[width=8cm]{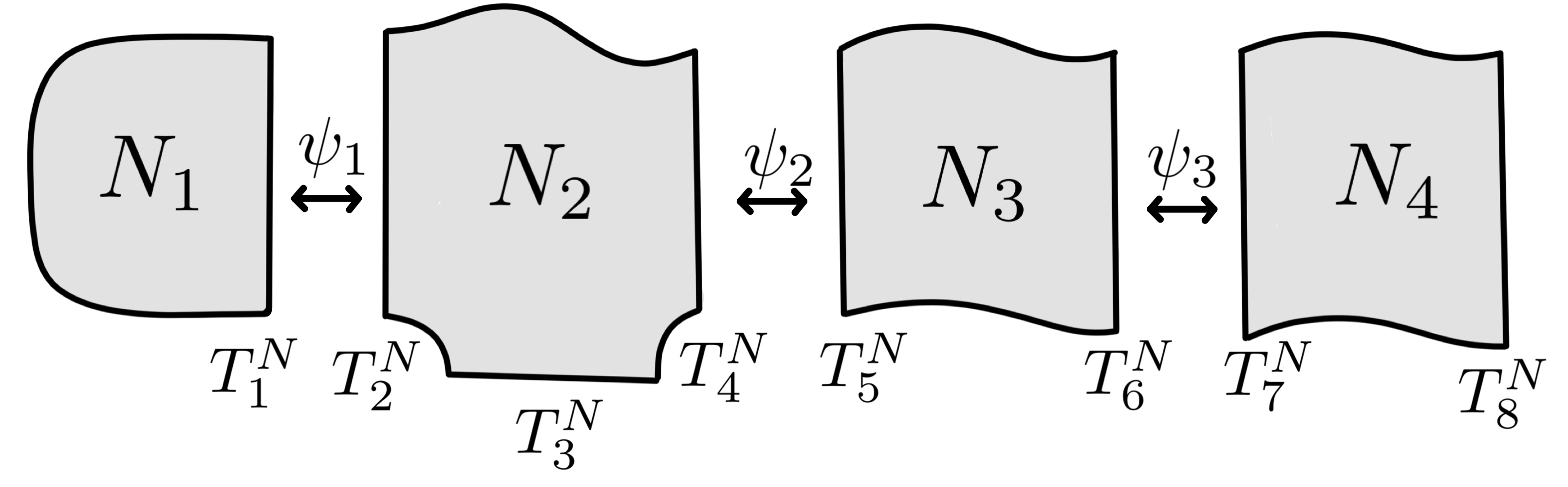}
\captionof{figure}{A diagrammatic representation of $N_1, N_2, N_3, N_4$, their boundary tori, and gluing maps between them.}
\label{fig_diagram_N}
}
\vskip 0.2cm
Similarly let us construct $M$. Define $M_1, M_2$ as: 
$$M_1:=[\Sigma_{4,1}, -3/4, 1/8], ~ M_2:=N_4(\cdot, 1/1).$$
Let $T_1^M:=\partial M_1$ and $T_2^M:=\partial M_2$. Equip $T_1^M$ with the $H_1$-basis given by the above Seifert structure and equip $T_2^M$ with the $H_1$-basis given by SnapPy. With respect to these $H_1$-basis let $\theta:T_1^M\to T_2^M$ be the homeomorphism given by the matrix $\begin{pmatrix} 1 & 1 \\ 3 & 2\end{pmatrix}$. Then, as above, define $M$ to be the gluing of $M_1, M_2$ along $\theta$. \\
We need to choose some $\partial$-equation system $\mathcal{F}$ and some $\partial$-parametrisation $(\mathcal{L}, (\rho_j, \sigma_j), \mathcal{P})$ in variables $\alpha_1, \beta_1, \alpha_2, \beta_2$. In this example we make the simplest choice, namely: 
\begin{itemize}
\item[-] Let $\mathcal{F}=\emptyset, \mathcal{L}=\emptyset$; 
\item[-] Let $(\rho_j, \sigma_j)$ consist of the functions $\rho_1:=\alpha_1, \sigma_1:=\beta_1, \rho_2:=\alpha_2, \sigma_2:=\beta_2$; 
\item[-] Let $\mathcal{P}$ be the partition $\{\{\alpha_1, \beta_1\}, \{\alpha_2, \beta_2\}\}$. \\
\end{itemize}

In this special case Proposition \ref{prop:Rephrased_main_prop} states that there is a mono-quadratic system $\mathcal{Q}$ such that values $a_1, b_1, a_2, b_2$ of $\alpha_1, \beta_1, \alpha_2, \beta_2$ are part of a solution to $\mathcal{Q}$ if and only if $\gcd(a_1, b_1)=\gcd(a_2, b_2)=1$ and $N(a_1/b_1, a_2/b_2)\cong M$. Let us find $\mathcal{Q}$ by hand in this example. \\

Before we begin, let us mention that we choose $N_4$ to be particularly well-behaved under Dehn fillings. As we see in Figure \ref{fig:link}, the link $L$ has an alternating, prime, and twist-reduced diagram such that every twist region has at least 7 crossings and each component of $L$ passes through 8 twist regions. For an introduction to this notation see \cite{Purcell_Futer_No_Exceptional}. It is shown in \cite[Theorem 1.7]{Purcell_Futer_No_Exceptional} that therefore any non-meridian Dehn filling of $S^3-L$ is hyperbolic. In particular, $N_4(\cdot, a_2/b_2)$ with $b_2\neq 0$ is hyperbolic. We can use SnapPy to check that $M_2=N_4(\cdot, 1/1)$ has trivial isometry group. \\

{\bf Case 1.} $a_1=\pm 1, b_1=0$, that is we fill along the fibred slopes in $N_2$: Hence we want to find $a_2, b_2$ such that $\gcd(a_2, b_2)=1$ and $N(1/0, a_2/b_2)\cong M$.\\

Aside on notation: In the previous subsection we introduced notation to describe that we have simplified our manifolds $N,M$. As an aside let us describe what this notation looks like in case 1. All $a_1, b_1, a_2, b_2$ with $a_1=\pm 1, b_1=0$ giving a $\partial$-framed filling of $N$ to $(M, \mathcal{F})$ via $(\mathcal{L}, (\rho_j, \sigma_j), \mathcal{P})$ are detected by the 6-tuple $(M', N', \mathcal{F}', \mathcal{L}', (\rho'_j, \sigma'_j), \mathcal{P}')$ with: 
\begin{itemize}
\item[-] $M':=M, N':=N(1/0, \cdot)$ with $\partial$-framings inherited from $N,M$; 
\item[-] $\mathcal{F}':=\emptyset$ and $\mathcal{P}':=\mathcal{P}$; 
\item[-] $\mathcal{L}:=\{\alpha_1=\pm 1\}\wedge \{\beta_1=0\}$; 
\item[-] The functions $(\rho'_j, \sigma'_j)$ are the pair $\rho_2, \sigma_2$. 
\end{itemize}
This notation might seem like an absurd way of saying ``we want to find $a_2, b_2$ such that $\gcd(a_2, b_2)=1$ and $N(0/1, a_2/b_2)\cong M$''. In a more complicated case, where we keep track of more complicated partitions and linear systems, this notation is necessary. Let us now return to Case 1. \\

Observe that the Dehn filling of $N_2=(\Sigma_{0,3}\times S^1)(\cdot, 1/0, \cdot)$ along the fibred slopes in $T_3^N$ is homeomorphic to complement of the two-component unlink in $S^3$, also known as $(D^2\times S^1)\#(D^2\times S^1)$. Moreover, under this homeomorphisms the fibred slopes $\lambda_2\in T_2^N, \lambda_4\in T_4^N$ become the curves $\partial D^2\times \{1\}\subset \partial (D^2\times S^1)$ in each prime-component. Hence $N_1\cup_{\psi_1}N_2(\cdot, 1/0, \cdot )\cup_{\psi_2} N_3$ is homeomorphic to the connect sum of a Dehn filling of $N_1$ and a Dehn filling of $N_3$. The Dehn filling of $N_1$ will be along the slope $\psi_1^{-1}(\lambda_2)$ and the Dehn filling of $N_2$ will be along the slope $\psi_2(\lambda_4)$. In terms of the $H_1$-basis on $T_1^N$, $\psi_1^{-1}(\lambda_2)$ has coordinates $\begin{pmatrix} -9 & -22 \\ -7 & -17\end{pmatrix}^{-1}\cdot \dbinom{1}{0}=\dbinom{17}{-7}$. In terms of the $H_1$-basis on $T_5^N$, $\psi_2(\lambda_4)$ has coordinates $\begin{pmatrix} 3 & 1 \\ 7 & 2\end{pmatrix}\cdot \dbinom{1}{0}=\dbinom{3}{7}$. In summary: $$N_1\cup_{\psi_1}N_2(\cdot, 1/0, \cdot )\cup_{\psi_2} N_3\cong [\Sigma_{2,0}, 5/8, -17/7]\#[\Sigma_{2,1}, -1/4, 3/7].$$
Observe that $[\Sigma_{2,0}, 5/8, -17/7]\not\cong S^3$ and $[\Sigma_{2,1}, -1/4, 3/7]\not\cong D^2\times S^1.$ If $b_2\neq 0$ then recall that $N_4(\cdot, a_2/b_2)$ is hyperbolic. In particular, $[\Sigma_{2,1}, -1/4, 3/7]\cup_{\psi_3}N_4(\cdot, a_2/b_2)$ has non-trivial JSJ-decomposition and in particular is not $S^3$. On the other hand, we see that $N_4(\cdot, 1/0)$ is the complement in $S^3$ of one of the components of the link $L$. Each component of $L$ is individually unknotted and therefore $N_4(\cdot, 1/0)\cong D^2\times S^1$ with contractible slope $\mu$ having coordinates $(0,1)$ with respect to the $H_1$-basis on $T_7^N$. Hence $\psi_3^{-1}(\mu)$ has coordinates $\begin{pmatrix} 1 & 1 \\ 3 & 2\end{pmatrix}^{-1}\cdot \dbinom{0}{1}$ with respect to the $H_1$-basis on $T_6^N$ and hence $[\Sigma_{2,1}, -1/4, 3/7]\cup_{\psi_3}N_4(\cdot, a_2/b_2)\cong [\Sigma_{2,0}, -1/4, 3/7, -1/1]$. Again  $[\Sigma_{2,1}, -1/4, 3/7]\cup_{\psi_3}N_4(\cdot, a_2/b_2)\not\cong S^3$. It follows that $N(1/0, a_2/b_2)$ is not prime and in particular not homeomorphic to $M$. Hence there are no desired Dehn fillings in Case 1. \\

{\bf Case 2.} $b_2=0$, that is we fill along the meridian in $N_4$: By the previous case, we may assume $b_1\neq 0$. As discussed in the previous case, $N_4(\cdot, 1/0)\cong D^2\times S^1$ and $N_3\cup_{\psi_3}N_4(\cdot, 1/0)$ is Seifert fibred. Hence $N(a_1/b_1, 1/0)$ is a union of Seifert fibred spaces, none of which is a solid torus. Hence, possibly combining some of these to a larger Seifert fibred space, $N(a_1/b_1, 1/0)$ has JSJ-decomposition consisting entirely of Seifert fibred spaces. In particular, $N(a_1/b_1, 1/0)\not\cong M$ and there are no desired Dehn fillings in Case 2. \\

{\bf Case 3.} $N_2$ Dehn fills to become $T^2\times I$, that is $b_1=\varepsilon\in \{\pm 1\}$: We may assume $b_2\neq 0$. After such a Dehn filling and collapsing down the resulting $T^2\times I$ onto a single torus, we get an identifiction of the tori $T_1^N, T_2^N, T_4^N,$ and $T_5^N$. This gives a change of $H_1$-basis from the chosen $H_1$-basis on $T_1^N$ and $T_5^N$. As discussed in Example \ref{ex:thickenedtorus}, this change of basis is given by the matrix $\begin{pmatrix} -9 & -22 \\ -7 & -17\end{pmatrix} \cdot \begin{pmatrix}1 & \varepsilon\cdot a_1 \\ 0 & -1\end{pmatrix}\cdot \begin{pmatrix} 3 & 1 \\ 7 & 2\end{pmatrix}$. \\

{\bf Case 3.1.} After collapsing $T^2\times I$ the fibred slopes $\lambda_1\in T_1^N$ and $\lambda_4\in T_4^N$ are distinct: Recall that $N_4(\cdot, a_2/b_2)$ is hyperbolic. So $N(a_1/\varepsilon, a_2/b_2)$ consists of two Seifert fibred spaces and a hyperbolic manifold glued along their boundaries such that fibred slopes in adjacent Seifert fibred spaces don't line up. Thus, by Theorem \ref{prop:jsjdecompcharacterisation}, $N(a_1/\varepsilon, a_2/b_2)$ has three JSJ-components and isn't homeomorphic to $M$. Hence there are no desired Dehn fillings in Case 3.1. \\

{\bf Case 3.2.} After collapsing $T^2\times I$, the fibred slopes $\lambda_1\in T_1^N$ and $\lambda_4\in T_4^N$ are equal up to swapping orientation: This is equivalent to the matrix $\begin{pmatrix} -9 & -22 \\ -7 & -17\end{pmatrix}\cdot \begin{pmatrix}1 & \varepsilon\cdot a_1 \\ 0 & -1\end{pmatrix}\cdot \begin{pmatrix} 3 & 1 \\ 7 & 2\end{pmatrix}$ taking $\dbinom{1}{0}$ to $\dbinom{\delta}{0}$ for $\delta \in \{\pm 1\}$, which in turn is equivalent to $\begin{pmatrix} -9 & -22 \\ -7 & -17\end{pmatrix}\cdot \begin{pmatrix}1 & \varepsilon\cdot a_1 \\ 0 & -1\end{pmatrix}\cdot \begin{pmatrix} 3 & 1 \\ 7 & 2\end{pmatrix}=\begin{pmatrix} \delta & c_1 \\ 0 & -\delta \end{pmatrix}$ for some $c_1\in \mathbb Z$. Multiplying out the matrices, or more generally by Lemma \ref{lemma:sl2z}, for each $\varepsilon, \delta \in\{\pm 1\}$ there is a linear-system $\mathcal{L}_{\varepsilon, \delta}$ in variables $\alpha_1, \gamma_1$ such that $a_1, c_1$ are part of a solution to $\mathcal{L}_{\varepsilon, \delta}$ if and only if the above matrix equations holds. By Example \ref{ex:gluingsfs}, if $a_1, c_1$ are part of a solution to $\mathcal{L}_\varepsilon, \delta$ we obtain the following Seifert structure for $$N_1\cup_{\psi_1}N_2(\cdot, a_1/\varepsilon, \cdot)\cup_{\psi_2}N_3=[\Sigma_{4,1}, (-\delta\cdot c_1)/1, 5/8, -1/4].$$
Recall that $N_4(\cdot, a_2/b_2)$ is hyperbolic. Hence a homeomorphism $\phi:N(a_1/\varepsilon, a_2/b_2)\to M$ must take $[\Sigma_{4,1}, (-\delta\cdot c_1)/1, a/b, c/d]$ to $M_1$ and $N_4(\cdot, a_2/b_2)$ to $M_2$. Moreover, observe that the mapping class group of $M_2$ acts trivially on $\partial M_2$ and $\psi_3, \theta$ are given by the same matrix. Hence such a homeomorphism $\phi$ must respect the $H_1$-basis given by Seifert structures on the boundaries of $[\Sigma_{4,1}, (-\delta\cdot c_1)/1, 5/8, -1/4]$ and $M_1=[\Sigma_{4,1}, 1/8, -3/4]$. In summary, in this case we want to find $a_1, b_1, a_2, b_2, c_1\in \mathbb Z$ such that for some $\varepsilon, \delta\in \{\pm 1\}$: 
\begin{itemize}
\item[-] $b_1=\varepsilon$, $\gcd(a_2, b_2)=1$, and $a_1, c_1$ are part of a solution to $\mathcal{L}_{\varepsilon, \delta}$;
\item[-] $N_4(\cdot, a_2/b_2)\cong M_2$; 
\item[-] The Seifert and slope data $[\Sigma_{4,1}, (-\delta\cdot c_1)/1, 5/8, -1/4 \mid (0,1)]$ and $[\Sigma_{4,1}, 1/8, -3/4 \mid (0,1)]$ are slope-isomorphic. 
\end{itemize}
By Proposition \ref{prop:slopeisomorphic}, the final bullet point is equivalent $-\delta\cdot c_1+5/8-1/4=1/8-3/4$. By Proposition \ref{prop:hypDehnfilling}, we can find a finite list of slopes $a_2/b_2$ such that if $N_4(\cdot, a_2/b_2)\cong M_2$ then $a_2/b_2$ belongs to this list. Testing each element of the list, in this case using SnapPy and in general using Theorem \ref{thm:homeomorphismalgo}, we see that $N_4(\cdot, a_2/b_2)\cong M_2$ if and only if $a_2/b_2=1$. Hence $a_1, b_2, a_2, b_2$ give a Dehn filling of $N$ to $M$ in Case 2.2 if and only if they are part of a solution to the following linear system: 
$$\bigvee_{\varepsilon, \delta, \tau \in \{\pm 1\}}\left(\{\beta_1=\varepsilon\}\wedge \mathcal{L}_{\varepsilon, \delta} \wedge\{-\delta\cdot \gamma_1=-1\}\wedge \{\alpha_2=\beta_2=\tau\}\right).$$

{\bf Case 4}. $N_2$ does not Dehn fill to becomes $T^2\times I$ and $b_1\neq 0$: We may again assume $b_2\neq 0$. Then, by Theorem \ref{thm:nonisomorphicSFS}, $N_2$ Dehn fills to a Seifert fibred space with a unique Seifert structure up to isotopy. This Seifert structure will therefore not have fibred slopes lining up with the fibred slopes on $N_1, N_3$ under gluing by $\psi_1, \psi_2$. Thus $N(a_1/b_1, a_2/b_2)$ has a JSJ-decomposition with four JSJ-components and in particular $N(a_1/b_1, a_2/b_2)\not\cong M$. Hence there are no desired Dehn fillings in Case 4. \\

By the above discussion, the linear system given in Case 3.2. is the desired mono-quadratic system for this example. 
\end{example}

\subsection{Proof of Proposition \ref{prop:Rephrased_main_prop}}\label{subsection:main_proof_new}

Let us recall and prove the following proposition. \\

\noindent
{\bf Proposition \ref{prop:Rephrased_main_prop}.} Consider $\partial$-framed (not necessarily connected) 3-manifolds $N, M$, a $\partial$-parameterisation $(\mathcal{L}, (\rho_j, \sigma_j), \mathcal{P})$, and a $\partial$-equation system $\mathcal{F}$ on $M$ in variables $\alpha_1,\beta_1, \ldots, \alpha_l, \beta_l, \gamma_1, \ldots, \gamma_m$. There is a mono-quadratic system $\mathcal{Q}$ which detects every $a_1,b_1, \ldots, a_l, b_l$ giving a $\partial$-framed filling of $N$ to $(M, \mathcal{F})$ via $(\mathcal{L}, (\rho_j, \sigma_j), \mathcal{P})$. Moreover, there is an algorithm that, given $M, N, \mathcal{F}, \mathcal{L}, (\rho_j, \sigma_j), \mathcal{P}$, outputs $\mathcal{Q}$. 

\begin{proof}[Proof of Proposition \ref{prop:Rephrased_main_prop} assuming Lemma \ref{lemma:technicallemma}]
Let us proceed by induction on $k$. The base case $k=0$ is implicit in the following proof, as we only apply the induction hypothesis if $k>0$. Let us observe that changing the $H_1$-basis of a torus changes the coefficients of a slopes by a linear transformation. Hence we are free to choose any $H_1$-basis of boundary tori in the $\partial$-framings of $N,M$. Throughout the following proof, let $p_j, q_j$ be the value of $\rho_j, \sigma_j$ evaluated at $\alpha_i=a_i, \beta_i=b_i, \gamma_i=c_i$.\\

Suppose $\mathcal{L}$ is of the form $\bigvee_s \bigwedge_t \mathcal{L}_{s,t}$ for each $\mathcal{L}_{s,t}$ a single equation. Let $\mathcal{L}_s:=\bigwedge_t\mathcal{L}_{s,t}$. Then each $(\mathcal{L}_s, (\rho_j, \sigma_j), \mathcal{P})$ is a $\partial$-parametrisation. Moreover $a_1, b_1, \ldots, a_l, b_l, c_1, \ldots, c_m$ give a $\partial$-framed filling of $N$ to $(M, \mathcal{F})$ via $(\mathcal{L}, (\rho_j, \sigma_j), \mathcal{P})$ if and only if they give a $\partial$-framed filling of $N$ to $(M, \mathcal{F})$ via at least one of $(\mathcal{L}_s, (\rho_j, \sigma_j), \mathcal{P})$. Suppose we can find a mono-quadratic system $\mathcal{Q}_s$ satisfying the proposition for $\mathcal{L}$ replaced by $\mathcal{L}_s$. Then $\mathcal{Q}=\bigvee_s \mathcal{Q}_s$ is the desired mono-quadratic system. Hence we may, and will, assume that $\mathcal{L}$ only includes equations and $\vee$'s. \\

Suppose $N,M$ are connected. By Theorem \ref{thm:prime_decomp}, we compute the prime decompositions $N=N^{(1)}\#\ldots \#N^{(a)},$ $M=M^{(1)}\# \ldots\#M^{(b)}$. Any Dehn filling of $N$ will be the connect sum of the corresponding Dehn fillings of $N^{(i)}$. Hence fix an ordered partition $\mathcal{OP}=(P_1, \ldots, P_a)$ of $\{1, \ldots, b\}$ into $a$, possibly empty, sets. Suppose we can find a mono-quadratic system $\mathcal{Q}_{\mathcal{OP}}$ satisfying the proposition for $N$ replaced by $\bigsqcup_{i=1}^a N^{(i)}$ and $M$ replaced by $\bigsqcup_{i=1}^a\#_{j\in P_i}M^{(j)}$. Here we fix the empty connect sum $\#_{j\in \emptyset}M^{(j)}$ to be $S^3$. Then $\mathcal{Q}:=\bigvee_\mathcal{OP}\mathcal{Q}_{\mathcal{OP}}$, where we index over all ordered partitions $\mathcal{OP}$, would give the desired mono-quadratic system $\mathcal{Q}$. We similarly find $\mathcal{Q}$ if $N,M$ are not connected. Hence we may assume $N$ is prime, that is admits no essential spheres.\\

Check whether $N$ is $D^2\times S^1$. A Dehn filling of $N$ is $D^2\times S^1$ or a lens space. Hence let us assume $M$ is one of these two and thus by convention, we know $\mathcal{F}=\emptyset$. Consider the following cases: 
\begin{enumerate}
 \item If $k=0$, that is we do not Dehn fill $N$, then $a_1, b_1, \ldots, a_l, b_l, c_1, \ldots, c_m$ gives a $\partial$-framed filling of $N$ to $(M, \mathcal{F})$ via $(\mathcal{L}, (\rho_j, \sigma_j), \mathcal{P})$ if and only if: $\gcd(a_1, b_1)=\ldots=\gcd(a_l, b_l)=1$, $a_1, b_1, \ldots, a_l, b_l, c_1, \ldots, c_m$ are part of a solution to $\mathcal{L}$, and $M\cong D^2\times S^1$. We adopted the convention that if $k=0$, then $\gcd(a_1, b_1)=\ldots=\gcd(a_l, b_l)=1$ follows from $a_1, b_1, \ldots, a_l, b_l$ part of a solution to $\mathcal{L}$. Hence we may take $\mathcal{Q}=\mathcal{L}$ if $M\cong D^2\times S^1$ and otherwise we take $\mathcal{Q}$ with empty solution set. \\
 \item If $k=1$, without loss of generality, let $N$ have a $\partial$-framing such that $N(p_1/q_1)$ is the lens space $L(p_1, q_1)$. By Corollary \ref{Lens_recognition} we may check whether $M$ is a lens space. If not, we are done, and if so find $p,q\in \mathbb Z$ such that $M\cong L(p,q)$. Then $a_1, b_1, \ldots, a_l, b_l, c_1, \ldots, c_m$ give a $\partial$-framed filling of $N$ to $(M, \mathcal{F})$ via $(\mathcal{L}, (\rho_j, \sigma_j), \mathcal{P})$ if and only if: $\gcd(a_1, b_1)=\ldots=\gcd(a_l, b_l)=1$, $a_1, b_1, \ldots, a_l, b_l, c_1, \ldots, c_m$ are part of a solution to $\mathcal{L}$, and $N(p_1/q_1)\cong L(p,q)$. By convention if $\gcd(p_1, q_1)=1$ and $a_1, b_1, \ldots, a_l, b_l$ are part of a solution to $\mathcal{L}$, then $\gcd(a_1, b_1)=\ldots=\gcd(a_l, b_l)=1$. Moreover by Lemma \ref{lemma:classification_lens}, $N(p_1/q_1)\cong L(p_1, q_1)\cong L(p,q)$ if and only if for some $\varepsilon \in \{\pm 1\}$: $p_1=\varepsilon \cdot p, q_1\equiv \varepsilon\cdot q^{\pm} \pmod p$. Solutions $p_1, q_1$ to the latter equations are necessarily coprime. Hence we may take: $$\mathcal{Q}=\mathcal{L}\wedge \{\rho_1=\varepsilon\cdot p, \sigma_1\equiv \varepsilon\cdot q^{\pm} \pmod p, \varepsilon\in \{\pm 1\}\}.$$
\end{enumerate}
In summary, we may assume $N$ is not $D^2\times S^1$. \\

Check whether $N$ is $T^2\times I$ and if so let $N$ have a $\partial$-framing with $H_1$-basis equal under isotopy through the product structure of $T^2\times I$. Consider the following cases: 
\begin{enumerate}
 \item If $k=0, \mathcal{F}=\emptyset$, we only need to check whether $M\cong T^2\times I$ and $a_1, b_1, \ldots, a_l, b_l, c_1, \ldots, c_m$ is part of solution to $\mathcal{L}$.
 \item Suppose $k=0$ and $\mathcal{F}$ is of the form $\mathcal{M}_T\cdot X=Y\cdot \mathcal{M}_{T'}\cdot X'$ for $$X:=X_1\cdot \begin{pmatrix} 1 & n_1\\ 0 & 1\end{pmatrix} \cdots \begin{pmatrix} 1 & n_h\\ 0 & 1\end{pmatrix} \cdot X_{h+1}, \text{ and } X':=X'_1\cdot \begin{pmatrix} 1 & n'_1\\ 0 & 1\end{pmatrix} \cdots \begin{pmatrix} 1 & n'_{h'}\\ 0 & 1\end{pmatrix} \cdot X'_{h'+1},$$ with the same conventions as in Definition \ref{def:framing_pairing}. For $a_1,b_1, \ldots, a_l, b_l, c_1, \ldots, c_m$ to give a $\partial$-framed filling of $N$ to $(M,\mathcal{F})$ via $(\mathcal{L}, (\rho_j, \sigma_j), \mathcal{P})$ we certainly need $M\cong T^2\times I$ and $a_1, b_1, \ldots, a_l, b_l, c_1, \ldots, c_m$ part of a solution to $\mathcal{L}$. Without loss of generality assume the $\partial$-framings on $N,M$ are isotopic through their product structures. In particular, for any homeomorphism $\phi:N\to M$, we must have $\mathcal{M}_{\phi, T}=\mathcal{M}_{\phi, T'}\in SL_2(\mathbb Z)$. Moreover, we can choose $\phi$ such that $\mathcal{M}_{\phi,T}$ becomes an arbitrary element of $SL_2(\mathbb Z)$. Hence $\phi, a_1, b_1, \ldots, a_l, b_l, c_1, \ldots, c_m$ satisfy the $\partial$-equation if and only if $X\cdot (X')^{-1}$ and $Y$ are conjugate by some matrix in $SL_2(\mathbb Z)$. By Lemma \ref{lemma:sl2z}, we may find a linear system $\mathcal{L}'$ such that $a_1, b_1, \ldots, a_l, b_l, c_1, \ldots, c_m$ are part of a solution to $\mathcal{L}'$ if and only if $X\cdot (X')^{-1}$ is conjugate to $Y$ by a matrix in $SL_2(\mathbb Z)$. Hence $\mathcal{Q}=\mathcal{L}\wedge \mathcal{L}'$ is the desired Diophantine system. 
%
% 
% 
% 
% 
% , and there is an orientation-preserving homeomorphism of $T^2\times I$ to itself taking the $H_1$-basis on the two boundary tori given by the matrices $X, X'$ to the $H_1$-basis given by the matrices $Y\cdot Z, \text{id}\cdot Z$ for some $Z\in GL_2(\mathbb Z)$. This is equivalent to $(X')^{-1}\cdot X$ being conjugate to $Y$ by a matrix in $SL_2(\mathbb Z)$. By Lemma \ref{lemma:sl2z}, we may find a linear system $\mathcal{L}'$ such that $a_1, b_1, \ldots, a_l, b_l$ are part of a solution to $\mathcal{L}'$ if and only if $(X')^{-1}\cdot X$ is conjugate to $Y$ by a matrix in $SL_2(\mathbb Z)$. Then $\mathcal{Q}=\mathcal{L}\wedge \mathcal{L}'\wedge\{\mathds{1}(M\cong T^2\times I)=1\}$ is the desired Diophantine system. 
 \item If $k=1$, that is we fill along one boundary torus, then all Dehn fillings of $N$ are $D^2\times S^1$. Thus we are done by checking whether $M$ is $D^2\times S^1$, whether $a_1, b_1, \ldots, a_l, b_l, c_1, \ldots, c_m$ are part of a solution to $\mathcal{L}$, and whether $\gcd(a_1, b_1), \ldots, \gcd(a_l, b_l)=1$. Recall from the definition of $\partial$-parameterisations, that the final condition is equivalent to $\gcd(p_1, q_1)=1$, which in turn is equivalent to $p_1, q_1$ being part of a solution to $p_1\cdot r+q_1\cdot s=1$. Hence we obtain our desired mono-quadratic system $\mathcal{Q}=\{\rho_1\cdot r+\sigma_1\cdot s=1\}\wedge \mathcal{L}$.
 \item If $k=2$, check whether $M$ is a lens space and if so find $p,q\in \mathbb Z$ such that $M\cong L(p,q)$. Then Lemma \ref{lemma:lens_quadratic} gives a desired mono-quadratic equation.
\end{enumerate}
In summary, we may assume $N$ is not $T^2\times I$.\\

Suppose some component $C$ of $N$ is $D^2\times S^1$ or $T^2\times I$. Hence, a possibly trivial, Dehn filling of $C$ is a $T^2\times I$, $D^2\times S^1$, or a lens space. Fix some component $D$ of $M$ that is $T^2\times I$, $D^2\times S^1$, or a lens space. If there is no such $D$, we are immediately done. By taking the $\vee$ of the resulting mono-quadratic equations as we consider all possible $D$, it is sufficient to find a mono-quadratic system detecting all $a_1, b_1, \ldots, a_l, b_l, c_1, \ldots, c_m$ giving $\partial$-framed filling of $N$ to $(M, \mathcal{F}$) via $(\mathcal{L}, (\rho_j, \sigma_j), \mathcal{P})$ in which $C$ becomes $D$. Fix the following conventions: 
\begin{itemize}
\item[-] Let $\mathcal{F}_D\subset \mathcal{F}$ be the $\partial$-equations involving boundary tori of $D$, 
\item[-] Let $\{(\rho^C_j, \sigma^C_j)\}\subset\{(\rho_j, \sigma_j)\}$ be the integral-affine functions corresponding to filling slopes in boundary tori of $C$ and let $\{(\rho^{N-C}_j, \sigma^{N-C}_j)\}$ be the remaining integral-affine functions, 
\item[-] Let $\mathcal{P}_{D}\subset \mathcal{P}$ be those parts of $\mathcal{P}$ containing variables appearing in the $\mathcal{F}_D$ or $(\rho^C_j, \sigma^C_j)$, 
\item[-] Let $\mathcal{L}= \mathcal{L}_{D}\wedge \mathcal{L}_{M-D}$ such that $\mathcal{L}_D$ and $\mathcal{L}_{M-D}$ respectively only use variables from $\mathcal{P}_D$ and $\mathcal{P}_{M-D}$. 
\end{itemize}
Observe that $(\mathcal{L}_D, (\rho^C_j, \sigma^C_j), \mathcal{P}_D)$ and $(\mathcal{L}_{M-D}, (\rho^{N-C}_j, \sigma^{N-C}_j), \mathcal{P}\setminus\mathcal{P}+_{D})$ are $\partial$-parameterisations with a disjoint set of variables. By convention $\mathcal{F}_D$ does not involve boundary tori from $M-D$ and is hence a $\partial$-equation system on $D$. Hence we can separately consider $\partial$-framed fillings of $C$ to $(D, \mathcal{F}_D)$ via $(\mathcal{L}_D, (\rho_j^C, \sigma_j^C), \mathcal{P}_D)$ and $\partial$-framed fillings of $N-C$ to $(M-D, \mathcal{F}-\mathcal{F}_D)$ via $(\mathcal{L}_{M-D}, (\rho^{N-C}_j, \sigma^{N-C}_j), \mathcal{P}\setminus\mathcal{P}_{D})$. Repeating this we may assume no component of $N$ is $D^2\times S^1$ or $T^2\times I$. An identical argument lets us separately consider the case of $M$ homeomorphic to $D^2\times S^1, T^2\times I$, or a lens space, and the case of $M$ having no component which is $D^2\times S^1, T^2\times I$, or a lens space. We may also assume no component of $N$ is closed. \\

%By assumption, no $\partial$-equation may involve a Dehn filling of $D$ and another component of $M$. Thus $a_1, b_1, \ldots, a_l, b_l$ gives a $\partial$-framed filling of $N$ to $(M, \mathcal{F})$ via $(\mathcal{L}, (\rho_j, \sigma_j))$ if and only if the induced filling of $D$ is a $\partial$-framed filling to some component $C$ of $M$ and the induced filling of $N-D$ is a $\partial$-framed filling to $M-C$. Hence, we may assume no component of $N$ is $D^2\times S^1$ or $T^2\times I$. Similarly, we may assume no component of $N$ is closed. \\

In summary, $N$ is prime and no component of $N$ is $D^2\times S^1$ or $T^2\times I$. Therefore we are in the setup of Lemma \ref{lemma:technicallemma} and find the list of 6-tuples $\mathcal{N}_{red}=(N_i, M_i, \mathcal{F}_i, (\rho_j^{(i)}, \sigma_j^{(i)}), \mathcal{L}_i, \mathcal{P}_i)_{i\in I}$ given by Lemma \ref{lemma:technicallemma}. By induction, for each $i\in I$, we may find a mono-quadratic system $\mathcal{Q}_i$ detecting all $a_1, b_1, \ldots, a_l, b_l$ giving a $\partial$-framed filling of $N_i$ to $(M_i, \mathcal{F}_i)$ via $(\mathcal{L}_i, (\rho_j^{(i)}, \sigma_j^{(i)}), \mathcal{P}_i)$. By Lemma \ref{lemma:technicallemma} the mono-quadratic system $\bigvee_{i\in I} \mathcal{Q}^{(i)}$ detects every $a_1, b_1, \ldots, a_l, b_l$ giving a JSJ-exceptional $\partial$-framed filling of $N$ to $(M, \mathcal{F})$ via $(\mathcal{L}, (\rho_j, \sigma_j), \mathcal{P})$. Suppose we can find a mono-quadratic system $\mathcal{Q}'$ detecting every $a_1, b_1, \ldots, a_l, b_l$ giving a non-JSJ-exceptional filling from $N$ to $(M, \mathcal{F})$ via $(\mathcal{L}, (\rho_j, \sigma_j), \mathcal{P})$. Then $\mathcal{Q}=\mathcal{Q}'\vee\left(\bigvee_{i\in I} \mathcal{Q}^{(i)}\right)$ is the desired mono-quadratic system. Hence let us find $\mathcal{Q}'$.\\

Let $\mathcal{T}_N$ denote the JSJ-decomposition of $N$. If $M$ does not admit a JSJ-decomposition, we are immediately done. Therefore let $\mathcal{T}_M$ denote the JSJ-decomposition of $M$. By Theorem \ref{thm:jsjalgo}, there is an algorithm outputting $\mathcal{T}_N$ and $\mathcal{T}_M$. Coefficients $a_1, b_1, \ldots, a_l, b_l, c_1, \ldots, c_m$ give a non-JSJ-exceptional $\partial$-framed filling of $N$ to $(M, \mathcal{F})$ via $(\mathcal{L}, (\rho_j, \sigma_j), \mathcal{P})$ only if they give a $\partial$-framed filling of $N-\mathcal{T}_N$ to $(M-\mathcal{T}_M, \mathcal{F}')$ via $(\mathcal{L}, (\rho_j, \sigma_j), \mathcal{P})$ where: 
\begin{itemize}
 \item[-] For each pair of boundary tori $T, T'\in \partial M-\mathcal{T}_M$ which are obtained by cutting along a single torus of $\mathcal{T}_M$ we fix the $H_1$-basis that are equal under regluing of the JSJ-tori. 
 \item[-] The $\partial$-framing on $M-\mathcal{T}_M$ on the remaining boundary tori consists of the $H_1$-basis inherited from the $\partial$-framing on $M$. The $\partial$-framing on $N-\mathcal{T}_N$ is defined similarly. 
 \item[-] The set $\mathcal{F}'$ is $\mathcal{F}$ with additional $\partial$-equations $\mathcal{M}_T=\mathcal{M}_{T'}$ for each pair of boundary tori $T, T'\in \partial (M-\mathcal{T}_M)$ which are obtained by cutting along a torus of $\mathcal{T}_M$. 
\end{itemize}
As discussed in Example \ref{ex:partial-system}, the additional $\partial$-equations in $\mathcal{F}'$ ensure that a homeomorphism from a Dehn filling of $N-\mathcal{T}_N$ to $M-\mathcal{T}_M$ will respect regluing along the JSJ-tori. Moreover if $a_1, b_1, \ldots, a_l, b_l, c_1, \ldots, c_m$ give a $\partial$-framed filling of $N-\mathcal{T}_N$ to $(M-\mathcal{T}_M, \mathcal{F}')$ via $(\mathcal{L}, (\rho_j, \sigma_j), \mathcal{P})$ then they give a $\partial$-framed filling of $N$ to $(M, \mathcal{F})$ via $(\mathcal{L}, (\rho_j, \sigma_j), \mathcal{P})$. Hence it is now sufficient to find $\mathcal{Q}'$ in the case of $N, M$ having empty JSJ-decomposition, that is being a disjoint union of hyperbolic and Seifert fibred pieces. \\

Suppose we Dehn fill along a boundary torus of a hyperbolic component $C$ of $N$. By Proposition \ref{prop:hypDehnfilling}, there is an algorithm outputing a finite list of slopes in the boundary tori of $C$ such that Dehn filling avoiding these finite lists will give a hyperbolic manifold not homeomorphic to any component of $M$. By induction on $k$ we obtain a mono-quadratic system detecting $a_1, b_1, \ldots, a_l, b_l$ giving a $\partial$-framed filling of $N$ to $(M, \mathcal{F})$ via $(\mathcal{L}, (\rho_j, \sigma_j), \mathcal{P})$ using a slope from one of these finite lists. Therefore, we may assume no boundary torus of $N$ we fill along belongs to a hyperbolic JSJ-component. \\

Hence we only consider Dehn fillings of $N$ to $M$ that induce a matching between hyperbolic components of $N$ and $M$ and restrict to homeomorphisms on these pieces. By Proposition \ref{prop:list_MCG}, for a pair of homeomorphic hyperbolic manifolds there is a finite set of homeomorphisms, up to isotopy, between them and these may be listed by an algorithm. We may therefore separately consider the Dehn fillings of $N$ to $M$ inducing a fixed homeomorphism $\psi$ between the union $N_{hyp}$ of hyperbolic components of $N$ and the union $M_{hyp}$ of hyperbolic components of $M$. For each $T\in \partial M_{hyp}$, consider the fixed matrix $Z_T:=M_{\psi, T}\in SL_2(\mathbb Z)$. Hence, coefficients $a_1, b_1, \ldots, a_l, b_l, c_1, \ldots, c_m$ give a $\partial$-framed Dehn filling of $N$ to $(M, \mathcal{F})$ via $(\mathcal{L}, (\rho_j, \sigma_j), \mathcal{P})$ that restricts to $\psi$ on $N_{hyp}$ if and only if $a_1, b_1, \ldots, a_l, b_l, c_1, \ldots, c_m$ give a $\partial$-framed Dehn filling of $N-N_{hyp}$ to $(M-M_{hyp}, \mathcal{F}')$ via $(\mathcal{L}'', (\rho_j, \sigma_j), \mathcal{P})$ where: 
\begin{itemize}
 \item[-] We choose the $\partial$-framing of $N-N_{hyp}, M-M_{hyp}$ inherited from $N, M$; 
 \item[-] For every $\partial$-equation of $\mathcal{F}$ we replace each occurrence $\mathcal{M}_{\phi,T}$ with $T\in\partial M_{hyp}$ by $Z_T$. Under this operation every $\partial$-equation of $\mathcal{F}$ either becomes a $\partial$-equation for $M-M_{hyp}$ (if it involves tori not in $\partial M_{hyp}$), or becomes an equation of matrices in $GL_2(\mathbb Z)$ with coefficients integral-affine functions. Call these two sets of equation $\mathcal{F}'$ and $\mathcal{G}$ respectively. We take $\mathcal{F}'$ to be the new $\partial$-equation system on $M-M_{hyp}$; 
 \item[-] Observe that each equation in the newly obtained set $\mathcal{G}$ is of the form described in Lemma \ref{lemma:sl2z}. Hence by Lemma \ref{lemma:sl2z}, we may find linear system $\mathcal{L}'$ such that $a_1, b_1, \ldots, a_l, b_l$ is part of a solution to $\mathcal{L}'$ if and only if when evaluated at $a_1, b_1, \ldots, a_l, b_l$ all the equations of $\mathcal{G}$ are true. We take $\mathcal{L}''=\mathcal{L}\wedge \mathcal{L}'$. 
%
% 
% 
% 
% 
% 
% 
% a $\partial$-equation of the form $\mathcal{M}_{\phi,T}\cdot X=Y\cdot \mathcal{M}_{\phi, T'}\cdot X'$ will either remain unchanged (if it does not involve a torus of $\partial M_{hyp}$, become $\partial$-equation of the form $Z_T\cdot X=Y\cdot \mathcal{M}_{\phi, T'}\cdot X'$ or $\mathcal{M}_{\phi, T}\cdot X=Y\cdot Z_{T'}\cdot X'$, or become an equation in $GL_2(\mathbb Z)$ with variables $\alpha_1, \beta_1, \ldots, \alpha_l, \beta_l$ that is no longer a $\partial$-equation but instead of the form $Z_T\cdot X=Y\cdot Z_{T'}\cdot X'$. Similarly a $\partial$-equation of the form $\mathcal{M}_{\phi,T}\cdot X=X'$ will remain unchanged or becomes an equation in $GL_2(\mathbb Z)$. Hence, we divide the resulting set of equations into the set of $\partial$-equations, denoted $\mathcal{F}'$, and the set of those equations in $GL_2(\mathbb Z)$ which are not $\partial$-equations, denoted $\mathcal{G}$.
% \item[-] By Lemma \ref{lemma:sl2z}, we may find linear system $\mathcal{L}'$ such that $a_1, b_1, \ldots, a_l, b_l$ is part of a solution to $\mathcal{L}'$ if and only if when evaluated at $a_1, b_1, \ldots, a_l, b_l$ all the equations of $\mathcal{G}$ are true. We take $\mathcal{L}''=\mathcal{L}\wedge \mathcal{L}'$. 
\end{itemize}
Hence, we have reduced to the case of $N$ and $M$ both disjoint unions of Seifert fibred spaces. \\

Suppose, for now, that $M$ has finitely many Seifert structures up to isotopy, or up to isomorphism if $\partial M=\emptyset$. By the discussion of Section \ref{section:Seifert_section}, there is an algorithm outputting all possible Seifert structures of $M$ up to isotopy. Hence, having fixed Seifert structures on $N,M$, it is sufficient to find a mono-quadratic system $\mathcal{Q}'$ detecting all $a_1, b_1, \ldots, a_l, b_l$ giving Dehn filling under which the fixed Seifert structure of $N$ extends to the fixed Seifert structure of $M$ up to isotopy. Moreover, it is sufficient to detect the Dehn fillings inducing a fixed matching of the components of $N$ and $M$. Let us fix the following convention: 
\begin{itemize}
\item[-] $\partial$-framings on $N,M$ will have $H_1$-basis on each boundary torus consisting of the fibred slopes $\lambda$ and the section slope $\mu$; 
\item[-] Let $[\Sigma, r_1/s_1, \ldots, r_h/s_h]$ denote the Seifert data on a component $C$ of $N(\rho_1/\sigma_1, \ldots, \rho_k/\sigma_k)$ given by extending the Seifert structure on $N$. In particular, $r_1, s_1, \ldots, r_h, s_h$ are integral-affine functions in variables $\alpha_1,\beta_1, \ldots, \alpha_l, \beta_l, \gamma_1, \ldots, \gamma_m$; 
\item[-] Let $D$ be the component of $M$ corresponding to $C$ and consider tori $T^D_{1}, \ldots, T^D_{m_D}\subset \partial D$; 
\item[-] Let $[\Sigma', r'_1/s'_1, \ldots, r'_{h'}/s'_{h'}]$ denote the Seifert data on $D$. Here $r'_1, s'_1, \ldots, r'_{h'}, s'_{h'}$ are fixed integers; 
\item[-] Let $Z^D_i$ denote matrices in $GL_2(\mathbb Z)$. 
%\item[-] Let $X_1, \ldots, X_m, Y_1, \ldots, Y_m$ denote matrices in $GL_2(\mathbb Z)$. 
\end{itemize}
Recall that isomorphisms of Seifert structures preserve unoriented fibred slopes and therefore restrict on boundary tori to maps of the form $\pm \begin{pmatrix} 1 & z \\ 0 & 1\end{pmatrix}$ with respect to the $H_1$-basis given by the Seifert structures. In fact there is an isomorphism $\phi\colon C\to D$ of Seifert structures with $\mathcal{M}_{\phi, T^D_i}=Z^D_i$ for all $i=1, \ldots, m_D$ if and only if exists $z^D_1, \ldots, z^D_{m_D}\in \mathbb Z, \varepsilon\in \{\pm 1\}$ such that: 
\begin{itemize}
 \item[-] For each $i=1, \ldots, m$: $Z^D_i=\varepsilon \cdot \begin{pmatrix} 1 & z^D_i \\ 0 & 1
 \end{pmatrix}$; 
 \item[-] The Seifert and slope data $[\Sigma, r_1/s_1, \ldots, r_h/s_h \mid (0,1), \ldots, (0,1)]$ and $[\Sigma', r'_1/s'_1, \ldots, r'_{l'}/s'_{l'} \mid \varepsilon\cdot (z^D_1,1), \ldots, \varepsilon\cdot (z^D_{m_D},1)]$ are slope-isomorphic.\footnote{Here $\varepsilon$ encodes whether $\phi$ preserves the orientation of fibred slopes in $\partial C, \partial D$ or flips these orientations.}
\end{itemize}
By Proposition \ref{prop:SFS_linear} there is a linear system $\mathcal{L}_{D}$ such that $r_1, s_1, \ldots, r_h, s_h, z^D_1, \ldots, z^D_{m_D}, \varepsilon$ are part of a solution to $\mathcal{L}_D$ if and only if $\gcd(r_1, s_1)=\ldots=\gcd(r_h, s_h)=1$ and the latter bullet point above is satisfied. Hence $a_1, b_1, \ldots, a_l, b_l, c_1, \ldots, c_m$ gives a $\partial$-framed filling of $N$ to $(M, \mathcal{F})$ via $(\mathcal{L}, (\rho_j, \sigma_j), \mathcal{P})$ if and only if for each component $D$ of $M$ exist $z^D_1, \ldots, z^D_{m_D}\in \mathbb Z, \varepsilon\in \{\pm 1\}$ such that: 
\begin{itemize}
\item[-] $a_1, b_1, \ldots, a_l, b_l, c_1, \ldots, c_m$ are part of a solution to $\mathcal{L}$; 
\item[-] for each component $D$ of $N$, $r_1, s_1, \ldots, r_h, s_h, z^D_1, \ldots, z^D_{m_D}, \varepsilon$ are part of a solution to $\mathcal{L}_D$ when evaluated at $a_1, b_1, \ldots, a_l, b_l, c_1, \ldots, c_m$; 
\item[-] after replacing each instance of $\mathcal{M}_{T^D_i}$ in $\mathcal{F}$ by $\varepsilon\cdot \begin{pmatrix} 1 & z^D_i \\ 0 & 1\end{pmatrix}$ in $\mathcal{F}$, all $\partial$-equations are true when evaluated at $a_1, b_1, \ldots, a_l, b_l, c_1, \ldots, c_m$. 
\end{itemize}
By Lemma \ref{lemma:sl2z} there is a linear system $\mathcal{L}'$ such that $a_1, b_1, \ldots, a_l, b_l, c_1, \ldots, c_m$, and $z^D_i$ are part of a solution to $\mathcal{L}'$ if and only if the final condition above holds. Hence a desired mono-quadratic system is $\mathcal{Q}=\left(\bigwedge_{D}\mathcal{L}_D\right)\wedge\mathcal{L}'\wedge \mathcal{L}$.\\

Let us conclude by addressing the case of $M$ having infinitely many Seifert fibrations up to isotopy, or up to isomorphism if $\partial M=\emptyset$. In this case, some component $D$ of $M$ must be a solid torus, thickened torus, or lens space. As discussed above, it is sufficient to consider the case of $M$ homeomorphic to a solid torus, thickened torus, or lens space. In the following let the $\partial$-framing on $N$ be given by a Seifert structure on $N$ (if a Seifert structure exists). \\

Suppose $M$ is a solid torus. Then $\mathcal{F}$ is empty. By the discussion of Example \ref{ex:solidtorus}, we see that $a_1, b_1, \ldots, a_l, b_l, c_1, \ldots, c_m$ give a non-JSJ-exceptional $\partial$-framed filling of $N$ to $(M, \mathcal{F})$ via $(\mathcal{L}, (\rho_j, \sigma_j), \mathcal{P})$ if and only if: 
\begin{itemize}
\item[-] $N$ is $[D^2-\{x_1, \ldots, x_k\}]$; 
\item[-] $a_1, b_1, \ldots, a_l, b_l, c_1, \ldots, c_m$ are part of a solution to $\mathcal{L}$; 
\item[-] and for some reindexing $p_1=\ldots=p_{k-1}=1$ and $\gcd(p_k, q_k)=1$;
\end{itemize}
or 
\begin{itemize}
\item[-] $N$ is $[D^2-\{x_1, \ldots, x_k\}, p/q]$; 
\item[-] $a_1, b_1, \ldots, a_l, b_l, c_1, \ldots, c_m$ are part of a solution to $\mathcal{L}$; 
\item[-] and $p_1=\ldots=p_k=1$.
\end{itemize}
There is a mono-quadratic system $\mathcal{Q}$ such that $a_1, b_1, \ldots, a_l, b_l, c_1, \ldots, c_m$ is part of a solution to $\mathcal{Q}'$ if and only if $a_1, b_1, \ldots, a_l, b_l$ satisfies the above. This $\mathcal{Q}'$ therefore detects all $a_1, b_1, \ldots, a_l, b_l$ giving non-JSJ-exceptional $\partial$-framed fillings of $N$ to $(M, \mathcal{F})$ via $(\mathcal{L}, (\rho_j, \sigma_j), \mathcal{P})$. \\

Suppose $M$ is a thickened torus $T^2\times I$. By the discussion of Example \ref{ex:thickenedtorus}, $N\mapsto N(p_1/q_1, \ldots, p_k/q_k)$ is a non-JSJ-exceptional Dehn filling to $M$ if only if $N$ is $[S^1\times I-\{x_1, \ldots, x_k\}]$ and, up to reordering $p_1=\ldots=p_k=1$. However in such a Dehn filling the $\partial$-framing on $\partial M$ given by the Seifert structure $[S^1\times I]$ and the $\partial$-framing inherited from the Seifert structure on $N$ will not necessarily be the same. We may take the two corresponding $H_1$-basis on $T^2\times \{0\}\subset \partial M$ to be equal in which case, as discussed in Example \ref{ex:thickenedtorus}, the two corresponding $H_1$-basis on $T^2\times \{1\}$ will be related by the matrix $\begin{pmatrix} 1 & q \\ 0 & 1\end{pmatrix}, q=\sum_j q_j$. Hence $a_1, b_1, \ldots, a_l, b_l$ giving a $\partial$-framed filling of $N$ to $(M, \mathcal{F})$ via $(\mathcal{L}, (\rho_j, \sigma_j), \mathcal{P})$ are detected by $(T^2\times I, T^2\times I, \mathcal{F}', (\emptyset), \mathcal{L}', \mathcal{P}')$ where: 
\begin{itemize}
\item[-] The $\partial$-framing on both copies of $T^2\times I$ are given by the Seifert structure $[S^1\times I]$;
\item[-] $\mathcal{L}'=\mathcal{L}\wedge\{\rho_1=\ldots=\rho_k=1\}$; 
\item[-] If $\mathcal{F}=\emptyset$, then $\mathcal{F}'=\emptyset$; 
\item[-] If $\mathcal{F}$ consists of a $\partial$-equation of the form $\mathcal{M}_{T\times \{0\}}\cdot X=Y\cdot \mathcal{M}_{T\times \{1\}}\cdot X'$, then $\mathcal{F}'$ consists of the $\partial$-equation $\mathcal{M}_{T\times \{0\}}\cdot X=Y\cdot \mathcal{M}_{T\times \{1\}}\cdot \begin{pmatrix} 1 & \sigma \\ 0 & 1 \end{pmatrix}\cdot X'$, for $\sigma=\sum_j\sigma_j$; 
\item[-] $\mathcal{P}'$ is the partition with one part. 
\end{itemize}
Yet, we have already addressed the case of $N$ a thickened torus and $k=0$.\\

Suppose $M$ is a lens space. Find $p,q$ such that $M$ is $L(p,q)$. By the same argument as the case of $M$ having finitely many Seifert structures up to isotopy, we may find a linear system $\mathcal{L}'$ detecting all $a_1, b_1, \ldots, a_l, b_l$ giving a $\partial$-framed filling for which the Seifert structure of $N$ extends to a Seifert structure of $L(p,q)$ with base surface $\mathbb RP^2$ as given in Lemma \ref{lemma:lensspacecharacterisation}. Hence suppose $a_1, b_1, \ldots, a_l, b_l, c_1, \ldots, c_m$ gives a non-JSJ-exceptional $\partial$-framed filling of $N$ to $(M, \mathcal{F})$ via $(\mathcal{L}, (\rho_j, \sigma_j), \mathcal{P})$ extending the Seifert structure on $N$ to a Seifert structure of $L(p,q)$ with base surface $S^2$. Then $N$ must be a Seifert fibred space over a planar surface with at most $b\leq 2$ singular fibres and all but at most $2-b$ of the filling slopes must have $p_j=1$. In summary, for each viable $N$ there is a linear system $\mathcal{L}''$ and integral-affine functions $r_1, s_1, r_2, s_2$ in variables $\rho_1, \sigma_1, \ldots, \rho_k, \sigma_k$ such that $a_1, b_1, \ldots, a_l, b_l, c_1, \ldots, c_m$ give a framed filling of $N$ to $(M, \mathcal{F})$ if and only if $a_1, b_1, \ldots, a_l, b_l, c_1, \ldots, c_m$ are part of a solution to $\mathcal{L}''$, $\gcd(r_1, s_1)=\gcd(r_2, s_2)=1$, and $[S^2, r_1/s_1, r_2/s_2]$ is a Seifert fibration of $L(p,q)$. Combining all this and Lemma \ref{lemma:lens_quadratic}, we obtain a mono-quadratic system detecting every $a_1, b_1, \ldots, a_l, b_l$ giving such a $\partial$-framed filling of $N$ to $(M, \mathcal{F})$ via $(\mathcal{L}, (\rho_j, \sigma_j), \mathcal{P})$. \\

This concludes the proof. 
\end{proof}

\subsection{Proof of Lemma \ref{lemma:technicallemma}} \label{section:Lemma_proof}
A quick note to the reader. In the following proof, we construct several $6$-tuples $(N, M, \mathcal{F}, (\rho_j, \sigma_j), \mathcal{L}, \mathcal{P})$. In each construction we do not explicitly check all the properties required of a $6$-tuple to detect integers $a_1, b_1, \ldots, a_l, b_l$. For example we do not explicitly check that $(\mathcal{L}, (\rho_j, \sigma_j), \mathcal{P})$ are a $\partial$-parametrisation. All such checks are routine and merely require individually considering each of the conditions in the definitions of Subsection \ref{subsection:formalism}. Moreover, on a first read-through the reader should feel free to ignore the technicalities associated with the partitions $\mathcal{P}$.

\begin{proof}[Proof of Lemma~\ref{lemma:technicallemma}]
Call the boundary tori of $N-\mathcal{T}_N$: {\it JSJ-} if they belong to $\mathcal{T}_N$; {\it filling} if they are among the first $k$ tori of $N$, that is if they are used in the Dehn filling specified by $(p_1, q_1, \ldots, p_k, q_k)$; and ${\it free}$ otherwise. Suppose we find suitable $\mathcal{N}_{red}$ for some choice of $\partial$-framings on $N,M$ and modify these $\partial$-framings by changing the choice of $H_1$-basis in some boundary torus. Then we may obtain new suitable $\mathcal{N}_{red}$ by composing each $\rho_j^{(i)}, \sigma_j^{(i)}$, $\mathcal{L}_i, \mathcal{F}_i$ with the suitable change of basis matrix. Hence we may, without loss of generality, choose $H_1$-basis in each boundary torus as we please. \\

In this proof, we start with empty $\mathcal{N}_{red}$ and, in steps, add additional 6-tuples until the lemma holds. It will be clear from the construction that there is an algorithm outputting $\mathcal{N}_{red}$. We will call coefficient $a_1, b_1, \ldots, a_l, b_l$ {\it undetected} if they give a $\partial$-framed filling of $N$ to $(M, \mathcal{F})$ via $(\mathcal{L}, (\rho_j, \sigma_j), \mathcal{P})$ but are not yet detected by $\mathcal{N}_{red}$. We call a Dehn filling of $N$ {\it undetected} if it is given by undetected $a_1, b_1, \ldots, a_l, b_l$. Thus our goal is to keep adding 6-tuples to $\mathcal{N}_{red}$ until no undetected Dehn filling is JSJ-exceptional. \\

We can decide which of the JSJ-components of $N$ are hyperbolic. By Proposition \ref{prop:find_short_slopes} and Theorem \ref{theorem:longFillingGeometry} we can find a finite list $S$ of slopes in the boundary tori of the hyperbolic JSJ-components, such that a Dehn filling of a hyperbolic JSJ-component avoiding this list will remain hyperbolic. For each slope $s\in S$ given by coprime coefficients $p_h, q_h$ for some $1\leq h\leq k$, add to $\mathcal{N}_{red}$ the 6-tuple $(N_s, M, \mathcal{F}, (\rho_j^{(s)}, \sigma_j^{(s)}), \mathcal{L}_s, \mathcal{P})$ with: 
\begin{itemize}
 \item[-] $N_s$ the Dehn filling of $N$ along $s$; 
% \item[-] $M_s=M$ and $\mathcal{F}_s=\mathcal{F}$; 
 \item[-] $(\rho_j^{(s)}), (\sigma_j^{(s)})$ obtained from $(\rho_j), (\sigma_j)$ by omitting $\rho_h, \sigma_h$;
 \item[-] $\mathcal{L}_s=\mathcal{L}\wedge\{\rho_h=p_h, \sigma_h=q_h\}$.
\end{itemize}
A Dehn filling $N(p_1/q_1, \ldots, p_k/q_k)$ using the slope $s\in S$ is $\partial$-framed homeomorphic to the Dehn filling $N_s(p_1/q_1, \ldots, p_{h-1}/q_{h-1}, p_{h+1}/q_{h+1}, \ldots, p_k/q_k)$. Hence $a_1, b_1, \ldots, a_l, b_l$ giving such a $\partial$-framed filling of $N$ to $(M, \mathcal{F})$ via $(\mathcal{L}, (\rho_j, \sigma_j), \mathcal{P})$ is detected by $(N_s, M_s, \mathcal{F}_s, (\rho_j^{(s)}, \sigma_j^{(s)}), \mathcal{L}_s, \mathcal{P})$. Thus undetected Dehn fillings keep hyperbolic JSJ-components hyperbolic.\\

We can decide which of the JSJ-components of $N$ are Seifert fibred. Since no component of $N$ is $T^2\times I$ or $D^2\times S^1$, we may find all the finitely-many boundary slopes which are fibres in some Seifert structure. Just as above, we can add tuples to $\mathcal{N}_{red}$ detecting any Dehn filling using a fibre-parallel slope.
Hence undetected Dehn fillings do not use fibre-parallel slopes in any Seifert structure of any Seifert fibred JSJ-component.\\

Suppose there is some JSJ-component $A$ of $N$ which is Seifert fibred with Seifert data $[D^2-\{x_1, \ldots, x_b\}; q/p]$ and with at least $b$ filling boundary tori. Fix a Seifert structure on $A$ and, without loss of generality, assume the first $b$ tori of the $\partial$-framing correspond to the punctures $x_i$ in $A$ and have $H_1$-basis given by the Seifert structure on $A$. As discussed in Example \ref{ex:solidtorus}, a Dehn filling of $A$ along slopes $q_1/1, \ldots, q_b/1$ gives a solid torus. The meridian slope of this solid torus is $(q+ap)/p$, for $a=\sum_{i=1}^b q_i$. Moreover, we may assume $N-A$ is not empty, as else the Dehn filling is immediately not JSJ-exceptional. Thus, in summary, such a Dehn filling of $N$, that is a filling along $\partial A$ with $q_i=1$ for $1\leq i\leq b$, is $\partial$-framed homeomorphic to a Dehn filling of $N-A$ along $(q+ap)/p$. Therefore add to $\mathcal{N}_{red}$ the 6-tuple $(N-A, M, \mathcal{F}, (\rho'_j, \sigma'_j), \mathcal{L}', \mathcal{P}')$ with: 
\begin{itemize}
\item[-] In the $\partial$-framing on $N'$ the torus $T\subset \partial N'$ adjacent to $A$ has $H_1$-basis given by the above Seifert structure on $A$ and all other tori have $H_1$-basis inherited from $N$; 
%\item[-] The $\partial$-framings on $M'$ and $M$ are the same; 
\item[-] The integral-affine functions $\rho'_j, \sigma'_j$ are obtained from $\rho_j, \sigma_j$ by omitting $\rho_1, \sigma_1, \ldots, \rho_b, \sigma_b$ and adding the new integral-affine functions $\rho'_{k-b+1}=p, \sigma'_{k-b+1}=q+ap$ for $a=\sum_{j=1}^b \sigma_j$. Here the integral-affine functions $\rho'_{k-b+1}, \sigma'_{k-b+1}$ correspond to the filling slope in $T$;
\item[-] $\mathcal{P}'$ is obtained from $\mathcal{P}$ by merging all parts in which $\rho_1, \sigma_1, \ldots, \rho_b, \sigma_b$ have their variables into a single part of $\mathcal{P}$; 
%\item[-] The integral-affine functions corresponding to filling slopes in $T$ satisfy $\rho'_{k-b+1}=p, \sigma'_{k-b+1}=q+ap$ for $a=\sum_{j=1}^b \sigma_j$; 
%\item[-] The remaining integral-affine functions $\rho'_j, \sigma'_j, 1\leq j\leq k-b$ are equal to $\rho_{b+j}, \sigma_{b+j}$; 
\item[-] $\mathcal{L}'$ is $\mathcal{L}\wedge \{\rho_i=1 \mid i=1, \ldots, b\}$.
\end{itemize}
Any undetected $a_1, b_1, \ldots, a_l, b_l$ that give a Dehn filling of $N$ with $A$ becoming a solid torus is detected by this 6-tuple. Add such 6-tuples for each JSJ-component of the above form. Hence in undetected Dehn fillings no JSJ-component with Seifert data $[D^2-\{x_1, \ldots, x_b\}; q/p]$ fills to become a solid torus. By a variation on this argument, we may also add 6-tuples to $\mathcal{N}_{red}$ such that in undetected Dehn fillings no JSJ-component with Seifert data $[D^2-\{x_1, \ldots, x_b\}]$ fills to become a solid torus. As we already do not consider fibre-parallel fillings, by the discussion of Example \ref{ex:solidtorus}, these are the only ways a JSJ-component of $N$ fills to become a solid torus. Hence, in summary, in undetected Dehn fillings of $N$ no JSJ-component fills to become a solid torus.\\

Observe that the only Seifert fibred or hyperbolic 3-manifold with compressible boundary is the solid torus. Thus, by an innermost-disc argument, after an undetected Dehn filling the tori $\mathcal{T}_N$ will remain incompressible and cut the manifold into pieces which are hyperbolic or Seifert fibred. In particular, we are done if $M$ does not admit a JSJ-decomposition. Hence suppose $M$ has a JSJ-decomposition $\mathcal{T}_M$. Moreover, an undetected Dehn filling $\mathcal{N}_{red}$ is only JSJ-exceptional if Seifert fibred JSJ-components are Dehn filled to have Seifert structures with fibred slopes lining up. In this case these two Dehn fillings of JSJ-components glue to form a single Seifert fibred space. Repeating this, we see that a subset of $\mathcal{T}_N$ becomes the JSJ-decomposition of the Dehn filling of $N$, and in particular an element of $\mathcal{T}_N$ adjacent to a hyperbolic JSJ-components remains a JSJ-torus in the Dehn filling of $N$. Finally, observe that Seifert structures on JSJ-components of $N$ do not have fibred slopes lining up and thus do not extend to Seifert structures with fibred slopes lining up. Hence, an undetected Dehn filling is only JSJ-exceptional if some Seifert fibred JSJ-components fills to have non-isotopic Seifert structures, that is fills to become $T^2\times I$ or $K^2\tilde{\times} I$. \\

Suppose there is some JSJ-component $A_1$ of $N$ which has a Seifert structure with base surface a $b$-punctured disc with at most two singular fibres, each of multiplicity $2$, and with $b$ filling boundary tori. Then $A_1$ admits Dehn fillings under which its Seifert structure extends to $[D^2, 1/2, -1/2]$ up to isomorphism. Without loss of generality suppose that in the $\partial$-framing on $N$ the first $b$ boundary tori are the filling boundary tori of $A_1$ and that the $H_1$-basis on these boundary tori is given by Seifert structure on $A_1$. Then we may find a linear system $\mathcal{L}'$ such that $p_1, q_1, \ldots, p_b, q_b$ are part of a solution to $\mathcal{L}'$ if and only if $\gcd(p_1,q_1)=\ldots=\gcd(p_b, q_b)=1$ and they give a non-fibre-paralllel filling of $A_1$ to $[D^2, 1/2, -1/2]$. Let us now consider undetected $a_1, b_1, \ldots, a_l, b_l$ such that $p_1, q_1, \ldots, p_b, q_b$ are part of a solution to $\mathcal{L}'$. Suppose $A_1$ is adjacent to another Seifert fibred JSJ-component $A_2$ along a torus $T\in \mathcal{T}_N$. There are four $H_1$-basis we can consider on $T$, namely: 
\begin{itemize}
\item[-] The fibred slope $\lambda_1$ and section slope $\mu_1$ with respect to a fixed Seifert structure on $A_1$; 
\item[-] The fibred slope $\lambda_2$ and section slope $\mu_2$ with respect to the Seifert structure $[D^2, 1/2, -1/2]$ on the Dehn filling of $A_1$. Observe that $\lambda_2=\lambda_1$ and we may find an integral-affine function $\xi$ in variables $\rho_1,\sigma_1 \ldots, \rho_b, \sigma_b$ such that $\mu_2=\xi\cdot \lambda_1+\mu_1$; 
\item[-] The fibred slope $\lambda_3$ and the section slope $\mu_3$ with respect to a Seifert structure with Seifert data $[\text{M\"ob}]$ on the Dehn filling of $A_1$. By Lemma \ref{lemma:thickenedkleinbottlestructures}, we may take this Seifert structure with Seifert data $[\text{M\"ob}]$ such that $\lambda_3=-\mu_2, \mu_3=\lambda_2$; 
\item[-] The fibred slope $\lambda_4$ and section slope $\mu_4$ with respect to a fixed Seifert structure on $A_2$. We may find a fixed matrix $X\in GL_2(\mathbb Z)$ such that $(\mu_4, \lambda_4)^T=X\cdot (\mu_1, \lambda_1)^T$.
\end{itemize}
The fibred slope $\lambda_3$ lines up with the fibred slope $\lambda_4$ if and only if for some $q\in \mathbb Z, \varepsilon \in \{\pm 1\}$ we have: 
$$\begin{pmatrix}
 \mu_4\\ \lambda_4
\end{pmatrix}=
\begin{pmatrix}
 -\varepsilon & q \\
 0 & \varepsilon
\end{pmatrix}\cdot 
\begin{pmatrix}
 \mu_3 \\ \lambda_3
\end{pmatrix}=
\begin{pmatrix}
 -\varepsilon & q \\
 0 & \varepsilon
\end{pmatrix}
\begin{pmatrix}
 0 & 1 \\ -1 & 0 
\end{pmatrix}\cdot 
\begin{pmatrix}
 \mu_2 \\ \lambda_2
\end{pmatrix}=
\begin{pmatrix}
 -\varepsilon & q \\
 0 & \varepsilon
\end{pmatrix}
\begin{pmatrix}
 0 & 1 \\ -1 & 0 
\end{pmatrix}
\begin{pmatrix}
 1 & \xi \\ 0 & 1
\end{pmatrix}\cdot 
\begin{pmatrix}
 \mu_1 \\ \lambda_1
\end{pmatrix} 
$$
$$
\Leftrightarrow X=\begin{pmatrix}
 -\varepsilon & q \\
 0 & \varepsilon
\end{pmatrix}
\begin{pmatrix}
 0 & 1 \\ -1 & 0 
\end{pmatrix}
\begin{pmatrix}
 1 & \xi \\ 0 & 1
\end{pmatrix}.
$$
Without loss of generality consider the case $\varepsilon=1$. By Lemma \ref{lemma:sl2z} there is a linear system $\mathcal{L}''$ such that $p_1, q_1, \ldots, p_b, q_b, q$ is a part of a solution to the above if and only if the above matrix equation holds. Let $[\Sigma, r_1/s_1, \ldots, r_h/s_h]$ denote the chosen Seifert structure on $A_2$. Then by Example \ref{ex:gluingsfs} the gluing of $A_2$ to $K^2\tilde{\times} I$ such that $\lambda_3=\lambda_4$ and $\mu_4=-\mu_3+q\cdot \lambda_3$ is $\partial$-framed homeomorphic the Seifert fibred space with Seifert data $[\text{M\"ob}\#_\partial \Sigma, -q/1, r_1/s_1, \ldots, r_h/s_h].$ We therefore add to $\mathcal{N}_{red}$ the 6-tuple $(N', M, \mathcal{F}, (\rho'_j, \sigma'_j), \mathcal{L}\wedge \mathcal{L}'\wedge\mathcal{L}'', \mathcal{P}')$ in variables $\alpha_1, \beta_1, \ldots, \alpha_l, \beta_l, \gamma_1, \ldots, \gamma_m, q$ with: 
\begin{itemize}
 \item[-] $N'$ is $N$ with $A_1\cup A_2$ replaced by the Seifert fibred space with Seifert data $[\text{M\"ob}\#_\partial \Sigma-\{\text{pt}\}, r_1/s_1, \ldots, r_h/s_h]$; 
 \item[-] The $H_1$-basis of the $\partial$-framing on the boundary torus of $N'$ corresponding to the additional puncture $\{\text{pt}\}$ in $\text{M\"ob}\#_\partial \Sigma-\{\text{pt}\}$ given by the above Seifert structure; 
 \item[-] The remaining $\partial$-framing on $N'$ is inherited from $N$; 
 \item[-] The integral-affine functions $\rho'_j, \sigma'_j$ are obtained from $\rho_j, \sigma_j$ by omitting those integral-affine functions $\rho_1, \sigma_1, \ldots, \rho_b, \sigma_b$ corresponding to filling slopes in $A_1$ and adding integral-affine functions $\rho'_0=1$ and $\sigma'_0=-q$ corresponding to the filling slope in the boundary torus corresponding to $\{\text{pt}\}$; 
 \item[-] $\mathcal{P}'$ is obtained from $\mathcal{P}$ by merging all the parts in which $\rho_1, \sigma_1, \ldots, \rho_b, \sigma_b$ have their variables and adding to this merged part the variable $q$.
\end{itemize}
By the above discussion, any $a_1, b_1, \ldots, a_l, b_l$ for which $A_1$ Dehn fills to $K^2\tilde{\times} I$ with a fibred slope lining up with the fibred slope on $A_2$ is detected by $\mathcal{N}_{red}$. We add such 6-tuples to $\mathcal{N}_{red}$ for each JSJ-component of $N$ of the above form.\\

We also add the analogous 6-tuple to $\mathcal{N}_{red}$ for each JSJ-component $A_1$ of $N$ that is a circle bundle over a punctured M\"obius band. Hence any $a_1, b_1, \ldots, a_l, b_l$ giving a non-fibre-parallel Dehn filling in which $A_1$ becomes $K^2\tilde{\times} I$ with a fibred slope lining up with the fibred slope of an adjacent JSJ-component $A_2$ of $N$ is detected by $\mathcal{N}_{red}$. By an analogous argument, we may add 6-tuples to $\mathcal{N}_{red}$ detecting any $a_1, b_1, \ldots, a_l, b_l$ for which adjacent JSJ-components $A_1, A_2$ of $N$ both Dehn fill to $K^2\tilde{\times} I$ with fibred slopes lining up. Hence, the only undetected Dehn fillings of $N$ in which Seifert fibred JSJ-components of $N$ Dehn fill to Seifert structures with fibred slopes lining up are those with JSJ-components filling to become $T^2\times I$. Addressing this issue will be the final and most involved part of this proof. \\

Suppose there is a sequence $A_0, \ldots, A_{n+1}$ of JSJ-component of $N$ such that: 
\begin{itemize}
\item[-] $A_1, \ldots, A_n$ are distinct circle bundles over punctured annuli each with two JSJ-boundary tori and such that all other boundary tori are filling; 
\item[-] $A_i, A_{i+1}, i=0, \ldots, n$ are adjacent JSJ-components connected via the torus $T_i\in \mathcal{T}_N$; 
\item[-] $A_0, A_{n+1}$ may be empty, equal to each other, or we can have $A_0=A_n, A_{n+1}=A_1$.
\end{itemize}
Let us fix the following notation: 
\begin{itemize}
\item[-] Fix $\partial$-framings on $A_1, \ldots, A_n$ given by some fixed Seifert structure. Fix $\partial$-framings on $A_0, A_{n+1}$ which, if a Seifert structure exists, is given by a fixed choice of such a Seifert structure. With respect to the $H_1$-basis given by these $\partial$-framings on $A_i, A_{i+1}$, let the gluing map across $T_i$ be given by the matrix $Z_i\in GL_2(\mathbb Z)$. If $A_0$ or $A_{n+1}$ is empty, let $Z_0=\text{id}$ or $Z_n=\text{id}$, respectively;
\item[-] Let $A=A_1\cup\ldots\cup A_n$; 
\item[-] Let $B_0, \ldots, B_{n+1}$ denote the Dehn fillings of $A_0, \ldots, A_{n+1}$ and let $B=B_1\cup\ldots B_n$. 
\end{itemize}
We want to consider those undetected Dehn fillings of $N$ with $B_1\cong \ldots\cong B_n\cong T^2\times I$. Let $\mathcal{V}_A:=\{\rho_j, \sigma_j | \rho_j/\sigma_j \text{ is a filling slope in one in }A\}$. Suppose $a_1, b_1, \ldots, a_l, b_l, c_1, \ldots, c_m$ gives a non-fibre-parallel Dehn filling of $N$ to $(M, \mathcal{F})$ via $(\mathcal{L}, (\rho_j, \sigma_j), \mathcal{P})$. In this Dehn filling $A_1, \ldots, A_n$ become $T^2\times I$ if and only if for each filling slope $\rho_j \in \mathcal{V}_A$ we have $\rho_j=1$. Let $\mathcal{L}':=\{\rho_j=1 \mid \rho_j\in \mathcal{V}_A\}$ and assume $\alpha_1, \beta_1, \ldots, \alpha_l, \beta_l$ satisfy $\mathcal{L}'$. Moreover, by Example \ref{ex:thickenedtorus}, we know how the two $H_1$-basis on $T_0, T_n$ inherited from the $\partial$-framings on $A_0, A_{n+1}$ are related. Namely after the Dehn filling the isotopy through the product structure on $T^2\times I$ relates these $H_1$-basis of $T_0$ and $T_n$ by the matrix: $$Z:=Z_0\cdot \begin{pmatrix} 1 & \xi_1 \\ 0 & -1\end{pmatrix} \cdot Z_1\cdot \ldots \cdot Z_{n-1}\cdot \begin{pmatrix} 1 & \xi_n \\ 0 & -1\end{pmatrix}\cdot Z_n,$$ where $\xi_1, \ldots, \xi_n$ are integral-affine functions in variables $\mathcal{V}_A$. On the other hand, as discussed in Example \ref{ex:thickenedtorus}, the $H_1$-basis on the boundary tori of $T^2\times I$ given by a Seifert structure $[S^1\times I]$ are related by isotopy through the product structure via the matrix $\begin{pmatrix} 1 & 0 \\ 0 & -1 \end{pmatrix}$. Hence we may find a Seifert structure $[S^1\times I]$ on $B=T^2\times I$ such that the induced $H_1$-basis on $T_0=T^2\times \{0\}$ is equal to the $H_1$-basis inherited from $A_1$ and the induced $H_1$-basis on $T_n=T^2\times \{1\}$ is related to the $H_1$-basis inherited from $A_n$ via the matrix:
$$Z':=\begin{pmatrix} 1 & \xi_1 \\ 0 & -1\end{pmatrix} \cdot Z_1\cdot \ldots \cdot Z_{n-1}\cdot \begin{pmatrix} 1 & \xi_n \\ 0 & -1\end{pmatrix}\cdot \begin{pmatrix} 1 & 0 \\ 0 & -1\end{pmatrix}.$$ Let us now consider the following cases: \\

{\bf Case 1}. $A_0, A_{n+1}$ are empty. Then undetected $a_1, b_1, \ldots, a_l, b_l$ that are part of a solution to $\mathcal{L}'$ give a Dehn filling taking the component $A\subset N$ to a thickened torus. Check whether a component $D$ of $M$ is $T^2\times I$. If not, we immediately do not consider $a_1, b_1, \ldots, a_l, b_l$ that are part of a solution to $\mathcal{L}'$. If so, and $D$ is not involved in a $\partial$-equation of $\mathcal{F}$ then $a_1, b_1, \ldots, a_l, b_l$ is detected by $(N-A, M-D, \mathcal{F}, (\rho'_j, \sigma'_j), \mathcal{L}\wedge \mathcal{L}', \mathcal{P}')$ with:
\begin{itemize}
\item[-] $\rho'_j, \sigma'_j$ obtained from $\rho_j, \sigma_j$ by omitting the functions in $\mathcal{V}_A$, that is omitting the functions corresponding to filling slopes in $A$; 
\item[-] $\mathcal{P}'$ obtained from $\mathcal{P}$ by merging the parts in which functions of $\mathcal{V}_A$ have variables.
\end{itemize}
Hence suppose $D\cong T^2\times I$ and is involved in a $\partial$-equation of the form $\mathcal{M}_{T\times \{0\}}\cdot X=Y\cdot \mathcal{M}_{T\times \{1\}}\cdot X'$. Such $a_1, b_1, \ldots, a_l, b_l$ gives a Dehn filling satisfying this $\partial$-equation if and only if $X\cdot (Z'\cdot X')^{-1}$ is conjugate to $Y$ (for comparison see the case of $M\cong T^2\times I$ in the proof of Proposition \ref{prop:Rephrased_main_prop}). By Lemma \ref{lemma:sl2z} we may find a linear system $\mathcal{L}''$ such that $a_1, b_1, \ldots, a_l, b_l$ are part of a solution to $\mathcal{L}''$ if and only if these matrices are conjugate. Thus such $a_1, b_1, \ldots, a_l, b_l$ is detected by $(N-A, M-D, \mathcal{F}', (\rho'_j, \sigma'_j), \mathcal{L}\wedge \mathcal{L}'\wedge\mathcal{L}'', \mathcal{P}')$ with:
\begin{itemize}
\item[-] $\mathcal{F}'$ is $\mathcal{F}$ with the $\partial$-equation on $D$ omitted; 
\item[-] $\rho'_j, \sigma'_j$ obtained from $\rho_j, \sigma_j$ by omitting the functions in $\mathcal{V}_A$; 
\item[-] $\mathcal{P}'$ obtained from $\mathcal{P}$ by merging the parts in which functions of $\mathcal{V}_A$ have variables and the part in which the $\partial$-equation of $D$ has variables.
\end{itemize}
Add to $\mathcal{N}_{red}$ all the above 6-tuples for each thickened torus component $D$ of $M$ . \\

{\bf Case 2}. $A_0=A_n$ and thus $A_{n+1}=A_1$. Then undetected $a_1, b_1, \ldots, a_l, b_l$ that are part of a solution to $\mathcal{L}'$ give a Dehn filling taking the component $A\subset N$ to a torus bundle. This torus bundle has monodromy given by the matrix: 
$$Z'':=Z_0\cdot \begin{pmatrix} 1 & \xi_1 \\ 0 & -1\end{pmatrix} \cdot Z_1\cdot \ldots \cdot Z_{n-1}\cdot \begin{pmatrix} 1 & \xi_n \\ 0 & -1\end{pmatrix}$$
By Proposition \ref{Recognising_torus_bundle} we may check whether some component $D$ of $M$ is a torus bundle. If not, we are immediately done with this case. If so, find a matrix $X\in GL_2(\mathbb Z)$ such that $D$ has monodromy $X$. Then a Dehn filling of $A$ of the above form is homeomorphic to $D$ if and only if $Z''$ is conjugate to $X$ by a matrix in $SL_2(\mathbb Z)$. By Lemma \ref{lemma:sl2z}, there is a linear system $\mathcal{L}''$ in variables $\mathcal{V}_A$ such that $a_1, b_1, \ldots, a_l, b_l$ are part of a solution to $\mathcal{L}''$ if and only if $Z'', X$ are conjugate. Thus undetected $a_1, b_1, \ldots, a_l, b_l$ giving a Dehn filling of $A$ to $D$ are detected by $(N-A, M-D, \mathcal{F}, (\rho'_j, \sigma'_j), \mathcal{L}\wedge \mathcal{L}'\wedge\mathcal{L}'', \mathcal{P}')$ with:
\begin{itemize}
\item[-] $\rho'_j, \sigma'_j$ obtained from $\rho_j, \sigma_j$ by omitting the functions in $\mathcal{V}_A$; 
\item[-] $\mathcal{P}'$ obtained from $\mathcal{P}$ by merging the parts in which functions of $\mathcal{V}_A$ have variables. 
\end{itemize}
 Add to $\mathcal{N}_{red}$ such 6-tuples for all torus bundle components $D$ of $M$. \\

{\bf Case 3}. $A_{n+1}$ is empty, but $A_0$ is not: Then undetected $a_1, b_1, \ldots, a_l, b_l, c_1, \ldots, c_m$ that are part of a solution to $\mathcal{L}'$ give a Dehn filling under which $A$ becomes $T^2\times I$ and hence the Dehn filling $B$ of $N$ is homeomorphic to the corresponding Dehn filling $B_A$ of $N-A$. We may take such a homeomorphism $\psi\colon B\to B_A$ mapping $T_n$ to $T_0$ and respecting the $H_1$-basis inherited from $N$ on all boundary tori of $\partial B, \partial B_A$ apart from $T_0, T_n$. Then by our previous discussion $\mathcal{M}_{\psi, T_0}=Z^{-1}$. Thus such $a_1, b_1, \ldots, a_l, b_l$ is detected by the 6-tuple $(N-A, M, \mathcal{F}', (\rho'_j, \sigma'_j), \mathcal{L}\wedge \mathcal{L}', \mathcal{P}')$, with: \begin{itemize}
 \item[-] $\mathcal{F}'$ obtained by replacing each occurrence of $\mathcal{M}_{T}$ in $\mathcal{F}$ by $\mathcal{M}_{T}\cdot Z^{-1}$, where $T\subset \partial M$ is the torus corresponding to $T_n$; 
 \item[-] The $\rho'_j, \sigma'_j$ are obtained from $\rho_j, \sigma_j$ by omitting the functions in $\mathcal{V}_A$;
 \item[-] $\mathcal{P}'$ obtained from $\mathcal{P}$ by merging the parts in which functions of $\mathcal{V}_A$ have variables. 
\end{itemize}
Add such 6-tuples to $\mathcal{N}_{red}$. \\

{\bf Case 4}. $A_0, A_{n+1}$ are non-empty and $A_0$ is hyperbolic: Then, by the above discussion, undetected $a_1, b_1, \ldots, a_l, b_l$ that are part of a solution to $\mathcal{L}'$ give a Dehn filling under which $T_0$ will be a JSJ-torus of $\mathcal{T}_M$. Moreover such Dehn filling of $N$ will be homeomorphic to the corresponding Dehn filling of $N-A$ with the two tori $T'_0, T''_0$ corresponding to $T_0$ glued by the matrix $Z$. Hence, undetected $a_1, b_1, \ldots, a_l, b_l$ that are part of a solution to $\mathcal{L}'$ are detected by some $(N-A, M-T, \mathcal{F}', (\rho'_j, \sigma'_j), \mathcal{L}\wedge \mathcal{L}', \mathcal{P})$, with: 
\begin{itemize}
 \item[-] $T\subset \mathcal{T}_M$ a JSJ-torus on $M$. Let $T', T''$ denote the two boundary tori of $M-T$ corresponding to $T$. Fix $H_1$-basis on $T', T''$ that are equal after regluing; 
 \item[-] Fix $H_1$-basis on $T_0, T_n\subset \partial(N-A)$ inherited from $A_0, A_{n+1}$, respectively; 
 \item[-] $\mathcal{F}'$ obtained by adding to $\mathcal{F}$ the $\partial$-equation $\mathcal{M}_{T'}=\mathcal{M}_{T''}\cdot Z$;
 \item[-] As usual $\rho'_j, \sigma'_j$ are equal to $\rho_j, \sigma_j$ with those functions corresponding to filling slopes in $A$ omitted; 
 \item[-] As usual $\mathcal{P}'$ obtained from $\mathcal{P}$ by merging the parts in which functions of $\mathcal{V}_A$ have variables.
\end{itemize}
Add to $\mathcal{N}_{red}$ these 6-tuples for each each torus $T\subset \mathcal{T}_M$. \\

{\bf Case 5}. Suppose $A_0, A_{n+1}$ are Seifert fibred: Fix Seifert structures on $A_0, A_{n+1}$. Undetected $a_1, b_1, \ldots, a_l, b_l$ that are part of a solution to $\mathcal{L}'$ give a Dehn filling of $N$ that is homeomorphic to the corresponding Dehn filling of $N-A$ with $T_0, T_n$ glued by the map given by matrix $Z$ with respect to $H_1$-basis given by the Seifert structures on $A_0, A_{n+1}.$ We consider several subcases. \\

{\bf Case 5.1}. Suppose $Z$ maps the unoriented fibred slope of $A_0$ to the unoriented fibred slope of $A_{n+1}$. This is equivalent to $Z=\begin{pmatrix} -\varepsilon & q\\ 0 & \varepsilon \end{pmatrix} $ for some $q\in \mathbb Z, \varepsilon \in \{\pm 1\}$. Without loss of generality consider the $\varepsilon=1$ case. Let the Seifert structures on $A_0, A_{n+1}$ have Seifert data $[\Sigma, r_1/s_1, \ldots, r_h/s_h]$ and $[\Sigma', r'_1/s'_1, \ldots, r'_{h'}/s'_{h'}]$ respectively. Then by Example \ref{ex:gluingsfs} a gluing of $A_0, A_{n+1}$ along $Z$ is homeomorphic to $[\Sigma \#_\partial \Sigma', -q/1, r_1/s_1, \ldots, r_h/s_h, r'_1/s'_1,$ $\ldots, r'_{h'}/s'_{h'}].$ Hence undetected $a_1, b_1, \ldots, a_l, b_l$ that give Dehn fillings in which $A$ becomes a thickened torus through which the fibred slopes of $A_0, A_{n+1}$ line up are detected by the 6-tuple: $(N', M, \mathcal{F}, (\rho'_j, \sigma'_j), \mathcal{L}'', \mathcal{P}')$ in variables $\alpha_1, \beta_1, \ldots, \alpha_l, \beta_l, \gamma_1, \ldots, \gamma_m, q$ with: 
\begin{itemize}
 \item[-] $N'$ equal to $N$ with $A_0\cup A\cup A_{n+1}$ replaced by a space with Seifert data $[\Sigma \#_\partial \Sigma'-\{\text{pt}\}, r_1/s_1, \ldots, r_h/s_h,$ $r'_1/s'_1, \ldots, r'_{h'}/s'_{h'}]$; 
 \item[-] The $\partial$-framing of $N'$ inherited from $N$ apart from in the boundary torus corresponding to $\{\text{pt}\}$ which has $H_1$-basis given by the above Seifert structure; 
 \item[-] $\rho'_j, \sigma'_j$ are integral-affine functions in variables $\alpha_1, \ldots, \alpha_l$ obtained from $\rho_j, \sigma_j$ by omitting the functions in $\mathcal{V}_A$ and adding the integral-affine functions $\rho'_0=1, \sigma'_0=-q$ corresponding to the filling slope in the boundary torus given by $\{\text{pt}\}$; 
 \item[-] The linear system $\mathcal{L}''$ equal to $\mathcal{L}\wedge \mathcal{L}'\wedge \mathcal{L}_q$, where $\mathcal{L}_q$ is the linear system in variables $\mathcal{V}_A$ given by Lemma \ref{lemma:sl2z} such that $a_1, b_1, \ldots, a_l, b_l, q$ are part of a solution to $\mathcal{L}_q$ if and only if $\begin{pmatrix} -1 & q\\ 0 & 1\end{pmatrix} =Z$; 
 \item[-] The partition $\mathcal{P}'$ obtained from $\mathcal{P}$ by merging all the parts in which functions of $\mathcal{V}_A$ have variables and adding to this newly merged part the variable $q$. 
\end{itemize}
Add to $\mathcal{N}_{red}$ all such 6-tuples. \\

{\bf Case 5.2}. Suppose undetected $a_1, b_1, \ldots, a_l, b_l, c_1, \ldots, c_m$ gives a Dehn filling with $B_0\cong K^2\tilde{\times} I$ and $Z$ maps the fibred slope of this $K^2\tilde{\times} I$ that isn't inherited from $A_0$ to the fibred slope of $A_{n+1}$. This is the identical setup as the discussion of a JSJ-component $A_1$ of $N$ Dehn filling to $K^2\tilde{\times} I$ we discussed earlier in this proof with the only change being the fixed matrix $X$ in the proof above is replaced by the matrix expression $Z$. Moreover the proof for $X$ replaced by $Z$ carries through verbatim and we may add 6-tuples to $\mathcal{N}_{red}$ detecting such $a_1, b_1, \ldots, a_l, b_l$. Similarly we may add 6-tuples to $\mathcal{N}_{red}$ detecting Dehn fillings in which $B_0\cong B_{n+1}\cong K^2\tilde{\times} I$ and $A$ Dehn fills to $T^2\times I$ mapping a fibred slope on one of these $K^2\tilde{\times} I$ to a fibred slope in the other. \\

Consider an undetected Dehn filling in which $A$ becomes $T^2\times I$ and $B_0, B_{n+1}$ have unoriented fibred slopes isotopic through the product structure on $T^2\times I$. By Case 5.1 the lined-up fibred slopes on $B_0, B_{n+1}$ cannot be images of the fibred slope on $A_0, A_{n+1}$. So one of $B_0, B_{n+1}$ must have multiple non-isotopic Seifert structure, that is must be one of $T^2\times I$ or $K^2\tilde{\times} I$. By Case 5.2 and without loss of generality, we must have $B_0\cong T^2\times I$. Let $A_{-1}$ be the, possibly empty, JSJ-component of $N$ on the other of side $A_0$. Then $a_1, b_1, \ldots, a_l, b_l$ is in Case 5 with $A_0, \ldots, A_{n+1}$ replaced by $A_{-1}, \ldots, A_{n+1}$. Add to $\mathcal{N}_{red}$ all 6-tuples described in Case 5 with $A_0, \ldots, A_{n+1}$ replaced by $A_{-1}, \ldots, A_{n+1}$. More generally, add to $\mathcal{N}_{red}$ the 6-tuples so far described in Case 5 for all possible chains of JSJ-components $A_0, \ldots, A_{n+1}$. Hence we detect $a_1, b_1, \ldots, a_l, b_l$ giving a Dehn filling in which a chain of JSJ-components Dehn fills to $T^2\times I$ with unoriented fibred slopes of Seifert fibred JSJ-components on either side isotopic through the product structure on $T^2\times I$. Hence, to summarise, in an undetected Dehn filling of $N$ after collapsing all JSJ-component that become $T^2\times I$ we obtain a JSJ-decomposition of $M$. In particular in an undetected Dehn filling of $N$ every torus $T\subset \mathcal{T}_N$ will become isotopic to a JSJ-torus of $\mathcal{T}_M$. In light of this let us conclude the final subcase. \\

{\bf Case 5.3.} Suppose undetected $a_1, b_1, \ldots, a_l, b_l, c_1, \ldots, c_m$ gives a Dehn filling in which $A$ becomes $T^2\times I$: Then the Dehn filling of $N$ is homeomorphic to the corresponding Dehn filling of $N-A$ with $T_0$ glued to $T_{n}$ along a map given by the matrix $Z$. By the above discussion, the Dehn filling of $A$ collapses down to becomes a JSJ-torus $T\in \mathcal{T}_M$ after Dehn filling. Let $T', T''$ denote the two boundary tori of $M-T$ corresponding to $T$. Hence such $a_1, b_1, \ldots, a_l, b_l$ is detected by a 6-tuple of the form $(N-A, M-T, \mathcal{F}', (\rho'_j, \sigma'_j), \mathcal{L}', \mathcal{P}')$ with: 
\begin{itemize}
 \item[-] The $H_1$-basis on $T_0, T_n\subset \partial(N-A)$ inherited from the Seifert structure on $A_0, A_{n+1}$ and the remaining $\partial$-framing on $N-A$ inherited from $N$. 
 \item[-] $\partial$-framing on $M-T$ inherited from $M$ and $H_1$-basis on $T', T''$ chosen to be equal after regluing; 
 \item[-] $\mathcal{F}'$ obtain from $\mathcal{F}$ by adding the $\partial$-equation $\mathcal{M}_{T'}=\mathcal{M}_{T''}\cdot Z$; 
 \item[-] $\rho'_j, \sigma'_j$ equal to $\rho_j, \sigma_j$ with $\mathcal{V}_A$, that is those functions corresponding to filling slopes in $A$ omitted; 
 \item[-] $\mathcal{P}'$ obtained from $\mathcal{P}$ by merging the parts in which functions of $\mathcal{V}_A$ have their variables into one part.
\end{itemize}
Add such 6-tuples for all JSJ-tori $T\in \mathcal{T}_M$ and for all possible choises of $A_0, \ldots, A_{n+1}$.\\

To summarise, any undetected Dehn filling of $N$ will: 
\begin{itemize}
\item[-] Not fill hyperbolic JSJ-components of $N$ along exceptional slopes; 
\item[-] Not fill Seifert fibred JSJ-components of $N$ along fibred slopes; 
\item[-] Not fill a Seifert fibred JSJ-component of $N$ to become a solid torus; 
\item[-] Not fill a Seifert fibred JSJ-component of $N$ to become $K^2\tilde{\times} I$ with either of its two fibred slopes lining up with the fibred slopes of the Dehn filling of another Seifert fibred component; 
\item[-] Not fill a Seifert fibred JSJ-component of $N$ to become a thickened torus. 
\end{itemize}
Hence any undetected Dehn filling is not JSJ-exceptional and we have completed the proof. 
\end{proof}

\subsection{Proof of Corollary \ref{thm:hyperbolic_Dehn_Ancestry}}

We now recall and prove Corollary \ref{thm:hyperbolic_Dehn_Ancestry}:\\

\noindent
{\bf Corollary \ref{thm:hyperbolic_Dehn_Ancestry}.}
Consider $\partial$-framed manifolds $N, M$ with $N$ hyperbolic and $k\in \mathbb N$. There is a mono-quadratic system $\mathcal{Q}_{\text{hyp}}$ such that $\gcd(p_1, q_1)=\ldots=\gcd(p_{k}, q_{k})=1$, the Dehn filling of $N$ to $N(p_1/q_1, \ldots, p_k/q_k)$ is strongly non-exceptional, and $N(p_1/q_1, \ldots, p_k/q_k)$ admits a Dehn filling to $M$ if and only if $p_1, q_1, \ldots, p_{k}, q_{k}$ are part of a solution to $\mathcal{Q}_{\text{hyp}}$. Moreover, there is an algorithm that, given $M,N$, and $k$, outputs $\mathcal{Q}_{\text{hyp}}$.

\begin{proof} Recall that by Theorem \ref{prop:mainprop} we may find a mono-quadratic system $\mathcal{Q}$ such that $p_1, q_1, \ldots, p_k, q_k$ is part of a solution to $\mathcal{Q}$ if and only if $\gcd(p_1, q_1)=\ldots=\gcd(p_k, q_k)=1$ and $N(p_1/q_1, \ldots, p_k/q_k)$ admits a Dehn filling to $M$. Hence we need only address the issue of whether the Dehn filling of $N$ to $N(p_1/q_1, \ldots, p_k/q_k)$ is strongly non-exceptional. Before we begin, let us recall that a multi-slope on $N$ is a set of slopes in distinct boundary tori of $N$. Moreover observe that for a fixed multi-slope $S$ it is decidable whether the Dehn filling of $N$ to $N(S)$ is strongly non-exceptional. \\

By Theorem \ref{theorem:longFillingGeometry} and Proposition \ref{prop:find_short_slopes} we know that for a hyperbolic 3-manifold $A$ we may find a finite set of slopes $\mathcal{E}(A)$ in $\partial A$ such that any Dehn filling of $A$ avoiding slopes in $\mathcal{E}(A)$ is hyperbolic. For a set of slopes $S$ on $N$ let $\mathcal{E}(S)$ denote $S\cup\left(\bigcup_{S'\subset S}\mathcal{E}(N(S'))\right),$ where we index over all multi-slopes $S'\subset S$. Observe first that if $S$ is finite, then so is $\mathcal{E}(S)$. Moreover, observe that, by construction, for any multi-slope $S=S_1\cup S_2$ on $N$ with $S_2\cap \mathcal{E}(S_1)=\emptyset$ the Dehn filling of $N$ to $N(S)$ is strongly non-exceptional if and only if the Dehn filling of $N$ to $N(S_1)$ is strongly non-exceptional. \\

{\bf Claim:} Given $N$ we may find a finite set of slopes $\mathcal{E}_{\text{max}}$ on $\partial N$ such that any multi-slope $S$ on $N$ is of the form $S=S_1\cup S_2$ for $S_1\subset \mathcal{E}_{\text{max}}, S_2\cap \mathcal{E}(S_1)=\emptyset$. \\
Proof of Claim: Let us show that we may take $\mathcal{E}_{\text{max}}=\mathcal{E}(\mathcal{E}(\ldots \mathcal{E}(\mathcal{E}(N))\ldots ))$, where we apply $\mathcal{E}$ $b+1$ times for $b$ the number of components of $\partial N$. Take some arbitrary multi-slope $S$ on $N$ and let us modify the partition $S_1=\emptyset, S_2=S$ of $S$ until it satisfies the claim. If $S\cap \mathcal{E}(N)=\emptyset$ we are done and if not let $S_1=S\cap \mathcal{E}(N)$ and $S_2=S\setminus S_1$. Again, if $S_2\cap \mathcal{E}(\mathcal{E}(N))=\emptyset$ we are done and if not let $S_1=S\cap \mathcal{E}(\mathcal{E}(N))$ and $S_2=S\setminus S_1$. Repeat this until we are done or $S_1=S$ and $S_2=\emptyset$. Since $|S|=b$ and the size of $S_2$ decreases at each step, this process will terminate in at most $b$ steps. This proves the claim. \\

Hence for a multi-slope $S$ on $N$ the Dehn filling of $N$ to $N(S)$ is strongly non-exceptional if for some $S_1\subset \mathcal{E}_{\text{max}}$ we have $S_1 \subset S, (S\setminus S_1) \cap \mathcal{E}(S_1)=\emptyset$, and the Dehn filling of $N$ to $N(S_1)$ is strongly non-exceptional. Since $\mathcal{E}_{\text{max}}$ is finite we may in turn express this as a linear-system $\mathcal{L}$. To be precise, there is a linear system $\mathcal{L}$ such that $p_1, q_1, \ldots, p_k, q_k$ with $\gcd(p_1, q_1)=\ldots=\gcd(p_k, q_k)=1$ are part of a solution of $\mathcal{L}$ if and only if the Dehn filling of $N$ to $N(p_1/q_1, \ldots, p_k/q_k)$ is strongly non-exceptional.\\

Recall the mono-quadratic system $\mathcal{Q}$ given by Theorem \ref{prop:mainprop} as discussed above. To summarise the above: if $\mathcal{Q}_{\text{hyp}}:=\mathcal{L}\wedge \mathcal{Q}$ is a mono-quadratic system, then it fulfils the conditions of the Corollary. Observe, that each equation of $\mathcal{L}$ is of the form $(p_i, q_i)=(a,b)$ or $(p_i, q_i)\neq (a,b)$ for some $(a,b)\in \mathbb Z^2$. In particular, no equation of $\mathcal{L}$ involves both one of $p_i$ or $q_i$ and one of $p_j$ or $q_j$ for $i\neq j$. Consider again the mono-quadratic system $\mathcal{Q}$ given by Theorem \ref{prop:mainprop} as discussed above. Express $\mathcal{Q}$ as $\bigvee_i\mathcal{Q}_i$ such that for each $\mathcal{Q}_i$ there is a partition $\mathcal{P}_i$ of its variables with no equation using variables from multiple parts and at most one quadratic equation using variables from each part. From the proof of Theorem \ref{prop:mainprop}, we may choose each partition $\mathcal{P}_i$ such that each of the pairs $\{p_1, q_1\}, \ldots, \{p_k, q_k\}$ lies in the same part. Hence $\mathcal{L}\wedge \mathcal{Q}_i$ are mono-quadratic systems and therefore $\mathcal{Q}_{\text{hyp}}:=\mathcal{L}\wedge \mathcal{Q}=\bigvee_i(\mathcal{L}\wedge\mathcal{Q}_i)$ is a mono-quadratic system. This is our desired $\mathcal{Q}_{\text{hyp}}$.
\end{proof}

\bibliographystyle{alpha}
\bibliography{citation}

\end{document}